\documentclass{amsart}
\usepackage{amsmath,amssymb,amsfonts}
\IfFileExists{yhmath.sty}{\usepackage{yhmath}}{}
\usepackage{xcolor}
\usepackage{hyperref}
\usepackage{tikz-cd}
\usepackage{amscd}
\usepackage[all]{xy}
\usepackage{enumerate}
\def\Z{{\mathbb Z}}
\def\Zl{{\mathbb Z_\ell}}
\def\Q{{\mathbb Q}}
\def\Ql{{\mathbb Q_\ell}}

\def\C{{\mathbb C}}

\def\F{{\mathbb F}}

\def\V{{\mathbb V}}

\def\cA{{\mathcal A}}
\def\cC{{\mathcal C}}

\def\cG{{\mathcal G}}
\def\cN{{\mathcal N}}
\def\cP{{\mathcal P}}

\def\G{{\Gamma}}

\def\J{{\mathcal J}}
\def\L{{\mathbb L}}
\def\M{{\mathcal M}}
\def\Mbar{\overline{\M}}

\def\U{{\mathcal U}}

\def\g{{\mathfrak g}}
\def\n{{\mathfrak n}}
\def\p{{\mathfrak p}}

\def\u{{\mathfrak u}}

\def\an{\mathrm{an}}

\def\et{\mathrm{\acute et}}
\def\hyp{\mathrm{hyp}}
\def\orb{\mathrm{orb}}
\def\top{\mathrm{top}}
\def\un{\mathrm{un}}
\def\rel{{\mathrm{rel}(\ell)}}
\newcommand{\ad}{\operatorname{ad}}
\newcommand{\Ad}{\operatorname{Ad}}
\newcommand{\Gr}{\operatorname{Gr}}
\newcommand{\Hom}{\operatorname{Hom}}
\newcommand{\Lie}{\operatorname{Lie}}
\newcommand{\Out}{\operatorname{Out}}
\newcommand{\Der}{\operatorname{Der}}
\newcommand{\OutDer}{\operatorname{OutDer}}

\newcommand{\GL}{\operatorname{GL}}
\newcommand{\Sp}{\operatorname{Sp}}
\newcommand{\Pic}{\operatorname{Pic}}
\newcommand{\Spec}{\operatorname{Spec}}
\newcommand{\Loc}{\operatorname{Loc}}
\newcommand{\Rep}{\operatorname{Rep}}
\newcommand{\im}{\operatorname{im}}
\newcommand{\id}{\operatorname{id}}
\newcommand{\contr}{\operatorname{contr}}
\newtheorem{theorem}{Theorem}[section]
\newtheorem{lemma}[theorem]{Lemma}
\newtheorem{proposition}[theorem]{Proposition}
\newtheorem{corollary}[theorem]{Corollary}
\newtheorem{bigtheorem}{Theorem}
\newtheorem{bigcorollary}[bigtheorem]{Corollary}
\theoremstyle{definition}
\newtheorem{definition}[theorem]{Definition}
\theoremstyle{remark}
\newtheorem{remark}[theorem]{Remark}

\newif\ifshowchanges
\showchangestrue

\begin{document}

\title[Relative completions and separating boundary divisors]
{Relative completions and separating boundary divisors in moduli of curves}
\author{Ma Luo}
\address{School of Mathematical Sciences, Key Laboratory of MEA (Ministry of Education), Shanghai Key Laboratory of PMMP, East China Normal University, Shanghai}
\email{mluo@math.ecnu.edu.cn}
\author{Tatsunari Watanabe}
\address{Mathematics Department, Embry-Riddle Aeronautical University, Prescott}
\email{watanabt@erau.edu}
\thanks{The first author is supported in part by Shanghai Pilot Program for Basic Research and by Science and Technology Commission of Shanghai Municipality (No. 22DZ2229014).}

\begin{abstract}
We study relative completions of fundamental groups of moduli stacks
of curves of compact type and of partial compactifications of the
smooth moduli stack. For $g\geq3$, the compact-type completion has
commutative prounipotent radical with Lie algebra
$\Lambda^3_0H\oplus H^{\oplus n}$. Fix $1\leq h\leq g-1$.
We show that adjoining the one-node loci of the separating divisors
whose genus-$h$ side carries at most two markings gives the same
relative completion. The proof uses explicit separating-twist
calculations in weight $-2$, pointed Johnson homomorphisms, and the
lantern relation. Functorial specialization gives the corresponding
results in positive characteristic for odd $\ell$ different from the
characteristic. As a consequence, extension to compact type of
iterated extensions of algebraic symplectic local systems can be
tested on these selected divisors. We also prove that any collection
of separating divisors giving the same relative completion contains
at least $\binom n2$ divisors.
\end{abstract}
\maketitle
\setcounter{tocdepth}{1}
\tableofcontents

% REWRITTEN INTRODUCTION. Printed in black; local edits to retained
% sections, rather than whole new sections, are marked in red.
\section{Introduction}\label{sec:introduction}
Separating degenerations govern the relative completion of the
moduli stack of curves of compact type. Passing from smooth curves
to curves of compact type kills the monodromy around every
separating boundary divisor. Our main result shows that, for relative completion with
respect to symplectic monodromy, these relations are already imposed
by a much smaller collection: one may prescribe the genus on one
side of the node and use only the divisors for which that side
carries at most two marked points. For unpointed curves, the
one-node locus of any single separating divisor suffices.

Suppose that $g\geq3$ and $n\geq0$. Let $\M_{g,n}$ denote the moduli
stack of smooth curves of genus $g$ with $n$ ordered marked points,
and let $\M^c_{g,n}$ denote the moduli stack of stable $n$-pointed
curves of compact type. The complement of $\M_{g,n}$ in $\M^c_{g,n}$
consists of the separating boundary divisors. We study the relative
completions of their fundamental groups and of the partial
compactifications obtained by adjoining selected separating divisors.
The relative Jacobian extends over compact type and supplies the
symplectic monodromy representations used throughout. We write
\[
\M^{c,\an}_{g,n}:=(\M^c_{g,n/\C})^{\an}
\]
for the complex analytification of the compact-type moduli stack.

Let $\G_{g,n}$ be the mapping class group of a closed oriented surface
$\Sigma_g$ with $n$ ordered marked points, and put
$H=H_1(\Sigma_g,\Q)$. Denote the standard symplectic representation by
\[
\rho_{g,n}:\G_{g,n}\longrightarrow\Sp(H).
\]
Its kernel is the Torelli group $T_{g,n}$. Let $K_{g,n}$ be the
subgroup generated by Dehn twists about separating simple closed
curves. Boggi's description of the compact-type fundamental group gives
\[
\pi_1^{\orb}(\M^{c,\an}_{g,n})
\cong\G_{g,n}/K_{g,n};
\]
see \cite[Prop.~3.1]{boggi_compact_type}. Moreover,
$T_{g,n}/K_{g,n}$ is free abelian
\cite[Thm.~3.18]{boggi_compact_type}.

Write $\cG_{g,n}$ for the relative completion of $\rho_{g,n}$,
$\U_{g,n}$ for its prounipotent radical, and $\u_{g,n}$ for the Lie
algebra of $\U_{g,n}$. Hain's results give \cite[Thm.~9.11]{hain_rational}
\[
H_1(\u_{g,n})\cong\Lambda^3_0H\oplus H^{\oplus n}.
\]
Here $\Lambda^3_0H$ denotes the weight $-1$ realization
$(\Lambda^3H/(\theta\wedge H))(-1)$ of the irreducible
$\Sp(H)$-module $V(\lambda_3)$. The image of $T_{g,n}$ is Zariski
dense in $\U_{g,n}$, and separating twists have zero image in
$H_1(\u_{g,n})$. It follows from Boggi's abelian quotient that the
prounipotent radical of the compact-type relative completion is
commutative. Its Lie algebra is therefore the unchanged
abelianization $\Lambda^3_0H\oplus H^{\oplus n}$. We give this
argument in Section~\ref{sec:compact-type-relative-completion}.

We use the following notation in the \'etale setting. Let $k$ be an
algebraically closed field and choose a prime
$\ell\neq\operatorname{char}(k)$, assumed odd if
$\operatorname{char}(k)>0$. Fix a geometric point
$\bar\xi\to\M_{g,n/k}$ corresponding to a smooth curve
$C_{\bar\xi}$, and put
\[
H=H_{\bar\xi,\Ql}:=H^1_{\et}(C_{\bar\xi},\Ql(1)).
\]
The relative Jacobian over $\M^c_{g,n/k}$ defines the symplectic
monodromy representation
\[
\rho^c_{g,n/k}:\pi_1(\M^c_{g,n/k},\bar\xi)
\longrightarrow\Sp(H).
\]
Denote its continuous relative completion by $\cG^c_{g,n/k}$ and
the Lie algebra of its prounipotent radical by $\u^c_{g,n/k}$.

\begin{bigtheorem}\label{thm:compact-type-completion}
Suppose that $g\geq3$ and $n\geq0$. With the conventions above,
$\u^c_{g,n/k}$ is abelian, and there is an isomorphism
\[
\u^c_{g,n/k}\cong\Lambda^3_0H\oplus H^{\oplus n}
\]
of $\Sp(H)$-modules.
\end{bigtheorem}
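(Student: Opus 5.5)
We first prove the statement over $\C$, then transfer it to general algebraically closed $k$ by comparison and specialization.

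\emph{Complex case.} By Boggi's result, $\pi_1^{\orb}(\M^{c,\an}_{g,n})\cong\G_{g,n}/K_{g,n}$, and the monodromy $\rho^c_{g,n}$ factors as $\G_{g,n}\to\G_{g,n}/K_{g,n}\to\Sp(H)$. The kernel of this representation is $T_{g,n}/K_{g,n}$, which is free abelian. The image is Zariski dense in $\Sp(H)$, since it is an arithmetic subgroup. The kernel is the quotient of a finitely generated group, hence finitely generated. We then use the standard exactness property of relative completion: for an extension $1\to N\to\Gamma\to\Gamma_0\to 1$ with $\Gamma_0$ Zariski dense in a semisimple $S$, the prounipotent radical of the relative completion is a quotient of the unipotent completion of $N$, and its abelianization is computed by $H_1(N,\Q)$ modulo coinvariants. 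Since $N=T_{g,n}/K_{g,n}$ is abelian, its unipotent completion is the vector group $N\otimes\Q$, which is commutative. Hence $\u^c_{g,n}$ is abelian.

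To identify it, we compare with $\cG_{g,n}$. The surjection $\G_{g,n}\to\G_{g,n}/K_{g,n}$ induces a surjection $\cG_{g,n}\to\cG^c_{g,n}$, and hence a surjection $\u_{g,n}\to\u^c_{g,n}$. Its kernel is the closed normal subalgebra generated by the logarithms of the images of separating Dehn twists. The latter lie in the commutator $[\u_{g,n},\u_{g,n}]$, because they are trivial in $H_1(\u_{g,n})=\Lambda^3_0H\oplus H^{\oplus n}$; this holds since the Johnson homomorphism kills $K_{g,n}$. Since $\u^c_{g,n}$ is abelian, the surjection $\u_{g,n}\to\u^c_{g,n}$ factors through $H_1(\u_{g,n})$. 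Conversely, the kernel is contained in the commutator, so $H_1(\u^c_{g,n})=H_1(\u_{g,n})$. Together these give $\u^c_{g,n}\cong\Lambda^3_0H\oplus H^{\oplus n}$.

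\emph{Transfer to $k$.} For $\operatorname{char}k=0$, the \'etale fundamental group is the profinite completion of the orbifold fundamental group, and continuous $\ell$-adic relative completion of the profinite completion agrees with the relative completion of the discrete group tensored with $\Ql$. This uses the fact that $T_{g,n}/K_{g,n}$ is finitely generated free abelian and that $\Sp(\Z)$ has the congruence subgroup property for $g\geq 3$, so the relevant kernel has the expected pro-$\ell$ image.

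For positive characteristic, we use specialization $\pi_1(\M^c_{g,n/\bar k'})\to\pi_1(\M^c_{g,n/k})$ from a characteristic-zero lift over a Witt-vector base. The moduli stack $\M^c_{g,n}$ is smooth over $\Z$ and has a normal crossings compactification $\Mbar_{g,n}$. Hence specialization is surjective, and it is an isomorphism on maximal pro-$\ell$ quotients of the tame parts; the relevant groups are tame over the $\ell$-part because the unipotent radical is pro-$\ell$. Functoriality of relative completion then shows that $\u^c_{g,n/k}$ is a quotient of the characteristic-zero algebra, so it is abelian. For the reverse bound, we show $H_1$ does not drop. The Johnson-type invariants come from the relative Jacobian and from the sections $x_i$ mapped to the Jacobian via Abel–Jacobi. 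These define an extension over $\M^c_{g,n/k}$ of $\Sp(H)$-local systems whose $H^1$-classes realize $\Lambda^3_0H\oplus H^{\oplus n}$ as a quotient of $H_1(\u^c_{g,n/k})$, using the Ceresa cycle and the point classes.

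\emph{Main obstacle.} We expect the main obstacle to be the positive-characteristic lower bound, where one needs the surjection onto $\Lambda^3_0H\oplus H^{\oplus n}$. The plan is to exhibit the corresponding extension classes over $\Z[1/\ell]$ and check their nontriviality after specialization. Oddness of $\ell$ is used so that the Sp-modules remain irreducible and the relevant $H^1$ computations are unaffected.
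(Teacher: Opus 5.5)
Your complex-analytic argument is essentially the paper's. You apply Hain's exactness directly to $1\to T_{g,n}/K_{g,n}\to\G_{g,n}/K_{g,n}\to\Sp(H_\Z)\to1$, while the paper pushes the Zariski-dense image of $T_{g,n}$ from $\U_{g,n}$ into $\U^c_{g,n}$; both then use $\log\widetilde\rho_{g,n}(K_{g,n})\subset W_{-2}\u_{g,n}$. Your characteristic-zero transfer is also fine, although it needs only finite generation of $\G_{g,n}/K_{g,n}$ and integrality of the monodromy (Hain's profinite comparison), not the congruence subgroup property.

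The gap is in positive characteristic.
\begin{itemize}
\item The tame specialization theorem identifies maximal pro-$\ell$ (or prime-to-$p$) quotients, and these carry no information here. Since $\G_{g,n}$ is perfect for $g\geq3$, the maximal pro-$\ell$ quotient of $\widehat{\G_{g,n}/K_{g,n}}$ is trivial.
\item What controls the continuous relative completion is the relative pro-$\ell$ completion. This is an extension of the closed symplectic image by a pro-$\ell$ group, and that image surjects onto $\Sp_{2g}(\F_\ell)$, whose order may be divisible by $p$. No prime-to-$p$ specialization statement applies to it directly.
\item SGA1~XIII concerns schemes with a smooth proper compactification whose boundary is a relative normal-crossings divisor. $\M^c_{g,n}$ is a stack, and the normalization of $\Mbar_{g,n}$ in a general (e.g.\ abelian) level cover is singular along the boundary.
\end{itemize}
So your surjectivity claim is not justified as stated; it also tacitly needs tameness of the Jacobian monodromy along $\Delta_{\mathrm{irr}}$. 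Your lower bound is only a plan. Showing that the Ceresa and point classes stay nonzero and independent over $k$ requires a comparison of $\pi_1(\M^c_{g,n/k})$ with $\pi_1(\M^c_{g,n/W})$ on the relevant relative quotients, and you never establish one.

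The missing idea is the paper's level-cover descent. Take the Pikaart--de Jong/Boggi--Pikaart level $\Gamma^\lambda$, defined through the action on $\Pi^{\mathrm{pun}}_{g,n}/(W^3\Pi^{\mathrm{pun}}_{g,n}\cdot(\Pi^{\mathrm{pun}}_{g,n})^\ell)$ with $\ell$ odd. Its cover $M^{c,\lambda}$ has the following properties:
\begin{itemize}
\item it is a scheme;
\item it is finite \'etale over $\M^c_{g,n}$, because separating edges are unramified;
\item it has a smooth proper compactification over $\Z[1/\ell]$ with relative normal-crossings boundary;
\item it kills the mod-$\ell$ monodromy;
\item its deck group maps to the mod-$\ell$ symplectic image with finite $\ell$-group kernel $\G_{g,n}[\ell]/\Gamma^\lambda$.
\end{itemize}
Tame specialization then identifies the pro-$\ell$ completions of $\pi_1(M^{c,\lambda})$ over $\bar L$, $W$, and $k$. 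Finite-cover descent presents each $\pi_1(\M^c_{g,n/A})^{\mathrm{rel}}$ as an extension of the deck group by that pro-$\ell$ completion. The five lemma then gives an isomorphism of relative pro-$\ell$ completions compatible with monodromy, hence of continuous relative completions. This yields abelianness and the full module $\Lambda^3_0H\oplus H^{\oplus n}$ at once, with no separate Ceresa-cycle lower bound.

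Note also that oddness of $\ell$ enters through this level structure, not through irreducibility of $\Sp(H)$-modules over $\Ql$, which holds for every $\ell$.
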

In characteristic zero, the theorem follows from the preceding
argument and the comparison between discrete and continuous relative
completion. In positive characteristic, we construct an isomorphism
of relative pro-$\ell$ completions compatible with symplectic
monodromy, and then pass to continuous relative completion.
This comparison also applies to the partial compactifications below,
where the fixed-genus theorem identifies a smaller set of boundary
relations with the same effect on relative completion.
% In characteristic zero, the theorem follows from the preceding
% argument and the comparison between discrete and continuous relative
% completion. In positive characteristic, we construct an isomorphism
% of relative pro-$\ell$ completions compatible with symplectic
% monodromy, and then pass to continuous relative completion.
% The same comparison will be needed for the partial compactifications.
% The compact-type calculation thus follows from Boggi's and Hain's
% results in characteristic zero. The fixed-genus theorem below
% identifies a smaller set of boundary relations with the same effect
% on relative completion, and the specialization comparison transfers
% this assertion to positive characteristic.

For a stable separating type $(h,S)$, let $\Delta_{h,S}$ be the
closed boundary divisor whose genus-$h$ component carries precisely
the markings in $S\subseteq\{1,\ldots,n\}$. We identify $(h,S)$ with
$(g-h,S^c)$.

For a separating divisor $D$, let $D^\circ$ denote its locus of
curves with exactly one node, so that the curve has two smooth
components meeting at that node. For a collection $\mathcal D$ of
separating divisors, put
\[
\M_{\mathcal D}^{\circ}:=\M_{g,n}\cup\bigcup_{D\in\mathcal D}D^\circ.
\]
The superscript $\circ$ indicates that curves have at most one node;
the smooth locus is included. The pair $(g,n)$ is fixed and is
suppressed in the subscript.
For $1\leq h\leq g-1$, write
\[
\begin{aligned}
\mathcal D_h&=\{\Delta_{h,S}:S\subseteq\{1,\ldots,n\}\},\\
\mathcal D_{h,\leq2}&=\{\Delta_{h,S}:S\subseteq\{1,\ldots,n\},\ |S|\leq2\}.
\end{aligned}
\]
The corresponding one-node partial compactifications are
$\M_{\mathcal D_h}^{\circ}$ and $\M_{\mathcal D_{h,\leq2}}^{\circ}$.
We write $\M_{\mathcal D/k}^{\circ}$ after base change to $k$.

For any such open $Z$, write
$\cG_{Z/k}$ for the continuous relative completion of its geometric
fundamental group over $\Ql$ with respect to the restriction of
$\rho^c_{g,n/k}$. We use $\cG_Z$ for its rational Betti relative
completion when $Z$ is considered over $\C$.

\begin{bigtheorem}[Fixed-genus boundary theorem]
\label{thm:fixed-genus-main}
Suppose that $g\geq3$, $n\geq0$, and $1\leq h\leq g-1$.
The inclusions induce isomorphisms
\[
\cG_{\M_{\mathcal D_{h,\leq2}}^{\circ}/k}
\xrightarrow{\ \sim\ }
\cG_{\M_{\mathcal D_h}^{\circ}/k}
\xrightarrow{\ \sim\ }\cG^c_{g,n/k}.
\]
The same assertion holds for rational Betti relative completions.
Their prounipotent radicals are commutative and have Lie algebra
$\Lambda^3_0H\oplus H^{\oplus n}$.
\end{bigtheorem}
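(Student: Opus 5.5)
We first prove the statement for rational Betti completions and then transfer it to $k$ by the specialization comparison used for Theorem~\ref{thm:compact-type-completion}. The strategy is to describe each orbifold fundamental group as a quotient of $\G_{g,n}$, and to show that the extra relations already kill the same part of $\u_{g,n}$ that $K_{g,n}$ kills.

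\emph{Step 1: group-theoretic description.} Adjoining the one-node locus $D^\circ$ of a divisor $D=\Delta_{h,S}$ adds a smooth divisor to $\M_{g,n}$. By van Kampen for orbifolds, this kills the normal closure of the Dehn twist $t_\gamma$ about one separating curve $\gamma$ of type $(h,S)$. Since every separating curve of a given type is $\G_{g,n}$-conjugate to $\gamma$, we get
\[
\pi_1^{\orb}(\M_{\mathcal D}^{\circ,\an})\cong \G_{g,n}/K_{\mathcal D},
\]
where $K_{\mathcal D}$ is generated by the twists about separating curves whose types lie in $\mathcal D$. Relative completion is right exact on such quotients. Hence $\cG_{\M_{\mathcal D}^\circ}$ is the quotient of $\cG_{g,n}$ by the closed normal subgroup generated by the images of these twists. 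On Lie algebras, $\u_{\mathcal D}$ is the quotient of $\u_{g,n}$ by the ideal $\mathfrak k_{\mathcal D}$ generated by the elements $\log t_\gamma$ for $\gamma$ of type in $\mathcal D$, together with all their $\Sp(H)$- and $\G$-translates. The maps in the theorem are the evident surjections, so it suffices to show that $\mathfrak k_{\mathcal D_{h,\le2}}$ contains $\log t_\gamma$ for every separating $\gamma$ of every type. Then all three quotients coincide with the compact-type one, which Theorem~\ref{thm:compact-type-completion} identifies as abelian with Lie algebra $\Lambda^3_0H\oplus H^{\oplus n}$.

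\emph{Step 2: reduction to weight $-2$.} Using Hain's weight filtration and the fact that $\u_{g,n}$ is generated by $H_1(\u_{g,n})$ in weight $-1$, we have
\[
\u^c = \u_{g,n}/(\text{ideal generated by } \Gr^W_{-2}\text{ of the kernel}).
\]
Indeed, the kernel of $\u_{g,n}\to\u^c$ is the ideal generated by the brackets $[\u,\u]$, since $\u^c$ is the abelianization. That ideal is generated as an ideal by $\Gr^W_{-2}\u_{g,n}=\Lambda^2 H_1(\u)/R$. It therefore suffices to show that the $\Sp(H)$-module spanned by the weight $-2$ components of $\log t_\gamma$, for $\gamma\in\mathcal D_{h,\le2}$, is all of $\Gr^W_{-2}\u_{g,n}$. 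Here we use that $\log t_\gamma$ for a separating twist lies in $W_{-2}$, with leading term given by the Morita/Johnson-type formula. For a curve cutting off a subsurface with symplectic basis spanning $A\subset H$ and carrying the points in $S$, that leading term is roughly
\[
\tfrac12\,\theta_A^{\,2}\quad\text{plus mixed terms } \theta_A\wedge x_j,\ x_i\wedge x_j \text{ for } i,j\in S.
\]

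\emph{Step 3, the main obstacle: explicit computation.} We decompose $\Gr^W_{-2}\u_{g,n}$ into $\Sp$-isotypic pieces. These are the $\Lambda^3_0H\otimes\Lambda^3_0H$-part (trivial plus $V(2\lambda_2)$, as in Hain's unpointed case), the pieces coming from $\Lambda^3_0H\otimes H^{(j)}$, and the pieces $H^{(i)}\wedge H^{(j)}$ together with the trivial parts indexed by $i<j$. For each isotypic piece we exhibit a separating twist of type $(h,S)$ with $|S|\le2$ whose projection to that piece is nonzero.
\begin{itemize}
\item The unpointed pieces use $S=\emptyset$ via pointed Johnson homomorphisms.
\item The pieces involving a single point $j$ use $S=\{j\}$ and subtract the $S=\emptyset$ contribution.
\item The trivial $\Q$-components indexed by pairs $\{i,j\}$ use $S=\{i,j\}$.
\end{itemize}
These components are exactly what forces pairs: they explain the $\binom n2$ lower bound.

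The remaining types, with $|S|\ge3$, are then obtained without computation. A lantern relation among separating curves bounding a sphere with three holes expresses $t_\gamma$ for $|S|=3$ in terms of twists with smaller $S$, modulo twists already killed. Induction on $|S|$ finishes. As a cross-check, multiplicity-one-type arguments show each isotypic piece is generated once a single nonzero vector is present, since the $\Sp$-submodule generated by a nonzero vector of an irreducible summand is the whole summand. The hard part is bookkeeping the mixed terms for $|S|=1,2$ and verifying nonvanishing of each projection.

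Finally, the positive-characteristic statement follows from the specialization isomorphism of relative pro-$\ell$ completions. That isomorphism is compatible with the open embeddings $\M_{\mathcal D}^\circ\subset\M^c_{g,n}$ because the divisors $D^\circ$ are smooth over $\Z[1/\ell]$ and tame ramification along them is governed by the same loop generators. Passing to continuous relative completion then gives the $k$-version.
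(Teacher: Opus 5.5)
There is a genuine gap, and it sits in Step~3, which carries all the content. Your Steps~1 and~2 match the paper: the quotient $\G_{g,n}/N_{\mathcal D}$, and the criterion that a closed ideal inside $W_{-2}\u_{g,n}$ surjecting onto $\Gr^W_{-2}\u_{g,n}$ equals $W_{-2}\u_{g,n}$. The transfer to $k$ is the paper's functorial specialization, though it must be run on the larger opens $\M_{\mathcal D}$ through a smooth proper level cover and then purity, not on the one-node loci directly.

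First, the decomposition you list is not the right one. It is $\Gr^W_{-2}\u_{g,n}\cong V(2\lambda_2)\oplus V(\lambda_2)^{\oplus n}\oplus\Q(1)^{\oplus\binom n2}$. The unpointed quotient $\Gr^W_{-2}\u_g$ is $V(2\lambda_2)$ with no trivial summand. For $i\neq j$, Hain's relations collapse $[H^{(i)},H^{(j)}]$ onto the line $\Q q_{ij}$.

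More seriously, your closing principle fails for isotypic components of multiplicity greater than one. A nonzero vector of $V(\lambda_2)^{\oplus n}$ need not generate it; a diagonal vector $(w,\ldots,w)$ generates a single copy. A vector of $\Q(1)^{\oplus\binom n2}$ generates only a line. A nonzero projection of $I_h$ onto a summand does not put that summand inside $I_h$. You need elements supported in a single summand.

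For $V(\lambda_2)^{(j)}$, your element $d_{\{j\}}-d_\varnothing$ is the right one, but two things are missing. You must observe that it dies under forgetting to any $x_r$ with $r\neq j$, since the two curves become isotopic once $x_j$ is forgotten. You also need an actual nonvanishing computation in the $j$th slot. The paper gets this from the difference of the pointed hyperelliptic Johnson images at Weierstrass points on the two sides, which projects to $4(g+1)[\theta_A]\neq0$. Likewise, the $V(2\lambda_2)$ part requires checking that the projection $\xi_h$ is nonzero for every $h$.

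The Tate part is where your plan has no mechanism. Pair-forgetting maps show that the coefficient of $d_S$ on $\Q(1)_{rs}$ equals that of a twist of type $(h,S\cap\{r,s\})$ in $\M_{g,2}$, after relabeling $r,s$ as $1,2$. So a single $(h,\{i,j\})$ twist is a priori spread over all the lines $\Q(1)_{rs}$. Nothing in your proposal computes these coefficients or shows that the $\binom n2$ resulting vectors are independent.

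The missing idea is to use the lantern relation here rather than for $|S|\geq3$. Take the four-holed sphere bounded by $\partial A$, $\partial U_i$, $\partial U_j$, and a genus-$h$ curve enclosing $A$, $x_i$, $x_j$. The lantern relation gives $\operatorname{in}_{-2}(T_{c^{(0)}_{ij}})=d_\varnothing+d_{\{i,j\}}-d_{\{i\}}-d_{\{j\}}$, where $c^{(0)}_{ij}$ bounds a disk containing exactly $x_i,x_j$. That twist is a diagonal meridian in $F_n(C)$. Coordinate and pair-forgetting maps put its initial form in $\Q(1)_{ij}$. The Euler number $2-2g$ of the unit tangent bundle, together with Hain's relations $s_r+q_{12}/g=0$, then identifies it as $\pm q_{ij}/g\neq0$.

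Your induction on $|S|\geq3$ is redundant once Step~2 gives $\mathfrak n=W_{-2}\u_{g,n}$. As sketched, it would also produce a genus-zero three-point twist that is not among the killed relations.
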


Thus, for $n=0$, the one-node locus of any one positive-genus
separating divisor suffices. For general $n$, the theorem uses at
most $1+n+\binom n2$ distinct divisors. There can be fewer when
$g=2h$, since complementary subsets label the same divisor.
Thus a collection of quadratic size in $n$ suffices even though the
full fixed-genus family grows exponentially with the number of
markings. The genus $h$ may be fixed in advance.
The assertion is about relative proalgebraic completions; no
isomorphism of full fundamental groups or of relative pro-$\ell$
groups is asserted for these inclusions.

The key step is to detect the separate marked-point and diagonal
contributions using twists of the prescribed genus. The proof in
characteristic zero uses the weight $-2$ computation
\[
\Gr^W_{-2}\u_{g,n}
\cong V(2\lambda_2)\oplus V(\lambda_2)^{\oplus n}
\oplus\Q(1)^{\oplus\binom n2}
\]
of Hain \cite[Prop.~8.6 and Thm.~9.11]{hain_rational}.
The logarithm of a separating twist belongs to $W_{-2}\u_{g,n}$.
We show that the initial forms of the selected genus-$h$ twists
generate every summand in this display. For the unpointed summand,
we use the projection calculation in
\cite[\S6.3]{watanabe_sections} and verify its nonvanishing for
each prescribed $h$. The difference of two pointed realizations of
a separating twist detects one marked-point summand at a time,
using the computation of Luo--Watanabe
\cite[proof of Thm.~9.1 and Lem.~9.3]{LuoWatanabeCollino}.
Finally, the lantern relation expresses a two-point diagonal
meridian in terms of four genus-$h$ twists. This gives each
individual Tate summand.

For a collection $\mathcal D$, write $N_{\mathcal D}$ for the
normal subgroup generated by its separating twists,
$\mathcal N_{\mathcal D}$ for its closed normal image in
$\cG_{g,n}$, and $\mathfrak n_{\mathcal D}:=\Lie\mathcal N_{\mathcal D}$.
Since the weight filtration on $\u_{g,n}$ is its lower central
series, spanning in weight $-2$ implies
$\mathfrak n_{\mathcal D}=W_{-2}\u_{g,n}$ for the selected
fixed-genus collection. Its closed normal image therefore equals
the closed normal image of all separating twists. This proves the Betti assertion of
Theorem~\ref{thm:fixed-genus-main} without a calculation in lower
weights. Section~\ref{sec:image-Kgn} contains the fixed-genus
spanning calculation, which also implies the spanning statement for
all separating twists. Section~\ref{sec:fixed-genus} applies this
calculation to the partial compactifications.

For the characteristic-$p$ comparison, let $\mathcal D$ be any
collection of separating boundary types. Besides the one-node open
$\M_{\mathcal D}^{\circ}$, we consider the open
$\M_{\mathcal D}$ obtained by removing the nonseparating divisor
and all separating divisors not in $\mathcal D$. Their difference
has codimension at least two, so purity identifies their fundamental
groups. A single Boggi--Pikaart level cover supplies a smooth proper
compactification with relative normal-crossings boundary for all the
larger opens. Tame specialization on this cover and finite-cover descent
give a comparison of relative pro-$\ell$ completions, functorial for
enlarging $\mathcal D$. This transfers the inclusion-induced
isomorphisms in Theorem~\ref{thm:fixed-genus-main} to positive
characteristic.

We apply the theorem to extension of local systems. Let $F=\Q$ in
the rational Betti setting and $F=\Ql$ in the \'etale setting above.
Write $\Loc_H(Z;F)$ for the category of finite-dimensional local
systems on $Z$ admitting a finite filtration whose successive
quotients are associated to algebraic representations of $\Sp(H)$.
The filtration is required to exist but is not part of the data.
Relative completion identifies this category with the category of
finite-dimensional algebraic representations of $\cG_Z$ or
$\cG_{Z/k}$, respectively.

\begin{bigcorollary}\label{thm:local-extension-main}
Under the hypotheses of Theorem~\ref{thm:fixed-genus-main}, restriction
induces an equivalence of tensor categories
\[
\Loc_H(\M^c_{g,n};F)\xrightarrow{\ \sim\ }
\Loc_H(\M_{\mathcal D_{h,\leq2}}^{\circ};F).
\]
A local system in $\Loc_H(\M_{g,n};F)$ extends to compact type if and
only if its boundary monodromy is trivial along every
$\Delta_{h,S}$ with $|S|\leq2$. In the \'etale setting, triviality
means trivial action of the full transverse boundary inertia group.
The extension is unique up to a unique isomorphism compatible with
its specified restriction.
\end{bigcorollary}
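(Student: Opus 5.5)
We deduce the corollary from Theorem~\ref{thm:fixed-genus-main} via the Tannakian description of relative completion. Write $Z=\M_{\mathcal D_{h,\leq2}}^{\circ}$. The inclusion $Z\hookrightarrow\M^c_{g,n}$ induces a surjection on (orbifold or geometric \'etale) fundamental groups. It is surjective because $Z$ is a dense open of a normal stack. The map is compatible with the symplectic monodromy, since $\rho$ on $Z$ is by definition the restriction of $\rho^c_{g,n}$. By the universal property of relative completion, pulling back along $\pi_1(Z)\to\cG_{Z}$ identifies $\Loc_H(Z;F)$ with finite-dimensional algebraic representations of $\cG_Z$, and similarly for $\M^c_{g,n}$. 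The key point is that an $F$-representation of $\pi_1$ lies in $\Loc_H$ exactly when it factors algebraically through the relative completion; this is a standard fact, and I take it as the meaning of the identification stated just before the corollary. Restriction of local systems then corresponds to restriction of representations along $\cG_{Z}\to\cG^c_{g,n}$. This map is an isomorphism by Theorem~\ref{thm:fixed-genus-main}, which gives the tensor equivalence. In the \'etale case one must check that continuous $\Ql$-local systems with the filtration condition correspond to algebraic representations of the continuous relative completion. That is the defining property of the continuous relative completion, so it adds nothing new.

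For the extension criterion, let $\V\in\Loc_H(\M_{g,n};F)$. If $\V$ extends to $\M^c_{g,n}$, then the local monodromy around every separating divisor is trivial, since a loop around $\Delta_{h,S}$ becomes nullhomotopic in a neighbourhood of a generic point. Conversely, suppose the monodromy about each $\Delta_{h,S}$ with $|S|\leq2$ is trivial. Near a point of $D^\circ$, the stack $\M_{\mathcal D}^{\circ}$ is locally a smooth divisor complement, up to a finite automorphism group. The local fundamental group of the complement is generated, modulo the stabilizer, by the meridian in the Betti case, and by the transverse inertia group in the \'etale case (tame, pro-cyclic up to the wild part, which is why the full inertia is demanded). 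Trivial inertia therefore gives an extension of $\V$ across $D^\circ$, by Zariski--Nagata purity (\'etale) or the Riemann extension theorem for locally constant sheaves (Betti). Equivalently, $\pi_1(Z)$ is the quotient of $\pi_1(\M_{g,n})$ by the normal closure of these inertia groups. The extension to $Z$ still lies in $\Loc_H$, since the filtration extends: its graded pieces are subquotients, and their monodromy likewise factors. Applying the equivalence above then yields an extension to $\M^c_{g,n}$.

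For uniqueness, restriction from $\M^c_{g,n}$ to $\M_{g,n}$ is fully faithful on local systems, because $\pi_1(\M_{g,n})\to\pi_1(\M^c_{g,n})$ is surjective. Hence any two extensions are related by a unique isomorphism restricting to the identity. The main obstacle is the \'etale local step: identifying $\pi_1$ of a neighbourhood of $D^\circ$ minus $D^\circ$ with the transverse inertia, in the presence of stack automorphisms and in positive characteristic. I would handle this by passing to a Boggi--Pikaart level cover on which the boundary is a normal-crossings divisor of a scheme, applying purity there, and descending by finite-cover descent as in the comparison argument described in the introduction.
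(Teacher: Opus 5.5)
Your proposal is correct and follows essentially the paper's route. The Tannakian identification $\Loc_H(Z;F)\simeq\Rep_F(\cG_Z)$, combined with Theorem~\ref{thm:fixed-genus-main}, gives the equivalence. The boundary criterion comes from trivial meridians or inertia extending $\V$ across the one-node loci $D^\circ$; the graded pieces remain algebraic symplectic because $\pi_1(\M_{g,n})\to\pi_1(Z)$ is surjective. Uniqueness follows from that same surjectivity. The only difference is in the \'etale local step: the paper does not use the level cover, but instead reduces a monodromy-invariant $\Zl$-lattice modulo $\ell^m$, applies purity of the branch locus on regular \'etale charts, and descends.
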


Algebraic symplectic local systems themselves extend over compact
type through the relative Jacobian. The corollary controls the
additional boundary monodromy of their nonsplit iterated extensions.
The assertion concerns their underlying
local systems; no additional Hodge or arithmetic structure is part of
the statement. The same result holds with all subsets $S$, or with
the corresponding larger opens.

\begin{bigcorollary}\label{cor:divisor-lower-bound-main}
Suppose that $\mathcal D$ consists of $r$ distinct separating boundary
divisors and that
$\M_{\mathcal D}^{\circ}\hookrightarrow\M^c_{g,n}$ induces an
isomorphism on the relative completions above. Then
\[
r\geq\binom n2.
\]
\end{bigcorollary}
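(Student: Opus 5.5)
Our plan is to count dimensions in the Tate part of weight $-2$. We work in the rational Betti setting; the \'etale case then follows from the specialization comparison. By the discussion after Theorem~\ref{thm:fixed-genus-main}, the relative completion of $\M_{\mathcal D}^{\circ}$ is the quotient of $\cG_{g,n}$ by $\mathcal N_{\mathcal D}$. Indeed, $\pi_1$ of $\M^\circ_{\mathcal D}$ is $\G_{g,n}$ modulo the normal closure of the twists around curves of the types in $\mathcal D$, and relative completion is right exact. So the hypothesis that the inclusion induces an isomorphism means $\mathfrak n_{\mathcal D}=W_{-2}\u_{g,n}$. Since the weight filtration is the lower central series, this gives surjectivity onto $\Gr^W_{-2}\u_{g,n}$. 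Concretely, the $\Sp(H)$-submodule of $\Gr^W_{-2}\u_{g,n}$ spanned by the initial forms of the logarithms of all separating twists of types in $\mathcal D$ is everything. Brackets with $\u_{g,n}$ land in weight $\le -3$, so only these initial forms and their $\G_{g,n}$-conjugates matter. In particular, the projection to the isotypic component $\Q(1)^{\oplus\binom n2}$ must be surjective.

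The key step is to show that each separating type contributes at most a one-dimensional subspace to the trivial isotypic part. Fix a type $(h',S)$. All twists of that type are conjugate under $\G_{g,n}$. On $\Gr^W_{-2}\u_{g,n}$ the conjugation action factors through $\Sp(H)$, since the Torelli group acts trivially on the graded pieces. Hence the initial forms of the type-$(h',S)$ twists form one $\Sp(H)$-orbit. Projecting to the trivial summand $\Q(1)^{\binom n2}$, on which $\Sp(H)$ acts trivially, the whole orbit maps to a single vector $v_{h',S}$. Then the span of the projections of all twists in $\mathcal D$ is the span of the $r$ vectors $v_D$, $D\in\mathcal D$. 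Surjectivity onto a space of dimension $\binom n2$ forces $r\ge\binom n2$.

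The main obstacle is justifying that the $\Sp(H)$-submodule generated by a set of vectors projects onto the trivial isotypic component exactly as the span of their projections. This holds because the projection to an isotypic component is $\Sp(H)$-equivariant, and $\Sp(H)$ acts trivially on the target. A second subtlety is the identification of the weight grading with the lower central series: this is Hain's result for $g\ge3$, already invoked in the introduction. We also need care that isomorphism of relative completions of $\M^\circ_{\mathcal D}$ and $\M^c_{g,n}$ indeed means $\mathcal N_{\mathcal D}=\mathcal N_{\text{all}}=\exp W_{-2}\u_{g,n}$; the latter equality uses Theorem~\ref{thm:fixed-genus-main}. Finally, Hain's decomposition gives multiplicity exactly $\binom n2$ for $\Q(1)$ in $\Gr^W_{-2}\u_{g,n}$ when $g\ge3$, since $V(2\lambda_2)$ and $V(\lambda_2)$ are nontrivial. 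This yields the bound.
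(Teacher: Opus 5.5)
Your proposal is correct and follows the paper's proof essentially step for step. Like the paper, you deduce $\mathfrak n_{\mathcal D}=W_{-2}\u_{g,n}$ from the hypothesis, use conjugation equivariance to show that all twists of a given type have the same projection to the trivial isotypic component $\bigoplus_{i<j}\Q(1)_{ij}$, and conclude by counting dimensions. Two minor points: the equality $\mathcal N_{g,n}=\exp W_{-2}\u_{g,n}$ comes from the compact-type computation in Theorem~\ref{thm:compact-type-completion-char-zero}, and the characteristic-zero \'etale case is handled by scalar extension and profinite comparison, with specialization needed only in positive characteristic.
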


Each divisor contributes at most one dimension to the trivial
isotypic component of $\Gr^W_{-2}\u_{g,n}$, which proves the bound.
Together with Theorem~\ref{thm:fixed-genus-main}, it shows that the
upper and lower bounds have the same quadratic leading term. Thus
the quadratic growth of the number of divisors is necessary, although
we do not determine the minimum number of divisors.

The paper is organized as follows. Section~\ref{sec:background}
recalls the mapping class groups and compact-type moduli stacks.
Sections~\ref{sec:relative-pro-l-completion} and
\ref{sec:relative-completion} review relative pro-$\ell$ completion
and relative proalgebraic completion. In
Section~\ref{sec:johnson-subgroup}, we locate separating twists in
the weight filtration. Section~\ref{sec:compact-type-relative-completion}
gives the characteristic-zero compact-type computation, and
Section~\ref{sec:image-Kgn} proves the fixed-genus spanning
result in weight $-2$. Section~\ref{sec:fixed-genus} deduces the
characteristic-zero relative completions of the partial compactifications. Section~\ref{sec:specialization-characteristic-p}
establishes the functorial specialization comparison and completes
the proofs in positive characteristic. The local-system consequences
and the divisor bound are proved in Sections~\ref{sec:local-systems}
and~\ref{sec:divisor-count}.

% \subsection*{AI disclosure}
% The authors used OpenAI's ChatGPT to assist with literature
% searches, check calculations and notation, and improve the
% exposition of their original mathematical ideas and arguments.
% All AI-assisted suggestions were independently reviewed and
% verified by the authors, who take full responsibility for the
% content of the paper.

\section{Mapping Class Groups and Compact-Type Moduli}
\label{sec:background}
% \authorcomment{
% Thank you for putting this background together. I have shortened
% this section, keeping the definitions and results used later in the
% paper. In particular, I have condensed the Jacobian and Torelli
% discussion to the statements needed for monodromy. These edits are
% already incorporated in the text. Parenthesized notes are editorial
% comments; red text outside these notes is part of the manuscript.
% }
\subsection{Mapping class groups and moduli of curves}
Suppose that $g\geq3$ and $n\geq0$. Let $\Sigma_{g,n}$ be a closed,
connected, oriented surface of genus $g$ with $n$ ordered distinct
marked points. The \emph{mapping class group} $\G_{g,n}$ is the group
of isotopy classes of orientation-preserving diffeomorphisms fixing
each marked point, where isotopies also fix the marked points.
Its action on homology gives the surjective symplectic representation
\[
\rho_{g,n}:\G_{g,n}\longrightarrow
\Sp(H_1(\Sigma_g,\Z)).
\]
Its kernel is the \emph{Torelli group} $T_{g,n}$.

Let $\M_{g,n}$ denote the moduli stack over $\Z$ of smooth, proper,
geometrically connected curves of genus $g$ with $n$ ordered distinct
marked points, and write $\M_{g,n/k}$ for its base change to a field $k$.
Over $\C$, its analytification has the presentation
\[
\M_{g,n}^{\an}:=(\M_{g,n/\C})^{\an}
\simeq[\operatorname{Teich}_{g,n}/\G_{g,n}],
\]
where $\operatorname{Teich}_{g,n}$ is Teichm\"uller space. Its contractibility gives
\begin{equation}\label{eq:pionequalsgamma}
\pi_1^{\orb}(\M_{g,n}^{\an})\cong\G_{g,n}.
\end{equation}
We use $\pi_1$ without a superscript for the \'etale fundamental group.
For an algebraically closed field $k$ of characteristic zero,
Betti--\'etale comparison and invariance under algebraically closed
base change give
\[
\pi_1(\M_{g,n/k})\cong\widehat{\G_{g,n}}.
\]
Base points are omitted, with compatible choices understood.

\subsection{Moduli of curves of compact type}
The Deligne--Mumford compactification $\Mbar_{g,n}$ parametrizes
stable $n$-pointed curves of genus $g$.
Stability means that each genus-zero component of the
normalization has at least three special points and each genus-one
component has at least one, where special points are the marked
points and the branches of nodes.
% \authorcomment{
% I have formulated stability using special points to include
% rational tails whose stability depends on their marked points.
% These occur among the boundary divisors considered later.
% }
The stack $\Mbar_{g,n}$ is smooth over $\Z$, and its boundary is a
relative normal-crossings divisor.

A geometric stable curve is of \emph{compact type} if its dual graph
is a tree. Here the vertices correspond to irreducible components
and the edges to nodes. Equivalently, every node is separating, or
its generalized Jacobian is proper, hence an abelian variety.
% \authorcomment{
% I have used ``proper, hence an abelian variety'' to express the
% compactness criterion algebraically, so that it also applies in
% positive characteristic.
% }
The compact-type locus is an open substack
\[
\M_{g,n}\subseteq\M^c_{g,n}\subseteq\Mbar_{g,n}.
\]
Its boundary
$\partial\M^c_{g,n}:=\M^c_{g,n}\setminus\M_{g,n}$
consists of the separating boundary divisors, including those
parametrizing stable rational tails.

For a separating simple closed curve $\alpha$ disjoint from the
markings, write $T_\alpha$ for its Dehn twist. Define the
\emph{Johnson subgroup} by
\begin{equation}\label{eq:K}
K_{g,n}:=
\langle T_\alpha:
\alpha\subset\Sigma_{g,n}
\text{ is separating}\rangle
\unlhd\G_{g,n}.
\end{equation}
It is normal because conjugation carries a separating twist to a
separating twist, and it is contained in $T_{g,n}$.

A meridian around a separating boundary divisor corresponds under
\eqref{eq:pionequalsgamma} to the associated Dehn twist. Since these
meridians normally generate the kernel of the map on orbifold
fundamental groups, one obtains the following description.
\begin{theorem}[{Boggi \cite[Prop.~3.1]{boggi_compact_type}}]
\label{thm:main}
The inclusion $\M_{g,n}\hookrightarrow\M^c_{g,n}$ induces an exact
sequence
\begin{equation}\label{eq:main}
1\longrightarrow K_{g,n}\longrightarrow\G_{g,n}
\longrightarrow
\pi_1^{\orb}(\M^{c,\an}_{g,n})
\longrightarrow1.
\end{equation}
In particular,
\[
\pi_1^{\orb}(\M^{c,\an}_{g,n})
\cong\G_{g,n}/K_{g,n}.
\]
\end{theorem}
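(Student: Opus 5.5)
This is Boggi's result, and I would reconstruct it from the orbifold van Kampen / purity principle for removing a divisor from a smooth orbifold. The plan has three steps.

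\textbf{Step 1: reduction to meridians.} The space $\M^{c,\an}_{g,n}$ is a smooth complex orbifold, and $\partial\M^c_{g,n}$ is a normal-crossings divisor in it. Its complement is $\M^{\an}_{g,n}$. I would apply the standard fact for complements of divisors in smooth orbifolds. The map $\pi_1^{\orb}(\M^{\an}_{g,n})\to\pi_1^{\orb}(\M^{c,\an}_{g,n})$ is surjective, because the divisor has real codimension two. Its kernel is normally generated by the meridians around the irreducible components of the boundary. To justify this at the orbifold level, I would pass to a finite-index torsion-free level cover where the compactification is a manifold (for instance a Boggi--Pikaart level structure, as used later in the paper). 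There I would apply the manifold version via a transversal-disk argument, and then descend by the finite group action, noting that the orbifold fundamental group is an extension of the deck group.

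\textbf{Step 2: identify meridians with separating twists.} Take a general point of a separating divisor $\Delta_{h,S}$: a curve with one separating node. The local model of a smoothing is $xy=t$. The monodromy of the family over a small loop in $t$ is, by Picard--Lefschetz, the Dehn twist about the vanishing cycle $\alpha$. This $\alpha$ is a separating simple closed curve disjoint from the markings, cutting off genus $h$ with the markings in $S$. Changing the base path conjugates the meridian, matching conjugation of $T_\alpha$ in $\G_{g,n}$ under \eqref{eq:pionequalsgamma}.

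\textbf{Step 3: match the two normal subgroups.} The normal closure of the meridians is therefore the normal closure of $\{T_\alpha\}$ with $\alpha$ ranging over one representative per topological type $(h,S)$. The mapping class group acts transitively on separating curves of a given type. This holds because the classification of surfaces shows that two such curves have homeomorphic complementary pieces with matching markings, so some mapping class carries one to the other. Hence this normal closure is all of $K_{g,n}$, giving exactness of \eqref{eq:main}.

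I expect the main obstacle to be Step 1 in the orbifold setting. One must check that stacky points of $\M^c_{g,n}$, whose automorphism groups jump along the boundary, do not add extra relations beyond meridians, and that the codimension-two strata (curves with several nodes) impose nothing new. I would handle the codimension-two strata by purity: removing codimension $\geq2$ does not change $\pi_1$. The stacky issue I would handle with the level-cover descent above, since on a fine cover everything is a manifold and the generic stabilizer along each divisor is harmless.
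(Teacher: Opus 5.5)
Your proposal is correct and follows the same route the paper takes in citing Boggi: boundary meridians normally generate the kernel, Picard--Lefschetz identifies each meridian with the twist about the separating vanishing cycle, and transitivity of $\G_{g,n}$ on separating curves of a fixed type $(h,S)$ identifies the normal closure with $K_{g,n}$, exactly as in the proof of Proposition~\ref{prop:boundary-group}. The one point to make explicit in your level-cover descent is that the cover is unramified over $\M^c_{g,n}$, which holds because separating twists lie in the level subgroup (compare Proposition~\ref{prop:selected-level-etale}); this guarantees that its meridians are full \'etale-chart meridians mapping to $T_\alpha$, whereas a coarse meridian around $\Delta_{1,\varnothing}$, where the elliptic involution acts by $t\mapsto -t$, would only give a square root of $T_\alpha$.
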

% \authorcomment{
% I have made the orbifold notation explicit, reserving $\pi_1$
% for the \'etale fundamental group, and used the analytic notation
% defined in the introduction. The statement and its labels are retained.
% }
\subsection{The relative Jacobian over compact type}
% \authorcomment{
% I have shortened the Torelli discussion to the statements needed
% for monodromy, omitting the general dimension and differential-sheaf
% formulas. The generalized Jacobian has dimension $g$ for every
% connected nodal curve of arithmetic genus $g$; properness is the
% criterion distinguishing compact type.
% }
Let $C$ be a geometric stable curve of compact type, and let
$C_1,\ldots,C_r$ be its irreducible components. These components
are smooth, and normalization induces an isomorphism
\[
J(C):=\Pic^0(C)\cong\prod_{i=1}^r J(C_i).
\]
The product of the canonical principal polarizations on
the factors gives the canonical principal polarization on $J(C)$.
% \authorcomment{
% I have used the product description of the polarization to avoid
% multidegree choices for a reducible curve. This also addresses the
% degree issue in the earlier theta-divisor description: already for
% a smooth curve, every degree-$g$ line bundle has a nonzero section,
% while the usual theta divisor lies in $\Pic^{g-1}$.
% }
% For a stable family $\pi:C\to S$ of compact-type curves,
% the relative Jacobian $\Pic^0_{C/S}$ is an abelian scheme of relative
% dimension $g$, equipped with a canonical principal polarization.
% This construction is compatible with base change;
% see \cite[\S1.3]{MoonenOort}.
For a stable family $\pi:C\to S$ of compact-type curves of genus $g$,
write
\[
\pi_J:\J_{C/S}:=\Pic^0_{C/S}\longrightarrow S
\]
for its relative Jacobian. 
This is an abelian scheme of relative dimension $g$, equipped
with a canonical principal polarization;
see \cite[\S1.3]{MoonenOort}.
Its formation commutes with arbitrary base change
\cite[Ex.~9.4.4 and Prop.~9.5.20]{KleimanPicard}.
% This is an abelian scheme of relative
% dimension $g$, equipped with a canonical principal polarization,
% and its formation commutes with base change;
% see \cite[\S1.3]{MoonenOort}.
The assignment $C/S\mapsto\J_{C/S}$, with its canonical principal
polarization, defines a morphism
\[
t^c_{g,n}:\M^c_{g,n}\longrightarrow\cA_g,
\]
where $\cA_g$ is the moduli stack of principally polarized abelian
varieties of dimension $g$. On the smooth locus, it is the usual
Torelli morphism after forgetting the markings.
% \authorcomment{
% I have replaced the gluing discussion with a direct citation of
% the result for families. This supplies the base-change compatibility
% needed later and avoids gluing over boundary strata, which form a
% stratification rather than an open cover.
% }
The relative Jacobian over $\M^c_{g,n}$ supplies the monodromy
representations used in Section~\ref{sec:relative-completion}.

\section{Relative pro-\texorpdfstring{$\ell$}{ell} Completion}
\label{sec:relative-pro-l-completion}
Fix a prime number $\ell$. Relative pro-$\ell$ completion is a
profinite construction that will be used below to compare discrete
mapping class groups, geometric fundamental groups, and specialization
homomorphisms. It should be distinguished from the continuous relative
completion introduced in the next section, which is a proalgebraic
construction over $\Q_\ell$.
Let $\Gamma$ be either a discrete group or a profinite group, let $P$
be a profinite group, and let $\rho:\Gamma\to P$ be a continuous
homomorphism with dense image. In applications, we take $P$ to be the
closure of the image of the given monodromy representation.
\begin{definition}\label{def:relative-pro-l-completion}
The \emph{relative pro-$\ell$ completion} of $\Gamma$ with respect to
$\rho$ is the inverse limit
\[
\Gamma^{\rel,\rho}:=\varprojlim_\phi G_\phi,
\]
where the limit is taken over all factorizations
$\Gamma\xrightarrow{\phi}G_\phi\xrightarrow{q_\phi}P$
such that $G_\phi$ is profinite, $\phi$ is continuous with dense image,
$q_\phi$ is continuous, $q_\phi\circ\phi=\rho$, and
$\ker q_\phi$ is pro-$\ell$. When $\rho$ is understood, we write
simply $\Gamma^\rel$.
\end{definition}
There are canonical continuous homomorphisms
$\Gamma\xrightarrow{\iota_\rho}\Gamma^{\rel,\rho}
\xrightarrow{\pi_\rho}P$ whose composite is $\rho$.
The image of $\iota_\rho$ is dense, and
\[
1\longrightarrow\ker\pi_\rho\longrightarrow\Gamma^{\rel,\rho}
\xrightarrow{\pi_\rho}P\longrightarrow1
\]
is exact, with $\ker\pi_\rho$ pro-$\ell$. Thus
$\Gamma^{\rel,\rho}$ need not itself be pro-$\ell$; rather, it is an
extension of $P$ by a pro-$\ell$ group. If $\Gamma$ is profinite, then
$\iota_\rho$ is surjective, since its image is both compact and dense.

The construction is characterized by the following universal property.
Suppose that $G$ is profinite, that $q:G\to P$ is continuous with
pro-$\ell$ kernel, and that $f:\Gamma\to G$ is a continuous lift of
$\rho$. Then there is a unique continuous homomorphism
$\widetilde f:\Gamma^{\rel,\rho}\to G$ such that
$\widetilde f\circ\iota_\rho=f$ and $q\circ\widetilde f=\pi_\rho$.
The lift $f$ is not required to have dense image. When $P=1$, relative
pro-$\ell$ completion is the usual pro-$\ell$ completion
$\Gamma^{(\ell)}$; for profinite $\Gamma$, this means its maximal
pro-$\ell$ quotient. See \cite[\S2.1]{HainMatsumoto2009}.

We record the formal properties that will be used below.
\begin{proposition}[{\cite[Prop.~2.1]{HainMatsumoto2009}}]
\label{prop:relative-pro-l-profinite-invariance}
Suppose that $\Gamma$ is discrete. The homomorphism $\rho$ extends
uniquely to a continuous homomorphism $\widehat\rho:\widehat\Gamma\to P$,
and the natural map $\Gamma\to\widehat\Gamma$ induces a canonical
isomorphism
\[
\Gamma^{\rel,\rho}\xrightarrow{\sim}
\widehat\Gamma^{\rel,\widehat\rho}.
\]
\end{proposition}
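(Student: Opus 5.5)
The statement to prove is Proposition~\ref{prop:relative-pro-l-profinite-invariance}: for discrete $\Gamma$, $\rho$ extends to $\widehat\rho$, and $\Gamma\to\widehat\Gamma$ induces $\Gamma^{\rel,\rho}\cong\widehat\Gamma^{\rel,\widehat\rho}$. The plan is to compare the two universal properties directly.

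\emph{Step 1: extending $\rho$.} Since $P$ is profinite, it is the inverse limit of its finite continuous quotients $P/U$, with $U$ open normal. Each composite $\Gamma\to P/U$ has finite-index kernel, so it factors uniquely through $\widehat\Gamma$. These factorizations are compatible, so they assemble into a continuous $\widehat\rho:\widehat\Gamma\to P$. Uniqueness holds because $\Gamma$ is dense in $\widehat\Gamma$ and $P$ is Hausdorff. The image of $\widehat\rho$ is compact and contains the dense set $\rho(\Gamma)$, so $\widehat\rho$ is surjective.

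\emph{Step 2: constructing maps in both directions.} Apply the universal property of $\Gamma^{\rel,\rho}$ to the composite lift $\Gamma\to\widehat\Gamma\to\widehat\Gamma^{\rel,\widehat\rho}$. This lifts $\rho$, and the target has pro-$\ell$ kernel over $P$, so we obtain $\alpha:\Gamma^{\rel,\rho}\to\widehat\Gamma^{\rel,\widehat\rho}$.

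For the reverse map, $\Gamma^{\rel,\rho}$ is profinite, so $\iota_\rho:\Gamma\to\Gamma^{\rel,\rho}$ extends uniquely to a continuous $\widehat\iota:\widehat\Gamma\to\Gamma^{\rel,\rho}$. This is again by the universal property of profinite completion: each finite quotient of $\Gamma^{\rel,\rho}$ receives a homomorphism from $\Gamma$ with finite-index kernel. The identity $\pi_\rho\circ\widehat\iota=\widehat\rho$ holds on the dense subgroup $\Gamma$, hence everywhere, so $\widehat\iota$ is a continuous lift of $\widehat\rho$. The universal property of $\widehat\Gamma^{\rel,\widehat\rho}$ then gives $\beta:\widehat\Gamma^{\rel,\widehat\rho}\to\Gamma^{\rel,\rho}$.

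\emph{Step 3: the maps are inverse.} The composite $\beta\alpha$ is compatible with the maps to $P$. It restricts to $\iota_\rho$ on $\Gamma$, so by the uniqueness in the universal property $\beta\alpha=\id$. For $\alpha\beta$, note that it agrees with the identity on the image of $\Gamma$. That image is dense in $\widehat\Gamma$, which maps onto $\widehat\Gamma^{\rel,\widehat\rho}$, so the image of $\Gamma$ is dense in $\widehat\Gamma^{\rel,\widehat\rho}$. Two continuous maps into a Hausdorff space that agree on a dense set are equal, so $\alpha\beta=\id$.

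The main point needing care is the existence of $\widehat\iota$ in Step 2. This uses only that $\Gamma^{\rel,\rho}$ is profinite, which holds because it is an inverse limit of profinite groups. Everything else is formal density and uniqueness bookkeeping.
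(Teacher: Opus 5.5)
Your proof is correct. The paper gives no argument of its own and only cites Hain--Matsumoto, where the result follows from the same formal comparison of universal properties that you carry out. The key points are that every lift of $\rho$ into a profinite group, in particular $\iota_\rho$, extends uniquely and continuously over $\widehat\Gamma$, and that density of the image of $\Gamma$ together with the Hausdorff property makes your $\alpha$ and $\beta$ mutually inverse.
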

\begin{proposition}[{\cite[Prop.~2.3]{HainMatsumoto2009}}]
\label{prop:relative-pro-l-naturality}
For $i=1,2$, let $\rho_i:\Gamma_i\to P_i$ be a continuous homomorphism
with dense image, where $\Gamma_i$ is discrete or profinite and $P_i$
is profinite. Every commutative diagram
\[
\begin{tikzcd}
\Gamma_1\arrow[r,"f"]\arrow[d,"\rho_1"']&\Gamma_2\arrow[d,"\rho_2"]\\
P_1\arrow[r,"a"']&P_2
\end{tikzcd}
\]
induces a unique continuous homomorphism
$f^\rel:\Gamma_1^{\rel,\rho_1}\to\Gamma_2^{\rel,\rho_2}$
compatible with the canonical maps from $\Gamma_i$ and to $P_i$.
These induced homomorphisms are compatible with identities and composition.
\end{proposition}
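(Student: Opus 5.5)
The plan is to deduce everything from the universal property of relative pro-$\ell$ completion stated just before Proposition~\ref{prop:relative-pro-l-profinite-invariance}. The obstacle is that $\Gamma_2^{\rel,\rho_2}$ maps to $P_2$, not to $P_1$, so the universal property for $\rho_1$ cannot be applied to it directly. I will first replace it by a fiber product over $P_2$ that does map to $P_1$ with pro-$\ell$ kernel.

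\emph{Construction.} Put
\[
G':=\Gamma_2^{\rel,\rho_2}\times_{P_2}P_1
=\{(x,p):\pi_{\rho_2}(x)=a(p)\}.
\]
This is a closed subgroup of a product of profinite groups, hence profinite. Let $q:G'\to P_1$ be the second projection, which is continuous. Its kernel consists of the pairs $(x,1)$ with $\pi_{\rho_2}(x)=1$, so it is isomorphic to $\ker\pi_{\rho_2}$ and is therefore pro-$\ell$. Define
\[
f':=(\iota_{\rho_2}\circ f,\ \rho_1):\Gamma_1\to G'.
\]
It lands in $G'$ because $\pi_{\rho_2}\iota_{\rho_2}f=\rho_2f=a\rho_1$ by commutativity of the square. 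It is continuous, since $f$ is assumed continuous (automatic when $\Gamma_1$ is discrete), and it satisfies $q\circ f'=\rho_1$.

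\emph{Existence.} Since the universal property does not require the lift to have dense image, it yields a unique continuous $\widetilde{f'}:\Gamma_1^{\rel,\rho_1}\to G'$ with $\widetilde{f'}\iota_{\rho_1}=f'$ and $q\widetilde{f'}=\pi_{\rho_1}$. Let $f^\rel$ be $\widetilde{f'}$ followed by the first projection. Then $f^\rel\iota_{\rho_1}=\iota_{\rho_2}f$. Moreover, the defining equation of $G'$ gives $\pi_{\rho_2}f^\rel=a\,q\,\widetilde{f'}=a\,\pi_{\rho_1}$, which is the required compatibility with the maps to $P_i$.

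\emph{Uniqueness and functoriality.} Let $F$ be any continuous homomorphism with $F\iota_{\rho_1}=\iota_{\rho_2}f$. The maps $F$ and $f^\rel$ agree on the dense subgroup $\iota_{\rho_1}(\Gamma_1)$ and take values in a Hausdorff group, so $F=f^\rel$. Alternatively, $(F,\pi_{\rho_1})$ is a lift into $G'$ of the same kind, so uniqueness in the universal property applies. For the identity square, $\id$ satisfies the characterizing conditions, so $\id^\rel=\id$ by uniqueness. For two composable squares, $g^\rel\circ f^\rel$ is continuous, restricts to $\iota_{\rho_3}\circ g f$ on $\iota_{\rho_1}(\Gamma_1)$, and is compatible with $b\circ a$ on the $P$'s. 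Uniqueness then gives $(g f)^\rel=g^\rel f^\rel$.

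The only step needing care is choosing the fiber product and checking that its kernel over $P_1$ is pro-$\ell$; the remaining steps are formal consequences of density and uniqueness.
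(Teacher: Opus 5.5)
Your proof is correct and follows essentially the standard route. The paper only cites Hain--Matsumoto for this statement, and your argument (pull $\Gamma_2^{\rel,\rho_2}$ back along $a$ to obtain a profinite group over $P_1$ with pro-$\ell$ kernel, apply the universal property, then get uniqueness and functoriality from density of $\iota_{\rho_1}(\Gamma_1)$) is the same pullback argument the paper itself uses for the proalgebraic analogue in Subsection~\ref{subsec:relative-completion-properties}.
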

\begin{lemma}[{\cite[Lem.~2.6]{HainMatsumoto2009}}]
\label{lem:relative-pro-l-change-of-target}
Suppose that $1\to L\to P\xrightarrow{q}\overline P\to1$ is an exact
sequence of profinite groups in which $L$ is pro-$\ell$, and put
$\overline\rho=q\circ\rho$. Then the natural homomorphism
$\Gamma^{\rel,\rho}\xrightarrow{\sim}\Gamma^{\rel,\overline\rho}$
is an isomorphism.
\end{lemma}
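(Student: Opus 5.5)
The plan is to build mutually inverse continuous homomorphisms from the universal properties, and then check that both composites are the identity by density of the image of $\Gamma$. Note first that $\overline\rho=q\circ\rho$ has dense image in $\overline P$, because $q$ is a continuous surjection and $\rho$ has dense image. So $\Gamma^{\rel,\overline\rho}$ is defined.

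\emph{Step 1 (the forward map).} Apply Proposition~\ref{prop:relative-pro-l-naturality} to the square with $f=\id_\Gamma$ on top and $a=q$ on the bottom. It commutes since $\overline\rho=q\circ\rho$. This gives a continuous homomorphism
\[
\phi:\Gamma^{\rel,\rho}\to\Gamma^{\rel,\overline\rho}
\]
with $\phi\circ\iota_\rho=\iota_{\overline\rho}$ and $\pi_{\overline\rho}\circ\phi=q\circ\pi_\rho$. This is the natural homomorphism in the statement.

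\emph{Step 2 (the backward map).} Consider the profinite group $G=\Gamma^{\rel,\rho}$ with the map $q\circ\pi_\rho:G\to\overline P$. This map is surjective, and its kernel $\pi_\rho^{-1}(L)$ sits in an exact sequence
\[
1\to\ker\pi_\rho\to\pi_\rho^{-1}(L)\to L\to1 .
\]
Both outer terms are pro-$\ell$, and an extension of a pro-$\ell$ group by a pro-$\ell$ group is pro-$\ell$. To see this, every open normal subgroup of the middle group has finite quotient which is an extension of $\ell$-groups, hence an $\ell$-group. This is the only point needing care.

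Moreover, $\iota_\rho:\Gamma\to G$ is a continuous lift of $\overline\rho$, since $q\circ\pi_\rho\circ\iota_\rho=q\circ\rho$. The universal property of $\Gamma^{\rel,\overline\rho}$ therefore gives a unique continuous
\[
\psi:\Gamma^{\rel,\overline\rho}\to\Gamma^{\rel,\rho}
\]
with $\psi\circ\iota_{\overline\rho}=\iota_\rho$.

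\emph{Step 3 (inverse).} The composites $\psi\circ\phi$ and $\phi\circ\psi$ are continuous endomorphisms of $\Gamma^{\rel,\rho}$ and $\Gamma^{\rel,\overline\rho}$, respectively. Each restricts to the identity on the image of $\iota_\rho$, respectively $\iota_{\overline\rho}$, and these images are dense. Since profinite groups are Hausdorff, two continuous maps agreeing on a dense subset coincide. Hence both composites are the identity, so $\phi$ is an isomorphism of profinite groups, automatically a homeomorphism.

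The expected obstacle is minor. It is confirming the pro-$\ell$ property of the kernel of $q\circ\pi_\rho$ in Step 2, which is exactly where the hypothesis that $L$ is pro-$\ell$ is used. Everything else is formal from the universal property and density.
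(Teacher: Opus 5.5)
Your proof is correct and complete. The only non-formal input is that $\pi_\rho^{-1}(L)$, an extension of $L$ by $\ker\pi_\rho$, is pro-$\ell$, and you handle this properly; after that, the two universal-property maps and the density of $\iota_\rho(\Gamma)$ and $\iota_{\overline\rho}(\Gamma)$ finish the argument. The paper gives no proof of its own and simply cites \cite[Lem.~2.6]{HainMatsumoto2009}; your argument is the standard one, with the same two-sided universal-property construction and density check the paper uses for Proposition~\ref{prop:continuous-completion-relative-pro-l} and Lemma~\ref{lem:finite-cover-descent}.
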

\begin{corollary}[Pro-$\ell$ targets]\label{cor:relative-pro-l-pro-l-target}
If $P$ is pro-$\ell$, then the natural homomorphism
$\Gamma^{\rel,\rho}\xrightarrow{\sim}\Gamma^{(\ell)}$ is an isomorphism.
\end{corollary}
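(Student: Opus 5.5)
The corollary follows from Lemma~\ref{lem:relative-pro-l-change-of-target} applied to the trivial group as the new target. When $P$ is pro-$\ell$, the sequence
\[
1\longrightarrow P\longrightarrow P\xrightarrow{\ q\ }1\longrightarrow1,
\]
with $q$ the map to the trivial group, is an exact sequence of profinite groups whose kernel $L=P$ is pro-$\ell$. The composite $\overline\rho=q\circ\rho:\Gamma\to1$ is then the trivial homomorphism, which has dense image in the trivial group. So the lemma gives a natural isomorphism
\[
\Gamma^{\rel,\rho}\xrightarrow{\ \sim\ }\Gamma^{\rel,\overline\rho}.
\]

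It remains to identify $\Gamma^{\rel,\overline\rho}$ with $\Gamma^{(\ell)}$. This is the remark after the universal property: when $P=1$, relative pro-$\ell$ completion is the usual pro-$\ell$ completion. To confirm it from Definition~\ref{def:relative-pro-l-completion}, take $P=1$. The factorizations $\Gamma\to G_\phi\to1$ are then exactly the continuous dense-image maps to profinite groups $G_\phi$ that are themselves pro-$\ell$, since $\ker q_\phi=G_\phi$. The inverse limit over these is $\Gamma^{(\ell)}$. For profinite $\Gamma$ the image is compact and dense, hence everything, so these are the pro-$\ell$ quotients and the limit is the maximal pro-$\ell$ quotient.

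Finally, the composite isomorphism is the natural map. It is induced via Proposition~\ref{prop:relative-pro-l-naturality}, or equivalently via the universal property, from the identity of $\Gamma$ over $P\to1$. This map is compatible with the canonical maps from $\Gamma$, so it coincides with the natural homomorphism in the statement.

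There is no real obstacle; the only point needing care is that the trivial homomorphism to the trivial group satisfies the dense-image hypothesis of the lemma, which it does.
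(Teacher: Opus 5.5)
Your proposal is correct and matches the paper's proof, which simply applies Lemma~\ref{lem:relative-pro-l-change-of-target} to the surjection $P\to1$. Your extra step identifying the completion relative to $\Gamma\to1$ with $\Gamma^{(\ell)}$ makes explicit the remark the paper states just before the lemma.
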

\begin{proof}
Apply Lemma~\ref{lem:relative-pro-l-change-of-target} to $P\to1$.
\end{proof}
In particular, one may replace an $\ell$-adic monodromy target by its
mod-$\ell$ image whenever the kernel of reduction is pro-$\ell$; the
relative pro-$\ell$ completion is unchanged. This is the form of the
change-of-target result used in the specialization arguments below;
compare \cite[Example~2.7]{HainMatsumoto2009}.
\begin{proposition}[{\cite[Prop.~2.4]{HainMatsumoto2009}}]
\label{prop:relative-pro-l-right-exactness}
Suppose that the $\Gamma_i$ are either all discrete or all profinite,
that the $P_i$ are profinite, and that there is a commutative diagram
\[
\begin{tikzcd}
1\arrow[r]&\Gamma_1\arrow[r]\arrow[d,"\rho_1"']&
\Gamma_2\arrow[r]\arrow[d,"\rho_2"']&\Gamma_3\arrow[r]\arrow[d,"\rho_3"']&1\\
1\arrow[r]&P_1\arrow[r]&P_2\arrow[r]&P_3\arrow[r]&1
\end{tikzcd}
\]
with exact rows, where each $\rho_i$ has dense image. Then the induced
sequence
\[
\Gamma_1^{\rel,\rho_1}\to\Gamma_2^{\rel,\rho_2}\to
\Gamma_3^{\rel,\rho_3}\to1
\]
is exact.
\end{proposition}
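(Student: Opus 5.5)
The plan is to prove the three exactness assertions directly from the universal property of Definition~\ref{def:relative-pro-l-completion} and the functoriality in Proposition~\ref{prop:relative-pro-l-naturality}. We use repeatedly one basic fact: a continuous image of a relative pro-$\ell$ completion is compact, so it is closed. Write $\iota_i:\Gamma_i\to\Gamma_i^{\rel}$ and $\pi_i:\Gamma_i^{\rel}\to P_i$ for the canonical maps, and $f_{12}$, $f_{23}$ for the induced maps.

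\emph{Surjectivity at $\Gamma_3^{\rel}$.} The image of $f_{23}$ is compact, hence closed. It contains $\iota_3(\Gamma_3)=\iota_3(\text{image of }\Gamma_2)$, which is dense. Therefore $f_{23}$ is onto.

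\emph{The composite vanishes.} The composite $\Gamma_1\to\Gamma_3$ is trivial, and $P_1\to P_3$ is trivial. The trivial homomorphism $\Gamma_1^{\rel}\to\Gamma_3^{\rel}$ is compatible with the maps from $\Gamma_1$ and to $P_i$. By the uniqueness in Proposition~\ref{prop:relative-pro-l-naturality}, together with compatibility with composition, $f_{23}\circ f_{12}$ is this trivial map.

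\emph{Exactness in the middle.} Let $M=f_{12}(\Gamma_1^{\rel})$. It is compact and contains the dense subset $\iota_2(\Gamma_1)$ of its closure, so $M=\overline{\iota_2(\Gamma_1)}$.

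First, $M$ is normal in $\Gamma_2^{\rel}$. Conjugation by any element of $\Gamma_2$ preserves $\Gamma_1$, hence preserves its closure $M$. By continuity and density of $\iota_2(\Gamma_2)$, every element of $\Gamma_2^{\rel}$ normalizes $M$.

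Put $Q=\Gamma_2^{\rel}/M$.
\begin{enumerate}[(i)]
\item The composite $\Gamma_2^{\rel}\to P_2\to P_3$ kills $M$, because $M$ maps into $P_1$. This gives a map $q:Q\to P_3$.
\item Let $K_2=\ker\pi_2$, which is pro-$\ell$. The image $\pi_2(M)$ is compact and contains the dense set $\rho_1(\Gamma_1)$, so $\pi_2(M)=P_1$. Hence $\pi_2^{-1}(P_1)=K_2M$.
\item It follows that $\ker q=K_2M/M$, which is a quotient of $K_2$ and hence pro-$\ell$.
\item The map $\Gamma_2\to Q$ kills $\Gamma_1$, so it descends to a continuous lift $\Gamma_3\to Q$ of $\rho_3$.
\end{enumerate}
By the universal property there is a map $\Gamma_3^{\rel}\to Q$ over $P_3$. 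Consider the two composites $Q\to\Gamma_3^{\rel}\to Q$ and $\Gamma_3^{\rel}\to Q\to\Gamma_3^{\rel}$. Each restricts to the identity on a dense image of $\Gamma_2$ or $\Gamma_3$, so each is the identity. Hence $Q\cong\Gamma_3^{\rel}$, and therefore $\ker f_{23}=M=\operatorname{im}f_{12}$.

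The only delicate point is step (ii), namely $\pi_2^{-1}(P_1)=K_2M$. This is what makes $\ker q$ pro-$\ell$, and it relies on $\pi_2(M)$ being all of $P_1$. That equality needs the density of $\rho_1(\Gamma_1)$ in $P_1$, which is exactly why the hypothesis that each $\rho_i$ has dense image is imposed. The argument is the same whether the $\Gamma_i$ are discrete or profinite. In the profinite case one may also note that each $\iota_i$ is surjective, which simplifies the density arguments.
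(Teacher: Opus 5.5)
Your argument is correct. The paper gives no proof of its own here and simply quotes Hain--Matsumoto. What you wrote is the standard universal-property proof of right exactness:
\begin{enumerate}[(i)]
\item pass to the closed normal image $M$ of $\Gamma_1^{\rel}$;
\item use density of $\rho_1(\Gamma_1)$ in $P_1$ to see that $\Gamma_2^{\rel}/M\to P_3$ has pro-$\ell$ kernel;
\item identify the quotient with $\Gamma_3^{\rel}$.
\end{enumerate}

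One point should be stated explicitly. The map $Q\to\Gamma_3^{\rel}$ in your final step must be the one induced by $f_{23}$, which exists because $f_{23}\circ f_{12}$ is trivial. It is this choice that turns $Q\cong\Gamma_3^{\rel}$ into $\ker f_{23}=M=\operatorname{im}f_{12}$.
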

\begin{corollary}[Quotients in the monodromy kernel]
\label{cor:relative-pro-l-quotient}
Let $N\triangleleft\Gamma$ be a normal subgroup contained in $\ker\rho$;
when $\Gamma$ is profinite, assume that $N$ is closed. Set
$\overline\Gamma=\Gamma/N$, and let $\overline\rho:\overline\Gamma\to P$
be the induced homomorphism. Then there is an exact sequence
\begin{equation}\label{eq:relative-pro-l-quotient}
N^{(\ell)}\longrightarrow\Gamma^{\rel,\rho}
\longrightarrow\overline\Gamma^{\rel,\overline\rho}\longrightarrow1.
\end{equation}
\end{corollary}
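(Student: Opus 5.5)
The plan is to apply Proposition~\ref{prop:relative-pro-l-right-exactness} to the short exact sequence
\[
1\longrightarrow N\longrightarrow\Gamma\longrightarrow\overline\Gamma\longrightarrow1 .
\]
This requires choosing compatible targets and monodromy maps, and then identifying the relative pro-$\ell$ completion of $N$ with its pro-$\ell$ completion.

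First I build the diagram. Since $N\subseteq\ker\rho$, the homomorphism $\rho$ factors through $\overline\Gamma$, giving $\overline\rho:\overline\Gamma\to P$ with $\overline\rho\circ(\Gamma\to\overline\Gamma)=\rho$. In the profinite case, closedness of $N$ makes $\overline\Gamma$ profinite and $\overline\rho$ continuous. Moreover $\overline\rho$ has dense image, because its image equals that of $\rho$. For the bottom row I take
\[
1\to 1\to P\xrightarrow{\id}P\to1 ,
\]
and I let the left vertical map be the trivial homomorphism $\rho|_N:N\to1$. This map has dense image and is well defined precisely because $N\subseteq\ker\rho$. The squares commute, both rows are exact, and $N$, $\Gamma$, $\overline\Gamma$ are all discrete or all profinite. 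In the profinite case $N$ is profinite because it is closed.

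Proposition~\ref{prop:relative-pro-l-right-exactness} then yields an exact sequence
\[
N^{\rel,1}\to\Gamma^{\rel,\rho}\to\overline\Gamma^{\rel,\overline\rho}\to1 .
\]
It remains to identify $N^{\rel,1}$ with $N^{(\ell)}$. This holds either by the remark after the universal property (relative completion with $P=1$ is the usual pro-$\ell$ completion), or by Corollary~\ref{cor:relative-pro-l-pro-l-target}, since the trivial group is pro-$\ell$.

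I should also check that the maps in the resulting sequence are the natural ones. The map $N^{(\ell)}\to\Gamma^{\rel,\rho}$ is the one induced by Proposition~\ref{prop:relative-pro-l-naturality} from $N\hookrightarrow\Gamma$ over $1\to P$. The map $\Gamma^{\rel,\rho}\to\overline\Gamma^{\rel,\overline\rho}$ is induced from the quotient over $\id_P$.

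The only potential obstacle is verifying the hypotheses of the cited proposition. In the profinite case this means closedness, so that the quotient and the subgroup are profinite. In the discrete case there is nothing further to check. Beyond these verifications, the argument is formal.
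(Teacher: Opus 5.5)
Your proposal is correct and is the paper's argument: apply Proposition~\ref{prop:relative-pro-l-right-exactness} to the rows $1\to N\to\Gamma\to\overline\Gamma\to1$ and $1\to1\to P\to P\to1$, then identify the relative pro-$\ell$ completion of $N\to1$ with $N^{(\ell)}$. Your additional checks are the routine hypotheses the paper leaves implicit: commutativity follows from $N\subseteq\ker\rho$, and closedness of $N$ makes $N$ and $\overline\Gamma$ profinite.
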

\begin{proof}
Apply Proposition~\ref{prop:relative-pro-l-right-exactness} to the
exact rows
\[
\begin{tikzcd}
1\arrow[r]&N\arrow[r]\arrow[d]&\Gamma\arrow[r]\arrow[d,"\rho"']&
\overline\Gamma\arrow[r]\arrow[d,"\overline\rho"']&1\\
1\arrow[r]&1\arrow[r]&P\arrow[r,equal]&P\arrow[r]&1.
\end{tikzcd}
\]
The relative pro-$\ell$ completion of $N\to1$ is $N^{(\ell)}$.
\end{proof}
For the mapping class group, put
\[
H_{\Z_\ell}:=H_1(\Sigma_g,\Z_\ell),
\]
with its intersection pairing, and let
$\rho_{g,n,\ell}:\G_{g,n}\to\Sp(H_{\Z_\ell})$ be the $\ell$-adic
symplectic representation. Put
$P_{g,n,\ell}:=\overline{\rho_{g,n,\ell}(\G_{g,n})}$.
We use $\G_{g,n}^\rel$ for completion relative to the resulting dense
homomorphism $\G_{g,n}\to P_{g,n,\ell}$, and similarly for its quotients.
Since $K_{g,n}\subseteq\ker\rho_{g,n,\ell}$,
Corollary~\ref{cor:relative-pro-l-quotient} gives
\begin{equation}\label{eq:relative-pro-l-compact-type-quotient}
K_{g,n}^{(\ell)}\longrightarrow\G_{g,n}^\rel
\longrightarrow(\G_{g,n}/K_{g,n})^\rel\longrightarrow1.
\end{equation}
Using Boggi's identification \cite[Prop.~3.1]{boggi_compact_type}, the
third term may equivalently be written
$\pi_1^{\orb}(\M^{c,\an}_{g,n})^\rel$.
Proposition~\ref{prop:relative-pro-l-profinite-invariance} then allows
us to pass to the profinite completion of this orbifold group without
changing its relative pro-$\ell$ completion.
\begin{remark}\label{rem:relative-pro-l-not-left-exact}
No initial $1$ is asserted in \eqref{eq:relative-pro-l-quotient}.
Relative pro-$\ell$ completion is right exact but not left exact in
general. Thus the first homomorphism need not be injective, and the
pro-$\ell$ kernel of $\Gamma^{\rel,\rho}\to P$ cannot in general be
identified with the ordinary pro-$\ell$ completion of $\ker\rho$
without an additional argument; see
\cite[the discussion following Prop.~2.4]{HainMatsumoto2009}.
In particular, the injectivity in
Lemma~\ref{lem:compact-type-relative-pro-l-exact} requires a separate
argument, supplied there by the finite-cover descent lemma.
\end{remark}

\section{(Continuous) Relative Completion}\label{sec:relative-completion}
We recall the properties of relative completion needed in this paper,
following Hain \cite[\S3]{hain_relwtfilt}, \cite[\S6]{hain_rational}, and
\cite[\S4]{hain_g3}. We then introduce the completions associated to the
smooth and compact-type moduli stacks. Their defining representations
are the monodromy representations of the relative Jacobian over
$\M^c_{g,n}$ and its restriction to $\M_{g,n}$.
Throughout the applications to moduli stacks, we assume that $g\geq3$
and $n\geq0$. All fundamental groups are based, although base points
will usually be omitted from the notation. Identifications involving
different base points are understood to use compatible choices of paths.

\subsection{Relative completion and its continuous version}
\label{subsec:relative-completion-definition}
Let $F$ be a field of characteristic zero, let $R$ be a reductive
algebraic group over $F$, and let $\rho:\Gamma\to R(F)$ be a
Zariski-dense homomorphism from a discrete group.
The \emph{completion of $\Gamma$ relative to $\rho$} consists of an
extension of proalgebraic $F$-groups
\[
1\longrightarrow\U\longrightarrow\cG
\overset{\pi_R}{\longrightarrow}R
\longrightarrow1,
\]
where $\U$ is prounipotent, together with a Zariski-dense homomorphism
$\widetilde\rho:\Gamma\to\cG(F)$ such that
$\pi_R\circ\widetilde\rho=\rho$. It is characterized by the following
universal property.
Suppose that
\[
1\to U'\to G'\overset{\pi_R'}\longrightarrow R\to1
\]
is an extension of proalgebraic $F$-groups with $U'$ prounipotent, and
that $\phi:\Gamma\to G'(F)$ satisfies
$\pi_R'\circ\phi=\rho$.
Then there is a unique morphism of proalgebraic $F$-groups
$\Phi:\cG\to G'$ such that
\[
\pi_R'\circ\Phi=\pi_R
\]
and
$\Phi(F)\circ\widetilde\rho=\phi$.
The group $\U$ is the prounipotent radical of $\cG$.
We write $\g:=\Lie\cG$ and $\u:=\Lie\U$.

Now let $\Gamma$ be a profinite group, let $\ell$ be a prime, and
suppose that $\rho:\Gamma\to R(\Ql)$ is continuous and Zariski dense,
where $R$ is reductive over $\Ql$. The \emph{continuous completion of
$\Gamma$ relative to $\rho$} is defined by the same universal property,
with $\widetilde\rho$ and all competing lifts $\phi$ required to be
continuous; see \cite[\S6.2]{hain_rational}. For a proalgebraic group
$G=\varprojlim_\alpha G_\alpha$ over $\Ql$, the group $G(\Ql)$ is
equipped with the inverse-limit topology induced by the $\ell$-adic
topologies on $G_\alpha(\Ql)$.
When $R$ is trivial, relative completion reduces to \emph{unipotent
completion}. In the profinite setting, it reduces to continuous
unipotent completion over $\Ql$.

\subsection{Functoriality and quotients}
\label{subsec:relative-completion-properties}
Relative completion is functorial in both the group and its reductive
representation \cite[Prop.~3.6]{hain_relwtfilt}. More precisely, suppose
that
\[
\begin{tikzcd}
\Gamma_1\arrow[r,"\rho_1"]\arrow[d,"\phi"']&R_1(F)\arrow[d,"\psi"]\\
\Gamma_2\arrow[r,"\rho_2"]&R_2(F)
\end{tikzcd}
\]
commutes, where $\psi:R_1\to R_2$ is a morphism of reductive groups and
both $\rho_i$ are Zariski dense. Then there is a unique induced morphism
$\Phi:\cG_1\to\cG_2$ compatible with the canonical lifts and with
$\psi$. The same statement holds for continuous relative completion
when $F=\Ql$ and the homomorphisms between profinite groups are
continuous. In either case, this follows directly from the universal
property, applied to the pullback of $\cG_2$ along $\psi$.
In particular, a section of a homomorphism of groups compatible with
the defining representations induces a section of the corresponding
morphism of relative completions.

\begin{proposition}[Quotients and relative completion]
\label{prop:relative-completion-quotient}
Suppose that $N\unlhd\Gamma$ is contained in $\ker\rho$, and let
$\cN$ be the closed normal proalgebraic subgroup of $\cG$ generated by
$\widetilde\rho(N)$. Let $\overline{\cG}$ denote the relative
completion of $\Gamma/N$ with respect to the representation induced
by $\rho$. Then $\cN\subseteq\U$, and the natural morphism
\[
\overline{\cG}\longrightarrow\cG/\cN
\]
is an isomorphism.

The same statement holds for continuous relative completion when
$\Gamma$ is profinite, $N$ is closed, and $\Gamma/N$ is equipped with
the quotient topology.
\end{proposition}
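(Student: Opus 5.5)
The plan is to prove the isomorphism $\overline{\cG}\to\cG/\cN$ directly from the universal property, by constructing an inverse; the discrete and continuous cases are handled the same way.

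First, $\cN\subseteq\U$. Since $N\subseteq\ker\rho$, we have $\pi_R(\widetilde\rho(N))=\rho(N)=1$, so $\widetilde\rho(N)\subseteq\U(F)$. The subgroup $\U$ is closed and normal in $\cG$, so it contains the closed normal subgroup generated by $\widetilde\rho(N)$. Consequently $\cG/\cN$ is again an extension of $R$ by the prounipotent group $\U/\cN$. Here we use that quotients of prounipotent groups by closed normal subgroups are prounipotent, and that quotients of proalgebraic groups by closed normal subgroups exist as proalgebraic groups.

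Second, we construct the two maps.
\begin{enumerate}[(i)]
\item The composite $\Gamma\to\cG(F)\to(\cG/\cN)(F)$ kills $N$, so it factors through $\Gamma/N$. Its image is Zariski dense, being the image of a dense subgroup under a surjection. It lifts the induced representation $\overline\rho$. The universal property of $\overline{\cG}$ therefore gives $\Psi:\overline{\cG}\to\cG/\cN$; this is the natural morphism in the statement.
\item Conversely, the composite $\Gamma\to\Gamma/N\to\overline{\cG}(F)$ is a lift of $\rho$ into an extension of $R$ by a prounipotent group. The universal property of $\cG$ gives $\Phi:\cG\to\overline{\cG}$ over $R$. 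Then $\Phi\circ\widetilde\rho$ is trivial on $N$. Hence $\ker\Phi$ is a closed normal subgroup containing $\widetilde\rho(N)$, so it contains $\cN$, and $\Phi$ descends to $\overline\Phi:\cG/\cN\to\overline{\cG}$.
\end{enumerate}

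Third, we check that $\Psi$ and $\overline\Phi$ are mutually inverse. The composite $\overline\Phi\circ\Psi$ is an endomorphism of $\overline{\cG}$ over $R$ that fixes the canonical lift of $\Gamma/N$, so by the uniqueness clause of the universal property it is the identity. For the other composite, $\Psi\circ\overline\Phi\circ q$, with $q:\cG\to\cG/\cN$ the quotient map, and $q$ are both morphisms $\cG\to\cG/\cN$ over $R$ that agree on $\widetilde\rho(\Gamma)$. By uniqueness in the universal property of $\cG$ they coincide, and since $q$ is an epimorphism, $\Psi\circ\overline\Phi$ is the identity.

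In the continuous setting the argument is the same. Continuity of $\Gamma/N\to(\cG/\cN)(\Ql)$ follows from the quotient topology and continuity of $(\cG\to\cG/\cN)(\Ql)$. Continuity of $\Gamma\to\overline{\cG}(\Ql)$ is immediate.

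The main technical obstacle is the claim that $\cG/\cN$ is a proalgebraic extension of $R$ by a prounipotent group whose $F$-points receive the dense image. In the continuous case there is the further need that the induced map of $\Ql$-points be continuous for the inverse-limit topology. I would handle this by writing $\cG=\varprojlim\cG_\alpha$ with algebraic quotients, taking $\cN_\alpha$ to be the image of $\cN$ in $\cG_\alpha$, and working levelwise. There $\cG_\alpha\to\cG_\alpha/\cN_\alpha$ is a morphism of algebraic groups, so it is continuous on $\Ql$-points. Density of the image is preserved at each level.
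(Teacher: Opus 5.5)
Your proposal is correct and follows essentially the same route as the paper: you construct $\overline{\cG}\to\cG/\cN$ and $\cG\to\overline{\cG}$ from the two universal properties, descend the latter through $\cN$ via its kernel, and identify both composites with the identity using the canonical Zariski-dense lifts. Your appeal to uniqueness together with the epimorphism property of $\cG\to\cG/\cN$, and your levelwise remarks on the existence of $\cG/\cN$ and on continuity on $\Ql$-points, only make explicit details that the paper leaves implicit.
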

\begin{proof}
Since $N\subseteq\ker\rho$, its image lies in $\U$. As $\U$ is
closed and normal in $\cG$, we have $\cN\subseteq\U$. Thus
$\cG/\cN$ is an extension of $R$ by the prounipotent group
$\U/\cN$.

The composite $\Gamma\to\cG(F)\to(\cG/\cN)(F)$ kills $N$,
so it descends to a lift of the induced representation of
$\Gamma/N$. The universal property of $\overline{\cG}$ gives
a morphism
\[
\alpha:\overline{\cG}\longrightarrow\cG/\cN.
\]

Conversely, composing the canonical lift
$\Gamma/N\to\overline{\cG}(F)$ with the quotient map
$\Gamma\to\Gamma/N$ and applying the universal property of
$\cG$ gives a morphism $\cG\to\overline{\cG}$.
This morphism kills the image of $N$, hence the closed normal
subgroup $\cN$ generated by it, and therefore factors through
\[
\beta:\cG/\cN\longrightarrow\overline{\cG}.
\]

The composites $\beta\circ\alpha$ and $\alpha\circ\beta$ agree
with the identity on the respective Zariski-dense canonical
images of $\Gamma/N$. They are therefore identity morphisms,
so $\alpha$ is an isomorphism.

The same argument applies in the continuous setting, with
continuity of the descended lift following from the quotient
topology on $\Gamma/N$.
\end{proof}

A Levi splitting identifies $\cG\cong\U\rtimes R$.
Such a splitting is generally not canonical. However, the conjugation
action on $H_1(\u):=\u/\overline{[\u,\u]}$ factors canonically through
$R$, since $\U$ acts trivially on this quotient. Here the bar denotes
closure in the pronilpotent topology. Homomorphisms from a pro-vector
space to a finite-dimensional vector space are understood to be
continuous.
% The following degree-one comparison will be used to determine the
% abelianization of the prounipotent radical.
% \begin{proposition}[Degree-one comparison]\label{prop:relative-completion-h1}
% Let $\Gamma$ be finitely generated, let $R$ be reductive over $\Q$, and
% let $\rho:\Gamma\to R(\Q)$ be Zariski dense. Suppose its $\ell$-adic
% realization extends continuously to $\widehat\Gamma$, and let $\u$ be
% the prounipotent Lie algebra of the resulting continuous relative
% completion. For every finite-dimensional $R$-module $V$, there are
% natural isomorphisms
% \[
% \Hom_{R_{\Ql}}^{\cts}(H_1(\u),V_{\Ql})
% \cong H^1_{\cts}(\widehat\Gamma,V_{\Ql})
% \cong H^1(\Gamma,V)\otimes_\Q\Ql,
% \]
% where subscripts $\Ql$ denote scalar extension.
% If $\Gamma=\pi_1^{\orb}(Y^{\an})$ for a smooth connected
% Deligne--Mumford stack $Y/\C$ of finite type, these groups also identify
% with $H^1_{\et}(Y,\mathbb V_{\Ql})$, where $\mathbb V_{\Ql}$ is the
% associated lisse sheaf.
% \end{proposition}
% \begin{proof}
% Tensor Hain's rational degree-one comparison \cite[\S4.1]{hain_g3} with
% $\Ql$ and apply extension of scalars and profinite comparison
% \cite[Thms.~6.2--6.3]{hain_rational}. Here finite generation ensures
% that degree-one group cohomology commutes with scalar extension.
% The final assertion follows from the standard degree-one comparison
% with \'etale cohomology.
% \end{proof}

\subsection{Extension of scalars and profinite comparison}
\label{subsec:relative-completion-base-change}
We distinguish extension of the coefficient field from passage from a
discrete group to its profinite completion.
Let $\cG_F$ be the relative completion of $\rho:\Gamma\to R(F)$.
If $E/F$ is a field extension, then $\cG_F\times_F E$ is the relative
completion of $\Gamma$ with respect to
$\rho_E:\Gamma\to R(F)\to R(E)$; see
\cite[Thm.~6.2]{hain_rational} and \cite[\S4]{hain_g3}.
This identification is natural in $(\Gamma,\rho)$.
For inverse limits of finite-dimensional Lie algebras, scalar extension
is understood in the pro-vector-space sense:
\[
\mathfrak a\widehat\otimes_F E
:=\varprojlim_\alpha(\mathfrak a_\alpha\otimes_F E),
\qquad\mathfrak a=\varprojlim_\alpha\mathfrak a_\alpha.
\]
Thus scalar extension of the relative completion induces
\[\Lie(\cG_F\times_F E)\cong(\Lie\cG_F)\widehat\otimes_F E,\]
and similarly for its prounipotent radical.
The second comparison identifies an algebraic relative completion over
$\Ql$ with a continuous relative completion.
\begin{proposition}[{\cite[Thm.~6.3]{hain_rational}}]
\label{prop:continuous-completion-profinite}
Let $\Gamma$ be a finitely generated discrete group, let $R$ be reductive
over $\Ql$, and let $\rho:\Gamma\to R(\Ql)$ be Zariski dense.
Suppose that $\rho$ extends to a continuous homomorphism
$\widehat\rho:\widehat\Gamma\to R(\Ql)$ from the profinite completion
of $\Gamma$. Then the canonical lift
$\widetilde\rho:\Gamma\to\cG(\Ql)$ to its relative completion extends
continuously to $\widehat\Gamma$, and this extension makes $\cG$ the
continuous relative completion of $\widehat\Gamma$ with respect to
$\widehat\rho$.
\end{proposition}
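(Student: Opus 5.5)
The plan is to prove this through the universal properties, constructing the extension of the lift from $\Gamma$ to $\widehat\Gamma$ by passing through finite-dimensional quotients of $\cG$. Write $\cG=\varprojlim_\alpha G_\alpha$ as an inverse limit of affine algebraic $\Ql$-groups, each an extension of $R$ by a unipotent group $U_\alpha$. It suffices to show that each composite $\Gamma\to G_\alpha(\Ql)$ extends continuously to $\widehat\Gamma$, compatibly in $\alpha$. Compatibility and uniqueness are automatic because $\Gamma$ is dense in $\widehat\Gamma$ and the groups $G_\alpha(\Ql)$ are Hausdorff.

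\emph{Step 1: extension to each $G_\alpha$.} Fix $\alpha$ and put $\phi_\alpha:\Gamma\to G_\alpha(\Ql)$. The image of $\Gamma$ under $\rho$ lies in a compact subgroup $K=\widehat\rho(\widehat\Gamma)\subseteq R(\Ql)$. Since $\Gamma$ is finitely generated, $\phi_\alpha(\Gamma)$ lies in a compact open subgroup of the preimage of $K$. To see this, choose a $\Z_\ell$-lattice in a faithful representation of $G_\alpha$ that is stable under the finitely many generators and their inverses. Stability under generators gives a compact group of the form $\GL(\Lambda)\cap G_\alpha(\Ql)$. The key point is then that this compact $\ell$-adic analytic group $C$ has a continuous extension property. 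The closure $\overline{\phi_\alpha(\Gamma)}$ is an extension of a closed subgroup of $K$ by a closed subgroup of $U_\alpha(\Ql)$. Because $U_\alpha$ is unipotent, a compact subgroup of $U_\alpha(\Ql)$ has an open pro-$\ell$ subgroup and is virtually pro-$\ell$ (indeed pro-$\ell$ after shrinking). Thus $\phi_\alpha$ is a lift of $\rho$ into an extension with virtually pro-$\ell$ kernel, and the topology on $\Gamma$ induced from $C$ has to be compared with the profinite topology.

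\emph{Step 2: continuity with respect to the profinite topology.} We need the following: every open subgroup $V$ of $\overline{\phi_\alpha(\Gamma)}$ pulls back to a finite-index subgroup of $\Gamma$. Open subgroups of a compact group have finite index, so $\phi_\alpha^{-1}(V)$ has finite index in $\Gamma$. Hence $\phi_\alpha$ is continuous for the profinite topology on $\Gamma$ and the $\ell$-adic topology on the compact target. It therefore extends uniquely to a continuous homomorphism $\widehat\phi_\alpha:\widehat\Gamma\to C\subseteq G_\alpha(\Ql)$. By density of $\Gamma$, the composite with $G_\alpha\to R$ is $\widehat\rho$. Passing to the limit gives the continuous extension $\widehat\Gamma\to\cG(\Ql)$ with dense image.

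\emph{Step 3: universal property.} Let $\psi:\widehat\Gamma\to G'(\Ql)$ be a continuous lift of $\widehat\rho$ into an extension of $R$ by a prounipotent group. Restricting to $\Gamma$ and applying the universal property of $\cG$ gives $\Phi:\cG\to G'$. The two continuous maps $\Phi\circ\widehat{\widetilde\rho}$ and $\psi$ agree on the dense subgroup $\Gamma$, so they agree on $\widehat\Gamma$. Uniqueness holds because any such $\Phi$ is already determined by its restriction to $\Gamma$.

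The main obstacle is Step 1: producing the compact subgroup containing $\phi_\alpha(\Gamma)$. This is exactly where finite generation is used. Without it, the image in $U_\alpha(\Ql)$ could have unbounded denominators, and the induced topology would not be compact.
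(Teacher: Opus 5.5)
Steps 2 and 3 are correct. Once each $\phi_\alpha(\Gamma)$ is known to lie in a compact subgroup of $G_\alpha(\Ql)$, Step~2 applies: any homomorphism from $\Gamma$ to a profinite group factors through $\widehat\Gamma$. Step~3 then matches the two universal properties by density and Hausdorffness.

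The gap is in Step~1, exactly at the point you call the main obstacle. You justify compactness by choosing a $\Zl$-lattice stable under the images of the finitely many generators and their inverses. But finitely many elements of $\GL(V)$ need not stabilize any common lattice. They do so precisely when the subgroup they generate is relatively compact, which is what you are trying to prove.

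Finite generation alone cannot supply this. For $\Gamma=\Z$ and $G_\alpha=R=\mathbb G_m$ with $1\mapsto\ell$, no lattice is stable. What rules this out is the hypothesis that $\rho$ extends, i.e.\ that $K=\widehat\rho(\widehat\Gamma)$ is compact, together with unipotence of $U_\alpha$. You introduce $K$ but never use it. Your remark about unbounded denominators ``in $U_\alpha(\Ql)$'' is also not quite meaningful, since $\phi_\alpha$ does not take values in $U_\alpha$. After a Levi splitting its unipotent part is a $\rho$-twisted cocycle, and bounding it on products of generators requires the $\rho(\Gamma)$-action to preserve a bounded set.

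Here is one way to close the gap.
\begin{enumerate}[(i)]
\item In a faithful representation $V$ of $G_\alpha$, let $0=V_{-1}\subset V_0\subset\cdots\subset V_m=V$ with $V_i/V_{i-1}=(V/V_{i-1})^{U_\alpha}$. This filtration is $G_\alpha$-stable.
\item $\Gamma$ acts on each $\Gr_iV$ through $\rho(\Gamma)\subseteq K$, so there are $\Gamma$-stable lattices $\Lambda_i\subset\Gr_iV$.
\item Choose complements $E_i$ of $V_{i-1}$ in $V_i$, identified with $\Gr_iV$. The images $g$ of the generators and their inverses are block upper triangular, with diagonal blocks preserving the $\Lambda_i$.
\item Since there are finitely many such $g$, there is $d\geq0$ with $g_{i'i}(\Lambda_i)\subseteq\ell^{-d}\Lambda_{i'}$ for all $i'<i$.
\item Then $\Lambda=\bigoplus_i\ell^{di}\Lambda_i$ is stable under all these $g$, hence under $\phi_\alpha(\Gamma)$. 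So $\phi_\alpha(\Gamma)$ lies in the compact open subgroup $\GL(\Lambda)\cap G_\alpha(\Ql)$.
\end{enumerate}
With this lemma in place, your discussion of compact subgroups of $U_\alpha(\Ql)$ being virtually pro-$\ell$ is unnecessary, and the rest of your argument goes through.
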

Combining the two comparisons gives the form needed below.
\begin{corollary}\label{cor:rational-to-continuous-completion}
Let $R$ be reductive over $\Q$, let $\Gamma$ be finitely generated, and
let $\rho:\Gamma\to R(\Q)$ be Zariski dense. Suppose that the induced
representation in $R(\Ql)$ extends continuously to $\widehat\Gamma$.
If $\cG$ is the relative completion of $(\Gamma,\rho)$ over $\Q$, then
$\cG\times_\Q\Ql$ is the continuous relative completion of
$\widehat\Gamma$ with respect to the extended representation.
Its prounipotent radical and prounipotent Lie algebra are
$\U\times_\Q\Ql$ and $\u\widehat\otimes_\Q\Ql$, respectively.
\end{corollary}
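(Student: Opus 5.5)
The plan is to deduce the corollary directly from the two comparisons, so that extension of scalars supplies the first step and the profinite comparison the second.

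\emph{Extension of scalars.} Let $\cG$ be the rational relative completion of $(\Gamma,\rho)$. Applying the extension-of-scalars result of \S\ref{subsec:relative-completion-base-change} with $E=\Ql$ shows that $\cG\times_\Q\Ql$ is the relative completion over $\Ql$ of the discrete group $\Gamma$ with respect to $\rho_{\Ql}:\Gamma\to R(\Ql)$. Before using this we must check that $\rho_{\Ql}$ is Zariski dense in $R_{\Ql}$. This holds because the Zariski closure of $\rho(\Gamma)$ is defined over $\Q$, and forming the closure commutes with flat base change. Concretely, the ideal of polynomials over $\Ql$ vanishing on $\rho(\Gamma)$ is spanned by the $\Q$-rational ones. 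Indeed, vanishing on the $\Q$-points $\rho(\Gamma)$ is a system of linear conditions with $\Q$-coefficients, so the solution space over $\Ql$ is the scalar extension of the solution space over $\Q$.

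\emph{Profinite comparison.} Next we apply Proposition~\ref{prop:continuous-completion-profinite} to $\Gamma$, the reductive group $R_{\Ql}$, and $\rho_{\Ql}$. Its hypotheses hold: $\Gamma$ is finitely generated, $\rho_{\Ql}$ is Zariski dense by the previous step, and by assumption $\rho_{\Ql}$ extends continuously to $\widehat\Gamma$. The proposition then says that $\cG\times_\Q\Ql$, together with the continuous extension of its canonical lift, is the continuous relative completion of $\widehat\Gamma$.

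\emph{Radical and Lie algebra.} It remains to identify the prounipotent radical and its Lie algebra. Base change preserves exactness of the extension, so
\[
1\to\U\times_\Q\Ql\to\cG\times_\Q\Ql\to R_{\Ql}\to1
\]
is exact. The base change of a prounipotent group is prounipotent, since each finite-dimensional unipotent quotient stays unipotent. Hence $\U\times_\Q\Ql$ is the prounipotent radical. Its Lie algebra is $\u\widehat\otimes_\Q\Ql$ by the pro-vector-space formula of \S\ref{subsec:relative-completion-base-change}.

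The only point requiring care is the Zariski-density check in the first step; everything else is formal.
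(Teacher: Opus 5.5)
Your proposal is correct and follows the paper's proof: extension of scalars identifies $\cG\times_\Q\Ql$ with the relative completion of the discrete group $\Gamma$ over $\Ql$, and Proposition~\ref{prop:continuous-completion-profinite} then identifies it with the continuous relative completion of $\widehat\Gamma$. The paper leaves three points implicit in \S\ref{subsec:relative-completion-base-change}, and you make them explicit: that $\rho_{\Ql}$ stays Zariski dense, that the radical is $\U\times_\Q\Ql$ via the base-changed exact sequence, and that its Lie algebra is $\u\widehat\otimes_\Q\Ql$ by the pro-vector-space formula.
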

\begin{proof}
Extension of scalars first identifies $\cG\times_\Q\Ql$ with the relative
completion of the discrete group $\Gamma$ over $\Ql$.
Proposition~\ref{prop:continuous-completion-profinite} then identifies
it with the continuous relative completion of $\widehat\Gamma$.
\end{proof}

\subsection{Monodromy of the relative Jacobian}
\label{subsec:jacobian-monodromy-completion}
Let $\pi^c:\cC^c_{g,n}\to\M^c_{g,n}$ be the universal curve
of compact type, and let
\[
\pi_J^c:\J^c_{g,n}:=\Pic^0_{\cC^c_{g,n}/\M^c_{g,n}}
\longrightarrow\M^c_{g,n}
\]
be its relative Jacobian, as in Section~\ref{sec:background}.
Write $j:\M_{g,n}\hookrightarrow\M^c_{g,n}$ for the inclusion
and $\pi:\cC_{g,n}\to\M_{g,n}$ for the universal smooth curve.
% Let $\pi^c:\cC^c_{g,n}\to\M^c_{g,n}$ be the universal curve of compact
% type, and let
% \[
% \pi_J^c:\J^c_{g,n}:=\Pic^0_{\cC^c_{g,n}/\M^c_{g,n}}\longrightarrow\M^c_{g,n}
% \]
% be its relative Jacobian. This is a principally polarized abelian scheme
% of relative dimension $g$, whose formation commutes with base change.
% Its restriction along $j:\M_{g,n}\hookrightarrow\M^c_{g,n}$ is the
% relative Jacobian of the universal smooth curve
% $\pi:\cC_{g,n}\to\M_{g,n}$.
Over $\C$, set
\[
\mathbb H^c_\Z:=R^1(\pi_J^{c,\an})_*\Z(1),
\qquad
\mathbb H^c_\Q:=\mathbb H^c_\Z\otimes_\Z\Q.
\]
Then $\mathbb H^c_\Q$ is a polarized variation of Hodge structure
of weight $-1$.
Over an algebraically closed field $k$, with
$\ell\neq\operatorname{char}(k)$, set
\[
\mathbb H^c_{\Zl}:=R^1(\pi_J^c)_*\Zl(1),
\qquad\mathbb H^c_{\Ql}:=\mathbb H^c_{\Zl}\otimes_{\Zl}\Ql.
\]
The principal polarization induces perfect alternating pairings
\[
\mathbb H^c_\Z\otimes_\Z\mathbb H^c_\Z\to\Z(1),
\qquad
\mathbb H^c_{\Zl}\otimes_{\Zl}\mathbb H^c_{\Zl}\to\Zl(1).
\]
The identification of the first cohomology of a curve with that of
its Jacobian is independent of the Abel--Jacobi base point and gives
canonical polarization-preserving isomorphisms
\[
\begin{aligned}
\mathbb H_\Z&:=j^*\mathbb H^c_\Z
\cong R^1(\pi^{\an})_*\Z(1),\\
\mathbb H_{\Zl}&:=j^*\mathbb H^c_{\Zl}
\cong R^1\pi_*\Zl(1).
\end{aligned}
\]
Write $\mathbb H_\Q:=\mathbb H_\Z\otimes_\Z\Q$.
Fix a point $\xi\in\M_{g,n}(\C)$ corresponding to a smooth curve
$C_\xi$, and choose a marking $C_\xi^{\an}\cong\Sigma_g$.
For an algebraically closed field $k$, fix a geometric point
$\bar\xi\to\M_{g,n/k}$ corresponding to a smooth curve $C_{\bar\xi}$.
Set
\[
H_\Q:=(\mathbb H^c_\Q)_\xi,
\quad H_{\Zl}:=(\mathbb H^c_{\Zl})_{\bar\xi},
\quad H_{\Ql}:=H_{\Zl}\otimes_{\Zl}\Ql.
\]
The preceding identifications and Poincar\'e duality give
\[
H_\Q\cong H^1(C_\xi^{\an},\Q(1))\cong H_1(\Sigma_g,\Q),
\qquad H_{\Zl}\cong H^1_{\et}(C_{\bar\xi},\Zl(1)).
\]
We write $H=H_\Q$ in the Betti setting and
$H=H_{\Ql}$ in the $\ell$-adic setting.
The relative Jacobian defines monodromy representations
\[
\rho^c_{g,n}:\pi_1^{\orb}(\M^{c,\an}_{g,n},\xi)\to\Sp(H_\Q),
\quad
\rho^c_{g,n/k}:\pi_1(\M^c_{g,n/k},\bar\xi)\to\Sp(H_{\Zl}).
\]
Since $k$ is algebraically closed, its geometric fundamental group acts
trivially on $\Zl(1)$, so the second representation preserves the
alternating pairing rather than merely its similitude class. We use
the same notation after extending coefficients to $\Ql$.
Denote the pullbacks to the smooth locus by $\rho_{g,n}$ and
$\rho_{g,n/k}$; thus
$\rho_{g,n}=\rho^c_{g,n}\circ j_*$ and
$\rho_{g,n/k}=\rho^c_{g,n/k}\circ j_*$.
Under the chosen marking, $\rho_{g,n}$ is the standard symplectic
representation of $\G_{g,n}$ on $H_1(\Sigma_g,\Q)$
\cite[\S6.1]{hain_rational}.
Under the identification
$\pi_1^{\orb}(\M^{c,\an}_{g,n},\xi)\cong\G_{g,n}/K_{g,n}$,
the homomorphism $j_*$ is the quotient map $q_{g,n}$. Consequently,
$\rho_{g,n}=\rho^c_{g,n}\circ q_{g,n}$.
Thus the monodromy of the relative Jacobian is the standard symplectic
representation factored through $\G_{g,n}/K_{g,n}$.
Put $H_\Z:=H_1(\Sigma_g,\Z)$. The standard integral representation of
$\G_{g,n}$ has image $\Sp(H_\Z)$ \cite[\S6.1]{hain_rational}.
Since $q_{g,n}$ is surjective,
\[
\im\rho^c_{g,n}=\im\rho_{g,n}=\Sp(H_\Z)\subseteq\Sp(H_\Q).
\]
In particular, $\rho^c_{g,n}$ has Zariski-dense image in $\Sp(H_\Q)$.
The open immersion $j$ also induces a surjection on \'etale fundamental
groups. Consequently, $\im\rho^c_{g,n/k}=\im\rho_{g,n/k}$.
When $\operatorname{char}(k)=0$, invariance under extension of
algebraically closed fields and Betti--\'etale comparison imply that
this common image is Zariski dense in $\Sp(H_{\Ql})$.
The positive-characteristic case is established in
Subsection~\ref{subsec:compact-type-continuous-specialization}.

\subsection{Relative completions over \texorpdfstring{$\Q$}{Q}}
\label{subsec:rational-moduli-completions}
We write $\cG_{g,n}$ for the relative completion over $\Q$ of
$\rho_{g,n}:\pi_1^{\orb}(\M_{g,n}^{\an})\cong\G_{g,n}\to\Sp(H_\Q)$,
and $\cG^c_{g,n}$ for the relative completion over $\Q$ of
$\rho^c_{g,n}:\pi_1^{\orb}(\M^{c,\an}_{g,n})
\cong\G_{g,n}/K_{g,n}\to\Sp(H_\Q)$.
Denote the corresponding canonical lifts by
\[
\widetilde\rho_{g,n}:\G_{g,n}\longrightarrow\cG_{g,n}(\Q),
\qquad
\widetilde\rho^c_{g,n}:\G_{g,n}/K_{g,n}
\longrightarrow\cG^c_{g,n}(\Q).
\]
Denote their prounipotent radicals by $\U_{g,n}$ and $\U^c_{g,n}$,
respectively, and set
\[
\begin{aligned}
\g_{g,n}&:=\Lie\cG_{g,n},&\u_{g,n}&:=\Lie\U_{g,n},\\
\g^c_{g,n}&:=\Lie\cG^c_{g,n},&\u^c_{g,n}&:=\Lie\U^c_{g,n}.
\end{aligned}
\]
The open immersion $j$ induces a commutative diagram with exact rows
\[
\begin{tikzcd}
1\arrow[r]&\U_{g,n}\arrow[r]\arrow[d,two heads]&
\cG_{g,n}\arrow[r]\arrow[d,two heads,"\widetilde q_{g,n}"]&
\Sp(H_\Q)\arrow[r]\arrow[d,equal]&1\\
1\arrow[r]&\U^c_{g,n}\arrow[r]&\cG^c_{g,n}\arrow[r]&\Sp(H_\Q)\arrow[r]&1.
\end{tikzcd}
\]
The surjectivity follows from the quotient property of relative
completion. The kernel of $\widetilde q_{g,n}$ is studied in
Section~\ref{sec:compact-type-relative-completion}.
All forgetful morphisms obtained by forgetting marked points are
compatible with these representations: stabilizing a compact-type
curve after forgetting markings does not change its Jacobian.
They therefore induce the corresponding morphisms of relative
completions and their Lie algebras.
The polarized variations $\mathbb H_\Q$ and $\mathbb H^c_\Q$
equip the relative completions $\cG_{g,n}$ and $\cG^c_{g,n}$
over $\Q$, respectively, with Hain's Hodge-theoretic structures;
see \cite[\S13]{hain_hodge_rel} and
\cite[Thm.~4.2 and Cor.~4.3]{hain_g3}.
For $g\geq3$, the weight filtration on $\u_{g,n}$ is its lower central
series:
\[
W_{-m}\u_{g,n}=L^m\u_{g,n},\quad m\geq1,
\qquad L^1\u_{g,n}=\u_{g,n},\quad
L^{m+1}\u_{g,n}=\overline{[\u_{g,n},L^m\u_{g,n}]}.
\]
This is \cite[Cor.~4.8]{hain_infpretor}.
We use the quotient weight filtration on $\u^c_{g,n}$, as discussed
in Section~\ref{sec:compact-type-relative-completion}.

\subsection{Continuous relative completions of geometric fundamental groups}
\label{subsec:continuous-moduli-completions}
Let $k$ be an algebraically closed field and let
$\ell\neq\operatorname{char}(k)$. Using the Jacobian monodromy
representations, define $\cG_{g,n/k}$ and $\cG^c_{g,n/k}$ to be the
continuous relative completions over $\Ql$ of
\[
\rho_{g,n/k}:\pi_1(\M_{g,n/k},\bar\xi)\to\Sp(H_{\Ql}),
\qquad
\rho^c_{g,n/k}:\pi_1(\M^c_{g,n/k},\bar\xi)\to\Sp(H_{\Ql}),
\]
respectively. Their prounipotent radicals are denoted by $\U_{g,n/k}$
and $\U^c_{g,n/k}$. We use the notation
\[
\begin{aligned}
\g_{g,n/k}&:=\Lie\cG_{g,n/k},&\u_{g,n/k}&:=\Lie\U_{g,n/k},\\
\g^c_{g,n/k}&:=\Lie\cG^c_{g,n/k},&\u^c_{g,n/k}&:=\Lie\U^c_{g,n/k}.
\end{aligned}
\]
Thus there are exact sequences
$1\to\U_{g,n/k}\to\cG_{g,n/k}\to\Sp(H_{\Ql})\to1$ and
$1\to\U^c_{g,n/k}\to\cG^c_{g,n/k}\to\Sp(H_{\Ql})\to1$.
The field $k$ in the subscript specifies the geometric fundamental
group being completed. The coefficient field of each completion is $\Ql$.
Functoriality gives surjective morphisms
$\cG_{g,n/k}\to\cG^c_{g,n/k}$ and $\u_{g,n/k}\to\u^c_{g,n/k}$,
compatible with the identity on $\Sp(H_{\Ql})$.

Suppose now that $\operatorname{char}(k)=0$.
The characteristic-zero comparisons recalled above identify
\[
\pi_1(\M_{g,n/k})\cong\widehat\G_{g,n},
\qquad\pi_1(\M^c_{g,n/k})\cong\widehat{\G_{g,n}/K_{g,n}},
\]
compatibly with the Jacobian monodromy representations. For a general
algebraically closed field $k$ of characteristic zero, these
identifications may be made by choosing embeddings of $\overline\Q$
into $k$ and $\C$ and using invariance under extension of algebraically
closed fields.
Since the monodromy representations preserve the integral lattice
$H_\Z$, their $\ell$-adic realizations take values in $\Sp(H_{\Zl})$
and therefore extend uniquely and continuously to the profinite
completions of $\G_{g,n}$ and $\G_{g,n}/K_{g,n}$. Extending scalars
gives the corresponding $\Ql$-valued representations.
Corollary~\ref{cor:rational-to-continuous-completion} therefore gives
a commutative diagram
\[
\begin{tikzcd}
\cG_{g,n}\times_\Q\Ql\arrow[r]\arrow[d,"\sim"']&
\cG^c_{g,n}\times_\Q\Ql\arrow[d,"\sim"]\\
\cG_{g,n/k}\arrow[r]&\cG^c_{g,n/k}.
\end{tikzcd}
\]
All arrows are compatible with the identified reductive quotient
$\Sp(H_{\Ql})$. In particular,
\[
\u_{g,n/k}\cong\u_{g,n}\widehat\otimes_\Q\Ql,
\qquad\u^c_{g,n/k}\cong\u^c_{g,n}\widehat\otimes_\Q\Ql.
\]
We equip these Lie algebras with the weight filtrations obtained by
scalar extension of their rational counterparts. For example,
$\Gr^W_m\u_{g,n/k}\cong\Gr^W_m\u_{g,n}\otimes_\Q\Ql$.
These identifications are compatible with the morphisms induced by the
open immersion and by forgetting marked points.
The comparison with positive characteristic requires the specialization
argument of Section~\ref{sec:specialization-characteristic-p}.
It is distinct from extension of the coefficient field $\Q\subset\Ql$.

\subsection{Unipotent completions of fibers and configuration spaces}
\label{subsec:fiber-configuration-completions}
Let $C=C_\xi$ be the smooth complex reference curve fixed above.
For $n\geq1$, let
\[
F_n(C):=\{(z_1,\ldots,z_n)\in C^n:z_i\neq z_j\text{ whenever }i\neq j\}
\]
be its ordered configuration space. Choose $\mathbf x\in F_n(C)$ and set
\[
\cP_{g,n}:=(\pi_1^{\top}(F_n(C)^{\an},\mathbf x))^{\un}_{/\Q},
\qquad\p_{g,n}:=\Lie\cP_{g,n}.
\]
Thus $\cP_{g,n}$ is a prounipotent $\Q$-group and $\p_{g,n}$ is a
pronilpotent $\Q$-Lie algebra. For $n=1$, we abbreviate
$\cP:=\cP_{g,1}$ and $\p:=\p_{g,1}$.
We equip $\p$ with its natural weight filtration, which agrees with
its lower central series:
\[
W_{-m}\p=L^m\p,\qquad m\geq1.
\]
We write $\Der\p$ for the Lie algebra of continuous derivations of
the pronilpotent Lie algebra $\p$. It is equipped with the induced
weight filtration
\[
W_m\Der\p
:=
\left\{
D\in\Der\p:
D(W_r\p)\subseteq W_{m+r}\p
\text{ for all }r
\right\}.
\]
Thus an element of
\[
\Gr^W_m\Der\p
=W_m\Der\p/W_{m-1}\Der\p
\]
induces a derivation of degree $m$ on $\Gr^W_\bullet\p$.

In particular, $H_1(\p)\cong H_1(C^{\an},\Q)\cong H_\Q$.
The forgetful morphism $\M_{g,n}\to\M_g$ has fiber $F_n(C)$.
For $g\geq2$, right exactness of relative completion, together with the
extension of the conjugation action \cite[Lem.~3.8]{hain_infpretor}
and the center-freeness of $\cP_{g,n}$, gives exact sequences
\[
1\to\cP_{g,n}\to\cG_{g,n}\to\cG_g\to1,
\qquad0\to\p_{g,n}\to\u_{g,n}\to\u_g\to0.
\]
Let $\bar{\mathbf x}$ denote the geometric base point induced by
$\mathbf x$. Corollary~\ref{cor:rational-to-continuous-completion},
applied with trivial reductive quotient, gives a canonical
isomorphism
\[
\cP_{g,n}\times_\Q\Ql
\cong
(\pi_1^{\et}(F_n(C),\bar{\mathbf x}))^{\un}_{/\Ql}.
\]
Taking Lie algebras gives the corresponding comparison for
$\p_{g,n}\widehat\otimes_\Q\Ql$.
For $n=1$, we write $\cP_{\Ql}:=\cP\times_\Q\Ql$.
% For comparison with continuous unipotent completions over $\Ql$, put
% \[
% \cP_{g,n,\Ql}:=\cP_{g,n}\times_\Q\Ql,
% \qquad\p_{g,n,\Ql}:=\Lie\cP_{g,n,\Ql}
% =\p_{g,n}\widehat\otimes_\Q\Ql,
% \]
% and, for $n=1$, write $\cP_{\Ql}:=\cP_{g,1,\Ql}$ and
% $\p_{\Ql}:=\p_{g,1,\Ql}$. Let $\bar{\mathbf x}$ denote the geometric
% base point induced by $\mathbf x$.
% Corollary~\ref{cor:rational-to-continuous-completion}, applied with
% trivial reductive quotient, gives canonical isomorphisms
% \[
% \cP_{g,n,\Ql}\cong(\pi_1^{\et}(F_n(C),\bar{\mathbf x}))^{\un}_{/\Ql},
% \qquad
% \p_{g,n,\Ql}\cong\Lie((\pi_1^{\et}(F_n(C),\bar{\mathbf x}))^{\un}_{/\Ql}).
% \]
% For $n=1$, these specialize to
% \[
% \cP_{\Ql}\cong(\pi_1^{\et}(C,\bar x))^{\un}_{/\Ql},
% \qquad\p_{\Ql}\cong\Lie((\pi_1^{\et}(C,\bar x))^{\un}_{/\Ql}).
% \]

\subsection{Passage through relative pro-\texorpdfstring{$\ell$}{ell} completion}
\label{subsec:continuous-relative-pro-l}
We finally record the comparison that permits the use of relative
pro-$\ell$ specialization in the continuous proalgebraic setting.
We give the argument from the universal properties.
\begin{lemma}\label{lem:compact-subgroups-prounipotent}
Every compact subgroup of the $\Ql$-points of a prounipotent group
is a pro-$\ell$ group.
\end{lemma}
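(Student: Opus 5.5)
Let $\U$ be a prounipotent group over $\Ql$, written as $\U=\varprojlim_\alpha U_\alpha$ with each $U_\alpha$ a unipotent algebraic $\Ql$-group. Let $K\subseteq\U(\Ql)$ be a compact subgroup. The plan is to reduce to the finite-dimensional case and then use the exponential map together with the lower central series.

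\emph{Reduction to unipotent algebraic groups.} The topology on $\U(\Ql)$ is the inverse-limit topology, so $K$ is the inverse limit of its images $K_\alpha\subseteq U_\alpha(\Ql)$. Since $K$ is compact and the projections are continuous, $K=\varprojlim_\alpha K_\alpha$. Each $K_\alpha$ is compact, and an inverse limit of pro-$\ell$ groups is pro-$\ell$. It therefore suffices to treat a single unipotent algebraic group $U$ over $\Ql$ and a compact subgroup $K\subseteq U(\Ql)$.

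\emph{The finite-dimensional case.} I would argue by induction on $\dim U$, using a central series
\[
U=U^1\supseteq U^2\supseteq\cdots\supseteq U^{c+1}=1,
\]
for instance the lower central series, whose quotients are vector groups $\mathbb G_a^{d_i}$. Put $K^i:=K\cap U^i(\Ql)$. These are closed subgroups, and the successive quotients $K^i/K^{i+1}$ embed continuously into $(U^i/U^{i+1})(\Ql)\cong\Ql^{d_i}$. The image is compact, since it is the continuous image of a compact group. A compact subgroup of $\Ql^{d}$ is contained in some lattice $\ell^{-m}\Zl^{d}$, and closed subgroups of $\Zl^d$ are pro-$\ell$. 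Hence each $K^i/K^{i+1}$ is pro-$\ell$. Since extensions of pro-$\ell$ groups by pro-$\ell$ groups are pro-$\ell$, $K$ is pro-$\ell$. One also needs the injectivity of $K^i/K^{i+1}\to(U^i/U^{i+1})(\Ql)$; this holds because $U^i(\Ql)\to(U^i/U^{i+1})(\Ql)$ has kernel $U^{i+1}(\Ql)$, as $H^1$ of a unipotent group over a field of characteristic zero vanishes.

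\emph{Main point to check.} The only subtle step is that the quotient maps are homeomorphisms onto their images on these compact pieces. This is automatic, since a continuous bijection from a compact space to a Hausdorff space is a homeomorphism. It is also worth confirming that the filtration quotients are honest pro-$\ell$ groups in the profinite sense, which means verifying that $K$ is totally disconnected. This follows because $U(\Ql)\cong\Ql^{\dim U}$ as topological spaces via the exponential map. With these checks, the lemma follows.
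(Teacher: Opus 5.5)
Your proof is correct and follows the paper's argument. You reduce to a unipotent algebraic quotient via the inverse limit of compact images, intersect $K$ with a normal series whose quotients are vector groups, use that compact subgroups of $\Ql^d$ lie in $\Zl$-lattices, and conclude because extensions of pro-$\ell$ groups are pro-$\ell$. One small correction: the identification $\ker\bigl(U^i(\Ql)\to(U^i/U^{i+1})(\Ql)\bigr)=U^{i+1}(\Ql)$ holds for any homomorphism of group schemes and needs no cohomological input, whereas vanishing of $H^1$ gives surjectivity, which your argument never uses.
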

\begin{proof}
First let $U$ be a unipotent algebraic group over $\Ql$.
It has a finite normal series whose successive quotients are vector
groups. A compact subgroup of a vector group over $\Ql$ is contained
in an additive $\Zl$-lattice and is therefore pro-$\ell$.
Intersecting a compact subgroup of $U(\Ql)$ with the normal series and
arguing by induction proves the assertion for $U$.
For a prounipotent group, project to its unipotent algebraic quotients.
A compact subgroup is the inverse limit of its compact images in these
quotients, and is consequently pro-$\ell$.
\end{proof}
\begin{proposition}\label{prop:continuous-completion-relative-pro-l}
Let $R$ be a reductive group over $\Ql$ with a $\Zl$-model, and let
$\rho:\Gamma\to R(\Zl)$ be a continuous homomorphism from a profinite
group whose image is Zariski dense in $R$. Let $\Gamma^{\rel,\rho}$
be its relative pro-$\ell$ completion. Then the canonical homomorphism
$\Gamma\to\Gamma^{\rel,\rho}$ induces an isomorphism on continuous
relative completions over $\Ql$.
\end{proposition}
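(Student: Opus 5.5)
The plan is to compare universal properties directly. Let $\iota:\Gamma\to\Gamma^{\rel,\rho}$ be the canonical map and $\pi_\rho:\Gamma^{\rel,\rho}\to P$ the projection, where $P$ is the closure of $\rho(\Gamma)$ in $R(\Zl)$. Then $\rho=\pi_\rho\circ\iota$, and $\pi_\rho$ has Zariski-dense image because $\rho$ does. Write $\cG$ for the continuous relative completion of $(\Gamma,\rho)$ and $\cG'$ for that of $(\Gamma^{\rel,\rho},\pi_\rho)$. The functoriality of Subsection~\ref{subsec:relative-completion-properties} gives a morphism $\Phi:\cG\to\cG'$ over $R$. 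To invert it, I will show that the canonical continuous lift $\widetilde\rho:\Gamma\to\cG(\Ql)$ factors continuously through $\iota$. The universal property of $\cG'$ then supplies $\Psi:\cG'\to\cG$. Since $\iota$ has dense image, both composites agree with the identity on Zariski-dense images, so they are identities.

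To obtain the factorization, set $G:=\overline{\widetilde\rho(\Gamma)}\subseteq\cG(\Ql)$. This is the closure of the image of a profinite group under a continuous map, so it is compact and equals $\widetilde\rho(\Gamma)$; in particular it is profinite in the subspace topology. The map $q:=\pi_R|_G:G\to P$ is continuous and surjective, and its kernel lies in $G\cap\U(\Ql)$. That kernel is a compact subgroup of the $\Ql$-points of a prounipotent group, so by Lemma~\ref{lem:compact-subgroups-prounipotent} it is pro-$\ell$. Thus $\Gamma\to G\to P$ is one of the factorizations appearing in Definition~\ref{def:relative-pro-l-completion}. The universal property of $\Gamma^{\rel,\rho}$ then gives a continuous $\widetilde f:\Gamma^{\rel,\rho}\to G\subseteq\cG(\Ql)$ with $\widetilde f\circ\iota=\widetilde\rho$ and $q\circ\widetilde f=\pi_\rho$. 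This $\widetilde f$ is the required continuous lift of $\pi_\rho$.

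For the uniqueness checks, $\Phi\circ\Psi$ and $\Psi\circ\Phi$ respect the canonical lifts, and these lifts are Zariski dense. Hence $\Phi\circ\Psi=\id$ and $\Psi\circ\Phi=\id$, by the uniqueness clause in the universal property.

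The main point needing care is the topological bookkeeping. One has to verify that $\cG(\Ql)$, with its inverse-limit topology, makes $G$ a profinite group. This follows because each projection of $G$ to a finite-type quotient $G_\alpha(\Ql)$ is compact and Hausdorff, and each is totally disconnected as a compact subgroup of an $\ell$-adic Lie group. It also remains to check that the universal property of $\Gamma^{\rel,\rho}$ applies to a target whose map to $P$ is surjective but where the lift need not be dense; the paper's statement of that universal property explicitly allows this. The $\Zl$-model of $R$ is used only to ensure that $P$ is profinite.
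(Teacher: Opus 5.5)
Your proposal is correct and is essentially the paper's proof. You take the compact image $\widetilde\rho(\Gamma)\subseteq\cG(\Ql)$, use Lemma~\ref{lem:compact-subgroups-prounipotent} to see that its kernel over $P$ is pro-$\ell$, factor the canonical lift through $\Gamma^{\rel,\rho}$ by its universal property, and conclude by uniqueness in the universal properties of the two continuous relative completions; your added checks that $\widetilde\rho(\Gamma)$ is profinite and that density of $\iota$ is needed for the composite $\Phi\circ\Psi$ fill in points the paper leaves implicit.
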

\begin{proof}
Put $P:=\rho(\Gamma)$, which is compact, and write
$\Gamma^{\mathrm{rel}}:=\Gamma^{\rel,\rho}$.
Let $\cG$ be the continuous relative completion of $\Gamma$ with respect
to $\rho$, and let $\U$ be its prounipotent radical.
The image $B:=\widetilde\rho(\Gamma)\subseteq\cG(\Ql)$ is compact.
Its projection to $R(\Ql)$ has image $P$, and there is an exact sequence
\[
1\to B\cap\U(\Ql)\to B\to P\to1.
\]
The kernel is pro-$\ell$ by Lemma~\ref{lem:compact-subgroups-prounipotent}.
The universal property of relative pro-$\ell$ completion therefore
factors the canonical lift as
$\Gamma\to\Gamma^{\mathrm{rel}}\to B\hookrightarrow\cG(\Ql)$.
Let $\cG^{\mathrm{rel}}$ denote the continuous relative completion of
$\Gamma^{\mathrm{rel}}$ with respect to its induced representation in
$R(\Ql)$. The canonical map $\Gamma\to\Gamma^{\mathrm{rel}}$ induces
$\cG\to\cG^{\mathrm{rel}}$, whereas the factorization just constructed
induces $\cG^{\mathrm{rel}}\to\cG$.
Both composites preserve the canonical lifts. Uniqueness in the
universal properties shows that they are the identity morphisms.
\end{proof}
Consequently, the continuous relative completions of
$\pi_1(\M_{g,n/k})$ and $\pi_1(\M^c_{g,n/k})$ may be computed from their
relative pro-$\ell$ completions with respect to Jacobian monodromy.
In particular, an isomorphism between such relative pro-$\ell$
completions that intertwines the monodromy representations induces an
isomorphism of their continuous relative completions, their prounipotent
radicals, and their Lie algebras. This is the passage used in
Section~\ref{sec:specialization-characteristic-p}.

\section{The Johnson Subgroup and Separating Twists}
\label{sec:johnson-subgroup}
In this section we determine the position of the Johnson subgroup
$K_{g,n}$ in the weight filtration on $\u_{g,n}$. The main result is that the logarithm of every element of
$K_{g,n}$ lies in $W_{-2}\u_{g,n}$. This allows us to associate a
weight $-2$ initial form to every element of $K_{g,n}$.
\subsection{The Johnson subgroup}\label{subsec:johnson-subgroup}
Recall from \eqref{eq:K} that the Johnson subgroup
$K_{g,n}\subset T_{g,n}$ is generated by Dehn twists about separating
simple closed curves. By Theorem~\ref{thm:main},
\[
\pi_1^{\orb}(\M^{c,\an}_{g,n})
\cong\G_{g,n}/K_{g,n}.
\]
The corresponding statement for relative completions is considered in
Section~\ref{sec:compact-type-relative-completion}.
Let $\Pi_{g,n}:=\ker\{\G_{g,n}\to\G_g\}$ be the pure surface braid
subgroup obtained by forgetting all marked points.
% \subsection{The Johnson subgroup}\label{subsec:johnson-subgroup}
% Recall from \eqref{eq:K} that the Johnson subgroup
% $K_{g,n}\subset T_{g,n}$ is generated by Dehn twists about separating
% simple closed curves. By Theorem~\ref{thm:main},
% \[
% \pi_1^{\orb}(\M^{c,\an}_{g,n})
% \cong\G_{g,n}/K_{g,n}.
% \]
% The corresponding statement for relative completions is considered in
% Section~\ref{sec:compact-type-relative-completion}.
% Let $\Pi_{g,n}:=\ker\{\G_{g,n}\to\G_g\}$ be the pure surface braid
% subgroup obtained by forgetting all marked points.
% The forgetful homomorphism restricts to a surjection $K_{g,n}\to K_g$.
% Its kernel is $\Pi_{g,n}\cap K_{g,n}$, so there is an exact sequence
% \[
% 1\to\Pi_{g,n}\cap K_{g,n}\to K_{g,n}\to K_g\to1.
% \]
% This separates the contribution already present in the unpointed mapping
% class group from the additional contributions arising from the marked
% points.

\subsection{Separating twists and the weight filtration}
\label{subsec:separating-twists-weight}
Restricting the canonical homomorphism
$\widetilde\rho_{g,n}:\G_{g,n}\to\cG_{g,n}(\Q)$ to the Torelli group
gives a homomorphism $T_{g,n}\to\U_{g,n}(\Q)$, since
$T_{g,n}=\ker\rho_{g,n}$. Composing with the logarithm and then
projecting to the abelianization gives the canonical homomorphism
\[
T_{g,n}\longrightarrow H_1(\u_{g,n}),\qquad
D\longmapsto\log\widetilde\rho_{g,n}(D)
\pmod{\overline{[\u_{g,n},\u_{g,n}]}}.
\]
It is equivariant for the $\G_{g,n}$-action, whose action on the target
factors through $\Sp(H)$.
For $g\geq3$, the weight filtration on $\u_{g,n}$ agrees with its lower
central series \cite[Cor.~4.8]{hain_infpretor}. In particular,
$H_1(\u_{g,n})\cong\Gr^W_{-1}\u_{g,n}$.
Hain identifies the first homology as an $\Sp(H)$-module
\cite[Prop.~6.2(ii)--(iii)]{hain_g3}:
\[
H_1(\u_{g,n})\cong V(\lambda_3)\oplus V(\lambda_1)^{\oplus n}
\cong\Lambda^3_0H\oplus H^{\oplus n}.
\]
This is also the description given in \cite[Thm.~9.11]{hain_rational}.
Under these identifications, the homomorphism
$T_{g,n}\to H_1(\u_{g,n})$ is the rational Johnson homomorphism together
with its marked-point components; see \cite[\S\S11--13]{hain_infpretor}.
\begin{proposition}\label{prop:Kgn-weight-minus-two}
Assume $g\geq3$. Then
$\log\widetilde\rho_{g,n}(K_{g,n})\subset W_{-2}\u_{g,n}$.
In particular, if $\alpha$ is a separating simple closed curve, then
$\log\widetilde\rho_{g,n}(T_\alpha)\in W_{-2}\u_{g,n}$.
\end{proposition}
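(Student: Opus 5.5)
Since $K_{g,n}\subseteq T_{g,n}$, the canonical lift sends $K_{g,n}$ into $\U_{g,n}(\Q)$, so $\log\widetilde\rho_{g,n}(D)\in\u_{g,n}$ is defined for $D\in K_{g,n}$. Because the weight filtration equals the lower central series for $g\geq3$ \cite[Cor.~4.8]{hain_infpretor}, $W_{-2}\u_{g,n}=\overline{[\u_{g,n},\u_{g,n}]}$. Hence the claim is exactly that the composite
\[
K_{g,n}\hookrightarrow T_{g,n}\longrightarrow H_1(\u_{g,n})\cong\Lambda^3_0H\oplus H^{\oplus n}
\]
vanishes. The map $T_{g,n}\to H_1(\u_{g,n})$ is a group homomorphism, since $\log$ of a product agrees with the sum of logs modulo commutators (Baker--Campbell--Hausdorff). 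Its kernel is therefore a subgroup. It then suffices to show that each generator $T_\alpha$, with $\alpha$ separating and disjoint from the markings, maps to zero. The first assertion follows from the second, because $W_{-2}\u_{g,n}$ is a closed subspace and $\exp$ of it is a subgroup of $\U_{g,n}$.

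For a single separating twist, I would use the identification recalled above: $T_{g,n}\to H_1(\u_{g,n})$ is the rational Johnson homomorphism together with its marked-point components \cite[\S\S11--13]{hain_infpretor}. The plan is to show each component kills $T_\alpha$.

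\emph{Unpointed component.} Compose with the forgetful map $T_{g,n}\to T_g$. Johnson's classical computation shows that the Johnson homomorphism $\tau:T_{g,1}\to\Lambda^3H_\Z$ vanishes on separating twists. The reason is that $T_\alpha$ acts on $\pi_1$ of the surface by conjugating the subgroup on one side by the class of $\alpha$, and this class lies in the commutator subgroup. The image in $\Lambda^3_0H$ is therefore zero.

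\emph{Marked-point components.} For the $j$-th copy of $H$, I would forget all markings except the $j$-th, obtaining $\G_{g,1}$. There $H_1(\u_{g,1})=\Lambda^3_0H\oplus H$, and the two components together are the rational pointed Johnson homomorphism $T_{g,1}\to\Lambda^3H$, projected via $\Lambda^3H\cong\Lambda^3_0H\oplus H$. Forgetting markings sends a separating curve either to a separating curve or to an inessential/peripheral one, whose twist is trivial. In either case the image of $T_\alpha$ is trivial or a separating twist in $T_{g,1}$, on which $\tau$ vanishes by the same commutator argument. Compatibility of these projections with the forgetful morphisms of relative completions was noted in Section~\ref{sec:relative-completion}.

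The main obstacle is bookkeeping. One must check that the $n$ copies of $H$ in Hain's decomposition are indeed detected by the $n$ one-point forgetful maps, or by the differences of pointed realizations, so that vanishing for each one-pointed image forces vanishing in $H^{\oplus n}$. If this identification is awkward to cite, I would argue directly with $\G_{g,n}$-equivariance instead. The subgroup of $T_{g,n}$ generated by all separating twists is normal, so its image in $H_1(\u_{g,n})$ is an $\Sp(H_\Z)$-stable $\Z$-submodule, hence its $\Q$-span is an $\Sp(H)$-submodule. A nonzero image would have to meet some $\Lambda^3_0H$ or $H$ summand, and that is excluded by the pointed Johnson computations above.
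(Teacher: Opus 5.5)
You follow the paper's route: reduce to one marked point with the forgetful maps $F_j$, and in the one-pointed and unpointed cases use Hain's identification of $T_{g,m}\to\Gr^W_{-1}\u_{g,m}$ with the rational Johnson homomorphism. The one difference is a harmless simplification. You verify $K_{g,1}\subseteq\ker\tau$ by Johnson's commutator argument: $T_\alpha$ fixes one side and conjugates the other by $\widetilde\alpha\in L^2\Pi_x$, so $T_\alpha(y)y^{-1}\in L^3\Pi_x$. The paper instead cites Boggi's identification of the Johnson kernel with $K_{g,m}$, but only this easy containment is needed. Either way, you still need the adjoint map $\Gr^W_{-1}\u_{g,1}\to\Gr^W_{-1}\Der\p$ to be injective, which is part of the identification you quote.

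The genuine gap is the step you call bookkeeping. Suppose $f_j(\log\widetilde\rho_{g,n}(D))=0$ in $\Gr^W_{-1}\u_{g,1}$ for every $j$. This forces vanishing in $\Gr^W_{-1}\u_{g,n}$ only if the map $\Gr^W_{-1}\u_{g,n}\to(\Gr^W_{-1}\u_{g,1})^{\oplus n}$ induced by the $f_j$ is injective. The paper takes exactly this from Hain \cite[Remark~8.5]{hain_rational}, and your proposal never supplies it.

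Your equivariance fallback does not get around this. The $\Q$-span of the image is indeed an $\Sp(H)$-submodule. But an $\Sp(H)$-submodule of $H^{\oplus n}$ has the form $H\otimes U$ for an arbitrary subspace $U\subseteq\Q^n$, for example a diagonal copy of $H$. Ruling out such a submodule with the one-pointed computations again requires knowing that the $f_j$ jointly detect $H^{\oplus n}$, which is the same statement.

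You can close the gap by the citation, or directly as follows.
\begin{itemize}
\item The sequence $H_1(\p_{g,n})\to H_1(\u_{g,n})\to H_1(\u_g)\to0$ is exact. Comparing dimensions ($2gn$ on each side) shows its left map is injective for every $n\geq1$.
\item The restriction of $f_j$ to $\p_{g,n}$ is induced by the coordinate projection $F_n(C)\to C$. These projections jointly give $H_1(F_n(C),\Q)\cong\bigoplus_jH_1(C,\Q)$.
\item An element killed by all $f_j$ dies in $H_1(\u_g)$ by factoring through $f_1$. Hence it comes from $H_1(\p_{g,n})$, where all of its coordinate projections vanish, so it is zero.
\end{itemize}
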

\begin{proof}
For $m=0,1$, Hain's adjoint construction realizes the rational
Johnson homomorphisms as the composites
\[
\begin{aligned}
T_{g,1}&\longrightarrow\Gr^W_{-1}\u_{g,1}
\xrightarrow{\Gr^W_{-1}\ad}\Gr^W_{-1}\Der\p,\\
T_g&\longrightarrow\Gr^W_{-1}\u_g
\longrightarrow\Gr^W_{-1}\OutDer\p,
\end{aligned}
\]
where the first arrows send $D$ to
$\log\widetilde\rho_{g,m}(D)\pmod{W_{-2}\u_{g,m}}$ and
$\OutDer\p:=\Der\p/\ad(\p)$.
The second arrows are isomorphisms: the unpointed one follows
from the pointed one by quotienting out the point-pushing and
inner-derivation summands; see \cite[\S11]{hain_infpretor}
and \cite[Remark~8.5 and Thm.~9.7]{hain_rational}.
Boggi proves that the corresponding Johnson kernels are
$K_{g,m}$ \cite[Thm.~3.15 and the discussion following
Def.~3.17]{boggi_compact_type}.
Injectivity of the second arrows therefore gives
$\log\widetilde\rho_{g,m}(K_{g,m})\subset W_{-2}\u_{g,m}$
for $m=0,1$.
% For $m=0,1$, Hain's adjoint construction identifies the homomorphism
% \[
% T_{g,m}\longrightarrow\Gr^W_{-1}\u_{g,m},\quad
% D\longmapsto\log\widetilde\rho_{g,m}(D)\pmod{W_{-2}\u_{g,m}},
% \]
% with the rational Johnson homomorphism; see \cite[\S11]{hain_infpretor}
% and \cite[Remark~8.5]{hain_rational}. Boggi proves that the kernel of
% the corresponding Johnson homomorphism is $K_{g,m}$
% \cite[Thm.~3.15 and the discussion following Def.~3.17]{boggi_compact_type}.
% Therefore $\log\widetilde\rho_{g,m}(K_{g,m})\subset W_{-2}\u_{g,m}$
% for $m=0,1$.
Suppose now that $n\geq2$. For each $1\leq j\leq n$, forgetting all
marked points except $x_j$ induces homomorphisms
$F_j:T_{g,n}\to T_{g,1}$ and $f_j:\u_{g,n}\to\u_{g,1}$.
A separating twist becomes either a separating twist or the identity
after marked points are forgotten. Hence $F_j(K_{g,n})\subset K_{g,1}$.
Let $D\in K_{g,n}$. By the case $n=1$ and functoriality of relative
completion, the image of
$\log\widetilde\rho_{g,n}(D)\pmod{W_{-2}\u_{g,n}}$
under $\Gr^W_{-1}f_j$ is zero for every $j$.
The homomorphism
\[
\Gr^W_{-1}\u_{g,n}\longrightarrow
(\Gr^W_{-1}\u_{g,1})^{\oplus n}
\]
induced by the $f_j$ is injective \cite[Remark~8.5]{hain_rational}.
It follows that $\log\widetilde\rho_{g,n}(D)\equiv0\pmod{W_{-2}\u_{g,n}}$,
which proves the assertion.
\end{proof}
Proposition~\ref{prop:Kgn-weight-minus-two} allows us to associate a
weight $-2$ initial form to every element of $K_{g,n}$.
\begin{definition}
For $D\in K_{g,n}$, define
\[
\operatorname{in}_{-2}(D):=\log\widetilde\rho_{g,n}(D)
\pmod{W_{-3}\u_{g,n}}\in\Gr^W_{-2}\u_{g,n}.
\]
\end{definition}
The weight $-2$ initial forms of elements of $K_{g,n}$ will be studied
in Section~\ref{sec:image-Kgn}.

\section{The Compact-Type Relative Completion in Characteristic Zero}
\label{sec:compact-type-relative-completion}
We first identify the kernel of the natural morphism of relative
completions. We then use Boggi's description of $T_{g,n}/K_{g,n}$
to prove that the prounipotent radical of the compact-type relative
completion is commutative.

\subsection{The compact-type completion map}
\label{subsec:compact-type-completion-map}
Boggi's description of the orbifold fundamental group gives
\cite[Prop.~3.1]{boggi_compact_type}
\[
\pi_1^{\orb}(\M^{c,\an}_{g,n})\cong\G_{g,n}/K_{g,n}.
\]
As in Subsection~\ref{subsec:rational-moduli-completions},
the quotient $q_{g,n}:\G_{g,n}\to\G_{g,n}/K_{g,n}$ induces
the morphism over $\Sp(H)$
\[
\widetilde q_{g,n}:\cG_{g,n}\longrightarrow\cG^c_{g,n},
\]
with induced homomorphisms
$\U_{g,n}\to\U^c_{g,n}$ and $\u_{g,n}\to\u^c_{g,n}$.
% Since $K_{g,n}\subset\ker\rho_{g,n}$, the symplectic representation
% factors through the quotient:
% \[
% \begin{tikzcd}
% \G_{g,n}\arrow[r,"q_{g,n}"]\arrow[dr,"\rho_{g,n}"']&
% \G_{g,n}/K_{g,n}\arrow[d]\\
% &\Sp(H).
% \end{tikzcd}
% \]
% Functoriality of relative completion therefore gives a morphism over
% $\Sp(H)$
% \[
% \widetilde q_{g,n}:\cG_{g,n}\longrightarrow\cG^c_{g,n}.
% \]
% It restricts to a homomorphism of prounipotent radicals
% $\U_{g,n}\to\U^c_{g,n}$ and induces a homomorphism of their Lie
% algebras $\u_{g,n}\to\u^c_{g,n}$.

\begin{proposition}\label{prop:kernel-generated-by-K}
Let $\cN_{g,n}\subseteq\cG_{g,n}$ be the closed normal
proalgebraic subgroup generated by
$\widetilde\rho_{g,n}(K_{g,n})$, and set
\[
\n_{g,n}:=\Lie\cN_{g,n}.
\]
Then $\cN_{g,n}\subseteq\U_{g,n}$ and
\[
\ker\widetilde q_{g,n}=\cN_{g,n},
\qquad
\cG^c_{g,n}\cong\cG_{g,n}/\cN_{g,n}.
\]
Moreover, $\n_{g,n}$ is the closed Lie ideal of $\u_{g,n}$
generated by
$\log\widetilde\rho_{g,n}(K_{g,n})$, and
\[
\u^c_{g,n}\cong\u_{g,n}/\n_{g,n}.
\]
\end{proposition}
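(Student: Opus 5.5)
The group-level assertions follow directly from Proposition~\ref{prop:relative-completion-quotient}, applied with $\Gamma=\G_{g,n}$, $N=K_{g,n}$ and $\rho=\rho_{g,n}$. The hypothesis $K_{g,n}\subseteq\ker\rho_{g,n}=T_{g,n}$ holds because separating twists act trivially on homology. Theorem~\ref{thm:main} identifies $\G_{g,n}/K_{g,n}$ with $\pi_1^{\orb}(\M^{c,\an}_{g,n})$, and under this identification the induced representation is $\rho^c_{g,n}$. That proposition therefore gives $\cN_{g,n}\subseteq\U_{g,n}$ and an isomorphism $\cG^c_{g,n}\cong\cG_{g,n}/\cN_{g,n}$.

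To identify $\ker\widetilde q_{g,n}$, I would check that the composite $\cG_{g,n}\to\cG_{g,n}/\cN_{g,n}\xrightarrow{\beta}\cG^c_{g,n}$ equals $\widetilde q_{g,n}$. Both morphisms lie over $\Sp(H)$ and agree on the Zariski-dense image of $\G_{g,n}$, so uniqueness in the universal property forces equality. Since $\beta$ is an isomorphism, $\ker\widetilde q_{g,n}=\cN_{g,n}$. Restricting to prounipotent radicals then gives $\U^c_{g,n}\cong\U_{g,n}/\cN_{g,n}$, and taking Lie algebras gives $\u^c_{g,n}\cong\u_{g,n}/\n_{g,n}$. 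The latter step uses exactness of $\Lie$ on quotients of prounipotent groups over a field of characteristic zero.

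The remaining claim, and the step I expect to be the main obstacle, is that $\n_{g,n}$ equals the closed Lie ideal $\mathfrak a$ of $\u_{g,n}$ generated by $\log\widetilde\rho_{g,n}(K_{g,n})$. I would prove the two inclusions separately.

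\emph{Inclusion $\mathfrak a\subseteq\n_{g,n}$.} Since $\cN_{g,n}\subseteq\U_{g,n}$ is prounipotent, $\exp$ and $\log$ are bijections between $\n_{g,n}$ and $\cN_{g,n}$. Hence $\log\widetilde\rho_{g,n}(K_{g,n})\subseteq\n_{g,n}$. The Lie algebra $\n_{g,n}$ is a closed ideal of $\g_{g,n}$, hence of $\u_{g,n}$, so it contains $\mathfrak a$.

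\emph{Inclusion $\n_{g,n}\subseteq\mathfrak a$.} Here I would show that $\exp\mathfrak a$ is a closed normal subgroup of $\cG_{g,n}$ containing $\widetilde\rho_{g,n}(K_{g,n})$.
\begin{itemize}
\item \textbf{Subgroup.} A closed ideal of a pronilpotent Lie algebra is closed under the Baker--Campbell--Hausdorff product. So $\exp\mathfrak a$ is a closed subgroup of $\U_{g,n}$, working on each finite-dimensional unipotent quotient and passing to the limit.
\item \textbf{Normal in $\U_{g,n}$.} This holds because $\mathfrak a$ is an ideal of $\u_{g,n}$.
\item \textbf{Normal in $\cG_{g,n}$.} The subtle point is that $\mathfrak a$ is only assumed to be an ideal of $\u_{g,n}$, not visibly $\Ad\cG_{g,n}$-stable. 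I would handle this by density. The generating set $\log\widetilde\rho_{g,n}(K_{g,n})$ is stable under $\Ad\widetilde\rho_{g,n}(\gamma)$ for $\gamma\in\G_{g,n}$, because $K_{g,n}$ is normal in $\G_{g,n}$. Hence $\mathfrak a$ is stable under the Zariski-dense subgroup $\widetilde\rho_{g,n}(\G_{g,n})$. The stabilizer of the closed subspace $\mathfrak a$ is a closed subgroup of $\cG_{g,n}$, computed on finite-dimensional quotients, so it is all of $\cG_{g,n}$.
\end{itemize}
It follows that $\cN_{g,n}\subseteq\exp\mathfrak a$, and therefore $\n_{g,n}\subseteq\mathfrak a$.
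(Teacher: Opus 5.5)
Your proposal is correct and follows the paper's route. You apply Proposition~\ref{prop:relative-completion-quotient} to $q_{g,n}:\G_{g,n}\to\G_{g,n}/K_{g,n}$, identify $\widetilde q_{g,n}$ with the quotient map by uniqueness, and pass to Lie algebras via the exponential--logarithm correspondence; your two-inclusion argument for the closed ideal, including the Zariski-density argument that it is $\Ad\cG_{g,n}$-stable, spells out the step the paper asserts in one line.
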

\begin{proof}
Apply Proposition~\ref{prop:relative-completion-quotient} to
$q_{g,n}:\G_{g,n}\to\G_{g,n}/K_{g,n}$.
It gives $\cN_{g,n}\subseteq\U_{g,n}$ and a canonical isomorphism
\[
\cG^c_{g,n}\cong\cG_{g,n}/\cN_{g,n}.
\]
Under this isomorphism, $\widetilde q_{g,n}$ is the quotient map, so
$\ker\widetilde q_{g,n}=\cN_{g,n}$.

Since $\cN_{g,n}$ is prounipotent, the exponential--logarithm
correspondence identifies
$\n_{g,n}=\Lie\cN_{g,n}$ with the closed Lie ideal of $\u_{g,n}$
generated by $\log\widetilde\rho_{g,n}(K_{g,n})$.
Taking Lie algebras gives
\(
\u^c_{g,n}\cong\u_{g,n}/\n_{g,n}.
\)
\end{proof}
We equip $\u^c_{g,n}$ with the quotient weight filtration.
The projection from $\u_{g,n}$ is strict for this filtration.
Since its kernel is closed and $W_\bullet\u_{g,n}$ is the complete
lower central series filtration, the quotient filtration is complete
and separated.

\subsection{The weight \texorpdfstring{$-1$}{-1} quotient}
\label{subsec:compact-type-weight-minus-one}
The computation of Section~\ref{sec:johnson-subgroup} shows that the
relations imposed by $K_{g,n}$ begin in weight $-2$.
\begin{corollary}\label{cor:weight-minus-one-unchanged}
Assume $g\geq3$. The natural homomorphism $\u_{g,n}\to\u^c_{g,n}$
induces an isomorphism
\[
\Gr^W_{-1}\u_{g,n}\xrightarrow{\sim}\Gr^W_{-1}\u^c_{g,n}.
\]
Consequently,
\[
\Gr^W_{-1}\u^c_{g,n}\cong V(\lambda_3)\oplus V(\lambda_1)^{\oplus n}
\cong\Lambda^3_0H\oplus H^{\oplus n}.
\]
\end{corollary}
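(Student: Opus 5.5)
The plan is to use the description $\u^c_{g,n}\cong\u_{g,n}/\n_{g,n}$ from Proposition~\ref{prop:kernel-generated-by-K}, together with the quotient weight filtration on $\u^c_{g,n}$. By strictness of the projection for the quotient filtration,
\[
\Gr^W_{-1}\u^c_{g,n}\cong
W_{-1}\u_{g,n}\big/\bigl(W_{-2}\u_{g,n}+\n_{g,n}\cap W_{-1}\u_{g,n}\bigr)
=\u_{g,n}\big/\bigl(W_{-2}\u_{g,n}+\n_{g,n}\bigr),
\]
since $W_{-1}\u_{g,n}=\u_{g,n}$. It therefore suffices to show that $\n_{g,n}\subseteq W_{-2}\u_{g,n}$. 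Once this holds, the denominator is $W_{-2}\u_{g,n}$, and the induced map $\Gr^W_{-1}\u_{g,n}\to\Gr^W_{-1}\u^c_{g,n}$ is an isomorphism.

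To prove the inclusion, I would invoke Proposition~\ref{prop:Kgn-weight-minus-two}, which gives $\log\widetilde\rho_{g,n}(K_{g,n})\subset W_{-2}\u_{g,n}$. Because $g\geq3$, the weight filtration equals the lower central series (\cite[Cor.~4.8]{hain_infpretor}). Hence $W_{-2}\u_{g,n}=\overline{[\u_{g,n},\u_{g,n}]}$ is a closed Lie ideal of $\u_{g,n}$. It is also stable under the adjoint action of $\cG_{g,n}$, since $\cG_{g,n}$ acts on $\u_{g,n}$ by Lie algebra automorphisms that preserve the lower central series. By Proposition~\ref{prop:kernel-generated-by-K}, $\n_{g,n}$ is the closed Lie ideal generated by $\log\widetilde\rho_{g,n}(K_{g,n})$. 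Since $W_{-2}\u_{g,n}$ is a closed ideal containing these generators, it contains $\n_{g,n}$. Normality needs no separate check: $K_{g,n}$ is normal in $\G_{g,n}$, and $W_{-2}$ is $\Ad$-stable anyway.

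The final identification then follows from Hain's computation $\Gr^W_{-1}\u_{g,n}\cong H_1(\u_{g,n})\cong V(\lambda_3)\oplus V(\lambda_1)^{\oplus n}\cong\Lambda^3_0H\oplus H^{\oplus n}$ (\cite[Prop.~6.2]{hain_g3}, \cite[Thm.~9.11]{hain_rational}). The isomorphism is $\Sp(H)$-equivariant because $\u_{g,n}\to\u^c_{g,n}$ lies over the identity of $\Sp(H)$.

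There is no real obstacle here. The one point needing care is the strictness and closedness bookkeeping: the quotient filtration must be computed as the image filtration, and $W_{-2}\u_{g,n}$ must be closed so that it contains the closure of the ideal generated by the logarithms. Both are supplied by the completeness remarks after Proposition~\ref{prop:kernel-generated-by-K}.
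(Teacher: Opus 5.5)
Your proposal is correct and follows essentially the same route as the paper. Both deduce $\n_{g,n}\subseteq W_{-2}\u_{g,n}$ from Propositions~\ref{prop:Kgn-weight-minus-two} and~\ref{prop:kernel-generated-by-K}, using that $W_{-2}\u_{g,n}$ is a closed Lie ideal, and then read off the weight $-1$ isomorphism from the quotient filtration on $\u_{g,n}/\n_{g,n}$ together with Hain's computation of $H_1(\u_{g,n})$.
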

\begin{proof}
Propositions~\ref{prop:Kgn-weight-minus-two}
and~\ref{prop:kernel-generated-by-K} give
$\n_{g,n}\subseteq W_{-2}\u_{g,n}$, since
$W_{-2}\u_{g,n}$ is a closed Lie ideal.
The quotient filtration on
$\u^c_{g,n}\cong\u_{g,n}/\n_{g,n}$ therefore gives the asserted
isomorphism in weight $-1$.
The module decomposition follows from the computation of
$H_1(\u_{g,n})$ recalled in
Subsection~\ref{subsec:separating-twists-weight}.
\end{proof}
% \begin{proof}
% By Proposition~\ref{prop:kernel-generated-by-K},
% $\n_{g,n}$ is the closed Lie ideal generated by
% $\log\widetilde\rho_{g,n}(K_{g,n})$. Proposition~\ref{prop:Kgn-weight-minus-two}
% gives
% \[
% \log\widetilde\rho_{g,n}(K_{g,n})
% \subseteq W_{-2}\u_{g,n}.
% \]
% Since $W_{-2}\u_{g,n}$ is a closed Lie ideal of $\u_{g,n}$, it follows
% that
% \[
% \n_{g,n}\subseteq W_{-2}\u_{g,n}.
% \]
% Thus the kernel of $\u_{g,n}\to\u^c_{g,n}$ has no weight $-1$
% quotient, and therefore
% \[
% \Gr^W_{-1}\u_{g,n}\xrightarrow{\sim}\Gr^W_{-1}\u^c_{g,n}.
% \]
% The final identification follows from the computation of
% $H_1(\u_{g,n})$ recalled in Section~\ref{sec:johnson-subgroup}.
% \end{proof}

\subsection{The compact-type relative completion}
\label{subsec:compact-type-relative-completion}
Boggi proves that $T_{g,n}/K_{g,n}$ is a free abelian group
\cite[Thm.~3.18]{boggi_compact_type}. We use this to prove the following.
\begin{theorem}[Compact-type relative completion]
\label{thm:compact-type-completion-char-zero}
Assume $g\geq3$. Then $\Gr^W_m\u^c_{g,n}=0$ for $m\leq-2$.
Consequently, $\u^c_{g,n}$ is abelian, and
\[
\u^c_{g,n}\cong\Gr^W_{-1}\u_{g,n}
\cong V(\lambda_3)\oplus V(\lambda_1)^{\oplus n}
\cong\Lambda^3_0H\oplus H^{\oplus n}.
\]
\end{theorem}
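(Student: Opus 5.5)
The plan is to compute $\u^c_{g,n}$ directly as the Lie algebra of the unipotent radical of the relative completion of $\G_{g,n}/K_{g,n}$, using the extension
\[
1\longrightarrow T_{g,n}/K_{g,n}\longrightarrow\G_{g,n}/K_{g,n}\longrightarrow\Sp(H_\Z)\longrightarrow1 .
\]
We then show that the abelianness of the kernel forces the prounipotent radical to be abelian. Write $A:=T_{g,n}/K_{g,n}$; by Boggi it is free abelian, and since $\G_{g,n}$ is finitely generated and $T_{g,n}$ has finite index image in a finitely generated group modulo an arithmetic quotient, $A$ is finitely generated as well. (More simply, $A\otimes\Q$ injects into $H_1(\u_{g,n})$, as explained below.) Consider the map $A\to\U^c_{g,n}(\Q)$ induced by $\widetilde\rho^c_{g,n}$. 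A standard property of relative completion, going back to Hain, is the following. If $1\to A\to\Gamma\to S\to1$ with $S$ arithmetic (here $\Sp(H_\Z)$, $g\geq3$) and $A$ abelian and finitely generated, then the relative completion is an extension of $\Sp(H)$ by a quotient of the unipotent completion of $A$. This holds because $H^1(\Sp(H_\Z),V)=0$ for all nontrivial irreducible $V$ and for $V$ trivial (Raghunathan/Borel, $g\geq3$), so the right-exactness of relative completion gives a surjection $A^{\un}=A\otimes\Q\to\U^c_{g,n}$. I would invoke the right exactness of relative completion, \cite[Prop.~3.6]{hain_relwtfilt} type statements, applied to this sequence, exactly as in the fiber sequence of Subsection~\ref{subsec:fiber-configuration-completions}.

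Concretely, the key steps are as follows. (1) Apply right exactness to $1\to A\to\G_{g,n}/K_{g,n}\to\Sp(H_\Z)\to1$. The completion of $\Sp(H_\Z)$ relative to its inclusion in $\Sp(H)$ is $\Sp(H)$ itself, by the vanishing of $H^1$ together with Margulis superrigidity (Hain's computation for $g\geq3$). This gives an exact sequence $A^{\un}\to\cG^c_{g,n}\to\Sp(H)\to1$. (2) Since $A$ is abelian, $A^{\un}$ is the vector group $A\otimes\Q$. Its image is a closed normal subgroup which equals $\U^c_{g,n}$, and it is commutative, being the image of a commutative group. Hence $\u^c_{g,n}$ is abelian. (3) An abelian pronilpotent Lie algebra has $L^2=0$. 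The quotient weight filtration on $\u^c_{g,n}$ is the image of the lower central series of $\u_{g,n}$, which maps onto the lower central series of $\u^c_{g,n}$. Therefore $W_{-2}\u^c_{g,n}=L^2\u^c_{g,n}=0$, which gives $\Gr^W_m\u^c_{g,n}=0$ for $m\leq-2$. (4) Conclude that $\u^c_{g,n}=\Gr^W_{-1}\u^c_{g,n}$, and apply Corollary~\ref{cor:weight-minus-one-unchanged} to get the module identification.

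The main obstacle is step (1): right exactness requires that $\Sp(H_\Z)$ itself have trivial relative completion over $\Sp(H)$. This amounts to the vanishing $H^1(\Sp_g(\Z),V)=0$ for every irreducible $V$, including $V=\Q$, which needs $g\geq3$ (or $g\geq2$ for nontrivial $V$). If invoking right exactness directly is awkward, a fallback is to work in $\u_{g,n}$. Boggi's abelianness gives $[T_{g,n},T_{g,n}]\subseteq K_{g,n}$. The image of $T_{g,n}$ is Zariski dense in $\U_{g,n}$, so the image of its commutator subgroup is dense in the closed commutator subgroup of $\U_{g,n}$. That closed commutator subgroup is then contained in $\cN_{g,n}$, which gives $W_{-2}\u_{g,n}=\overline{[\u_{g,n},\u_{g,n}]}\subseteq\n_{g,n}$. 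Combined with Proposition~\ref{prop:Kgn-weight-minus-two}, this gives $\n_{g,n}=W_{-2}\u_{g,n}$. The density of $T_{g,n}$ in $\U_{g,n}$ is exactly the cited fact from the introduction, and this second route avoids the cohomology vanishing. I would present this route as the main argument.
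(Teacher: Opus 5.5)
Your second route, which you say you would present as the main argument, is correct and is essentially the paper's proof. The paper likewise uses Hain's surjectivity of the unipotent completion of $T_{g,n}$ onto $\U_{g,n}$ (hence Zariski density) together with Boggi's abelianness of $T_{g,n}/K_{g,n}$, then a commutator-morphism argument on algebraic quotients of $\U^c_{g,n}$ to get $W_{-2}\u_{g,n}\subseteq\n_{g,n}$, with Proposition~\ref{prop:Kgn-weight-minus-two} supplying the reverse inclusion. You assert without proof that the image of $[T_{g,n},T_{g,n}]$ is dense in the closed commutator subgroup; that claim is justified by the same commutator-morphism argument, applied to $\U_{g,n}$ modulo the closure of that image, so the cohomology-vanishing route (1) is not needed.
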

% REPLACED PROOF: the compact-type abelianness is proved from T/K.
\begin{proof}
By \cite[Thm.~6.1]{hain_g3}, the unipotent completion of
$T_{g,n}$ maps surjectively to $\U_{g,n}$.
Since $T_{g,n}$ has Zariski-dense image in its unipotent completion,
its images in $\U_{g,n}$ and the quotient $\U^c_{g,n}$ are
Zariski dense.
The latter image factors through the abelian group
$T_{g,n}/K_{g,n}$.
Let $U$ be any algebraic quotient of $\U^c_{g,n}$.
The image of $T_{g,n}$ in $U$ is abelian and Zariski dense.
The commutator morphism
\[
U\times U\longrightarrow U,\qquad
(u,v)\longmapsto uvu^{-1}v^{-1}
\]
has a closed fiber over $1$ containing the Zariski-dense image
of $T_{g,n}\times T_{g,n}$. This fiber is therefore $U\times U$,
so $U$ is commutative. Hence $\U^c_{g,n}$ is commutative.
% Denote the unipotent completion of $T_{g,n}$ by $\mathcal T_{g,n}^{\un}$.
% By \cite[Thm.~6.1]{hain_g3}, the homomorphism
% \[
% \mathcal T_{g,n}^{\un}\longrightarrow\U_{g,n}
% \]
% is surjective. The image of $T_{g,n}$ is Zariski dense in its
% unipotent completion, and hence in $\U_{g,n}$.
% Since $\U_{g,n}\to\U^c_{g,n}$ is surjective, the image of $T_{g,n}$
% in $\U^c_{g,n}$ is also Zariski dense.
% This image factors through the abelian group $T_{g,n}/K_{g,n}$.
% Let $U$ be any algebraic quotient of $\U^c_{g,n}$, and denote the
% image of $T_{g,n}$ in $U$ by $A$. Then $A$ is an abelian
% Zariski-dense subgroup of $U$. Consider the commutator morphism
% \[
% c_U:U\times U\longrightarrow U,\qquad
% (u,v)\longmapsto uvu^{-1}v^{-1}.
% \]
% Since $A$ is abelian, the closed subset $c_U^{-1}(1)$ contains
% $A\times A$, which is Zariski dense in $U\times U$.
% Thus $c_U$ is identically equal to the identity, and $U$ is
% commutative. Since $\U^c_{g,n}$ is the inverse limit of its
% algebraic quotients, it is also commutative.

It follows that the kernel of $\u_{g,n}\to\u^c_{g,n}$ contains
\[
\overline{[\u_{g,n},\u_{g,n}]}=W_{-2}\u_{g,n}.
\]
By the preceding proof, the reverse containment also holds.
Thus $\n_{g,n}=W_{-2}\u_{g,n}$, and the projection induces
\[
\u^c_{g,n}\cong\u_{g,n}/W_{-2}\u_{g,n}
=\Gr^W_{-1}\u_{g,n}.
\]
Corollary~\ref{cor:weight-minus-one-unchanged} gives the asserted
$\Sp(H)$-module. The quotient filtration has no weights at most $-2$.
\end{proof}

The comparison with characteristic zero applies over any algebraically
closed field of characteristic zero.
\begin{proposition}[Characteristic-zero compact-type comparison]
\label{prop:generic-fiber-compact-type-comparison}
Assume $g\geq3$, let $\ell$ be any prime, and let $F$ be an algebraically
closed field of characteristic zero. After compatible identifications
of the symplectic monodromy representations, there is an isomorphism
\[
\cG^c_{g,n/F}\cong\cG^c_{g,n}\times_\Q\Ql.
\]
Consequently, its prounipotent Lie algebra is abelian and isomorphic
to $\Lambda^3_0H_{\Ql}\oplus H_{\Ql}^{\oplus n}$.
\end{proposition}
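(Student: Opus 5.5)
The plan is to reduce to the case $F=\C$ (or $F=\overline\Q$) and then invoke Corollary~\ref{cor:rational-to-continuous-completion} together with Theorem~\ref{thm:compact-type-completion-char-zero}.

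First, I will identify the geometric fundamental group. Choose embeddings $\overline\Q\hookrightarrow F$ and $\overline\Q\hookrightarrow\C$. Invariance of the étale fundamental group under extension of algebraically closed fields of characteristic zero applies to the Deligne--Mumford stack $\M^c_{g,n}$, which is of finite type, smooth, and connected. Combined with the Betti--étale comparison and Theorem~\ref{thm:main}, this gives
\[
\pi_1(\M^c_{g,n/F},\bar\xi)\cong\pi_1(\M^c_{g,n/\C})\cong\widehat{\G_{g,n}/K_{g,n}},
\]
after choosing compatible paths between base points. The relative Jacobian is defined over $\Z$ and its formation commutes with base change. The comparison isomorphism between $R^1(\pi^c_J)_*\Zl(1)$ and $\mathbb H^c_\Z\otimes\Zl$ therefore intertwines $\rho^c_{g,n/F}$ with the $\ell$-adic realization of $\rho^c_{g,n}$, extended continuously to the profinite completion. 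This is the step needing the most care: one must check that the specialization isomorphisms along $\overline\Q\subset F$ and $\overline\Q\subset\C$ respect the lisse sheaf $\mathbb H^c_{\Zl}$. This follows from proper base change applied to the abelian scheme $\J^c_{g,n}$.

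Second, $\G_{g,n}/K_{g,n}$ is finitely generated, as a quotient of $\G_{g,n}$. Its symplectic representation has image $\Sp(H_\Z)$, which is Zariski dense in $\Sp(H_{\Ql})$ and lands in $\Sp(H_{\Zl})$, so it extends continuously to the profinite completion. Corollary~\ref{cor:rational-to-continuous-completion} then identifies $\cG^c_{g,n}\times_\Q\Ql$ with the continuous relative completion of $\widehat{\G_{g,n}/K_{g,n}}$. Transporting this through the first step gives $\cG^c_{g,n/F}\cong\cG^c_{g,n}\times_\Q\Ql$, and also shows that $\rho^c_{g,n/F}$ is Zariski dense.

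Finally, the corollary identifies the prounipotent Lie algebra with $\u^c_{g,n}\widehat\otimes_\Q\Ql$. By Theorem~\ref{thm:compact-type-completion-char-zero}, $\u^c_{g,n}$ is finite-dimensional, abelian, and isomorphic to $\Lambda^3_0H\oplus H^{\oplus n}$. Scalar extension preserves these properties, and $H_\Q\otimes\Ql\cong H_{\Ql}$ via the comparison of the first step. This gives $\Lambda^3_0H_{\Ql}\oplus H_{\Ql}^{\oplus n}$ as an $\Sp(H_{\Ql})$-module.
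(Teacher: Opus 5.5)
Your proposal is correct and follows essentially the same route as the paper. Like the paper, you use field invariance, Betti--\'etale comparison and Boggi's theorem to get $\pi_1(\M^c_{g,n/F})\cong\widehat{\G_{g,n}/K_{g,n}}$ compatibly with monodromy, then apply Corollary~\ref{cor:rational-to-continuous-completion} and Theorem~\ref{thm:compact-type-completion-char-zero}. The one point the paper treats more carefully is the passage to the stack: you assert invariance and comparison directly for the Deligne--Mumford stack, whereas the paper applies the scheme-theoretic results to a finite \'etale level cover (chosen with an auxiliary odd prime, independent of $\ell$) and its fiber product with itself, and then descends finite \'etale covers along this presentation.
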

\begin{proof}
Choose embeddings $\overline\Q\hookrightarrow F$ and
$\overline\Q\hookrightarrow\C$. Invariance under extension of
algebraically closed fields of characteristic zero
\cite[Lem.~4.1.16]{kedlaya_sheaves}, the comparison theorem
\cite[Exp.~XII, Cor.~5.2]{SGA1}, and Boggi's description
\cite[Prop.~3.1]{boggi_compact_type} give
\[
\pi_1(\M^c_{g,n/F})\cong\widehat{\G_{g,n}/K_{g,n}},
\]
compatibly with symplectic monodromy. For the stack comparison, apply
the scheme-theoretic results to a finite \'etale level cover and its
fiber product with itself, and descend finite \'etale covers along
this presentation. Such a level cover may be chosen using an auxiliary
odd prime, independently of the coefficient prime $\ell$.
Corollary~\ref{cor:rational-to-continuous-completion}, applied to
the finitely generated group $\G_{g,n}/K_{g,n}$, gives the asserted
isomorphism $\cG^c_{g,n/F}\cong\cG^c_{g,n}\times_\Q\Ql$.
% Apply Proposition~\ref{prop:continuous-completion-profinite} to the
% finitely generated group $\G_{g,n}/K_{g,n}$.
% Together with the coefficient-field comparison recalled in
% Section~\ref{sec:relative-completion}, it identifies its continuous
% relative completion with $\cG^c_{g,n}\times_\Q\Ql$.
The Lie-algebra assertion follows from
Theorem~\ref{thm:compact-type-completion-char-zero}.
\end{proof}
% Thus the passage from $\M_{g,n}$ to $\M^c_{g,n}$ preserves the weight
% $-1$ generators of the relative completion and kills all brackets
% among them. Equivalently, the prounipotent radical of the compact-type
% relative completion is commutative, with Lie algebra
% $\Lambda^3_0H\oplus H^{\oplus n}$.

% REORGANIZED SECTION: the detailed fixed-genus computations are placed
% here once. Moved computations remain black; revised setup is marked red.
\section{Separating Twists of Fixed Genus in Weight
\texorpdfstring{$-2$}{-2}}
\label{sec:image-Kgn}

Suppose that $g\geq3$, $n\geq0$, and $1\leq h\leq g-1$.
Throughout this section, put
\[
H_{\Z}:=H_1(\Sigma_g,\Z),
\qquad H:=H_{\Z}\otimes_{\Z}\Q.
\]
Let $N_h$ be the normal subgroup of $\G_{g,n}$ generated by
Dehn twists about separating curves that bound a genus-$h$
subsurface containing at most two marked points.
Then $N_h\subseteq K_{g,n}$.

We prove that the weight $-2$ initial forms of elements of $N_h$
span $\Gr^W_{-2}\u_{g,n}$. The corresponding assertion for
$K_{g,n}$ follows as a corollary.
For $D\in K_{g,n}$, we use the initial form
$\operatorname{in}_{-2}(D)\in\Gr^W_{-2}\u_{g,n}$
defined in Subsection~\ref{subsec:separating-twists-weight}.
We use the same notation in other unipotent completions for
the logarithm modulo $W_{-3}$, provided the logarithm lies
in $W_{-2}$.
% Recall from Section~\ref{sec:johnson-subgroup} that
% \[
% \log\widetilde\rho_{g,n}(K_{g,n})\subseteq W_{-2}\u_{g,n}.
% \]
% For $D\in K_{g,n}$, its initial form is
% \[
% \operatorname{in}_{-2}(D)=\log\widetilde\rho_{g,n}(D)
% \pmod{W_{-3}\u_{g,n}}\in\Gr^W_{-2}\u_{g,n}.
% \]
% We use only these initial forms in the computations below.

\subsection{The weight \texorpdfstring{$-2$}{-2} decomposition}
\label{subsec:weight-minus-two-decomposition}
For $n=0$, use the convention $\p_{g,0}=0$.
The forgetful homomorphism $\G_{g,n}\to\G_g$ induces an exact
sequence in weight $-2$:
\[
0\longrightarrow\Gr^W_{-2}\p_{g,n}
\longrightarrow\Gr^W_{-2}\u_{g,n}
\longrightarrow\Gr^W_{-2}\u_g
\longrightarrow0.
\]
For every $g\geq3$, Hain proves that
$\Gr^W_{-2}\u_g\cong V(2\lambda_2)$
\cite[Thm.~10.1]{hain_infpretor}.

Write
\[
\Lambda^2_0H:=\Lambda^2H/\langle\theta\rangle,
\]
where $\theta$ is the symplectic class. This is the irreducible
$\Sp(H)$-module $V(\lambda_2)$.
The coordinate projections of the configuration space, together
with the identification $\Gr^W_{-2}\p\cong\Lambda^2_0H$, give
an exact sequence
\[
0\longrightarrow\bigoplus_{i<j}\Q(1)_{ij}
\longrightarrow\Gr^W_{-2}\p_{g,n}
\longrightarrow(\Lambda^2_0H)^{\oplus n}
\longrightarrow0;
\]
see \cite[Thm.~12.6]{hain_infpretor} and, for the corresponding
statement over $\Ql$,
\cite[p.~732, preceding Lem.~8.7]{hain_rational}.
Here $\Q(1)_{ij}$ is the diagonal summand associated to the pair
$\{i,j\}$.

By semisimplicity of $\Sp(H)$-modules, these two exact sequences
give a decomposition
\begin{equation}\label{eq:wt2}
\Gr^W_{-2}\u_{g,n}
\cong V(2\lambda_2)
\oplus\bigoplus_{r=1}^n V(\lambda_2)^{(r)}
\oplus\bigoplus_{1\leq i<j\leq n}\Q(1)_{ij}.
\end{equation}
The coordinate-projection map above restricts to an isomorphism
from the $V(\lambda_2)$-isotypical component of
$\Gr^W_{-2}\p_{g,n}$ onto $(\Lambda^2_0H)^{\oplus n}$.
We define $V(\lambda_2)^{(r)}$ to be the inverse image of the
$r$th factor under this restriction. Thus the point-forgetting
maps act on these summands as coordinate projections.

For a normal subgroup
$N\triangleleft\G_{g,n}$ contained in $K_{g,n}$, set
\[
I_N:=\operatorname{span}_{\Q}
\{\operatorname{in}_{-2}(D):D\in N\}
\subseteq\Gr^W_{-2}\u_{g,n}.
\]
For the subgroup $N_h$ fixed above, write $I_h=I_{N_h}$.
\begin{lemma}\label{lem:image-Kgn-Sp-submodule}
The subspace $I_N$ is an $\Sp(H)$-submodule of
$\Gr^W_{-2}\u_{g,n}$.
\end{lemma}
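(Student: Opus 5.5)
The plan is to prove that $I_N$ is stable under the conjugation action of $\G_{g,n}$, and then pass to $\Sp(H)$ by a Zariski-density argument.

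\emph{Equivariance.} Let $\gamma\in\G_{g,n}$ and $D\in N$. Since $N$ is normal, $\gamma D\gamma^{-1}\in N$. Because $\widetilde\rho_{g,n}$ is a homomorphism,
\[
\log\widetilde\rho_{g,n}(\gamma D\gamma^{-1})
=\Ad(\widetilde\rho_{g,n}(\gamma))\log\widetilde\rho_{g,n}(D).
\]
The adjoint action of $\cG_{g,n}$ on $\u_{g,n}$ preserves the weight filtration. On $\Gr^W_\bullet\u_{g,n}$ this action factors through $\Sp(H)$, since $\U_{g,n}$ acts trivially on the graded pieces. Indeed, $\Ad(\exp u)-1$ lowers weight for $u\in\u_{g,n}=W_{-1}\u_{g,n}$.

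It follows that
\[
\operatorname{in}_{-2}(\gamma D\gamma^{-1})=\rho_{g,n}(\gamma)\cdot\operatorname{in}_{-2}(D).
\]
Hence $I_N$ is stable under the image $\rho_{g,n}(\G_{g,n})=\Sp(H_\Z)$. It is a $\Q$-linear span by definition.

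\emph{Passage to $\Sp(H)$.} The space $\Gr^W_{-2}\u_{g,n}$ is finite dimensional, for example by the decomposition \eqref{eq:wt2}. It is an algebraic $\Sp(H)$-representation. For a subspace $I$, its stabilizer $\{s\in\Sp(H): sI\subseteq I\}$ is a Zariski-closed subgroup. This stabilizer contains $\Sp(H_\Z)$, which is Zariski dense in $\Sp(H)$ (Borel density, or the density statement in Subsection~\ref{subsec:jacobian-monodromy-completion}). Therefore the stabilizer is all of $\Sp(H)$, and $I_N$ is an $\Sp(H)$-submodule.

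\emph{Main obstacle.} The only delicate point is the claim that the conjugation action on $\Gr^W_{-2}\u_{g,n}$ factors through $\Sp(H)$ compatibly with $\rho_{g,n}$. This follows from the Levi decomposition $\cG_{g,n}\cong\U_{g,n}\rtimes\Sp(H)$ together with the fact that $W_\bullet$ is the lower central series, which is characteristic and therefore preserved by all automorphisms.
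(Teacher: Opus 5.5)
Your proof is correct and follows essentially the same argument as the paper. Both go through conjugation equivariance of the logarithm, triviality of the $\U_{g,n}$-action on $\Gr^W_{-2}\u_{g,n}$ (the paper phrases this as $[W_{-1}\u_{g,n},W_{-2}\u_{g,n}]\subset W_{-3}\u_{g,n}$), stability under $\Sp(H_\Z)$, and Zariski closedness of the stabilizer; the Levi decomposition in your last paragraph is not actually needed, since factoring the action through $\cG_{g,n}/\U_{g,n}\cong\Sp(H)$ already suffices.
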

\begin{proof}
Since $N$ is normal in $\G_{g,n}$, for every $\gamma\in\G_{g,n}$
and $D\in N$, one has $\gamma D\gamma^{-1}\in N$.
Conjugation equivariance of the logarithm gives
\[
\log\widetilde\rho_{g,n}(\gamma D\gamma^{-1})
=\Ad(\widetilde\rho_{g,n}(\gamma))
(\log\widetilde\rho_{g,n}(D)).
\]
The adjoint action of $\cG_{g,n}$ on $\Gr^W_{-2}\u_{g,n}$ factors
through $\Sp(H)$. Indeed,
\[
[W_{-1}\u_{g,n},W_{-2}\u_{g,n}]\subset W_{-3}\u_{g,n},
\]
so the adjoint action of $\U_{g,n}$ is trivial on
$\Gr^W_{-2}\u_{g,n}$. Consequently,
\[
\operatorname{in}_{-2}(\gamma D\gamma^{-1})
=\rho_{g,n}(\gamma)\cdot\operatorname{in}_{-2}(D).
\]
Thus $I_N$ is stable under $\rho_{g,n}(\G_{g,n})$.
The stabilizer
\[
\operatorname{Stab}_{\Sp(H)}(I_N)
=\{s\in\Sp(H):sI_N=I_N\}
\]
is a Zariski-closed subgroup of $\Sp(H)$.
% \begin{remark}[Closedness of the stabilizer]
% Put $B=\Gr^W_{-2}\u_{g,n}$, and choose a basis
% $e_1,\ldots,e_m$ of $B$ such that
% \[
% I_N=\operatorname{span}_{\Q}\{e_1,\ldots,e_r\}.
% \]
% The algebraic representation $R:\Sp(H)\to\GL(B)$ has, in this
% basis, a block matrix
% \[
% R(s)=
% \begin{pmatrix}
% A(s)&C(s)\\
% D(s)&E(s)
% \end{pmatrix}.
% \]
% The condition $sI_N\subseteq I_N$ is equivalent to $D(s)=0$:
% the lower-left block records the components of
% $s(e_1),\ldots,s(e_r)$ outside $I_N$.
% Since $R(s)$ is invertible, $\dim(sI_N)=\dim I_N$, so this
% containment is equivalent to $sI_N=I_N$. Consequently,
% \[
% \operatorname{Stab}_{\Sp(H)}(I_N)
% =\{s\in\Sp(H):D(s)=0\}.
% \]
% The entries of $D(s)$ are regular functions on $\Sp(H)$ because
% $R$ is algebraic. Their common zero locus is therefore Zariski
% closed. It is a subgroup since products and inverses of
% transformations preserving $I_N$ also preserve $I_N$.
% In particular, if this stabilizer contains the Zariski-dense
% subgroup $\rho_{g,n}(\G_{g,n})$, it equals $\Sp(H)$.
% Thus invariance under $\G_{g,n}$ implies that $I_N$ is an
% $\Sp(H)$-submodule of $B$.
% \end{remark}
Since this stabilizer contains
$\rho_{g,n}(\G_{g,n})=\Sp(H_{\Z})$, which is Zariski dense
in $\Sp(H)$, it equals $\Sp(H)$. Therefore $I_N$ is an $\Sp(H)$-submodule.
\end{proof}
The representations $V(2\lambda_2)$, $V(\lambda_2)$, and $\Q(1)$ are
pairwise nonisomorphic. Semisimplicity therefore gives
\[
\begin{split}
I_h={}&(I_h\cap V(2\lambda_2))
\oplus(I_h\cap V(\lambda_2)^{\oplus n})\\
&\oplus(I_h\cap\Q(1)^{\oplus\binom n2}).
\end{split}
\]
Thus each isotypical component of an element of $I_h$ also belongs
to $I_h$. This observation separates the three distinct isotypical
types. It does not, by itself, separate the multiple copies of
$V(\lambda_2)$ or $\Q(1)$. Those copies will be isolated below using
the appropriate forgetful homomorphisms.

Since the logarithms start in weight $-2$, the
Baker--Campbell--Hausdorff formula gives
\[
\operatorname{in}_{-2}(D_1D_2)
=\operatorname{in}_{-2}(D_1)+\operatorname{in}_{-2}(D_2)
\qquad(D_1,D_2\in K_{g,n}).
\]
Thus $I_h$ is the $\Sp(H)$-submodule generated by the initial forms
of the selected genus-$h$ twists.
Choose a genus-$h$ subsurface $A_S\subset\Sigma_{g,n}$ with one
boundary component, containing exactly the marked points
$x_i$ with $i\in S$. Set
\[
c_S:=\partial A_S,
\qquad d_S:=\operatorname{in}_{-2}(T_{c_S}).
\]
These choices will be specified in each computation below.
The notation $d_S$ suppresses its dependence on the chosen curve
$c_S$.
% \begin{remark}[Existence of the subsurface $A_S$]
% For every $1\leq h\leq g-1$ and every
% $S\subseteq\{1,\ldots,n\}$, there exists a genus-$h$ subsurface
% $A_S\subset\Sigma_{g,n}$ with one boundary component, containing
% exactly the marked points $x_i$ with $i\in S$.
% Indeed, take compact connected oriented surfaces $A'$ and $B'$
% of genera $h$ and $g-h$, respectively, each with one boundary
% component. Place distinct labeled points $y_i$ in the interior
% of $A'$ for $i\in S$ and in the interior of $B'$ for $i\notin S$.
% Glue the boundary circles by an orientation-reversing
% diffeomorphism. The resulting surface is a closed oriented
% surface of genus $g$ with $n$ ordered marked points.
% By the classification of marked surfaces, there is an
% orientation-preserving diffeomorphism
% \[
% f:A'\cup_{\partial}B'\longrightarrow\Sigma_{g,n}
% \]
% such that $f(y_i)=x_i$ for every $i$.
% Thus
% \[
% A_S:=f(A'),
% \qquad c_S:=\partial A_S
% \]
% have the required properties.
% The chosen side of $c_S$ is the subsurface $A_S$.
% This specifies the side even when $g=2h$, so that both sides
% have genus $h$. These choices are not canonical; representatives
% will be chosen to suit each subsequent calculation.
% \end{remark}

\subsection{The unpointed summand
\texorpdfstring{$V(2\lambda_2)$}{V(2 lambda 2)}}
\label{subsec:unpointed-summand}
\label{subsec:fixed-h-unpointed}
We first recall the second Johnson homomorphism and its compatibility
with relative completion. The Lie algebra $\p$ is the Malcev Lie
algebra of a closed genus-$g$ surface group. Fix $x\in \Sigma_g$, and put
$\Pi_x=\pi_1^{\top}(\Sigma_g,x)$.
Since $K_{g,1}$ acts trivially on $\Pi_x/L^3\Pi_x$, its action on
$\Pi_x/L^4\Pi_x$ defines
\[
\tau_2:K_{g,1}\longrightarrow\Hom(H_\Z,\Gr^L_3\Pi_x),\qquad
\tau_2(D)(u)=D(\widetilde u)\widetilde u^{-1},
\]
where $L^\bullet\Pi_x$ is the lower central series and
$\widetilde u$ lifts $u\in H_\Z$. After tensoring with $\Q$, we regard
this homomorphism as taking values in
$\Gr^W_{-2}\Der\p
\cong V(2\lambda_2)\oplus V(\lambda_2)$;
see \cite[\S6]{LuoWatanabeCollino}. We suppress Tate twists below.

For the computations we need only the following hyperelliptic setup.
Fix a hyperelliptic involution $\sigma$ on $\Sigma_g$; its $2g+2$
fixed points are called Weierstrass points. The \emph{hyperelliptic
mapping class group}
\[
\Delta_g\subset\G_g
\]
is the centralizer of $[\sigma]$ in $\G_g$. Its Torelli subgroup,
called the \emph{hyperelliptic Torelli group}, is
\[
T\Delta_g:=\Delta_g\cap T_g.
\]
The group $T\Delta_g$ fixes all Weierstrass points and is generated
by twists about separating curves invariant under $\sigma$; see
\cite[\S3.1 and Thm.~6.9]{LuoWatanabeCollino}. Marking a Weierstrass point $q$ and then identifying
$(\Sigma_g,q)$ with the fixed reference once-pointed surface gives
a homomorphism
\[
\iota_q:T\Delta_g\longrightarrow K_{g,1}.
\]
The hyperelliptic Johnson homomorphism $\tau_q^{\hyp}$ is defined by
the induced action on the lower central quotients of
$\pi_1^{\top}(\Sigma_g,q)$. Consequently,
\[
\tau_q^{\hyp}=\tau_2\circ\iota_q;
\]
see \cite[\S6]{LuoWatanabeCollino}. Here the based surface groups are
identified using homeomorphisms isotopic to the identity; the resulting
identifications of their associated graded Lie algebras agree, since
these Lie algebras are generated by $H$.
We record explicitly the compatibility with relative completion.
\begin{lemma}\label{lem:second-johnson-adjoint-compatibility}
For $1\leq j\leq n$, let
\[
F_j:\G_{g,n}\longrightarrow\G_{g,1}
\]
forget all marked points except $x_j$, and also write
\[
f_j:\Gr^W_{-2}\u_{g,n}\longrightarrow
\Gr^W_{-2}\u_{g,1}
\]
for the induced map in weight $-2$. Then the diagram
\[
\begin{tikzcd}[column sep=large]
K_{g,n}\arrow[r,"F_j"]
\arrow[d,"\operatorname{in}_{-2}"']&
K_{g,1}\arrow[d,"\tau_2"]\\
\Gr^W_{-2}\u_{g,n}
\arrow[r,"(\Gr^W_{-2}\ad)\circ f_j"']&
\Gr^W_{-2}\Der\p
\end{tikzcd}
\]
commutes. In particular, for $D\in T\Delta_g$,
\[
(\Gr^W_{-2}\ad)
\bigl(\operatorname{in}_{-2}(\iota_qD)\bigr)
=\tau_q^{\hyp}(D).
\]
\end{lemma}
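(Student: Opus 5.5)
The plan is to reduce the square to the case $n=1$ using functoriality of relative completion, and then to identify $\tau_2$ with the weight $-2$ graded piece of the adjoint action of $\cG_{g,1}$ on $\p$.

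\emph{Reduction to $n=1$.} The forgetful homomorphism $F_j$ is compatible with the symplectic representations, so functoriality of relative completion gives a morphism $\cG_{g,n}\to\cG_{g,1}$ compatible with the canonical lifts. This morphism carries $\U_{g,n}$ into $\U_{g,1}$. Its Lie algebra map respects the weight filtrations: these are the lower central series for $g\geq3$, and Lie algebra homomorphisms preserve lower central series. For $D\in K_{g,n}$, Proposition~\ref{prop:Kgn-weight-minus-two} together with $F_j(K_{g,n})\subseteq K_{g,1}$ shows that both $\log\widetilde\rho_{g,n}(D)$ and $\log\widetilde\rho_{g,1}(F_jD)$ lie in $W_{-2}$. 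Since the logarithm commutes with the induced morphism, we get
\[
f_j(\operatorname{in}_{-2}(D))=\operatorname{in}_{-2}(F_jD).
\]
It therefore suffices to prove that $(\Gr^W_{-2}\ad)(\operatorname{in}_{-2}(E))=\tau_2(E)$ for every $E\in K_{g,1}$.

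\emph{The case $n=1$.} The group $\G_{g,1}$ acts on $\Pi_x$, hence on its unipotent completion $\cP$ and on $\p$, by filtration-preserving automorphisms. This action factors through $\cG_{g,1}$. Its derivative is the map $\ad:\g_{g,1}\to\Der\p$, which comes from conjugation in the extension $1\to\cP\to\cG_{g,1}\to\cG_g\to1$. So for $E\in K_{g,1}$ the automorphism $\phi_E$ of $\p$ induced by $E$ equals $\exp(\ad\,\delta)$ with $\delta=\log\widetilde\rho_{g,1}(E)\in W_{-2}\u_{g,1}$. Hence
\[
\phi_E(v)-v\equiv[\delta,v]\pmod{W_{r-3}\p}\qquad(v\in W_r\p).
\]
Now take $u\in H$ with lift $\widetilde u\in\Pi_x$ and $v=\log\widetilde u\in W_{-1}\p$. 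The element $\log(E(\widetilde u)\widetilde u^{-1})$ equals $\phi_E(v)-v$ modulo terms of weight at most $-4$, by the Baker--Campbell--Hausdorff formula, because $\phi_E(v)-v$ already lies in $W_{-3}$. Under the identification $\Gr^L_3\Pi_x\otimes\Q\cong\Gr^W_{-3}\p$, the left side represents $\tau_2(E)(u)$ and the right side represents $(\Gr^W_{-2}\ad\,\delta)(u)$. Both sides are derivations of $\Gr^W_\bullet\p$ of degree $-2$, and this Lie algebra is generated by $\Gr^W_{-1}\p=H$. Agreement on $H$ therefore gives agreement in $\Gr^W_{-2}\Der\p$.

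\emph{Hyperelliptic case.} For $D\in T\Delta_g$, apply the square with $n=1$ to $\iota_qD\in K_{g,1}$ and use $\tau^{\hyp}_q=\tau_2\circ\iota_q$.

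\emph{Main obstacle.} I expect the difficulty to lie in the conventions, not the algebra. One must check that the adjoint action of $\u_{g,1}$ on $\p$ really is the derivative of the mapping class group action on $\cP$ via Hain's construction, with the same base points and the same sign convention for $\tau_2$ ($D(\widetilde u)\widetilde u^{-1}$ rather than its inverse). One must also check that the rational identification $\Gr^L_3\Pi_x\otimes\Q\cong\Gr^W_{-3}\p$ is the one used in \cite[\S6]{LuoWatanabeCollino}. I would settle this by citing \cite[\S11]{hain_infpretor}, where the weight $-1$ analogue (the first Johnson homomorphism) is established by the same argument.
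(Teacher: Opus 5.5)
Your proposal is correct and follows essentially the same route as the paper: you reduce to $K_{g,1}$ by functoriality of the forgetful map, compare the weight $-3$ class of $D(\widetilde u)\widetilde u^{-1}$ with $\ad(\log\widetilde\rho_{g,1}(D))$ applied to $\log\widetilde u$ via Baker--Campbell--Hausdorff, and conclude using that $\Gr^W_\bullet\p$ is generated by $H$. The only difference is cosmetic: you expand $\exp(\ad\delta)$ to first order, whereas the paper expands $\log(D_*)=\sum_{r\geq1}(-1)^{r+1}(D_*-1)^r/r$; the two are equivalent because $D_*-1$ lowers weight by at least $2$.
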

\begin{proof}
We first consider the case of $K_{g,1}$. The action of
$\G_{g,1}$ on the based surface group
$\Pi_x=\pi_1^{\top}(\Sigma_g,x)$ induces an action on its
unipotent completion $\cP$. This action extends to the relative
completion and, on Lie algebras, gives a weight-preserving
homomorphism
\[
\ad:\u_{g,1}\longrightarrow\Der\p;
\]
see \cite[Prop.~8.8]{hain_rational}.

Let $D\in K_{g,1}$ and put
$L_D:=\log\widetilde\rho_{g,1}(D)\in W_{-2}\u_{g,1}$.
Since the action of the relative completion extends the original
action of $\G_{g,1}$, we have
$\ad(L_D)=\log(D_*)$, where $D_*$ denotes the automorphism of
$\p$ induced by $D$.

We compare the weight-$-2$ part of this derivation with the second
Johnson homomorphism. Let $u\in H_\Z$ and choose a lift
$\widetilde u\in\Pi_x$. Since $D$ acts trivially on
$\Pi_x/L^3\Pi_x$, the element
$\delta_u:=D(\widetilde u)\widetilde u^{-1}$ lies in
$L^3\Pi_x$. By definition,
\[
\tau_2(D)(u)
=[\delta_u]
\in
\Gr^L_3\Pi_x\otimes\Q.
\]

Denote the images of $\widetilde u$ and $\delta_u$ under the
canonical homomorphism $\Pi_x\to\cP(\Q)$ by the same symbols,
and set
$X:=\log(\widetilde u)\in W_{-1}\p$ and
$Y:=\log(\delta_u)\in W_{-3}\p$.
Since $D(\widetilde u)=\delta_u\widetilde u$, functoriality of
the logarithm and the Baker--Campbell--Hausdorff formula give
\[
D_*(X)
=\log\bigl(D(\widetilde u)\bigr)
=\log\bigl(\exp(Y)\exp(X)\bigr)
\equiv X+Y
\pmod{W_{-4}\p}.
\]
Thus $(D_*-1)(X)\equiv Y\pmod{W_{-4}\p}$.

Since $D_*-1$ lowers weight by at least $2$, it is topologically
nilpotent with respect to the weight filtration, and hence
\[
\log(D_*)
=\sum_{r\geq1}\frac{(-1)^{r+1}}{r}(D_*-1)^r.
\]
All terms with $r\geq2$ send $W_{-1}\p$ into $W_{-5}\p$.
Therefore
$\log(D_*)(X)\equiv Y\pmod{W_{-4}\p}$.
Under the canonical identification
$\Gr^W_{-3}\p\cong\Gr^L_3\Pi_x\otimes\Q$, the class of $Y$
is $[\delta_u]$. For $u\in H_\Z$, the preceding computation gives
\[
\begin{aligned}
\bigl((\Gr^W_{-2}\ad)(\operatorname{in}_{-2}(D))\bigr)(u)
&=[\ad(L_D)(X)]\\
&=[(\log D_*)(X)]=[Y]=\tau_2(D)(u),
\end{aligned}
\]
where brackets denote classes in $\Gr^W_{-3}\p$.
Since $H_\Z$ spans $H$ over $\Q$ and $\Gr^W_\bullet\p$ is
generated by $H$, equality on these generators gives
\begin{equation}\label{eq:second-johnson-adjoint}
(\Gr^W_{-2}\ad)
\bigl(\operatorname{in}_{-2}(D)\bigr)
=\tau_2(D)
\qquad (D\in K_{g,1}).
\end{equation}
Now let $D\in K_{g,n}$. Compatibility of the forgetful homomorphism
$F_j$ with relative completion and the weight filtration gives
\[
f_j\bigl(\operatorname{in}_{-2}(D)\bigr)
=\operatorname{in}_{-2}\bigl(F_j(D)\bigr).
\]
Applying \eqref{eq:second-johnson-adjoint} to $F_j(D)\in K_{g,1}$
proves the commutativity of the diagram. The final assertion
follows from $\tau_q^{\hyp}=\tau_2\circ\iota_q$.
% Now let $D\in K_{g,n}$. The forgetful homomorphism
% $F_j:\G_{g,n}\to\G_{g,1}$ is compatible with relative completion
% and the weight filtration, so
% \[
% f_j\bigl(\operatorname{in}_{-2}(D)\bigr)
% =\operatorname{in}_{-2}\bigl(F_j(D)\bigr).
% \]
% Applying the identity above to $F_j(D)\in K_{g,1}$ gives
% \[
% (\Gr^W_{-2}\ad)
% \bigl(f_j(\operatorname{in}_{-2}(D))\bigr)
% =\tau_2(F_j(D)),
% \]
% which proves the commutativity of the diagram.

% Finally, for $D\in T\Delta_g$, the identity
% $\tau_q^{\hyp}=\tau_2\circ\iota_q$ gives
% \[
% (\Gr^W_{-2}\ad)
% \bigl(\operatorname{in}_{-2}(\iota_qD)\bigr)
% =\tau_q^{\hyp}(D).
% \]
\end{proof}
Choose the hyperelliptic involution $\sigma$ so that its Weierstrass
points are disjoint from the marked points. Let
\[
\pi_\sigma:\Sigma_g\longrightarrow
\Sigma_g/\langle\sigma\rangle\cong S^2
\]
be the quotient map. Choose a simple closed curve
$\bar c_{\varnothing}\subset S^2$, disjoint from the branch points
and from the images of the markings, which bounds a disk containing
exactly $2h+1$ of the $2g+2$ branch points and no images of the
marked points. Its inverse image
$c_{\varnothing}:=\pi_\sigma^{-1}(\bar c_{\varnothing})$ is a
connected $\sigma$-invariant separating curve. The inverse image
$A$ of the enclosed disk is a genus-$h$ subsurface with one boundary
component and contains no markings. 

Since $c_{\varnothing}$ is disjoint from all marked points, the Dehn
twist $T_{c_{\varnothing}}$ may be represented by a diffeomorphism
supported in an annular neighborhood of $c_{\varnothing}$ disjoint
from the marked points. Thus we may regard
$T_{c_{\varnothing}}$ as an element of $K_{g,n}$. After forgetting
the marked points, we use the same notation for the corresponding
element of $T\Delta_g\subseteq K_g$.

Choose a Weierstrass point $q$
outside $A$; this is possible since the complementary disk contains
$2(g-h)+1$ branch points. Set
\[
D_q:=\iota_q(T_{c_{\varnothing}})\in K_{g,1},
\]
where on the right $T_{c_{\varnothing}}$ denotes the unpointed
element of $T\Delta_g$.

Let $B$ be the closure of $\Sigma_g\setminus A$, and let
$H_A,H_B\subset H$ be the images of $H_1(A,\Q)$ and $H_1(B,\Q)$,
respectively. Then
\[
H=H_A\oplus H_B.
\]
Choose a symplectic basis
$a_1,b_1,\ldots,a_g,b_g$ adapted to this decomposition, so that
\[
H_A=\langle a_1,b_1,\ldots,a_h,b_h\rangle,
\qquad
H_B=\langle a_{h+1},b_{h+1},\ldots,a_g,b_g\rangle,
\]
and $\langle a_r,b_s\rangle=\delta_{rs}$. Set
\[
\theta_A=\sum_{r=1}^h a_r\wedge b_r,\qquad
\theta_B=\sum_{r=h+1}^g a_r\wedge b_r,\qquad
\theta=\theta_A+\theta_B.
\]
The surface Lie algebra has the presentation
\[
\Gr^W_\bullet\p\cong\L(H)/(\theta),\qquad
\theta=\sum_{r=1}^g[a_r,b_r],
\]
where $\L(H)$ is the free Lie algebra on $H$.
We identify exterior products in degree two with Lie brackets, and
suppress Tate twists in the following formulas of symplectic representations.

Recall from \cite[\S6.1]{watanabe_sections} the
$\Sp(H)$-equivariant map
\[
\phi:S^2(\Lambda^2H)\longrightarrow
\Gr^W_{-2}\Der\p
\]
defined by
\begin{align*}
\phi((u_1\wedge v_1)(u_2\wedge v_2))(x)
={}&\langle u_1,x\rangle[v_1,[u_2,v_2]]
-\langle v_1,x\rangle[u_1,[u_2,v_2]]\\
&+\langle u_2,x\rangle[v_2,[u_1,v_1]]
-\langle v_2,x\rangle[u_2,[u_1,v_1]].
\end{align*}
By \cite[Cor.~6.5 and 6.6]{watanabe_sections},
\[
\Gr^W_{-2}\Der\p
\cong
V(2\lambda_2)\oplus\ad(\Gr^W_{-2}\p),
\qquad
\ad(\Gr^W_{-2}\p)\cong\Lambda^2_0H.
\]
 By
Lemma~\ref{lem:second-johnson-adjoint-compatibility},
\[
(\Gr^W_{-2}\ad)
\bigl(\operatorname{in}_{-2}(D_q)\bigr)
=\tau_q^{\hyp}(T_{c_{\varnothing}}).
\]
Since $q$ lies in the component of
$\Sigma_g\setminus c_{\varnothing}$ complementary to $A$,
Proposition~6.2 of \cite{watanabe_sections} gives
\[
\tau_q^{\hyp}(T_{c_{\varnothing}})
=\frac12\phi(\theta_A^2).
\]
Therefore
\begin{equation}\label{eq:fixed-h-alt-adjoint}
(\Gr^W_{-2}\ad)
\bigl(\operatorname{in}_{-2}(D_q)\bigr)
=\frac12\phi(\theta_A^2).
\end{equation}

We now compute the $V(2\lambda_2)$-component of
$\phi(\theta_A^2)$. We use the maps from
\cite[\S6.3]{watanabe_sections}, denoted here by
\[
\pi_\Lambda:S^2(\Lambda^2H)\longrightarrow\Lambda^2H,
\qquad
s_\Lambda:\Lambda^2H\longrightarrow S^2(\Lambda^2H).
\]
The first is given by
\begin{align*}
\pi_\Lambda((u_1\wedge v_1)(u_2\wedge v_2))
={}&\langle u_1,v_1\rangle v_2\wedge u_2
+\langle v_2,u_2\rangle u_1\wedge v_1\\
&+\frac12\bigl(
\langle u_1,v_2\rangle v_1\wedge u_2
+\langle v_1,u_2\rangle u_1\wedge v_2\\
&\hspace{28pt}
+\langle u_1,u_2\rangle v_2\wedge v_1
+\langle v_2,v_1\rangle u_1\wedge u_2
\bigr).
\end{align*}
Writing $\contr(u\wedge v)=\langle u,v\rangle$, the formula in
\cite[\S6.3]{watanabe_sections} simplifies to
\[
s_\Lambda(\alpha)
=-\frac1{g+1}\alpha\theta
+\frac{\contr(\alpha)}
{(g+1)(2g+1)}\theta^2.
\]
By \cite[Lem.~6.10 and Cor.~6.11--6.12]{watanabe_sections},
\begin{equation}\label{eq:pi-splitting}
\pi_\Lambda s_\Lambda=\id,
\qquad
\ker\pi_\Lambda/\Lambda^4H\cong V(2\lambda_2).
\end{equation}
Here $\Lambda^4H$ denotes its natural image in
$S^2(\Lambda^2H)$, as in
\cite[Prop.~6.8]{watanabe_sections}. We also use that $\phi$
vanishes on $\Lambda^4H$ and on $\theta^2$.

\begin{lemma}\label{lem:sLambda-inner}
For every $\alpha\in\Lambda^2H$, the derivation
$\phi(s_\Lambda(\alpha))$ is inner. More precisely,
\[
\phi(s_\Lambda(\alpha))
=\frac1{g+1}\ad([\alpha]),
\]
where $[\alpha]\in\Gr^W_{-2}\p$ denotes the class of $\alpha$.
\end{lemma}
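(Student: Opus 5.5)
The plan is a direct computation from the explicit formulas for $s_\Lambda$ and $\phi$. Both sides are linear in $\alpha$, so it suffices to treat a decomposable $\alpha=u\wedge v$. By the simplified formula,
\[
s_\Lambda(\alpha)=-\frac1{g+1}\,\alpha\theta
+\frac{\contr(\alpha)}{(g+1)(2g+1)}\,\theta^2 .
\]
Since $\phi$ vanishes on $\theta^2$ (recorded above), I get
\[
\phi(s_\Lambda(\alpha))=-\frac1{g+1}\,\phi(\alpha\theta).
\]
So the lemma reduces to the identity $\phi(\alpha\theta)=-\ad([\alpha])$, with the convention $\ad(y)(x)=[y,x]$. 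By the derivation property and the fact that $\Gr^W_\bullet\p$ is generated by $H$, it suffices to check this identity on $x\in H$.

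Expanding $\theta=\sum_r a_r\wedge b_r$ and applying the defining formula for $\phi$ term by term gives
\[
\phi(\alpha\theta)(x)=\sum_{r=1}^g\Bigl(\langle u,x\rangle[v,[a_r,b_r]]-\langle v,x\rangle[u,[a_r,b_r]]
+\langle a_r,x\rangle[b_r,[u,v]]-\langle b_r,x\rangle[a_r,[u,v]]\Bigr).
\]
I treat the two pairs of terms separately.

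The first two terms sum to $\langle u,x\rangle[v,\theta]-\langle v,x\rangle[u,\theta]$. These vanish in $\Gr^W_{-3}\p$, because $\theta=\sum_r[a_r,b_r]$ is the defining relation of $\Gr^W_\bullet\p\cong\L(H)/(\theta)$.

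The remaining two terms combine into $\bigl[\sum_r(\langle a_r,x\rangle b_r-\langle b_r,x\rangle a_r),\,[u,v]\bigr]$. The key step is the identity
\[
\sum_r\bigl(\langle a_r,x\rangle b_r-\langle b_r,x\rangle a_r\bigr)=x ,
\]
which expresses $x$ in the symplectic basis. I check it on basis vectors using $\langle a_r,b_s\rangle=\delta_{rs}$. For $x=a_s$, only the term $-\langle b_s,a_s\rangle a_s=a_s$ survives. For $x=b_s$, only $\langle a_s,b_s\rangle b_s=b_s$ survives. Hence
\[
\phi(\alpha\theta)(x)=[x,[u,v]]=-[[u,v],x]=-\ad([\alpha])(x),
\]
and therefore $\phi(s_\Lambda(\alpha))=\frac1{g+1}\ad([\alpha])$. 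Innerness is then immediate.

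The main obstacle is not conceptual but bookkeeping of conventions. One must keep the sign conventions for $\ad$ and for the symplectic pairing consistent with those of \cite[\S6]{watanabe_sections}. One must also make sure that the class of $\alpha=u\wedge v$ in $\Gr^W_{-2}\p$ is $[u,v]$ under the identification of $\Lambda^2$ with brackets. I would fix these by checking one explicit case, say $\alpha=a_1\wedge b_1$ with $x=a_1$, against the conventions used in the cited formula for $\phi$. The only other points to verify are that $\phi$ kills $\theta^2$, which is given above, and that the $\contr(\alpha)$ term contributes nothing.
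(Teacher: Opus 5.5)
Your proposal is correct and follows the paper's proof essentially step by step: reduce by linearity to $\alpha=u\wedge v$, discard the $\theta^2$ term because $\phi(\theta^2)=0$, kill the $\langle u,x\rangle[v,\theta]-\langle v,x\rangle[u,\theta]$ terms by the surface relation, and use the symplectic-basis expansion of $x$ to obtain $\phi(\alpha\theta)(x)=[x,\alpha]$. Your sign convention $\ad(y)(x)=[y,x]$ is the one the paper uses, so the extra convention check you propose is unnecessary.
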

\begin{proof}
It suffices by linearity to take $\alpha=u\wedge v$. From the
defining formula for $\phi$,
\[
\begin{aligned}
\phi(\alpha\theta)(x)
={}&
\langle u,x\rangle[v,\theta]
-\langle v,x\rangle[u,\theta]\\
&+
\sum_{i=1}^g
\left(
\langle a_i,x\rangle[b_i,[u,v]]
-\langle b_i,x\rangle[a_i,[u,v]]
\right).
\end{aligned}
\]
The first two terms vanish in
$\Gr^W_\bullet\p\cong\L(H)/(\theta)$. Using the symplectic-basis
identity
\[
x=\sum_{i=1}^g
\bigl(
\langle a_i,x\rangle b_i
-\langle b_i,x\rangle a_i
\bigr),
\]
the remaining terms sum to $[x,[u,v]]$. Thus
\[
\phi(\alpha\theta)(x)=[x,\alpha].
\]
Since $\phi(\theta^2)=0$, the formula for $s_\Lambda$ now gives
\[
\phi(s_\Lambda(\alpha))(x)
=-\frac1{g+1}[x,\alpha]
=\frac1{g+1}\ad([\alpha])(x).
\]
\end{proof}

The preceding lemma separates the inner-derivation part from the
$V(2\lambda_2)$-part.
\begin{lemma}\label{lem:V2lambda2-projection}
For every $\beta\in S^2(\Lambda^2H)$,
\[
\operatorname{pr}_{V(2\lambda_2)}\phi(\beta)
=\phi\bigl(\beta-s_\Lambda\pi_\Lambda(\beta)\bigr).
\]
\end{lemma}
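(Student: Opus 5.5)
The identity follows from decomposing $\beta$ along the splitting $\pi_\Lambda s_\Lambda=\id$ and checking that each piece lands where it should. Write
\[
\beta=\bigl(\beta-s_\Lambda\pi_\Lambda(\beta)\bigr)+s_\Lambda\pi_\Lambda(\beta).
\]
By \eqref{eq:pi-splitting}, the first summand lies in $\ker\pi_\Lambda$, since $\pi_\Lambda(\beta-s_\Lambda\pi_\Lambda\beta)=\pi_\Lambda\beta-\pi_\Lambda\beta=0$. Applying $\phi$ gives
\[
\phi(\beta)=\phi\bigl(\beta-s_\Lambda\pi_\Lambda(\beta)\bigr)+\phi\bigl(s_\Lambda\pi_\Lambda(\beta)\bigr).
\]
By Lemma~\ref{lem:sLambda-inner}, the second term equals $\frac1{g+1}\ad([\pi_\Lambda\beta])$, so it lies in the summand $\ad(\Gr^W_{-2}\p)\cong\Lambda^2_0H$ of the decomposition $\Gr^W_{-2}\Der\p\cong V(2\lambda_2)\oplus\ad(\Gr^W_{-2}\p)$. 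Its $V(2\lambda_2)$-projection is therefore zero.

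It remains to show that $\phi(\ker\pi_\Lambda)$ lies in the $V(2\lambda_2)$-summand. The map $\phi$ is $\Sp(H)$-equivariant and vanishes on $\Lambda^4H\subseteq S^2(\Lambda^2H)$. This requires $\Lambda^4H\subseteq\ker\pi_\Lambda$, which is implicit in the quotient $\ker\pi_\Lambda/\Lambda^4H$ of \eqref{eq:pi-splitting}. Hence $\phi|_{\ker\pi_\Lambda}$ factors through $\ker\pi_\Lambda/\Lambda^4H\cong V(2\lambda_2)$. By Schur's lemma, an equivariant image of the irreducible module $V(2\lambda_2)$ lies in the $V(2\lambda_2)$-isotypic component of $\Gr^W_{-2}\Der\p$. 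That component is exactly the $V(2\lambda_2)$-summand, because $\Lambda^2_0H\cong V(\lambda_2)$ is not isomorphic to $V(2\lambda_2)$. Thus the first term is its own $V(2\lambda_2)$-projection, which proves the lemma.

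The only delicate point is the containment $\Lambda^4H\subseteq\ker\pi_\Lambda$. I would verify it directly from the formula for $\pi_\Lambda$, as a check independent of \cite{watanabe_sections}. The natural image of $u_1\wedge v_1\wedge u_2\wedge v_2$ is the alternating sum of the three pairings $(u_1\wedge v_1)(u_2\wedge v_2)$, $-(u_1\wedge u_2)(v_1\wedge v_2)$, and $(u_1\wedge v_2)(v_1\wedge u_2)$. Applying $\pi_\Lambda$ to this sum, the contraction terms should cancel pairwise by antisymmetry; that cancellation is a routine computation. Equivariance of $\pi_\Lambda$ gives an alternative route: $\Lambda^4H$ contains $\Lambda^4_0H\cong V(\lambda_4)$, which cannot map nontrivially to $\Lambda^2H\cong V(\lambda_2)\oplus\Q$. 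The remaining trace parts, $\theta\wedge\Lambda^2_0H$ and $\theta^2$, still need the explicit check.
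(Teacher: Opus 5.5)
Your proposal is correct and follows the paper's proof. You split $\beta$ as $(\beta-s_\Lambda\pi_\Lambda\beta)+s_\Lambda\pi_\Lambda\beta$, place the image of the first piece in the $V(2\lambda_2)$-summand using \eqref{eq:pi-splitting} and the vanishing of $\phi$ on $\Lambda^4H$, and use Lemma~\ref{lem:sLambda-inner} to see that the second piece is inner.

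Your Schur-lemma step and the check that $\Lambda^4H\subseteq\ker\pi_\Lambda$ only make explicit what the paper leaves implicit. That containment does hold. The cancellation is not pairwise, though: each contraction monomial receives contributions $-1,\tfrac12,\tfrac12$ from the three pairings, and these sum to zero.
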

\begin{proof}
By \eqref{eq:pi-splitting},
\[
\beta-s_\Lambda\pi_\Lambda(\beta)\in\ker\pi_\Lambda.
\]
Modulo $\Lambda^4H$, this kernel is $V(2\lambda_2)$, and $\phi$
vanishes on $\Lambda^4H$. Hence
$\phi(\beta-s_\Lambda\pi_\Lambda(\beta))$ belongs to the
$V(2\lambda_2)$-summand of
\[
\Gr^W_{-2}\Der\p
\cong
V(2\lambda_2)\oplus\ad(\Gr^W_{-2}\p).
\]
On the other hand, Lemma~\ref{lem:sLambda-inner} shows that
$\phi(s_\Lambda\pi_\Lambda(\beta))$ is inner. The asserted
formula follows from
\[
\phi(\beta)
=\phi\bigl(\beta-s_\Lambda\pi_\Lambda(\beta)\bigr)
+
\phi\bigl(s_\Lambda\pi_\Lambda(\beta)\bigr).
\]
\end{proof}

We apply this projection formula to $\beta=\theta_A^2$.
\begin{lemma}\label{lem:thetaA-projection}
One has
\[
\pi_\Lambda(\theta_A^2)=-(2h+1)\theta_A.
\]
Consequently,
\begin{equation}\label{eq:fixed-h-alt-component}
\operatorname{pr}_{V(2\lambda_2)}\phi(\theta_A^2)
=\phi\left(
\theta_A^2-\frac{2h+1}{g+1}\theta_A\theta
\right).
\end{equation}
\end{lemma}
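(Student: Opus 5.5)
The plan is a direct computation of $\pi_\Lambda(\theta_A^2)$ from the defining formula, followed by Lemma~\ref{lem:V2lambda2-projection}. Expand
\[
\theta_A^2=\sum_{r,s=1}^h(a_r\wedge b_r)(a_s\wedge b_s)
\]
as a sum over ordered pairs $(r,s)$. For each term, apply the formula for $\pi_\Lambda$ with $u_1=a_r$, $v_1=b_r$, $u_2=a_s$, $v_2=b_s$, and evaluate the six pairings using $\langle a_r,b_s\rangle=\delta_{rs}$ and $\langle a_r,a_s\rangle=\langle b_r,b_s\rangle=0$.

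The two unscaled terms contribute as follows. Since $\langle u_1,v_1\rangle=1$ and $v_2\wedge u_2=b_s\wedge a_s=-a_s\wedge b_s$, the first gives $-a_s\wedge b_s$. Since $\langle v_2,u_2\rangle=-1$, the second gives $-a_r\wedge b_r$. In the bracketed half-term, the pairings $\langle u_1,u_2\rangle$ and $\langle v_2,v_1\rangle$ vanish. The term with $\langle u_1,v_2\rangle=\delta_{rs}$ gives $\delta_{rs}\,b_r\wedge a_s$. The term with $\langle v_1,u_2\rangle=-\delta_{rs}$ gives $-\delta_{rs}\,a_r\wedge b_s$. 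Thus for $r=s$ the bracket equals $-2a_r\wedge b_r$, and after the factor $\frac12$ it contributes $-a_r\wedge b_r$.

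Now sum over all $h^2$ ordered pairs. The first term gives $-h\theta_A$, since each $s$ occurs $h$ times. The second gives $-h\theta_A$ likewise. The diagonal half-terms give $-\theta_A$. This yields $\pi_\Lambda(\theta_A^2)=-(2h+1)\theta_A$.

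For the consequence, I would compute
\[
s_\Lambda\bigl(-(2h+1)\theta_A\bigr)
=\frac{2h+1}{g+1}\theta_A\theta
-\frac{(2h+1)\contr(\theta_A)}{(g+1)(2g+1)}\theta^2,
\]
where $\contr(\theta_A)=\sum_{r\leq h}\langle a_r,b_r\rangle=h$. By Lemma~\ref{lem:V2lambda2-projection},
\[
\operatorname{pr}_{V(2\lambda_2)}\phi(\theta_A^2)=\phi\bigl(\theta_A^2-s_\Lambda\pi_\Lambda(\theta_A^2)\bigr).
\]
Since $\phi(\theta^2)=0$, the $\theta^2$ term drops out after applying $\phi$, leaving \eqref{eq:fixed-h-alt-component}.

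The only real obstacle is bookkeeping. One must use the sign conventions of $\pi_\Lambda$ exactly as transcribed, and count $\theta_A^2$ over ordered pairs consistently with how the product in $S^2(\Lambda^2H)$ is normalized in \cite{watanabe_sections}. I would check the normalization on the case $h=g$: there the answer should read $\pi_\Lambda(\theta^2)=-(2g+1)\theta$. This is consistent with $\pi_\Lambda s_\Lambda=\id$ applied to $\alpha=\theta$, since $\theta\cdot\theta=\theta^2$ and $\contr(\theta)=g$:
\[
s_\Lambda(\theta)=\Bigl(-\frac{1}{g+1}+\frac{g}{(g+1)(2g+1)}\Bigr)\theta^2=-\frac{1}{2g+1}\theta^2.
\]
Hence $\pi_\Lambda(\theta^2)=-(2g+1)\theta$, confirming the sign and the factor.
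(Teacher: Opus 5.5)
Your proposal is correct and follows the paper's proof. The paper also evaluates $\pi_\Lambda$ from its defining formula, obtaining $-3\theta_r$ on the diagonal terms $\theta_r^2$ and $-(\theta_r+\theta_s)$ on the cross terms $2\theta_r\theta_s$; this is the same computation as your sum over ordered pairs. It then applies Lemma~\ref{lem:V2lambda2-projection} with $\contr(\theta_A)=h$ and $\phi(\theta^2)=0$ exactly as you do, and your $h=g$ consistency check against $\pi_\Lambda s_\Lambda=\id$ is a useful extra that the paper omits.
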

\begin{proof}
Set $\theta_r=a_r\wedge b_r$. Directly from the formula for
$\pi_\Lambda$,
\[
\pi_\Lambda(\theta_r^2)=-3\theta_r,
\qquad
\pi_\Lambda(\theta_r\theta_s)
=-(\theta_r+\theta_s)
\quad(r\neq s).
\]
Since $\theta_A=\sum_{r=1}^h\theta_r$, an immediate expansion gives
\[
\theta_A^2
=\sum_{r=1}^h\theta_r^2
+
2\sum_{1\le r<s\le h}\theta_r\theta_s.
\]
Using the preceding formulas for $\pi_\Lambda$, a direct computation
therefore yields
\begin{align*}
\pi_\Lambda(\theta_A^2)
&=
\sum_{r=1}^h\pi_\Lambda(\theta_r^2)
+
2\sum_{1\le r<s\le h}
\pi_\Lambda(\theta_r\theta_s)\\
&=
-3\theta_A
-
2\sum_{1\le r<s\le h}(\theta_r+\theta_s)\\
&=
-3\theta_A-2(h-1)\theta_A\\
&=
-(2h+1)\theta_A.
\end{align*}

Since $\contr(\theta_A)=h$,
\[
s_\Lambda(\theta_A)
=-\frac1{g+1}\theta_A\theta
+\frac{h}{(g+1)(2g+1)}\theta^2.
\]
Therefore
\[
\theta_A^2-s_\Lambda\pi_\Lambda(\theta_A^2)
=\theta_A^2
-\frac{2h+1}{g+1}\theta_A\theta
+\frac{h(2h+1)}{(g+1)(2g+1)}\theta^2.
\]
Lemma~\ref{lem:V2lambda2-projection}, together with
$\phi(\theta^2)=0$, now gives
\eqref{eq:fixed-h-alt-component}.
\end{proof}

Set
\[
\xi_h
:=
\operatorname{pr}_{V(2\lambda_2)}\phi(\theta_A^2).
\]
By Lemma~\ref{lem:thetaA-projection},
\[
\xi_h
=\phi\left(
\theta_A^2-\frac{2h+1}{g+1}\theta_A\theta
\right).
\]
For $h=1$, the coefficient $(2h+1)/(g+1)=3/(g+1)$ agrees with
the computation in the proof of
\cite[Thm.~6.14]{watanabe_sections}.

\begin{proposition}\label{prop:fixed-h-unpointed}
For every $1\leq h\leq g-1$, $\xi_h\neq0$ and
$V(2\lambda_2)\subseteq I_h$.
\end{proposition}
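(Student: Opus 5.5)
The plan is to exhibit one element of $N_h$ whose initial form has a nonzero $V(2\lambda_2)$-component. Lemma~\ref{lem:image-Kgn-Sp-submodule} and the isotypic splitting of $I_h$ then show that $I_h\cap V(2\lambda_2)$ is a nonzero $\Sp(H)$-submodule of the irreducible module $V(2\lambda_2)$, so it is all of $V(2\lambda_2)$. The candidate is the hyperelliptic twist $T_{c_\varnothing}\in K_{g,n}$ constructed above. It bounds the genus-$h$ subsurface $A$, which contains no markings, so $T_{c_\varnothing}\in N_h$.

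\textbf{Step 1: $\xi_h\neq0$.} By Lemma~\ref{lem:thetaA-projection}, $\xi_h=\phi(\theta_A^2)-c\,\phi(\theta_A\theta)$ with $c=(2h+1)/(g+1)\neq0$. I will evaluate both terms on a vector $x\in H_B$, for instance $x=a_g$, which exists because $h\leq g-1$.

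In the defining formula for $\phi(\theta_A^2)$, every coefficient has the form $\langle a_r,x\rangle$ or $\langle b_r,x\rangle$ with $r\leq h$. These pairings vanish for $x\in H_B$, so $\phi(\theta_A^2)(x)=0$.

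For the second term, the computation in the proof of Lemma~\ref{lem:sLambda-inner}, applied to $\alpha=\theta_A$ by linearity, gives $\phi(\theta_A\theta)(x)=[x,\theta_A]$. Hence $\xi_h(a_g)=-c\,[a_g,\theta_A]$.

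It remains to show that $[a_g,\theta_A]\neq0$ in $\Gr^W_{-3}\p=\L_3(H)/[H,\theta]$. Suppose $[a_g,\theta_A]=[y,\theta]$ in $\L_3(H)$ for some $y\in H$. I would compare coefficients in a Hall basis of the free Lie algebra. The brackets $[y,[a_r,b_r]]$ with $r>h$, $r\neq g$, appear on the right but not on the left, which forces $y=0$. This contradicts $[a_g,\theta_A]\neq0$ in the free Lie algebra. If $g=h+1$, so that no such index $r$ is available, I would instead compare the terms $[\,\cdot\,,[a_g,b_g]]$ in the same way.

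\textbf{Step 2: transport to $\u_{g,n}$.} For $n\geq1$, pick $j$ and apply $F_j$. The point $x_j$ lies outside $A$, as does the Weierstrass point $q$. A homeomorphism isotopic to the identity on $\Sigma_g$ carries $x_j$ to $q$ while fixing $c_\varnothing$, so $F_j(T_{c_\varnothing})$ is conjugate in $\G_{g,1}$ to $D_q$. By Lemma~\ref{lem:image-Kgn-Sp-submodule}, $f_j(\operatorname{in}_{-2}(T_{c_\varnothing}))$ then lies in the $\Sp(H)$-orbit of $\operatorname{in}_{-2}(D_q)$.

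Lemma~\ref{lem:second-johnson-adjoint-compatibility} and \eqref{eq:fixed-h-alt-adjoint} show that $(\Gr^W_{-2}\ad)\circ f_j$ sends $\operatorname{in}_{-2}(T_{c_\varnothing})$ to an $\Sp(H)$-translate of $\tfrac12\phi(\theta_A^2)$. By Step 1 the $V(2\lambda_2)$-component of this element is nonzero.

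Both $f_j$ and $\Gr^W_{-2}\ad$ are $\Sp(H)$-equivariant, so they map isotypic components to isotypic components. Therefore the $V(2\lambda_2)$-component of $\operatorname{in}_{-2}(T_{c_\varnothing})$ is nonzero.

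For $n=0$, I would run the same argument through $\u_{g,1}\to\u_g$, since the $V(2\lambda_2)$-summand maps isomorphically. Equivalently, one can compose with $\Gr^W_{-2}\u_g\to\Gr^W_{-2}\OutDer\p$, under which the $V(2\lambda_2)$-part is unchanged modulo inner derivations.

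\textbf{Main obstacle.} The delicate point is the non-vanishing of $[a_g,\theta_A]$ modulo the ideal generated by $\theta$ in degree $3$, together with the conjugation identification $F_j(T_{c_\varnothing})\sim D_q$ compatible with the chosen base-point identifications. I would handle the former by the explicit Hall-basis comparison described in Step 1. I would handle the latter by noting, as in the paper, that the identifications of associated graded Lie algebras are independent of isotopic choices.
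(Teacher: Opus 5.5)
Your proposal is correct and follows the paper's plan. You evaluate $\xi_h$ on a vector of $H_B$, where $\phi(\theta_A^2)$ vanishes, show that the resulting bracket with $\theta_A$ survives modulo $\theta$, transport this nonvanishing to the $V(2\lambda_2)$-component of $d_{\varnothing}$ through the Johnson compatibility lemma, and conclude by irreducibility and multiplicity one. The differences are only local: the paper proves $[a_{h+1},\theta_A]\neq0$ with a Lie homomorphism $\L(H)\to\L(X,Y)$ that kills $\theta$, where you use a multidegree comparison (which works, and the $[\,\cdot\,,[a_g,b_g]]$ comparison alone already forces $y=0$ for every $h$), and the paper transports through $f^{(n)}_{\mathrm{all}}$ and $\Gr^W_{-2}\OutDer\p$ uniformly in $n$ rather than through $F_j$ and your conjugacy $F_j(T_{c_\varnothing})\sim D_q$.
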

\begin{proof}
By \cite[Lem.~6.1]{watanabe_sections},
\[
\phi(\theta_A^2)(x)=
\begin{cases}2[x,\theta_A],&x\in H_A,\\0,&x\in H_B.
\end{cases}
\]
Together with $\phi(\theta_A\theta)(x)=[x,\theta_A]$, this gives
\[
\xi_h(x)=\begin{cases}
\dfrac{2(g-h)+1}{g+1}[x,\theta_A],&x\in H_A,\\[6pt]
-\dfrac{2h+1}{g+1}[x,\theta_A],&x\in H_B.
\end{cases}
\]
In particular, for $j=h+1$,
\begin{equation}\label{eq:fixed-h-alt-evaluation}
\xi_h(a_j)=-\frac{2h+1}{g+1}[a_j,\theta_A].
\end{equation}
To see that this bracket is nonzero modulo the surface relation,
define $F:\L(H)\to\L(X,Y)$ by
\[
F(a_1)=X,\quad F(b_1)=Y,\quad F(a_j)=X,\quad F(b_j)=-Y,
\]
and send every other basis vector to zero. Since
$F(\theta)=[X,Y]+[X,-Y]=0$, this map descends to
$\L(H)/(\theta)=\Gr^W_\bullet\p$. Since $1\leq h<j$, it satisfies
\[
F(\theta_A)=[X,Y],\qquad
F([a_j,\theta_A])=[X,[X,Y]].
\]
The latter is nonzero: its image in the free associative algebra is
$X^2Y-2XYX+YX^2$. Thus $[a_j,\theta_A]\neq0$, and
\eqref{eq:fixed-h-alt-evaluation} proves that $\xi_h\neq0$.

Recall that $c_{\varnothing}$ bounds the genus-$h$ subsurface $A$
and contains no marked points on its chosen side. Hence
$T_{c_{\varnothing}}\in N_h\subseteq K_{g,n}$, and we set
\[
d_{\varnothing}
:=
\operatorname{in}_{-2}(T_{c_{\varnothing}})
\in I_h
\subseteq
\Gr^W_{-2}\u_{g,n}.
\]

For $m\geq0$, let
\[
f_{\mathrm{all}}^{(m)}:
\Gr^W_{-2}\u_{g,m}\longrightarrow\Gr^W_{-2}\u_g
\]
be the map induced by forgetting all marked points. Under the forgetful maps,
$T_{c_{\varnothing}}\in K_{g,n}$ and
$D_q\in K_{g,1}$ both map to the same unpointed twist
$T_{c_{\varnothing}}\in K_g$. Therefore
\[
f_{\mathrm{all}}^{(n)}(d_{\varnothing})
=
f_{\mathrm{all}}^{(1)}
\bigl(\operatorname{in}_{-2}(D_q)\bigr).
\]

The action of the pointed mapping class group on the surface group
becomes an outer action after the marked point is forgotten. Thus
the map to derivations induces an $\Sp(H)$-equivariant homomorphism
\[
\beta_n:
\Gr^W_{-2}\u_{g,n}
\xrightarrow{\,f_{\mathrm{all}}^{(n)}\,}
\Gr^W_{-2}\u_g
\longrightarrow
\Gr^W_{-2}\OutDer\p
\cong V(2\lambda_2).
\]
By \eqref{eq:fixed-h-alt-adjoint} and the definition of $\xi_h$,
\[
\beta_n(d_{\varnothing})
=\frac12\,
\operatorname{pr}_{V(2\lambda_2)}
\bigl(\phi(\theta_A^2)\bigr)
=\frac12\,\xi_h
\neq0.
\]
Since $\beta_n$ is $\Sp(H)$-equivariant and its target is the
irreducible module $V(2\lambda_2)$, it vanishes on the
$V(\lambda_2)$- and $\Q(1)$-isotypical summands of $\Gr^W_{-2}\u_{g,n}$.
Hence the $V(2\lambda_2)$-component of $d_{\varnothing}$ is nonzero.
Because $I_h$ is an $\Sp(H)$-submodule and
$V(2\lambda_2)$ occurs in $\Gr^W_{-2}\u_{g,n}$ with multiplicity one, we conclude that
\[
V(2\lambda_2)\subseteq I_h.
\]
\end{proof}

%%%%%%%%%%%%%%%%%%%%%%%%%%%%%%%%%%%%%%
\subsection{The marked-point summands}
\label{subsec:marked-point-summands}
%%%%%%%%%%%%%%%%%%%%%%%%%%%%%%%%%%%%%%

Suppose $n\geq1$, and fix a marked point $x_i$.

Recall that $A$ is a connected genus-$h$ subsurface with one boundary
component $c_{\varnothing}$ and contains no marked points. Choose a
small closed disk $U_i\subset \Sigma_g\setminus A$ whose interior
contains $x_i$ and no other marked point. Since the complement of $A$
is connected, we can choose an embedded arc joining $\partial A$ to
$\partial U_i$, with interior disjoint from $A$, $U_i$, and all marked
points. Thicken this arc to a narrow rectangular band meeting $A$ and
$U_i$ along one boundary interval each and avoiding all other markings.

Let $A_i$ be a small closed regular neighborhood of the union of $A$,
the band, and $U_i$, chosen so that
$A\subset\operatorname{int}(A_i)$ and $A_i$ contains no marked point
other than $x_i$. The band joins the boundary circle of $A$ to
$\partial U_i$, leaving one boundary component; denote it by $c_i$.
Attaching the band decreases Euler characteristic by one, so
\[
\chi(A_i)
=\chi(A)+\chi(U_i)-1
=(1-2h)+1-1
=1-2h.
\]
As $A_i$ is connected and orientable with one boundary component, it
therefore has genus $h$.

The region $A_i\setminus\operatorname{int}(A)$ is connected by
construction: it consists of a collar of $\partial A$ enlarged by the
band and the disk $U_i$. Its boundary is
$c_{\varnothing}\sqcup c_i$. Since
$A_i=A\cup\bigl(A_i\setminus\operatorname{int}(A)\bigr)$ and their
intersection is $\partial A$, which has Euler characteristic zero,
additivity of Euler characteristic gives
\[
\chi\bigl(A_i\setminus\operatorname{int}(A)\bigr)
=\chi(A_i)-\chi(A)=0.
\]
A compact connected orientable surface with two boundary components and
Euler characteristic zero has genus zero and is therefore an annulus.
Thus $c_{\varnothing}$ and $c_i$ bound an annulus in the underlying
surface whose interior contains exactly the marked point $x_i$.

After forgetting $x_i$, this annulus contains no marked points, so an
isotopy supported in a neighborhood of it carries
$c_{\varnothing}$ to $c_i$ while fixing all remaining marked points.

The chosen sides of $c_{\varnothing}$ and $c_i$, represented by $A$
and $A_i$, respectively, both have genus $h$ and carry the marking sets
$\varnothing$ and $\{i\}$. Both twists therefore belong to $N_h$ by
its definition.
Using the additivity of initial forms from
Subsection~\ref{subsec:weight-minus-two-decomposition}, set
\[
e_i:=\operatorname{in}_{-2}
\bigl(T_{c_i}T_{c_{\varnothing}}^{-1}\bigr)
=d_{\{i\}}-d_{\varnothing}\in I_h.
\]
% Set
% \[
% E_i=\log\bigl(T_{c_i}T_{c_{\varnothing}}^{-1}\bigr)
% \in W_{-2}\mathfrak u_{g,n},
% \]
% where the logarithm is taken in the relative completion. Define $e_i$ to be the weight-$-2$ initial form of $E_i$:
% \[
% e_i:=\operatorname{in}_{-2}\bigl(T_{c_i}T_{c_{\varnothing}}^{-1}\bigr)
% =d_{\{i\}}-d_{\varnothing}\in I_h.
% \]
% Indeed, the Baker--Campbell--Hausdorff formula gives
% $E_i\equiv \log T_{c_i}-\log T_{c_{\varnothing}}\pmod{W_{-4}}$, since both logarithms lie in $W_{-2}\mathfrak u_{g,n}$. Passing to $\Gr^W_{-2}\mathfrak u_{g,n}$ therefore gives
% $e_i=d_{\{i\}}-d_{\varnothing}$.

For each $j$, let
\(
f_j:\Gr^W_{-2}\u_{g,n}\longrightarrow
\Gr^W_{-2}\u_{g,1}
\)
be the forgetful map of
Lemma~\ref{lem:second-johnson-adjoint-compatibility}. If $j\neq i$, then forgetting all
markings except $x_j$ in particular forgets $x_i$, and hence
$c_{\varnothing}$ and $c_i$ become isotopic. The same is true after
forgetting all markings. Consequently,
\begin{equation}\label{eq:fixed-h-other-markings}
f_j(e_i)=0\quad(j\neq i),\qquad
f_{\mathrm{all}}^{(n)}(e_i)=0
\quad\text{in }\Gr^W_{-2}\u_g.
\end{equation}

\begin{proposition}\label{prop:fixed-h-marked}
For every $i$, $V(\lambda_2)^{(i)}\subseteq I_h$.
\end{proposition}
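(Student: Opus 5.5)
The plan is to show that the $V(\lambda_2)$-isotypical component of $e_i$ is a nonzero element of the summand $V(\lambda_2)^{(i)}$. Then irreducibility and Lemma~\ref{lem:image-Kgn-Sp-submodule} give $V(\lambda_2)^{(i)}\subseteq I_h$.

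First, decompose $e_i\in I_h$ according to \eqref{eq:wt2}. By the isotypical splitting of $I_h$ recorded after Lemma~\ref{lem:image-Kgn-Sp-submodule}, its $V(\lambda_2)$-isotypical part $e_i'$ lies in $I_h$. By \eqref{eq:fixed-h-other-markings}, $f_j(e_i)=0$ for $j\neq i$. The point-forgetting maps act on the summands $V(\lambda_2)^{(r)}$ as coordinate projections, and they are $\Sp(H)$-equivariant, so they preserve isotypical components. Hence $f_j(e_i')=0$ for $j\neq i$, and therefore $e_i'\in V(\lambda_2)^{(i)}$. It remains to show $e_i'\neq0$. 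Equivalently, we must show that $f_i(e_i)$ has nonzero $V(\lambda_2)$-component in $\Gr^W_{-2}\u_{g,1}\cong V(2\lambda_2)\oplus V(\lambda_2)$.

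For this, apply $\Gr^W_{-2}\ad$ and Lemma~\ref{lem:second-johnson-adjoint-compatibility}. This gives
\[
(\Gr^W_{-2}\ad)(f_i(e_i))=\tau_2\bigl(F_i(T_{c_i})\bigr)-\tau_2\bigl(F_i(T_{c_\varnothing})\bigr),
\]
a difference of two pointed second Johnson images. In $\Sigma_{g,1}$ with the single point $x_i$, the curve $c_\varnothing$ bounds $A$ with the point on the genus-$(g-h)$ side. The curve $c_i$ bounds $A_i\ni x_i$, so from the point's viewpoint the genus-$h$ side now contains it. By the formula already used for $D_q$ (Watanabe, Prop.~6.2; Luo--Watanabe, proof of Thm.~9.1 and Lem.~9.3), a separating twist whose bounded side not containing the base point has genus $h$ and homology $H_A$ has $\tau_2$ equal to $\frac12\phi(\theta_A^2)$. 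When the base point lies on the $H_A$ side, the image is $\frac12\phi(\theta_B^2)$ instead. Hence the difference is $\frac12\phi(\theta_B^2-\theta_A^2)$. The $V(2\lambda_2)$-parts agree, as they must, since both twists map to the same element of $K_g$. So the difference is inner. By Lemmas~\ref{lem:sLambda-inner} and~\ref{lem:thetaA-projection} (and their analogue for $B$, with $h$ replaced by $g-h$), it equals
\[
\tfrac{1}{2(g+1)}\ad\bigl((2h+1)\theta_A-(2(g-h)+1)\theta_B\bigr)
\]
up to sign. Its image in $\Lambda^2_0H=\Lambda^2H/\langle\theta\rangle$ is a multiple of $\theta_A$ with coefficient proportional to $(2h+1)+(2(g-h)+1)=2g+2\neq0$. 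Since $\theta_A\notin\langle\theta\rangle$ for $1\leq h\leq g-1$, this class is nonzero. The map $\ad:\Gr^W_{-2}\p\to\Gr^W_{-2}\Der\p$ is injective on $\Lambda^2_0H$ because $\p$ is center-free in this weight. So $f_i(e_i)$ has nonzero $V(\lambda_2)$-component.

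The main obstacle is the identification of $\tau_2$ for the two pointed twists, in particular the orientation of sides and the sign in the formula when the base point lies inside the genus-$h$ side. I would first try to verify this directly from the definition of $\tau_2$ on a symplectic basis adapted to $H_A\oplus H_B$. As a fallback, I would check only that the inner part is nonzero, by evaluating on some $x\in H_B$ and using the free-Lie-algebra map $F$ from the proof of Proposition~\ref{prop:fixed-h-unpointed}.
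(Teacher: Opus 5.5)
Your proposal is correct, and its skeleton is the paper's: isolate $e_i=d_{\{i\}}-d_{\varnothing}$ with the forgetful maps, apply $\Gr^W_{-2}\ad$ and Lemma~\ref{lem:second-johnson-adjoint-compatibility}, and show that the resulting difference of two pointed second Johnson images has nonzero $V(\lambda_2)$-part. You diverge at the nonvanishing step.

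The paper transports $F_i(T_{c_i})$ and $F_i(T_{c_\varnothing})$ to hyperelliptic realizations $\iota_{q_A}(D)$ and $\iota_{q_B}(D)$ at Weierstrass points on the two sides, which requires compatible identifications of pointed surfaces. It then imports \cite[Lem.~9.3]{LuoWatanabeCollino} and the calculation in the proof of \cite[Thm.~9.1]{LuoWatanabeCollino}, obtaining $4(g+1)[\theta_A]$ after the projection $\widetilde\pi_{\Lambda^2H}$.

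You stay in the single pointed surface $(\Sigma_g,x_i)$ and use the side-of-the-base-point formula, $\frac12\phi(\theta_A^2)$ versus $\frac12\phi(\theta_B^2)$. This formula is classical for arbitrary separating twists and can be checked from the definition of $\tau_2$, as you plan. You then let Lemmas~\ref{lem:sLambda-inner} and~\ref{lem:thetaA-projection} (with $h$ replaced by $g-h$ for $B$) compute the difference exactly as $\pm\ad([\theta_A])$. Your route is internal to the paper and gives the value without depending on the normalization of $\widetilde\pi_{\Lambda^2H}$. The paper's route instead reuses published hyperelliptic computations.

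The sign issue you flag is harmless. The global sign does not affect nonvanishing. The relative sign is forced by your own consistency check: both images must have the same nonzero $V(2\lambda_2)$-part $\frac12\xi_h$, so opposite signs would give $\pm\frac12\xi_h$, a contradiction. More directly still, $T_{c_i}T_{c_\varnothing}^{-1}$ is the push of $x_i$ around a loop in the annulus between the two curves, and this loop is freely homotopic to $c_\varnothing$. Hence $f_i(e_i)=\pm[\theta_A]$ in $\Gr^W_{-2}\p\cong\Lambda^2_0H$.

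Finally, your isolation step is slightly leaner than the paper's. You pass to the $V(\lambda_2)$-isotypical component and use that the $f_j$ act there as coordinate projections. The paper additionally invokes $f^{(n)}_{\mathrm{all}}(e_i)=0$, which is unnecessary once one works with that component.
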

\begin{proof}
Choose a hyperelliptic model in which the unpointed separating curve is
symmetric, and choose Weierstrass points $q_A$ and $q_B$ on its
genus-$h$ and genus-$(g-h)$ sides, respectively. As above, a circle
enclosing $2h+1$ branch points in the quotient sphere gives such a model
for every $h$. Denote the corresponding unpointed Dehn twist by $D\in T\Delta_g$.
Since $c_i$ and $c_{\varnothing}$ become isotopic after forgetting
$x_i$, both $T_{c_i}$ and $T_{c_{\varnothing}}$ map to $D$ after
forgetting all marked points.

The two one-pointed realizations correspond to $(c_i,x_i)$ and
$(c_{\varnothing},x_i)$, respectively. Choose the identifications
compatibly with the isotopy obtained after forgetting $x_i$, so that
both induce the chosen identification on $H$. Basepoint paths induce the
same identification on $\Gr^W_\bullet\p$: changing a basepoint path acts
by inner conjugation, which is trivial on the associated graded.

We study $f_i(e_i)$ using the natural action of the pointed
relative completion on $\p$.
% Via the natural action of the pointed relative completion on $\p$, we
% view $f_i(e_i)$ as an element of
% $\Gr^W_{-2}\Der\p$. 
Under the compatible identifications above, the two one-pointed
realizations of $D$ satisfy
\[
\iota_{q_A}(D)=F_i(T_{c_i}),
\qquad
\iota_{q_B}(D)=F_i(T_{c_{\varnothing}})
\quad\text{in }K_{g,1}.
\]
Here $F_i$ forgets all marked points except $x_i$, and the
compatible pointed identifications send $q_A$ and $q_B$,
respectively, to $x_i$.
By functoriality and additivity of initial forms,
\[
f_i(e_i)
=\operatorname{in}_{-2}\bigl(\iota_{q_A}(D)\bigr)
-\operatorname{in}_{-2}\bigl(\iota_{q_B}(D)\bigr)
\qquad\text{in }\Gr^W_{-2}\u_{g,1}.
\]
% Consequently, after forgetting all marked points except $x_i$,
% \[
% f_i(e_i)
% =\operatorname{in}_{-2}\bigl(
% \iota_{q_A}(D)\iota_{q_B}(D)^{-1}
% \bigr).
% \]
% Since both $\log\iota_{q_A}(D)$ and
% $\log\iota_{q_B}(D)$ lie in $W_{-2}\mathfrak u_{g,1}$, the
% Baker--Campbell--Hausdorff formula gives
% \[
% f_i(e_i)
% =\operatorname{in}_{-2}\bigl(\iota_{q_A}(D)\bigr)
% -
% \operatorname{in}_{-2}\bigl(\iota_{q_B}(D)\bigr)
% \qquad\text{in }\Gr^W_{-2}\mathfrak u_{g,1}.
% \]
Applying the natural action
\[
\Gr^W_{-2}\ad:
\Gr^W_{-2}\mathfrak u_{g,1}
\longrightarrow
\Gr^W_{-2}\Der\mathfrak p
\]
and using
Lemma~\ref{lem:second-johnson-adjoint-compatibility} for the two
realizations yields
\begin{equation}\label{eq:fixed-h-LW-compatibility}
(\Gr^W_{-2}\ad)f_i(e_i)
=\tau_{q_A}^{\hyp}(D)-\tau_{q_B}^{\hyp}(D)
\qquad\text{in }\Gr^W_{-2}\Der\mathfrak p.
\end{equation}
Let
\[
\widetilde\pi_{\Lambda^2H}:
\Gr^W_{-2}\Der\p
\longrightarrow\Lambda^2H/\Q\theta
\cong V(\lambda_2)
\]
be the $\Sp(H)$-equivariant projection of
\cite[\S6.5]{LuoWatanabeCollino}, and put
$\widetilde\tau_q^{\hyp}
=\widetilde\pi_{\Lambda^2H}\circ\tau_q^{\hyp}$.
By \cite[Lem.~9.3]{LuoWatanabeCollino}, the difference in
\eqref{eq:fixed-h-LW-compatibility} lies in the
$V(\lambda_2)$-summand of $\Gr^W_{-2}\Der\p$.
Applying $\widetilde\pi_{\Lambda^2H}$ gives
\[
\widetilde\pi_{\Lambda^2H}
\bigl((\Gr^W_{-2}\ad)f_i(e_i)\bigr)
=\widetilde\tau_{q_A}^{\hyp}(D)
-\widetilde\tau_{q_B}^{\hyp}(D).
\]
% Let
% \[
% \widetilde\pi_{\Lambda^2H}:
% \Gr^W_{-2}\Der\p\longrightarrow\Lambda^2H/\Q\theta
% \]
% be the map of \cite[\S6.5]{LuoWatanabeCollino}, and put
% $\widetilde\tau_q^{\hyp}
% =\widetilde\pi_{\Lambda^2H}\circ\tau_q^{\hyp}$.
% By \cite[Lem.~9.3]{LuoWatanabeCollino}, the difference
% \[
% \tau_{q_A}^{\hyp}(D)-\tau_{q_B}^{\hyp}(D)
% \]
% in \eqref{eq:fixed-h-LW-compatibility} lies in the
% $V(\lambda_2)$-summand of
% $\Gr^W_{-2}\Der\p$. Recall that
% \[
% \widetilde\pi_{\Lambda^2H}:
% \Gr^W_{-2}\Der\p
% \longrightarrow
% \Lambda^2H/\Q\theta
% \cong V(\lambda_2)
% \]
% is the $\Sp(H)$-equivariant projection onto this summand. Thus,
% applying $\widetilde\pi_{\Lambda^2H}$ to
% \eqref{eq:fixed-h-LW-compatibility} gives
% \[
% \widetilde\pi_{\Lambda^2H}
% \bigl((\Gr^W_{-2}\ad)f_i(e_i)\bigr)
% =\widetilde\tau_{q_A}^{\hyp}(D)
% -\widetilde\tau_{q_B}^{\hyp}(D).
% \]
The calculation in the proof of
\cite[Thm.~9.1]{LuoWatanabeCollino} gives
\[
\widetilde\tau_{q_B}^{\hyp}(D)
-\widetilde\tau_{q_A}^{\hyp}(D)
=2(2g+2)[\theta_B].
\]
Consequently,
\begin{equation}\label{eq:fixed-h-LW-nonvanishing}
\widetilde\pi_{\Lambda^2H}
\bigl((\Gr^W_{-2}\ad)f_i(e_i)\bigr)
=-4(g+1)[\theta_B]
=4(g+1)[\theta_A].
\end{equation}
We claim that $[\theta_A]\neq0$ in $\Lambda^2H/\Q\theta$.
Indeed, if $\theta_A=c\theta$, then, writing
$\theta=\theta_A+\theta_B$, comparison of the
$\Lambda^2H_A$-components gives $c=1$, whereas comparison of the
$\Lambda^2H_B$-components gives $c=0$, a contradiction.
% We claim that $[\theta_A]\neq0$ in $\Lambda^2H/\Q\theta$.
% Indeed, if $\theta_A=c\theta$, then, writing
% $\theta=\theta_A+\theta_B$, comparison of the
% $\Lambda^2H_A$-components gives $c=1$, whereas comparison of the
% $\Lambda^2H_B$-components gives $c=0$, a contradiction.
% Equivalently, under the primitive-subspace realization of
% $\Lambda^2H/\Q\theta$,
% \[
% [\theta_A]\longleftrightarrow
% \theta_A-\frac hg\theta
% =\frac{g-h}{g}\theta_A-\frac hg\theta_B\neq0.
% \]
It follows from \eqref{eq:fixed-h-LW-nonvanishing} that the
$V(\lambda_2)$-component of
$(\Gr^W_{-2}\ad)f_i(e_i)$, and hence the corresponding
$V(\lambda_2)$-component of $f_i(e_i)$, is nonzero.
The coefficient is determined by the normalization of
$\widetilde\pi_{\Lambda^2H}$.

Together, \eqref{eq:fixed-h-other-markings} and
\eqref{eq:fixed-h-LW-nonvanishing} show that the
$V(\lambda_2)$-component of $e_i$ vanishes in every marked-point
summand except the $i$th, where it is nonzero. Moreover, $f_{\mathrm{all}}^{(n)}(e_i)=0$ shows that $e_i$
has no $V(2\lambda_2)$-component.
Thus any remaining components of $e_i$ are Tate classes.
By semisimplicity, the $V(\lambda_2)$-isotypical component separates
from the Tate summands. Since $V(\lambda_2)$ is irreducible and the
$i$th component is nonzero, it follows that
\[
V(\lambda_2)^{(i)}\subseteq I_h.
\]
\end{proof}

Thus all marked-point summands are obtained from the two topological
types $(h,\varnothing)$ and $(h,\{i\})$.

%%%%%%%%%%%%%%%%%%%%%%%%%%%%%%%%%%%%%%
\subsection{The diagonal summands and the lantern relation}
\label{subsec:diagonal-summands}
%%%%%%%%%%%%%%%%%%%%%%%%%%%%%%%%%%%%%%

Suppose that $n\geq2$, and fix $1\leq i<j\leq n$. Let $\gamma_{ij}$ be the boundary of a small embedded disk whose interior contains exactly the marked points $x_i$ and $x_j$. Then $\gamma_{ij}$ is an essential separating curve in the marked surface, and therefore $T_{\gamma_{ij}}\in K_{g,n}$.

After the marked points are forgotten, $\gamma_{ij}$ becomes null-homotopic, so $T_{\gamma_{ij}}$ maps to the identity under the forgetful homomorphism $\G_{g,n}\to\G_g$. Hence $T_{\gamma_{ij}}\in\Pi_{g,n}\cap K_{g,n}$.

The Birman exact sequence identifies $\Pi_{g,n}\cong\pi_1^{\top}(F_n(C)^{\an},\mathbf x)$, where $F_n(C)=C^n-\bigcup_{r<s}\Delta_{rs}$ and $\mathbf x=(x_1,\ldots,x_n)$. Let $\mu_{ij}\in\pi_1^{\top}(F_n(C)^{\an},\mathbf x)$ be a based meridian about the diagonal $\Delta_{ij}$. It may be represented by the pure braid in which $x_i$ travels once around $x_j$, while all other marked points remain fixed. Under the point-pushing identification, $\mu_{ij}\mapsto T_{\gamma_{ij}}^{\pm1}$, where the sign depends on the orientation convention.

Near a point of $\Delta_{ij}$ lying on no other diagonal, a normal coordinate is $w=z_i-z_j$. A positive meridian is represented by $w(t)=\epsilon e^{2\pi\mathrm i t}$, $0\leq t\leq1$, with the other local coordinates fixed. We choose the orientations of twists and meridians compatibly. We will show below that the logarithm of $\mu_{ij}$, taken in the Malcev completion of $\pi_1^{\top}(F_n(C)^{\an},\mathbf x)$ with Lie algebra $\p_{g,n}$, lies in $W_{-2}\p_{g,n}$.

Choose a symplectic basis $a_1,\ldots,a_g,b_1,\ldots,b_g$ of $H$. For $X\in H$, write $X^{(r)}$ for $X$ in the $r$th copy of $\Gr^W_{-1}\p_{g,n}\cong H^{\oplus n}$. For $r<s$, set $q_{rs}=\sum_{k=1}^g[a_k^{(r)},b_k^{(s)}]$. Hain's presentation gives $\Q(1)_{rs}=\Q q_{rs}$ and an exact sequence
\[
0\longrightarrow\bigoplus_{r<s}\Q(1)_{rs}
\longrightarrow\Gr^W_{-2}\p_{g,n}
\longrightarrow\bigoplus_{r=1}^n\Gr^W_{-2}\p
\longrightarrow0;
\]
see \cite[Thm.~12.6]{hain_infpretor}.

We now determine the initial form of $\mu_{ij}$. The argument has two steps. First, coordinate- and pair-forgetting maps show that its initial form is supported only on $\Q(1)_{ij}$. We then use the normal bundle of the diagonal to prove that its component on this line is nonzero.

\begin{lemma}\label{lem:diagonal-meridian-support}
For each $i<j$, there is a scalar $\kappa_{ij}\in\Q$ such that
\[
\operatorname{in}_{-2}(\mu_{ij})=\kappa_{ij}q_{ij}.
\]
\end{lemma}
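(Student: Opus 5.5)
We first show that $\log\mu_{ij}$ lies in $W_{-2}\p_{g,n}$. We then project its weight $-2$ initial form onto each piece of the exact sequence above. The forgetful maps will kill every piece except the line $\Q(1)_{ij}$.

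\textbf{Weight $-2$.} For each $r$, let $p_r:F_n(C)\to C$ be the coordinate projection. The image $p_r(\mu_{ij})$ is trivial for $r\neq i$, since only $x_i$ moves. For $r=i$ it is null-homotopic, since the loop of $x_i$ around $x_j$ bounds a small disk in $C$ once $x_j$ is no longer removed. Recall that $\Gr^W_{-1}\p_{g,n}\cong H^{\oplus n}$ is detected by the maps $H_1(p_r)$. Hence $\log\mu_{ij}$ maps to zero in $\Gr^W_{-1}$ and lies in $W_{-2}\p_{g,n}$, so $\operatorname{in}_{-2}(\mu_{ij})$ is defined. This is compatible with its image in $\u_{g,n}$, namely the initial form of $T_{\gamma_{ij}}^{\pm1}$.

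\textbf{Killing the $\Lambda^2_0H$ components.} The surjection $\Gr^W_{-2}\p_{g,n}\to\bigoplus_r\Gr^W_{-2}\p$ in the sequence above is induced by the projections $p_r$. By functoriality of unipotent completion and of the weight filtration, the $r$th component of $\operatorname{in}_{-2}(\mu_{ij})$ equals $\operatorname{in}_{-2}(p_r(\mu_{ij}))$. This vanishes because $p_r(\mu_{ij})=1$ in $\pi_1(C)$. Therefore $\operatorname{in}_{-2}(\mu_{ij})$ lies in $\bigoplus_{r<s}\Q(1)_{rs}=\bigoplus\Q q_{rs}$.

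\textbf{Isolating the pair $\{i,j\}$.} For each pair $\{r,s\}\neq\{i,j\}$, let $p_{rs}:F_n(C)\to F_2(C)$ be the pair projection. The image $p_{rs}(\mu_{ij})$ is trivial when $i\notin\{r,s\}$. When $i\in\{r,s\}$ but $j\notin\{r,s\}$, it is null-homotopic in $F_2(C)$: the other point stays fixed away from the small loop, which bounds a disk avoiding it. The induced map $\Gr^W_{-2}\p_{g,n}\to\Gr^W_{-2}\p_{g,2}$ sends $q_{rs}$ to the generator $q_{12}$ of $\Q(1)_{12}$, and it sends every $q_{r's'}$ with $\{r',s'\}\neq\{r,s\}$ to zero. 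This follows from Hain's presentation, since $p_{rs}$ kills $H^{(t)}$ for $t\notin\{r,s\}$.

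The step I expect to need the most care is the last one. One must check that under $p_{rs}$ the brackets $[a_k^{(r')},b_k^{(s')}]$ with $\{r',s'\}\neq\{r,s\}$ really map to zero in $\Gr^W_{-2}\p_{g,2}$, and not to a multiple of $q_{12}$. On generators, $p_{rs}$ sends $X^{(t)}\mapsto0$ for $t\notin\{r,s\}$. So a term with $\{r',s'\}$ meeting $\{r,s\}$ in at most one index has a factor in a killed copy, and hence vanishes. Given that, the coefficient of $q_{rs}$ in $\operatorname{in}_{-2}(\mu_{ij})$ equals the $\Q(1)_{12}$-coordinate of $\operatorname{in}_{-2}(p_{rs}\mu_{ij})=0$. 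Only the $q_{ij}$ coefficient survives, which gives $\operatorname{in}_{-2}(\mu_{ij})=\kappa_{ij}q_{ij}$.
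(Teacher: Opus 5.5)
Your proposal is correct and follows the paper's proof essentially step for step. Vanishing under all coordinate projections puts $\log\mu_{ij}$ in $W_{-2}\p_{g,n}$ and its initial form in the diagonal kernel $\bigoplus_{r<s}\Q q_{rs}$; the pair-forgetting maps $F_n(C)\to F_2(C)$ then kill every coefficient except $\kappa_{ij}$, and your check that brackets with $\{r',s'\}\neq\{r,s\}$ vanish (one factor lies in a killed copy of $H$) is exactly what justifies the paper's displayed formula for $\Gr^W_{-2}(f_{rs})_*$ on the $q_{uv}$.
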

\begin{proof}
For each $r$, consider the coordinate projection $\mathrm{pr}_r:F_n(C)\to C$.
Represent $\mu_{ij}$ by the pure braid in which only the $i$th point
moves around $x_j$. If $r\neq i$, the chosen representative of $\mu_{ij}$ projects
under $\mathrm{pr}_r$ to a constant loop.
If $r=i$, it is a small circle about $x_j$, which is contractible in
$C$. Notice that the target is $C$, not $C-\{x_j\}$. Thus
\[
(\mathrm{pr}_r)_*(\mu_{ij})=1
\]
for every $r$.

By \cite[Prop.~2.1]{hain_infpretor}, the coordinate projections jointly
induce an isomorphism
\[
H_1(F_n(C),\Q)\xrightarrow{\sim}\bigoplus_{r=1}^nH_1(C,\Q).
\]
It follows that $[\mu_{ij}]=0$ in $H_1(F_n(C),\Q)$. Since
$\Gr^W_{-1}\p_{g,n}\cong H_1(F_n(C),\Q)$, the image of
$\log\mu_{ij}$ in $\Gr^W_{-1}\p_{g,n}$ vanishes. Therefore
\[
\log\mu_{ij}\in W_{-2}\p_{g,n}.
\]
Set
\[
t_{ij}:=\operatorname{in}_{-2}(\mu_{ij})
=[\log\mu_{ij}]
\in\Gr^W_{-2}\p_{g,n}
=W_{-2}\p_{g,n}/W_{-3}\p_{g,n}.
\]
Since $(\mathrm{pr}_r)_*(\mu_{ij})=1$ for every $r$, the class $t_{ij}$ is
killed by each induced map $\Gr^W_{-2}\p_{g,n}\to\Gr^W_{-2}\p$.
It therefore lies in the diagonal kernel
\[
\ker\left\{\Gr^W_{-2}\p_{g,n}\longrightarrow
\bigoplus_{r=1}^n\Gr^W_{-2}\p\right\}
=\bigoplus_{r<s}\Q(1)_{rs}=\bigoplus_{r<s}\Q q_{rs}.
\]
Consequently, we may write $t_{ij}=\sum_{u<v}\kappa_{uv}q_{uv}$.

Fix $r<s$, and let $f_{rs}:F_n(C)\to F_2(C)$ retain the $r$th and
$s$th coordinates. On the diagonal summands, the induced map satisfies
\[
\Gr^W_{-2}(f_{rs})_*(q_{uv})=
\begin{cases}
q_{12},&(u,v)=(r,s),\\
0,&(u,v)\neq(r,s).
\end{cases}
\]
Consequently,
\[
\Gr^W_{-2}(f_{rs})_*(t_{ij})=\kappa_{rs}q_{12}.
\]
If $(r,s)\neq(i,j)$, then $(f_{rs})_*(\mu_{ij})=1$.
Indeed, if the moving $i$th point is forgotten, the resulting loop is
constant. If the $i$th point is retained but the $j$th point is
forgotten, its small circle about $x_j$ contracts without meeting the
other retained point. Hence
\[
0=\Gr^W_{-2}(f_{rs})_*(t_{ij})=\kappa_{rs}q_{12}.
\]
Since $q_{12}\neq0$, it follows that $\kappa_{rs}=0$. Thus every coefficient
except possibly $\kappa_{ij}$ vanishes, and therefore
$\operatorname{in}_{-2}(\mu_{ij})=\kappa_{ij}q_{ij}$.
\end{proof}

Lemma~\ref{lem:diagonal-meridian-support} determines the only diagonal summand in which the initial form of $\mu_{ij}$ can lie. It remains to prove that its coefficient in that summand is nonzero. For this, we reduce to two points and identify the diagonal meridian with the tangent-fiber loop in a punctured tubular neighborhood of the diagonal.

\begin{lemma}\label{lem:diagonal-meridian-nonzero}
For every $i<j$,
\[
\operatorname{in}_{-2}(\mu_{ij})=\pm\frac1gq_{ij}.
\]
In particular, this initial form is nonzero.
\end{lemma}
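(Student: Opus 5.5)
By Lemma~\ref{lem:diagonal-meridian-support} we already know $\operatorname{in}_{-2}(\mu_{ij})=\kappa_{ij}q_{ij}$, so it remains to compute the scalar $\kappa_{ij}$. Since the retaining map $f_{ij}:F_n(C)\to F_2(C)$ sends $\mu_{ij}$ to the standard diagonal meridian $\mu_{12}$ and $q_{ij}$ to $q_{12}$, we reduce at once to $n=2$. The idea is then to pass to the cohomological picture. In $F_2(C)=C\times C-\Delta$, the class in $\Gr^W_{-2}$ of the meridian is detected by the extension class of the mixed Hodge structure on $H^1$ and $H^2$. Concretely, the dual of $\Gr^W_{-2}\p_{g,2}$ is $H^2$ of the Chevalley--Eilenberg complex, and its diagonal part is computed by the Gysin/residue sequence
\[
H^0(\Delta)(-1)\xrightarrow{\ \gamma\ }H^2(C\times C)\longrightarrow H^2(F_2(C))\cdots
\]
Dually, the meridian $\mu_{12}$ is the image of the generator of $H_1$ of the punctured normal circle bundle. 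Its image in $H_2$-type relations, that is, under the cup-product/bracket map $\Lambda^2H_1\to\Gr^W_{-2}$, is governed by the class of the diagonal $[\Delta]\in H^2(C\times C)$.

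Concretely, I would proceed in three steps.

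\textbf{(1) Setting up the comparison.} Write the quadratic relation of $\p_{g,2}$ in Hain's presentation. This is the relation $\sum_k([a_k^{(1)},b_k^{(2)}]+[a_k^{(2)},b_k^{(1)}])$ together with the surface relations in each factor. The bracket map $\Lambda^2(H^{(1)}\oplus H^{(2)})\to\Gr^W_{-2}\p_{g,2}$ is dual to the cup product $H^1\otimes H^1\to H^2(F_2(C))$. The kernel of restriction $H^2(C\times C)\to H^2(F_2(C))$ is spanned by $[\Delta]=\eta^{(1)}+\eta^{(2)}+\sum_k(\pm a_k^{*(1)}b_k^{*(2)}\mp b_k^{*(1)}a_k^{*(2)})$, where $\eta$ is the orientation class.

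\textbf{(2) Relating the meridian to the diagonal class.} The meridian is the boundary of a small normal disk $N$ to $\Delta$ in $C\times C$. Since $\log\mu_{12}\in W_{-2}$, its class pairs with $H^2$-level Massey/cup data: pairing $\operatorname{in}_{-2}(\mu_{12})$ against a class $\omega\in(\Gr^W_{-2}\p_{g,2})^\vee$ equals $\int_N\omega$ for a representative with $d$-primitive on $F_2(C)$. This is the standard Chen iterated-integral, or equivalently Stokes, identification for elements of the second lower central term. Only the component dual to $q_{12}$ survives, and its pairing is the local intersection number of $N$ with $\Delta$, namely $\pm1$, divided by the coefficient needed to write the dual of $q_{12}$ in terms of $[\Delta]$.

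\textbf{(3) Extracting the coefficient.} Pairing $q_{12}=\sum_k[a_k^{(1)},b_k^{(2)}]$ with the mixed part $\sum_k a_k^{*(1)}b_k^{*(2)}$ of $[\Delta]$ gives $g$. Hence $\kappa_{12}=\pm1/g$.

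I expect the main obstacle to be step (2). There one must justify rigorously that the initial form of a loop in $W_{-2}$ is computed by integrating a 2-form over a bounding 2-chain, and one must track the normalization between $q_{12}$ and the dual basis modulo the other relations. As a fallback, I would instead use the surface-group presentation of $\pi_1(C-\{x_2\})\subset\pi_1(F_2(C))$. There the loop around the puncture equals $\prod_k[\alpha_k,\beta_k]$. Taking logarithms, this gives $\log\mu_{12}\equiv\sum_k[a_k^{(1)},b_k^{(1)}]$ in the fiber. Rewriting this via the relation $\sum_k[a_k^{(1)},b_k^{(1)}]=\theta^{(1)}$ and the diagonal relation, I would express it as a multiple of $q_{12}$; the factor $1/g$ should appear from the decomposition $\theta^{(1)}=\tfrac1g(\text{Tate part})+\text{primitive}$. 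The sign is left as $\pm$ to absorb the orientation conventions fixed above.
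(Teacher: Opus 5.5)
Your reduction to $n=2$ matches the paper's, but you compute $\kappa_{12}$ by a different route. The paper maps the unit tangent bundle into $F_2(C)$ by $\Phi_\epsilon$. It uses the Euler-number relation $\prod_k[\widetilde a_k,\widetilde b_k]=b_0^{\pm(2g-2)}$ to get $\operatorname{in}_{-2}(b_0)=\pm w_0/(2g-2)$. It then pushes this forward along the diagonal map on $\Gr^W_{-1}$, obtaining $(s_1+s_2+2q_{12})/(2g-2)$ with $s_r=\sum_k[a_k^{(r)},b_k^{(r)}]$, and applies Hain's relations $s_r+\frac1g q_{12}=0$.

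Your fallback is shorter and correct in substance. In the fiber $C\setminus\{x_2\}$ of the second projection, $\mu_{12}^{\pm1}$ is conjugate to $\prod_k[\alpha_k,\beta_k]$. The fiber inclusion induces $u\mapsto u^{(1)}$ on $\Gr^W_{-1}$, and conjugation does not change initial forms, so $\operatorname{in}_{-2}(\mu_{12})=\pm s_1$. The single relation $s_1=-\frac1g q_{12}$ then finishes, with no tangent bundle or Euler number needed. Your main route instead reads the constant off the coefficients of $[\Delta]$ via $(\Gr^W_{-2}\p_{g,2})^\vee\cong\ker(\cup)$. This has the merit of explaining where Hain's relation comes from.

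Several details are wrong, however.
\begin{itemize}
\item The presentation in step (1) is not Hain's. The surface relations fail factorwise, since $s_r=-\frac1g q_{12}\neq0$. Moreover, every mixed bracket is killed by both coordinate projections, so equivariance forces $[u^{(1)},v^{(2)}]=[u^{(2)},v^{(1)}]=\frac{\langle u,v\rangle}{g}q_{12}$. Your displayed relation would therefore say $2q_{12}=0$. Taken literally, that presentation makes your fallback output $\operatorname{in}_{-2}(\mu_{12})=0$.
\item The factor $1/g$ does not come from a ``Tate plus primitive'' splitting of $\theta^{(1)}$. The element $s_1$ is $\Sp(H)$-invariant and lies entirely in $\Q q_{12}$; the constant is exactly Hain's relation.
\item In step (2), the integral of a smooth form over a shrinking normal disk tends to zero. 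What you need is the Gysin residue of a primitive on $F_2(C)$.
\end{itemize}

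The cleanest repair combines your two routes and avoids iterated integrals. Pick $\omega\in\Lambda^2H^1(F_2(C))=\Lambda^2H^1(C\times C)$ with $\cup\,\omega=[\Delta]$ in $H^2(C\times C)$; then $\omega\in\ker(\cup)$ on $F_2(C)$. Under the dual of the cup product, $[C\times\{x_2\}]$ maps to the lift $\sum_k a_k^{(1)}\wedge b_k^{(1)}$ of $s_1$. Hence $\langle\omega,s_1\rangle=[\Delta]\cdot[C\times\{x_2\}]=\pm1$, while your step (3) gives $\langle\omega,q_{12}\rangle=\pm g$. Both coordinate projections kill $s_1$, so $s_1\in\Q q_{12}$, and therefore $s_1=\pm\frac1g q_{12}$. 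This proves the lemma and derives Hain's relation along the way. It is also the rigorous form of your ``local intersection number'' heuristic, because $C\times\{x_2\}$ meets $\Delta$ only at $(x_2,x_2)$.
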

\begin{proof}
In view of Lemma~\ref{lem:diagonal-meridian-support} and the pair-forgetting maps, it suffices to compute the coefficient in the case $n=2$.

Choose a Riemannian metric on $C$, and let $T^1C=\{(z,v)\in TC:|v|=1\}$ be the unit tangent bundle. Then $T^1C$ is a deformation retract of $T^\times C:=TC-\{0\}$. For sufficiently small $\epsilon>0$, define $\Phi_\epsilon:T^1C\to F_2(C)$ by $\Phi_\epsilon(z,v)=(\exp_z(\epsilon v),z)$, where $\exp_z:T_zC\to C$ is the Riemannian exponential map; thus $\exp_z(\epsilon v)$ is obtained by moving distance $\epsilon$ from $z$ in the direction $v$.

This identifies $T^1C$ with a circle bundle in a punctured tubular neighborhood of the diagonal. Fix a base point $(z_0,v_0)\in T^1C$. The fiber $T^1_{z_0}C=\{v\in T_{z_0}C:|v|=1\}$ is an oriented circle. Let $b_0$ denote the loop based at $v_0$ that traverses this circle once in the positive direction. Then, up to the orientation convention, $(\Phi_\epsilon)_*(b_0)=\mu_{12}^{\pm1}$.

Let $\pi_{g,\vec 1}:=\pi_1^{\top}(T^\times C)$
and let $\p_{g,\vec 1}$ denote the Lie algebra of its Malcev completion. Thus $\p_{g,\vec 1}$ is the Lie algebra denoted by $\p^0_{g,1}$ in \cite[\S\S2--3]{hain_infpretor}; the arrow over $1$ indicates that the decoration is a nonzero tangent vector rather than an ordinary marked point. Set
\[
w_0=\sum_{k=1}^g[a_k,b_k]
\in\Gr^W_{-2}\p_{g,\vec 1}.
\]

The circle bundle $T^1C\to C$ has Euler number $2-2g$; see \cite[proof of Prop.~2.1]{hain_infpretor}. Choose lifts $\widetilde a_k,\widetilde b_k\in \pi_{g,\vec 1}$ of the corresponding standard generators of $\pi_1^\top(C)$, so that their images in
$\operatorname{Gr}^W_{-1}\mathfrak p_{g,\vec 1}\cong H$
are $a_k,b_k$, respectively. They satisfy
\[
\prod_{k=1}^g[\widetilde a_k,\widetilde b_k]
=b_0^{\pm(2g-2)}.
\]
Passing to the Malcev Lie algebra and reducing modulo $W_{-3}\p_{g,\vec 1}$ gives $w_0=\pm(2g-2)\operatorname{in}_{-2}(b_0)$. Since $g\geq2$, it follows that $\operatorname{in}_{-2}(b_0)=\pm w_0/(2g-2)$.

Let $\mathrm{pr}_1,\mathrm{pr}_2:F_2(C)\to C$ denote the coordinate projections. For $r=1,2$, the composite $\mathrm{pr}_r\circ\Phi_\epsilon:T^1C\to C$ is homotopic to the bundle projection $T^1C\to C$. Consequently, the homomorphism
\[
\Gr^W_{-1}(\Phi_\epsilon)_*:
\Gr^W_{-1}\p_{g,\vec 1}
\longrightarrow
\Gr^W_{-1}\p_{g,2}
\]
identifies, under
\[
\Gr^W_{-1}\p_{g,\vec 1}\cong H,
\qquad
\Gr^W_{-1}\p_{g,2}\cong H^{(1)}\oplus H^{(2)},
\]
with the diagonal map
\[
H\longrightarrow H^{(1)}\oplus H^{(2)},
\qquad
X\longmapsto X^{(1)}+X^{(2)}.
\]
It follows that
\[
\Gr^W_{-2}(\Phi_\epsilon)_*(w_0)
=s_1+s_2+2q_{12},
\qquad
s_r=\sum_{k=1}^g[a_k^{(r)},b_k^{(r)}].
\]
The relations in Hain's presentation give $s_1+q_{12}/g=0$ and $s_2+q_{12}/g=0$; see \cite[Thm.~12.6]{hain_infpretor}. Therefore
\[
\Gr^W_{-2}(\Phi_\epsilon)_*(w_0)
=\frac{2(g-1)}{g}q_{12},
\]
which is nonzero since $g\geq2$. Since
$\operatorname{in}_{-2}(b_0)=\pm w_0/(2g-2)$, we obtain
\[
\Gr^W_{-2}(\Phi_\epsilon)_*
\bigl(\operatorname{in}_{-2}(b_0)\bigr)
=\pm\frac1g q_{12}.
\]
By functoriality of initial forms and the identity
$(\Phi_\epsilon)_*(b_0)=\mu_{12}^{\pm1}$, we obtain
\[
\operatorname{in}_{-2}(\mu_{12})
=\pm\frac1g q_{12}.
\]
For general $i<j$, the pair-forgetting map $f_{ij}$ sends
$\mu_{ij}$ to $\mu_{12}^{\pm1}$ and $q_{ij}$ to $q_{12}$.
Lemma~\ref{lem:diagonal-meridian-support} then gives
$\operatorname{in}_{-2}(\mu_{ij})=\pm q_{ij}/g$.
\end{proof}
% The homomorphism $(\Phi_\epsilon)_*$ on fundamental groups induces a
% homomorphism of Malcev Lie algebras, and hence
% \[
% (\Phi_\epsilon)_*(\log b_0)
% =\log\bigl((\Phi_\epsilon)_*(b_0)\bigr).
% \]
% Passing to weight $-2$, and using
% $(\Phi_\epsilon)_*(b_0)=\mu_{12}^{\pm1}$, gives
% \[
% \Gr^W_{-2}(\Phi_\epsilon)_*
% \bigl(\operatorname{in}_{-2}(b_0)\bigr)
% =\operatorname{in}_{-2}(\mu_{12}^{\pm1}).
% \]
% Since inversion changes the initial form by a sign, we conclude that
% \[
% \operatorname{in}_{-2}(\mu_{12})
% =\pm\frac1g q_{12}.
% \]
% For general $i<j$, the pair-forgetting map $f_{ij}$ takes $\mu_{ij}$ to $\mu_{12}^{\pm1}$ and $q_{ij}$ to $q_{12}$. Together with Lemma~\ref{lem:diagonal-meridian-support}, this gives $\operatorname{in}_{-2}(\mu_{ij})=\pm q_{ij}/g$.
% \end{proof}

We now return from the configuration-space meridian to the corresponding separating twist in the mapping class group. The point-pushing identification transfers the preceding computation to $T_{\gamma_{ij}}$.

\begin{proposition}\label{prop:diagonal-summands-in-image}
For every $1\leq i<j\leq n$,
\[
\Q\operatorname{in}_{-2}(T_{\gamma_{ij}})=\Q(1)_{ij}.
\]
\end{proposition}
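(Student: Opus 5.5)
The proposition follows by transporting Lemma~\ref{lem:diagonal-meridian-nonzero} from the configuration space to the mapping class group. The only issue is compatibility of initial forms in $\p_{g,n}$ with those in $\u_{g,n}$.

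Since $\gamma_{ij}$ becomes null-homotopic once the marked points are forgotten, $T_{\gamma_{ij}}$ lies in $\Pi_{g,n}$. Under the Birman identification $\Pi_{g,n}\cong\pi_1^{\top}(F_n(C)^{\an},\mathbf x)$, with compatible orientations, it corresponds to $\mu_{ij}^{\pm1}$. The exact sequence $1\to\cP_{g,n}\to\cG_{g,n}\to\cG_g\to1$ from Subsection~\ref{subsec:fiber-configuration-completions} gives a morphism $\cP_{g,n}\to\cG_{g,n}$ compatible with the canonical maps from $\Pi_{g,n}$. It is induced by the inclusion $\Pi_{g,n}\hookrightarrow\G_{g,n}$, together with the fact that $\Pi_{g,n}$ lies in the Torelli group and hence maps into $\U_{g,n}$. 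On Lie algebras this gives the inclusion $\p_{g,n}\hookrightarrow\u_{g,n}$, which preserves weight filtrations. The weight filtration on $\p_{g,n}$ is its lower central series, which maps into the lower central series of $\u_{g,n}$. Moreover, the induced map $\Gr^W_{-2}\p_{g,n}\to\Gr^W_{-2}\u_{g,n}$ is the injection in the weight $-2$ exact sequence of Subsection~\ref{subsec:weight-minus-two-decomposition}. Under this injection the line $\Q q_{ij}$ is, by definition, the summand $\Q(1)_{ij}$ of \eqref{eq:wt2}.

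Functoriality of the logarithm then gives $\log\widetilde\rho_{g,n}(T_{\gamma_{ij}})=\pm\log\mu_{ij}$ as elements of $\p_{g,n}\subseteq\u_{g,n}$. Lemma~\ref{lem:diagonal-meridian-support} shows that this lies in $W_{-2}$, and passing to $\Gr^W_{-2}$ yields
\[
\operatorname{in}_{-2}(T_{\gamma_{ij}})=\pm\operatorname{in}_{-2}(\mu_{ij})=\pm\frac1g\,q_{ij},
\]
by Lemma~\ref{lem:diagonal-meridian-nonzero}. Since $q_{ij}\neq0$ spans $\Q(1)_{ij}$, we conclude that $\Q\operatorname{in}_{-2}(T_{\gamma_{ij}})=\Q(1)_{ij}$.

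The main obstacle is the claim that $W_{-3}\u_{g,n}\cap\p_{g,n}=W_{-3}\p_{g,n}$, in other words that $\Gr^W_{-2}\p_{g,n}\to\Gr^W_{-2}\u_{g,n}$ is injective. The weight filtrations are those of Hain's mixed Hodge structures, and morphisms of mixed Hodge structures are strict, so this follows from the exactness of $0\to\p_{g,n}\to\u_{g,n}\to\u_g\to0$ as a sequence of MHS. This exactness is precisely what produced the exact weight $-2$ sequence quoted earlier, and I will cite it in that form. Inverting the meridian only changes the initial form by a sign, so the orientation ambiguity does not affect the conclusion.
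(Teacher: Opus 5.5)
Your proposal is correct and follows essentially the same route as the paper: identify $T_{\gamma_{ij}}$ with $\mu_{ij}^{\pm1}$ under point pushing, apply Lemma~\ref{lem:diagonal-meridian-nonzero}, and use that $\p_{g,n}\to\u_{g,n}$ is injective on $\Gr^W_{-2}$, with $\Q q_{ij}$ mapping onto the summand $\Q(1)_{ij}$. The paper simply asserts this injectivity, whereas you justify it through exactness of the weight $-2$ sequence (strictness of morphisms of mixed Hodge structures), which is the same underlying input.
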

\begin{proof}
Under the point-pushing identification, $\mu_{ij}\mapsto T_{\gamma_{ij}}^{\pm1}$. Lemma~\ref{lem:diagonal-meridian-nonzero} therefore gives $\Q\operatorname{in}_{-2}(T_{\gamma_{ij}})=\Q q_{ij}=\Q(1)_{ij}$. The inclusion $\p_{g,n}\to\u_{g,n}$ is injective on weight-graded quotients, so this class remains nonzero in $\Gr^W_{-2}\u_{g,n}$.
\end{proof}

\subsubsection{The lantern relation}
We now use the lantern relation to isolate the individual diagonal summands
$\Q(1)_{ij}$. Fix $i<j$.

\begin{proposition}\label{prop:fixed-h-diagonal}
For every $i<j$, the line $\Q(1)_{ij}$ is contained in $I_h$.
\end{proposition}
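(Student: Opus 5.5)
Use the lantern relation on a sphere with four boundary components: one component is $\gamma_{ij}$, and the other three are chosen so that each relevant twist is a genus-$h$ twist carrying at most two markings.

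\emph{Setup.} Start from the genus-$h$ subsurface $A$ bounded by $c_\varnothing$, which contains no markings. Choose disjoint small disks $U_i$ and $U_j$ around $x_i$ and $x_j$ outside $A$. Take a regular neighborhood of $A\cup U_i\cup U_j$ together with bands joining them. Its complement in a slightly larger genus-$h$ subsurface $A_{ij}$ is a sphere $P$ with four holes. The four boundary curves of $P$ are $c_\varnothing$, $\partial U_i$, $\partial U_j$, and $c_{ij}:=\partial A_{ij}$. Since $\partial U_i$ and $\partial U_j$ bound once-marked disks, their twists are trivial in $\G_{g,n}$. The lantern relation then reads
\[
T_{c_\varnothing}T_{\partial U_i}T_{\partial U_j}T_{c_{ij}}=T_{\alpha}T_{\beta}T_{\gamma},
\]
where $\alpha$, $\beta$, $\gamma$ are the three interior curves, each enclosing two of the four boundary curves. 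These interior curves are: $\gamma_{ij}$, which encloses $\partial U_i$ and $\partial U_j$; the curve around $c_\varnothing\cup\partial U_i$, which bounds a genus-$h$ subsurface containing exactly $x_i$ and is therefore a copy of $c_{\{i\}}$; and the curve around $c_\varnothing\cup\partial U_j$, a copy of $c_{\{j\}}$. The two peripheral twists disappear, so we obtain
\[
T_{\gamma_{ij}}=T_{c_\varnothing}T_{c_{ij}}T_{c'_{\{i\}}}^{-1}T_{c'_{\{j\}}}^{-1}
\]
up to ordering and the orientation conventions of the lantern relation.

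\emph{Initial forms.} All of $T_{c_\varnothing}$, $T_{c_{ij}}$, $T_{c'_{\{i\}}}$, $T_{c'_{\{j\}}}$ are separating twists of genus $h$ whose genus-$h$ side carries at most two markings, so they lie in $N_h$. By additivity of $\operatorname{in}_{-2}$ on $K_{g,n}$ (the Baker--Campbell--Hausdorff argument of Subsection~\ref{subsec:weight-minus-two-decomposition}), $\operatorname{in}_{-2}(T_{\gamma_{ij}})$ is a $\Q$-linear combination of elements of $I_h$. Hence it lies in $I_h$. Proposition~\ref{prop:diagonal-summands-in-image} identifies $\Q\operatorname{in}_{-2}(T_{\gamma_{ij}})$ with $\Q(1)_{ij}$, which gives $\Q(1)_{ij}\subseteq I_h$. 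The statement concerns the individual line $\Q(1)_{ij}$, not just the isotypic component, and this individual line is obtained directly, so no further separation is needed.

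\emph{Main obstacle.} The main difficulty is the topological bookkeeping of the lantern configuration. I must verify that $P$ really is a four-holed sphere embedded in $\Sigma_{g,n}$ and that the interior curves have the stated topological types. The Euler characteristic computation parallels the one for $A_i$ in Subsection~\ref{subsec:marked-point-summands}: $\chi(A_{ij})=1-2h$ with one boundary, and the region between $c_\varnothing$ and $c_{ij}$ minus the two disks has $\chi=-2$ and genus $0$. I must also check that the curve enclosing $\partial U_i$ and $\partial U_j$ is isotopic to the small curve $\gamma_{ij}$ of Proposition~\ref{prop:diagonal-summands-in-image}. This holds because it bounds a disk containing exactly $x_i,x_j$; the disk is the side away from $c_\varnothing$ and $c_{ij}$, and any choice of such curve is conjugate to $\gamma_{ij}$ by an element of $\G_{g,n}$, while $I_h$ is $\G_{g,n}$-stable by Lemma~\ref{lem:image-Kgn-Sp-submodule}. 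Signs and orientations do not matter, since only the span is used.
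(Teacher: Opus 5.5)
Your proposal is correct and follows essentially the same route as the paper. Both arguments use the four-holed sphere in $A_{ij}$ with boundary $c_\varnothing,\partial U_i,\partial U_j,c_{ij}$, the lantern relation with the two peripheral twists killed, additivity of $\operatorname{in}_{-2}$ on $K_{g,n}$, and a $\G_{g,n}$-conjugation from the lantern's two-point interior curve to $\gamma_{ij}$ combined with Proposition~\ref{prop:diagonal-summands-in-image}. The one point to state explicitly is that the three interior curves must be chosen in the standard lantern configuration; the topological types you use do not depend on that choice.
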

\begin{proof}
Choose the representatives below for the fixed pair $i<j$; they need not coincide with representatives chosen earlier.

Take disjoint small closed disks $U_i,U_j$ around $x_i,x_j$, respectively, lying outside $A$ and containing no other markings, and attach them to $A$ by disjoint bands avoiding all markings. Let $A_{ij}$ be a small closed neighborhood of this union, with boundary corners smoothed, chosen so that $A,U_i,U_j\subset\operatorname{int}(A_{ij})$. Set $c_{ij}=\partial A_{ij}$.

Each band joins two boundary components and decreases Euler
characteristic by one. Thus $A_{ij}$ has one boundary component.
Since $U_i,U_j$ are disks with their marked points retained,
\[
\chi(A_{ij})
=\chi(A)+\chi(U_i)+\chi(U_j)-2
=(1-2h)+1+1-2
=1-2h.
\]
Hence $A_{ij}$ has genus $h$ and contains exactly the markings
$x_i,x_j$.
% Each band joins two boundary components, so the two bands reduce the three boundary components of $A,U_i,U_j$ to one. Since a band is a rectangle of Euler characteristic $1$ attached along two intervals, attaching it decreases the Euler characteristic by $1$. Thickening the union and smoothing corners do not change the Euler characteristic. Since $U_i$ and $U_j$ are disks with their marked points retained,
% \[
% \chi(A_{ij})
% =\chi(A)+\chi(U_i)+\chi(U_j)-2
% =(1-2h)+1+1-2
% =1-2h.
% \]
% Thus $A_{ij}$ has genus $h$ and contains exactly the markings $x_i,x_j$.

Removing the interiors of $A,U_i,U_j$ from $A_{ij}$ gives a four-holed sphere with boundary curves $c_{\varnothing}$, $\delta_i=\partial U_i$, $\delta_j=\partial U_j$, and $c_{ij}$. Choose its interior curves $c_i,c_{ij}^{(0)},c_j$ in the standard lantern configuration of \cite[Fig.~5.1 and Prop.~5.1]{FarbMargalit}, using an orientation-preserving identification and labeling them so that their twists occur in the order $T_{c_i}T_{c_{ij}^{(0)}}T_{c_j}$ in the lantern relation. Thus $c_i$ encloses $c_{\varnothing},\delta_i$, $c_{ij}^{(0)}$ encloses $\delta_i,\delta_j$, and $c_j$ encloses $c_{\varnothing},\delta_j$. We use the twist and composition conventions of that reference.

The curve $c_{ij}^{(0)}$ bounds, after filling $\delta_i,\delta_j$ with $U_i,U_j$, a disk containing exactly $x_i$ and $x_j$. Like $\gamma_{ij}$ defined above, it therefore has the two-point genus-zero topological type, but it is chosen specifically for this lantern configuration. The superscript $(0)$ distinguishes it from $c_{ij}=\partial A_{ij}$, whose genus-$h$ side contains the same two markings.

After restoring $A,U_i,U_j$, the component bounded by $c_i$ that contains $A$ is a genus-$h$ subsurface containing exactly $x_i$; similarly, the corresponding component bounded by $c_j$ contains exactly $x_j$. Thus $c_{\varnothing},c_i,c_j,c_{ij}$ have separating types
\[
(h,\varnothing),\qquad
(h,\{i\}),\qquad
(h,\{j\}),\qquad
(h,\{i,j\}),
\]
respectively. Hence their Dehn twists correspond to divisors in $\mathcal D_{h,\leq2}$ and belong to $N_h$.

With these representatives, the lantern relation is
\begin{equation}\label{eq:fixed-h-lantern-before-capping}
T_{c_i}T_{c_{ij}^{(0)}}T_{c_j}
=T_{c_{\varnothing}}T_{\delta_i}T_{\delta_j}T_{c_{ij}}.
\end{equation}
Filling $\delta_i,\delta_j$ with their once-marked disks makes $T_{\delta_i}$ and $T_{\delta_j}$ trivial in $\G_{g,n}$. Hence
\[
T_{c_i}T_{c_{ij}^{(0)}}T_{c_j}
=T_{c_{\varnothing}}T_{c_{ij}}.
\]
Multiplying on the left by $T_{c_i}^{-1}$ and on the right by $T_{c_j}^{-1}$ gives
\begin{equation}\label{eq:fixed-h-lantern-group}
T_{c_{ij}^{(0)}}
=T_{c_i}^{-1}
T_{c_{\varnothing}}T_{c_{ij}}
T_{c_j}^{-1}.
\end{equation}
In particular, $T_{c_{ij}^{(0)}}\in N_h$.

For $S\in\{\varnothing,\{i\},\{j\},\{i,j\}\}$, set
$d_S:=\operatorname{in}_{-2}(T_{c_S})$, where
$c_{\{i\}}=c_i$, $c_{\{j\}}=c_j$, and $c_{\{i,j\}}=c_{ij}$.
By the additivity of initial forms from
Subsection~\ref{subsec:weight-minus-two-decomposition},
\eqref{eq:fixed-h-lantern-group} gives
\begin{equation}\label{eq:fixed-h-lantern-weight-two}
\operatorname{in}_{-2}(T_{c_{ij}^{(0)}})
=d_{\varnothing}+d_{\{i,j\}}-d_{\{i\}}-d_{\{j\}}
\in I_h.
\end{equation}

Since $c_{ij}^{(0)}$ and $\gamma_{ij}$ each bound a disk containing
exactly the marked points $x_i,x_j$, there is $f\in\G_{g,n}$ such that
$T_{c_{ij}^{(0)}}=fT_{\gamma_{ij}}f^{-1}$.
By Proposition~\ref{prop:diagonal-summands-in-image},
$\operatorname{in}_{-2}(T_{\gamma_{ij}})$ spans the trivial
$\Sp(H)$-module $\Q(1)_{ij}$. Conjugation equivariance therefore gives
\[
\begin{aligned}
\operatorname{in}_{-2}(T_{c_{ij}^{(0)}})
&=\rho_{g,n}(f)\cdot\operatorname{in}_{-2}(T_{\gamma_{ij}})\\
&=\operatorname{in}_{-2}(T_{\gamma_{ij}})\ne0.
\end{aligned}
\]
Thus \eqref{eq:fixed-h-lantern-weight-two} implies
$\Q(1)_{ij}\subseteq I_h$.
\end{proof}
\subsection{The fixed-genus spanning result}
\label{subsec:fixed-genus-spanning}
%%%%%%%%%%%%%%%%%%%%%%%%%%%%%%%%%%%%%%
We now combine the preceding computations.
% We now combine the preceding computations. The unpointed calculation
% gives the $V(2\lambda_2)$ summand, the pointed calculations give each
% $V(\lambda_2)^{(i)}$, and the lantern relation gives each diagonal
% summand $\Q(1)_{ij}$. Together they exhaust the decomposition
% \eqref{eq:wt2}.

\begin{theorem}[Fixed-genus spanning]
\label{thm:fixed-genus-spanning}
For every $1\leq h\leq g-1$,
\[
I_h=\Gr^W_{-2}\u_{g,n}.
\]
\end{theorem}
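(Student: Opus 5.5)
The proof is an assembly of the three summand computations, using the decomposition \eqref{eq:wt2}. By Lemma~\ref{lem:image-Kgn-Sp-submodule}, $I_h$ is an $\Sp(H)$-submodule of $\Gr^W_{-2}\u_{g,n}$. By semisimplicity, it therefore splits along the three pairwise nonisomorphic isotypical components $V(2\lambda_2)$, $V(\lambda_2)^{\oplus n}$ and $\Q(1)^{\oplus\binom n2}$. It therefore suffices to show that each summand appearing in \eqref{eq:wt2} is contained in $I_h$, since their sum is all of $\Gr^W_{-2}\u_{g,n}$.

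I will then invoke the three earlier propositions in turn. Proposition~\ref{prop:fixed-h-unpointed} gives $V(2\lambda_2)\subseteq I_h$ for every $1\leq h\leq g-1$. When $n\geq1$, Proposition~\ref{prop:fixed-h-marked} gives $V(\lambda_2)^{(i)}\subseteq I_h$ for each $i$. Here I must check that the curves $c_{\varnothing}$ and $c_i$ used there bound genus-$h$ sides carrying at most one marking, so that their twists lie in $N_h$; this was verified in the construction of $A_i$. When $n\geq2$, Proposition~\ref{prop:fixed-h-diagonal} gives $\Q(1)_{ij}\subseteq I_h$ for each $i<j$. The only inputs there are the four twists of types $(h,\varnothing)$, $(h,\{i\})$, $(h,\{j\})$ and $(h,\{i,j\})$, all of which lie in $N_h$. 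The lantern identity \eqref{eq:fixed-h-lantern-group} is what places $T_{c_{ij}^{(0)}}$ in $N_h$. For $n=0$ or $n=1$, the missing summands are simply absent from \eqref{eq:wt2}.

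Summing these inclusions gives $\Gr^W_{-2}\u_{g,n}\subseteq I_h$. The reverse inclusion holds by definition, since initial forms of elements of $N_h\subseteq K_{g,n}$ lie in $\Gr^W_{-2}\u_{g,n}$ by Proposition~\ref{prop:Kgn-weight-minus-two}. The main point to watch is bookkeeping rather than new mathematics. The multiple copies of $V(\lambda_2)$ and of $\Q(1)$ are not separated by isotypical decomposition alone. The earlier propositions already handled this: they showed that each individual copy $V(\lambda_2)^{(i)}$, defined via the coordinate projections, and each individual line $\Q(1)_{ij}$ lies in $I_h$. So the sum over all copies is contained in $I_h$, and the theorem follows.
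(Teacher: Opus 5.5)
Your proposal is correct and matches the paper's proof, which likewise just combines Propositions~\ref{prop:fixed-h-unpointed}, \ref{prop:fixed-h-marked}, and~\ref{prop:fixed-h-diagonal} to see that $I_h$ contains every summand of \eqref{eq:wt2}. Your additional remarks are harmless bookkeeping: the membership of the twists in $N_h$ and the trivial reverse inclusion are consistent with the paper, and the isotypical splitting is not needed at this final step, since containing each summand already gives equality.
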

\begin{proof}
By Propositions~\ref{prop:fixed-h-unpointed},
\ref{prop:fixed-h-marked}, and~\ref{prop:fixed-h-diagonal},
the subspace $I_h$ contains every summand in the decomposition
\eqref{eq:wt2} of $\Gr^W_{-2}\u_{g,n}$. Hence
$I_h=\Gr^W_{-2}\u_{g,n}$.
\end{proof}

%%%%%%%%%%%%%%%%%%%%%%%%%%%%%%%%%%%%%%%%%%%%%%%%%%%%%%%%%%%%%%%%%%%%%%%%%%%%%%%%
% REORGANIZED SECTION: geometric opens and closed-ideal consequences.
% All individual representation computations are in the preceding section.
\section{Partial Compactifications and Relative Completion}
\label{sec:fixed-genus}
%%%%%%%%%%%%%%%%%%%%%%%%%%%%%%%%%%%%%%%%%%%%%%%%%%%%%%%%%%%%%%%%%%%%%%%%%%%%%%%%
We apply the computation of Section~\ref{sec:image-Kgn} to partial
compactifications of $\M_{g,n}$. Throughout this section,
$g\geq3$ and $n\geq0$ are fixed. We use the notation $\M_{\mathcal D}^{\circ}$ for the open substack of
$\Mbar_{g,n}$ obtained by adjoining to $\M_{g,n}$ the one-node loci
of the selected separating boundary divisors, suppressing the fixed
pair $(g,n)$.

\subsection{The selected boundary opens}
All boundary divisors and boundary strata in this subsection are
understood as substacks of $\Mbar_{g,n}$. For a stable separating type $(h,S)$, let $\Delta_{h,S}$ be the
closed boundary divisor whose genus-$h$ component carries precisely
the markings in $S\subseteq\{1,\ldots,n\}$. The types $(h,S)$ and
$(g-h,S^c)$ denote the same divisor. Stability requires
\[
2h-1+|S|>0,\qquad 2(g-h)-1+n-|S|>0.
\]
Both conditions hold for all $S$ when $1\leq h\leq g-1$.
For a rational tail, $h=0$, one must have $|S|\geq2$.
For a separating boundary divisor $D$, denote by $D^\circ$ its
one-node stratum:
\[
D^\circ=\{[C;x_1,\ldots,x_n]\in D:C\text{ has exactly one node}\}.
\]
Each curve in $D^\circ$ has two smooth irreducible components
meeting at its unique node.
% Since $D$ is separating, such a curve has two smooth irreducible
% components meeting at that node. Thus the superscript $\circ$
% specifies the one-node stratum of the boundary divisor.
Let $\mathcal D$ be any collection of separating boundary types,
regarded as a collection of their closed divisors. Define
\begin{equation}\label{eq:partial-open-definitions}
\begin{aligned}
\M_{\mathcal D}^{\circ}
&=\M_{g,n}\cup
\mathop{\cup}\limits_{D\in\mathcal D} D^\circ,\\
\M_{\mathcal D}
&=\Mbar_{g,n}\setminus\left(
\Delta_{\mathrm{irr}}\cup
\mathop{\cup}\limits_{\substack{D\notin\mathcal D\\
D\text{ separating}}} D
\right).
\end{aligned}
\end{equation}
Here $\Delta_{\mathrm{irr}}$ is the closed nonseparating boundary
divisor. The open $\M_{\mathcal D}$ allows several nodes, provided
each node corresponds to a divisor in $\mathcal D$.
The open $\M_{\mathcal D}^{\circ}$ consists of the curves in
$\M_{\mathcal D}$ with at most one node, including smooth curves.
% Here $\Delta_{\mathrm{irr}}$ is the closed nonseparating boundary
% divisor. The open $\M_{\mathcal D}$ may contain curves with several
% nodes, provided that the boundary divisor corresponding to each node
% belongs to $\mathcal D$. The superscript $\circ$ in
% $\M_{\mathcal D}^{\circ}$ restricts to curves with at most one node,
% including the smooth locus. Thus $\M_{\mathcal D}^{\circ}$ is the
% one-node open inside $\M_{\mathcal D}$. 
We have open immersions
\[
\M_{\mathcal D}^{\circ}\subseteq\M_{\mathcal D}\subseteq\M^c_{g,n}.
\]
The subscript $/T$ will indicate base change to a base scheme $T$, as in
$\M_{\mathcal D/T}^{\circ}$ and $\M_{\mathcal D/T}$.

The complement of $\M_{\mathcal D}^{\circ}\subset\M_{\mathcal D}$ has
codimension at least two. Indeed, on an \'etale chart, a curve with
$r$ nodes has independent smoothing parameters $q_1,\ldots,q_r$, and
the locus where these nodes persist has codimension $r$. Every point
of $\M_{\mathcal D}\setminus\M_{\mathcal D}^{\circ}$ has at least two
nodes. Zariski--Nagata purity
\cite[Exp.~X, Thm.~3.1]{SGA1}, applied on regular \'etale charts,
therefore gives an equivalence of finite \'etale cover categories for
$\M_{\mathcal D}^{\circ}$ and $\M_{\mathcal D}$. Their orbifold
fundamental groups over $\C$ are likewise isomorphic.

For $1\leq h\leq g-1$, put
\[
\begin{aligned}
\mathcal D_h&=\{\Delta_{h,S}:S\subseteq\{1,\ldots,n\}\},\\
\mathcal D_{h,\leq2}&=\{\Delta_{h,S}:S\subseteq\{1,\ldots,n\},\ |S|\leq2\}.
\end{aligned}
\]
These are collections of distinct divisors, so complementary labels
are identified when $g=2h$. We refer to $\M_{\mathcal D_h}^{\circ}$ and $\M_{\mathcal D_{h,\leq2}}^{\circ}$ as the fixed-genus one-node opens; their corresponding larger opens are $\M_{\mathcal D_h}$ and $\M_{\mathcal D_{h,\leq2}}$.

\subsection{Boundary subgroups and the closed-ideal criterion}
We next identify the orbifold fundamental groups of the selected boundary opens.

\begin{proposition}\label{prop:boundary-group}
Let $\mathcal D$ be a collection of separating boundary types, and
let $N_{\mathcal D}\unlhd\G_{g,n}$ be the normal subgroup generated
by one positive Dehn twist of each type in $\mathcal D$. Then
\begin{equation}\label{eq:boundary-group}
\pi_1^{\orb}((\M_{\mathcal D}^{\circ})^{\an})
\cong \G_{g,n}/N_{\mathcal D}.
\end{equation}
The same quotient describes
$\pi_1^{\orb}((\M_{\mathcal D})^{\an})$.
\end{proposition}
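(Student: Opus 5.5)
We will follow the same pattern as Boggi's description in Theorem~\ref{thm:main}. The underlying idea is the standard one: removing a divisor from a smooth orbifold kills exactly the meridians around it.

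\textbf{Step 1: reduce to the larger open.} We first treat $\M_{\mathcal D}$ and then transfer the result to $\M_{\mathcal D}^{\circ}$. The complement $\M_{\mathcal D}\setminus\M_{\mathcal D}^{\circ}$ has complex codimension at least two, as shown above. Removing a closed analytic subset of complex codimension at least two from a smooth orbifold does not change $\pi_1^{\orb}$. This is the topological counterpart of the Zariski--Nagata purity statement recorded above, and it can be checked on an orbifold chart, where it reduces to real codimension at least four. Hence the two orbifold fundamental groups agree. In fact it is equally easy to work directly with $\M_{\mathcal D}^{\circ}$, viewed as the smooth orbifold $\M_{g,n}^{\an}\cup\bigcup_{D\in\mathcal D}D^{\circ,\an}$, whose added part is a smooth divisor.

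\textbf{Step 2: the van Kampen argument.} We view $\M_{g,n}^{\an}$ as the complement of the normal-crossings divisor $\bigcup_{D\in\mathcal D}D^{\circ}$ in $\M_{\mathcal D}^{\circ,\an}$. For a smooth orbifold $X$ and a divisor $Y\subset X$, the map $\pi_1^{\orb}(X\setminus Y)\to\pi_1^{\orb}(X)$ is surjective. Its kernel is normally generated by meridian loops around the irreducible components of $Y$. This follows by working on a finite-index torsion-free level cover, or equivariantly on Teichm\"uller space, and applying van Kampen to a tubular neighborhood. We intend to apply this in the same form used for Theorem~\ref{thm:main}. Each $D^{\circ}$ is irreducible, since the one-node stratum of type $(h,S)$ is the image of the connected stack $\M_{h,S\cup\{\star\}}\times\M_{g-h,S^c\cup\{\star\}}$. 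So a single meridian per divisor suffices, and all meridians around the same divisor are conjugate. Identifying $\pi_1^{\orb}(\M_{g,n}^{\an})\cong\G_{g,n}$ by \eqref{eq:pionequalsgamma}, we obtain that $\pi_1^{\orb}(\M_{\mathcal D}^{\circ,\an})$ is $\G_{g,n}$ modulo the normal closure of these meridians.

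\textbf{Step 3: identify the meridians.} This is the main point to verify. A meridian around $D^{\circ}$ corresponds, up to conjugation and orientation, to the Dehn twist about the vanishing cycle of the node. The reason is the local model $xy=t$: by the Picard--Lefschetz formula, the monodromy of a one-parameter smoothing around $t=0$ is the positive Dehn twist about the vanishing cycle. For $D=\Delta_{h,S}$, this vanishing cycle is a separating curve cutting off a genus-$h$ subsurface containing exactly the markings in $S$. Any two such curves differ by an element of $\G_{g,n}$, by the classification of marked surfaces, so their twists are conjugate. Hence the normal closure of the meridians is exactly $N_{\mathcal D}$, which gives \eqref{eq:boundary-group}.

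The main technical care lies in the orbifold version of the van Kampen statement near points with nontrivial automorphisms. We handle this by passing to a torsion-free finite-index normal subgroup of $\G_{g,n}$ and the corresponding level cover, which is a manifold near the boundary; see the cited \cite[Prop.~3.1]{boggi_compact_type}. On that cover the statement is the classical one. We then descend using the exact sequence with the finite quotient group, together with the fact that a meridian of the cover is a power of the orbifold meridian. We expect this power to be $1$ for these divisors, since the generic point of $D^{\circ}$ has no extra automorphism fixing the node branches. Combined with Step 1, this gives the same quotient for $\M_{\mathcal D}$.
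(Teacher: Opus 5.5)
Your Steps 1--3 follow the paper's proof. The paper cites Boggi's argument for a surjection whose kernel is normally generated by small meridians about the $D^\circ$, identifies these meridians with positive separating twists of the given types, uses transitivity of $\G_{g,n}$ on such curves, and passes to $\M_{\mathcal D}$ by codimension two. The problem is your final paragraph, where the orbifold care is supposed to happen. Its premise, that the generic point of $D^\circ$ has no extra automorphism fixing the node branches, is false for the elliptic-tail divisor $\Delta_{1,\varnothing}$. That divisor lies in $\mathcal D_{h,\leq2}$ when $h=1$ or $h=g-1$. The elliptic involution of the tail $(E,p)$, extended by the identity, fixes the node and acts by $-1$ on the smoothing parameter $t\in T_pE\otimes T_pC'$.

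Near a generic point of $\Delta_{1,\varnothing}$, with $B$ a polydisc, $\M_{\mathcal D}^{\circ}$ is $[B\times\mathbb{D}/\{\pm1\}]$ with $-1$ acting by $t\mapsto -t$. Meanwhile $\M_{g,n}$ is the manifold $B\times\mathbb{D}^*$ with coordinate $s=t^2$. The loop once around $s=0$ maps to a square root of $T_c^{\pm1}$ acting by $-1$ on the homology of the genus-one side (compare $(T_aT_b)^6=T_c$). Hence its image in $\G_{g,n}/N_{\mathcal D}$ has order $2$; only its square, the loop around $t=0$, is killed. So the meridians in Step~2 must be full meridians in orbifold charts, as in Section~\ref{sec:divisor-count}. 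That full meridian is the loop your Picard--Lefschetz computation in Step~3 actually treats.

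The exponent in your descent is not governed by automorphisms either. Let $\Gamma'$ be your finite-index normal subgroup. The normalized level cover is \'etale over $\M_{\mathcal D}^{\circ}$ along $D^\circ$ exactly when $T_c\in\Gamma'$, since the ramification index there is the order of $T_c$ modulo $\Gamma'$. Given this, the cover has trivial stabilizers over $D^\circ$ only if $\Gamma'$ meets each local group in $\langle T_c\rangle$. An arbitrary torsion-free finite-index normal subgroup can miss $T_c$, and then your exact sequence with the finite quotient fails.

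The fix is to choose a level containing all separating twists whose cover of $\M_{\mathcal D}$ is a finite \'etale scheme. An example is $\Gamma^\lambda$ of Section~\ref{sec:specialization-characteristic-p}, by Proposition~\ref{prop:selected-level-etale}. The cover's meridians are then exactly the $\G_{g,n}$-conjugates of the twists $T_c$. Your descent then gives $\G_{g,n}/N_{\mathcal D}$ for every type, elliptic tails included.
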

\begin{proof}
As in the proof of \cite[Prop.~3.1]{boggi_compact_type}, the inclusion
$\M_{g,n}\hookrightarrow\M_{\mathcal D}^{\circ}$ induces a surjection on
orbifold fundamental groups whose kernel is normally generated by
small meridians about the boundary components $D^\circ$, for
$D\in\mathcal D$. Under the identification
$\pi_1^{\orb}(\M_{g,n}^{\an})\cong\G_{g,n}$, such a meridian is the
positive Dehn twist about a separating curve of the corresponding
marked topological type. Since $\G_{g,n}$ acts transitively on curves
of each such type, their normal closure is $N_{\mathcal D}$. This
proves \eqref{eq:boundary-group}. Finally,
$\M_{\mathcal D}\setminus\M_{\mathcal D}^{\circ}$ has complex codimension
at least two, so the inclusion
$(\M_{\mathcal D}^{\circ})^{\an}\hookrightarrow
(\M_{\mathcal D})^{\an}$ induces an isomorphism on orbifold
fundamental groups.
\end{proof}

For $\mathcal D=\mathcal D_{h,\leq2}$, the group
$N_{\mathcal D}$ is exactly the subgroup $N_h$ used in
Section~\ref{sec:image-Kgn}. Let $\mathcal N_{\mathcal D}$ be the closed normal proalgebraic
subgroup of $\cG_{g,n}$ generated by the image of $N_{\mathcal D}$.
Since $N_{\mathcal D}$ is normal in $\G_{g,n}$ and the image of
$\G_{g,n}$ is Zariski dense in $\cG_{g,n}$, $\mathcal N_{\mathcal D}$
is simply the Zariski closure of the image of $N_{\mathcal D}$. Set
$\mathfrak n_{\mathcal D}:=\Lie\mathcal N_{\mathcal D}$.
It follows from Proposition~\ref{prop:relative-completion-quotient} that
$\u_{\M_{\mathcal D}^{\circ}}\cong\u_{g,n}/\mathfrak n_{\mathcal D}$.
Also set
\[
I_{\mathcal D}:=
\operatorname{span}_{\Q}
\{\operatorname{in}_{-2}(D):D\in N_{\mathcal D}\}
\subseteq\Gr^W_{-2}\u_{g,n}.
\]

\begin{lemma}\label{lem:boundary-ideal-weight-two}
One has
\[
\mathfrak n_{\mathcal D}\subseteq W_{-2}\u_{g,n},
\]
and the image of $\mathfrak n_{\mathcal D}$ in
$\Gr^W_{-2}\u_{g,n}$ is $I_{\mathcal D}$.
\end{lemma}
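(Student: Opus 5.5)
The plan is to establish the containment first and then identify the weight $-2$ image.

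\emph{The containment.} Since $N_{\mathcal D}\subseteq K_{g,n}$, Proposition~\ref{prop:Kgn-weight-minus-two} gives $\log\widetilde\rho_{g,n}(N_{\mathcal D})\subseteq W_{-2}\u_{g,n}$. As in the proof of Proposition~\ref{prop:kernel-generated-by-K}, the prounipotent group $\mathcal N_{\mathcal D}\subseteq\U_{g,n}$ has Lie algebra $\mathfrak n_{\mathcal D}$ equal to the closed Lie ideal of $\u_{g,n}$ generated by $\log\widetilde\rho_{g,n}(N_{\mathcal D})$. Since $W_{-2}\u_{g,n}$ is a closed Lie ideal, the inclusion $\mathfrak n_{\mathcal D}\subseteq W_{-2}\u_{g,n}$ follows.

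\emph{The weight $-2$ image.} Write $\overline{\mathfrak n}$ for the image of $\mathfrak n_{\mathcal D}$ in $\Gr^W_{-2}\u_{g,n}$. The inclusion $I_{\mathcal D}\subseteq\overline{\mathfrak n}$ is immediate, because each $\log\widetilde\rho_{g,n}(D)$ with $D\in N_{\mathcal D}$ lies in $\mathfrak n_{\mathcal D}\cap W_{-2}$ and maps to $\operatorname{in}_{-2}(D)$.

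For the reverse inclusion, consider the closed Lie ideal $\mathfrak a$ generated by $\log\widetilde\rho_{g,n}(N_{\mathcal D})$. It is the closure of the span of iterated brackets $[x_1,[x_2,\ldots,[x_k,y]]]$ with $x_r\in\u_{g,n}$ and $y$ a logarithm. Any bracket with $k\geq1$ lies in $[\u,W_{-2}]\subseteq W_{-3}$. Hence, modulo $W_{-3}$, the ideal is the closure of $\operatorname{span}\{\log\widetilde\rho(D)\}+W_{-3}$.

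One caveat: $\mathfrak n_{\mathcal D}$ is the Lie algebra of the ideal generated in $\u_{g,n}$, but $\mathcal N_{\mathcal D}$ is normal in all of $\cG_{g,n}$. The ideal must therefore also be stable under the adjoint action of $\Sp(H)$, via a Levi splitting. On $\Gr^W_{-2}$ this action factors through $\Sp(H)$, and by Lemma~\ref{lem:image-Kgn-Sp-submodule} $I_{\mathcal D}$ is already $\Sp(H)$-stable. Concretely, the normality of $N_{\mathcal D}$ and the Zariski density of $\Sp(H_\Z)$ handle this.

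It remains to take closures. Because $\Gr^W_{-2}\u_{g,n}$ is finite-dimensional and the projection $W_{-2}\to\Gr^W_{-2}$ is continuous with closed kernel $W_{-3}$, the image of a closure lies in the closure of the image. In a finite-dimensional space that closure is $I_{\mathcal D}$ itself. This gives $\overline{\mathfrak n}\subseteq I_{\mathcal D}$.

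The main obstacle is rigor in identifying $\mathfrak n_{\mathcal D}$ with the closed ideal generated by the logarithms, in particular invariance under the $\cG_{g,n}$-adjoint action rather than only under $\u_{g,n}$. For this, take $\mathfrak n_{\mathcal D}$ to be the Lie algebra of the Zariski closure of the image of $N_{\mathcal D}$, as noted just before the lemma. Then pass to each finite-dimensional algebraic quotient $U$ of $\U_{g,n}$. There the Zariski closure of a group generated by unipotent elements has Lie algebra spanned by the Lie algebra generated by their logarithms, which is an algebraic-group fact for unipotent groups. Finally, use the BCH additivity of initial forms, noted earlier, to conclude that products in $N_{\mathcal D}$ contribute nothing beyond $I_{\mathcal D}$ in weight $-2$.
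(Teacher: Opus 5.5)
Your argument is correct, but it works at the Lie-algebra level, whereas the paper argues with groups.

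\emph{The paper's route.} For the containment, the paper notes that $W_{-2}\U_{g,n}=\exp(W_{-2}\u_{g,n})$ is closed and normal in $\cG_{g,n}$ and contains $\widetilde\rho_{g,n}(N_{\mathcal D})$, hence contains $\mathcal N_{\mathcal D}$. For the weight $-2$ image, it passes to the vector group $W_{-2}\U_{g,n}/W_{-3}\U_{g,n}\cong\Gr^W_{-2}\u_{g,n}$. There $I_{\mathcal D}$ is a Zariski-closed vector subgroup containing the image of every $D\in N_{\mathcal D}$. So the image of the Zariski closure $\mathcal N_{\mathcal D}$ lies in $I_{\mathcal D}$. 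The reverse inclusion is the same easy observation you make.

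\emph{Your route.} You identify $\mathfrak n_{\mathcal D}$ with the closed Lie ideal $\mathfrak a$ generated by $\log\widetilde\rho_{g,n}(N_{\mathcal D})$. You then read off weight $-2$ from the generators, using $[\u_{g,n},W_{-2}\u_{g,n}]\subseteq W_{-3}\u_{g,n}$. Closure adds nothing because $W_{-3}\u_{g,n}$ is closed of finite codimension, hence open.

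\emph{The cost of your route.} It needs $\mathfrak n_{\mathcal D}=\mathfrak a$. The inclusion $\mathfrak a\subseteq\mathfrak n_{\mathcal D}$ is automatic. The inclusion $\mathcal N_{\mathcal D}\subseteq\exp\mathfrak a$ requires $\exp\mathfrak a$ to be normal in $\cG_{g,n}$, that is, $\mathfrak a$ must be $\Ad(\cG_{g,n})$-stable. Your ``caveat'' paragraph gestures at this but conflates the two objects. The clean statement is as follows. Normality of $N_{\mathcal D}$ makes $\mathfrak a$ stable under $\Ad(\widetilde\rho_{g,n}(\gamma))$ for every $\gamma\in\G_{g,n}$. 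The stabilizer of the closed subspace $\mathfrak a$ is a closed subgroup containing the Zariski-dense image of $\G_{g,n}$, so it is all of $\cG_{g,n}$. Once this is in place, you do not need Lemma~\ref{lem:image-Kgn-Sp-submodule} at all.

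\emph{Comparison.} The paper's route avoids ideal generation altogether. It uses only that $\mathcal N_{\mathcal D}$ is the Zariski closure of $\widetilde\rho_{g,n}(N_{\mathcal D})$, noted just before the lemma, and that linear subspaces of a vector group are closed. Yours yields an explicit description of $\mathfrak n_{\mathcal D}$ modulo $W_{-3}\u_{g,n}$. Your final paragraph, computing the Lie algebra of a Zariski closure on algebraic quotients, is essentially the paper's argument in Lie-algebra form.
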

\begin{proof}
For $m\geq1$, write
$W_{-m}\U_{g,n}:=\exp(W_{-m}\u_{g,n})$.
Since $N_{\mathcal D}\subseteq K_{g,n}$,
Proposition~\ref{prop:Kgn-weight-minus-two} implies that its image
in $\U_{g,n}$ is contained in $W_{-2}\U_{g,n}$. Since
$W_{-2}\U_{g,n}$ is closed and normal in $\cG_{g,n}$, it contains
$\mathcal N_{\mathcal D}$. Hence
$\mathfrak n_{\mathcal D}\subseteq W_{-2}\u_{g,n}$.

Consider the quotient
$W_{-2}\U_{g,n}/W_{-3}\U_{g,n}$.
It is a vector group, and the logarithm induces an isomorphism of
vector groups
\[
W_{-2}\U_{g,n}/W_{-3}\U_{g,n}
\xrightarrow{\sim}
\Gr^W_{-2}\u_{g,n}.
\]
Via this isomorphism, we regard the linear subspace
$I_{\mathcal D}\subseteq\Gr^W_{-2}\u_{g,n}$ as a closed vector
subgroup of the quotient above. The image of each
$D\in N_{\mathcal D}$ corresponds to
$\operatorname{in}_{-2}(D)$, and therefore lies in
$I_{\mathcal D}$. Since $\mathcal N_{\mathcal D}$ is the Zariski
closure of the image of $N_{\mathcal D}$, its image in the quotient
is also contained in $I_{\mathcal D}$.

Taking Lie algebras, the image of $\mathfrak n_{\mathcal D}$ in
$\Gr^W_{-2}\u_{g,n}$ is therefore contained in $I_{\mathcal D}$.
Conversely, for every $D\in N_{\mathcal D}$,
$\log\widetilde\rho_{g,n}(D)\in \mathfrak n_{\mathcal D}$, so
$\operatorname{in}_{-2}(D)$ lies in the image of $\mathfrak n_{\mathcal D}$.
Taking their span gives the reverse inclusion.
\end{proof}

\begin{proposition}\label{prop:general-criterion}
If $I_{\mathcal D}=\Gr^W_{-2}\u_{g,n}$, then $\mathfrak n_{\mathcal D}=W_{-2}\u_{g,n}$.
Consequently, the inclusion
$\M_{\mathcal D}^{\circ}\hookrightarrow\M^c_{g,n}$ induces an
isomorphism on rational relative completions.
\end{proposition}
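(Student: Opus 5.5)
The strategy is to show that $\mathfrak n_{\mathcal D}$ is a closed ideal squeezed between two things that both equal $W_{-2}\u_{g,n}$. One containment is already known: Lemma~\ref{lem:boundary-ideal-weight-two} gives $\mathfrak n_{\mathcal D}\subseteq W_{-2}\u_{g,n}$. For the reverse, the hypothesis together with Lemma~\ref{lem:boundary-ideal-weight-two} says that the image of $\mathfrak n_{\mathcal D}$ in $\Gr^W_{-2}\u_{g,n}$ is everything. Equivalently,
\[
\mathfrak n_{\mathcal D}+W_{-3}\u_{g,n}=W_{-2}\u_{g,n}.
\]

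Next I bootstrap this through the lower central series, using $W_{-m}\u_{g,n}=L^m\u_{g,n}$ from \cite[Cor.~4.8]{hain_infpretor}. I claim by induction that $\mathfrak n_{\mathcal D}+W_{-m-1}\u_{g,n}\supseteq W_{-m}\u_{g,n}$ for all $m\geq2$. For the inductive step, recall that $\Gr^W_{-m-1}\u_{g,n}$ is spanned by brackets $[x,y]$ with $x\in\u_{g,n}$ and $y\in W_{-m}\u_{g,n}$. By induction write $y=n+w$ with $n\in\mathfrak n_{\mathcal D}$ and $w\in W_{-m-1}$. Then $[x,y]=[x,n]+[x,w]$, where $[x,n]\in\mathfrak n_{\mathcal D}$ because $\mathfrak n_{\mathcal D}$ is an ideal, and $[x,w]\in W_{-m-2}$. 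Therefore
\[
W_{-m-1}\subseteq\mathfrak n_{\mathcal D}+W_{-m-2}\quad\text{modulo the closure of brackets.}
\]
Combining the inductive statements gives $W_{-2}\u_{g,n}\subseteq\mathfrak n_{\mathcal D}+W_{-m}\u_{g,n}$ for every $m$.

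To pass to the limit, note that $\mathfrak n_{\mathcal D}$ is closed. The filtration is complete and separated, and it is given by finite-dimensional graded pieces, being a pro-object with finite-dimensional quotients $\u/W_{-m}$. Hence $\bigcap_m(\mathfrak n_{\mathcal D}+W_{-m})=\overline{\mathfrak n_{\mathcal D}}=\mathfrak n_{\mathcal D}$, which gives $\mathfrak n_{\mathcal D}=W_{-2}\u_{g,n}$.

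\emph{Main obstacle.} The delicate point is the topological one: the span of brackets must be handled in the pronilpotent topology, and closed subspaces must be compatible with the inverse-limit structure. I would handle this by working in each finite-dimensional quotient $\u_{g,n}/W_{-m}\u_{g,n}$, where the Lie algebra is nilpotent and the image of $\mathfrak n_{\mathcal D}$ is an ideal generating modulo $[\cdot,\cdot]$-decomposables. There the argument is the standard nilpotent generation lemma, so that image equals $W_{-2}/W_{-m}$. Passing to the inverse limit then uses closedness of $\mathfrak n_{\mathcal D}$.

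Finally, I deduce the isomorphism. By Proposition~\ref{prop:kernel-generated-by-K} and Theorem~\ref{thm:compact-type-completion-char-zero}, $\n_{g,n}=W_{-2}\u_{g,n}$. Since $N_{\mathcal D}\subseteq K_{g,n}$ we have $\mathcal N_{\mathcal D}\subseteq\cN_{g,n}$, and both groups are prounipotent with Lie algebra $W_{-2}\u_{g,n}$, so $\mathcal N_{\mathcal D}=\cN_{g,n}$. Proposition~\ref{prop:boundary-group} and Proposition~\ref{prop:relative-completion-quotient} then identify $\cG_{\M_{\mathcal D}^{\circ}}\cong\cG_{g,n}/\mathcal N_{\mathcal D}=\cG_{g,n}/\cN_{g,n}\cong\cG^c_{g,n}$. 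The map $\G_{g,n}/N_{\mathcal D}\to\G_{g,n}/K_{g,n}$ induced by inclusion is compatible with these identifications by uniqueness in the universal property, so the inclusion induces this isomorphism.
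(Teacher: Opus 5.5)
Your proposal is correct and follows essentially the same route as the paper. You use the hypothesis to get $\mathfrak n_{\mathcal D}+W_{-3}\u_{g,n}=W_{-2}\u_{g,n}$, bootstrap through the lower central series, use closedness of $\mathfrak n_{\mathcal D}$ to conclude $\mathfrak n_{\mathcal D}=W_{-2}\u_{g,n}$, and then compare with $\Lie\cN_{g,n}=W_{-2}\u_{g,n}$ to identify the two quotients. The only cosmetic difference is that the paper runs the bootstrap in the quotient $\mathfrak q=\u_{g,n}/\mathfrak n_{\mathcal D}$, showing $W_{-2}\mathfrak q=W_{-r}\mathfrak q$ and invoking separatedness, whereas you prove $W_{-2}\u_{g,n}\subseteq\mathfrak n_{\mathcal D}+W_{-m}\u_{g,n}$ for all $m$ and intersect over $m$; these are the same statement.
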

\begin{proof}
The hypothesis gives
\[
\mathfrak n_{\mathcal D}+W_{-3}\u_{g,n}=W_{-2}\u_{g,n}.
\]
Put $\mathfrak q=\u_{g,n}/\mathfrak n_{\mathcal D}$ and give it the quotient
filtration. Since $W_\bullet\u_{g,n}$ is the closed lower central
series, the quotient filtration is the closed lower central series of
$\mathfrak q$. Thus
\[
W_{-2}\mathfrak q=W_{-3}\mathfrak q,\qquad
W_{-r-1}\mathfrak q=
\overline{[\mathfrak q,W_{-r}\mathfrak q]}.
\]
It follows inductively that
$W_{-2}\mathfrak q=W_{-r}\mathfrak q$ for every $r\geq2$.
Since $\mathfrak n_{\mathcal D}$ is closed and $W_\bullet\u_{g,n}$ is the complete lower central series filtration, the induced filtration on $\mathfrak q$ is separated, and hence $W_{-2}\mathfrak q=0$. Therefore
$W_{-2}\u_{g,n}\subseteq \mathfrak n_{\mathcal D}$, and the reverse inclusion
was established above. Thus
$\mathfrak n_{\mathcal D}=W_{-2}\u_{g,n}$.
By Theorem~\ref{thm:compact-type-completion-char-zero},
\[
\Lie\mathcal N_{g,n}=W_{-2}\u_{g,n}.
\]
Since $N_{\mathcal D}\subseteq K_{g,n}$, one has
$\mathcal N_{\mathcal D}\subseteq\mathcal N_{g,n}$, and the equality
$\mathfrak n_{\mathcal D}=W_{-2}\u_{g,n}$ shows that these two closed
prounipotent subgroups have the same Lie algebra. Over a field of
characteristic zero, the exponential and logarithm give inverse
equivalences between prounipotent groups and pronilpotent Lie
algebras. Hence
$\mathcal N_{\mathcal D}=\mathcal N_{g,n}$.
Therefore $\cG_{\M_{\mathcal D}^{\circ}}$ and $\cG^c_{g,n}$ are the same
quotient of $\cG_{g,n}$.
\end{proof}

\subsection{The fixed-genus partial completions}
We now apply the fixed-genus spanning theorem to the selected boundary
opens. Put
\[
\mathfrak n_h:=\mathfrak n_{\mathcal D_{h,\leq2}}.
\]
\begin{theorem}[Fixed-genus partial completions]
\label{thm:fixed-genus-betti}
For every $1\leq h\leq g-1$,
\[
\mathfrak n_h=W_{-2}\u_{g,n}.
\]
Moreover, the inclusions induce isomorphisms
\[
\cG_{\M_{\mathcal D_{h,\leq2}}^{\circ}}
\xrightarrow{\sim}
\cG_{\M_{\mathcal D_h}^{\circ}}
\xrightarrow{\sim}
\cG^c_{g,n}.
\]
The same assertions hold for the corresponding larger opens
$\M_{\mathcal D_{h,\leq2}}$ and $\M_{\mathcal D_h}$. In
particular, their prounipotent radicals are commutative, with Lie
algebra
\[
\Lambda^3_0H\oplus H^{\oplus n}.
\]
\end{theorem}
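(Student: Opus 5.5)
The argument will combine Theorem~\ref{thm:fixed-genus-spanning} with the closed-ideal criterion of Proposition~\ref{prop:general-criterion}, and then pass between the one-node opens and the larger opens by purity. First, for $\mathcal D=\mathcal D_{h,\leq2}$ we have $N_{\mathcal D}=N_h$, as observed after Proposition~\ref{prop:boundary-group}. Hence $I_{\mathcal D}=I_h$, and Theorem~\ref{thm:fixed-genus-spanning} gives $I_{\mathcal D}=\Gr^W_{-2}\u_{g,n}$. Proposition~\ref{prop:general-criterion} then yields $\mathfrak n_h=W_{-2}\u_{g,n}$ and $\mathcal N_{\mathcal D_{h,\leq2}}=\mathcal N_{g,n}$. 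Proposition~\ref{prop:relative-completion-quotient} applied to $\G_{g,n}\to\G_{g,n}/N_{\mathcal D}$, together with Proposition~\ref{prop:boundary-group}, identifies $\cG_{\M^\circ_{\mathcal D}}$ with $\cG_{g,n}/\mathcal N_{\mathcal D}$. We must also check that the restriction of $\rho^c_{g,n}$ is Zariski dense. This holds because the image already contains $\Sp(H_\Z)$ from the smooth locus.

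Second, to obtain the chain of isomorphisms, I would use the inclusions $N_h\subseteq N_{\mathcal D_h}\subseteq K_{g,n}$. These give closed normal subgroups $\mathcal N_{\mathcal D_{h,\leq2}}\subseteq\mathcal N_{\mathcal D_h}\subseteq\mathcal N_{g,n}$ of $\cG_{g,n}$. The outer two are equal, so all three coincide. The morphisms $\cG_{\M^\circ_{\mathcal D_{h,\leq2}}}\to\cG_{\M^\circ_{\mathcal D_h}}\to\cG^c_{g,n}$ are induced by the open inclusions. Under the identifications with quotients of $\cG_{g,n}$, they are the natural quotient maps $\cG_{g,n}/\mathcal N_{\mathcal D_{h,\leq 2}}\to\cG_{g,n}/\mathcal N_{\mathcal D_h}\to\cG_{g,n}/\mathcal N_{g,n}$. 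This compatibility follows from functoriality, since all maps are compatible with the canonical lifts from $\G_{g,n}$, which is Zariski dense. These maps are therefore isomorphisms.

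Third, for the larger opens $\M_{\mathcal D_{h,\leq2}}$ and $\M_{\mathcal D_h}$, Proposition~\ref{prop:boundary-group} gives the same orbifold fundamental group $\G_{g,n}/N_{\mathcal D}$. The inclusion $\M^\circ_{\mathcal D}\hookrightarrow\M_{\mathcal D}$ induces this isomorphism compatibly with the monodromy. Their relative completions are therefore the same, and the preceding chain transfers. Finally, Theorem~\ref{thm:compact-type-completion-char-zero} shows that the prounipotent radical of $\cG^c_{g,n}$ is commutative with Lie algebra $\u_{g,n}/W_{-2}\u_{g,n}\cong\Lambda^3_0H\oplus H^{\oplus n}$, and this transfers along the isomorphisms.

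The only delicate point is the bookkeeping in the second step. One must check that the maps induced by the open immersions agree with the quotient maps between quotients of $\cG_{g,n}$, that is, that the isomorphisms of Proposition~\ref{prop:boundary-group} are compatible with $j_*$ for the various opens. I expect this to follow directly from the fact that each map is induced by the identity on $\G_{g,n}$ composed with quotients, since all opens contain $\M_{g,n}$ with a common base point. All substantive content is already contained in Theorem~\ref{thm:fixed-genus-spanning}.
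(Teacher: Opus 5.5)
Your proposal is correct and follows the paper's route: fixed-genus spanning plus the closed-ideal criterion give $\mathfrak n_h=W_{-2}\u_{g,n}$ and $\mathcal N_{\mathcal D_{h,\leq2}}=\mathcal N_{g,n}$. Purity and Theorem~\ref{thm:compact-type-completion-char-zero} then handle the larger opens and the radical, exactly as in the paper. The only cosmetic difference is in the middle step: you sandwich $\mathcal N_{\mathcal D_h}$ between two equal closed normal subgroups, whereas the paper notes that the composite isomorphism factors through two surjections; these are the same argument.
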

\begin{proof}
By Theorem~\ref{thm:fixed-genus-spanning},
\(
I_{\mathcal D_{h,\leq2}}
=I_h
=\Gr^W_{-2}\u_{g,n}.
\)
Proposition~\ref{prop:general-criterion} therefore gives
\(
\mathfrak n_h=W_{-2}\u_{g,n}
\)
and an isomorphism
\[
\cG_{\M_{\mathcal D_{h,\leq2}}^{\circ}}
\xrightarrow{\sim}\cG^c_{g,n}.
\]
This morphism factors through
$\cG_{\M_{\mathcal D_h}^{\circ}}$. Since the inclusion-induced morphisms of
relative completions are surjective, both morphisms in
\[
\cG_{\M_{\mathcal D_{h,\leq2}}^{\circ}}
\longrightarrow
\cG_{\M_{\mathcal D_h}^{\circ}}
\longrightarrow
\cG^c_{g,n}
\]
are isomorphisms. The description of the prounipotent radicals follows
from Theorem~\ref{thm:compact-type-completion-char-zero}. The assertion
for the larger opens follows from the codimension-two comparison above.
\end{proof}

For any algebraically closed field $F$ of characteristic zero, the
same conclusions hold for continuous relative completion over $\Ql$,
for every prime $\ell$, by
Corollary~\ref{cor:rational-to-continuous-completion} and invariance
under algebraically closed base change. This proves the
characteristic-zero part of Theorem~\ref{thm:fixed-genus-main}.

\begin{remark}
The equality $\mathfrak n_h=W_{-2}\u_{g,n}$ concerns the closed normal image
of $N_h$ in $\cG_{g,n}$. It does not identify the intrinsic
unipotent completion of $N_h$ or $K_{g,n}$ with that image.
\end{remark}

% Replacement draft for Section 9. This is a section fragment and uses the
% notation, theorem environments, and bibliography of the manuscript.
% Existing labels are retained where their statements have been generalized.
%
% Additional bibliography entry, if it is not already in references.bib:
% @incollection{pikaart_dejong,
%   author    = {Pikaart, M. and de Jong, A. J.},
%   title     = {Moduli of curves with non-abelian level structure},
%   booktitle = {The Moduli Space of Curves},
%   editor    = {Dijkgraaf, R. and Faber, C. and van der Geer, G.},
%   series    = {Progress in Mathematics},
%   volume    = {129},
%   publisher = {Birkh\"auser},
%   year      = {1995},
%   pages     = {483--510}
% }

%%%%%%%%%%%%%%%%%%%%%%%%%%%%%%%%%%%%%%%%%%%%%%%%%%%%%%%%%%%%%%%%%%%%%%%%%%%%
\section{Specialization and Comparison in Characteristic
\texorpdfstring{$p$}{p}}
\label{sec:specialization-characteristic-p}
%%%%%%%%%%%%%%%%%%%%%%%%%%%%%%%%%%%%%%%%%%%%%%%%%%%%%%%%%%%%%%%%%%%%%%%%%%%%

Throughout this section, $k$ is an algebraically closed field of
characteristic $p>0$, and $\ell\neq p$ is an odd prime.
Let $\mathcal D$ be any collection of separating boundary types.
We compare the continuous relative completion of
$\pi_1(\M_{\mathcal D/k})$ with the relative completion computed in
characteristic zero, compatibly with enlarging $\mathcal D$.
Taking $\mathcal D$ to be the collection of all separating boundary
types recovers $\M^c_{g,n}$.
After choosing geometric base points below, we write $H_{\bar\eta,\Ql}$
and $H_{\bar\xi,\Ql}$ for the corresponding symplectic representations
on the generic and special fibers.
Purity will give the same comparison for the one-node opens
$\M_{\mathcal D}^{\circ}$.

\subsection{A smooth level cover}
\label{subsec:compact-type-level-cover}

Choose an auxiliary base point
$x_0\in\Sigma_g\setminus\{x_1,\ldots,x_n\}$ and put
\[
\Pi^{\mathrm{pun}}_{g,n}
:=\pi_1^{\top}\bigl(
\Sigma_g\setminus\{x_1,\ldots,x_n\},x_0\bigr).
\]
This is the punctured surface group, with the convention that it is
the closed surface group when $n=0$. It is distinct from the pure
surface braid group $\Pi_{g,n}$ and its unipotent completion
$\cP_{g,n}$.
For $n>0$, let $W^\bullet\Pi^{\mathrm{pun}}_{g,n}$ be the weight
filtration of Boggi--Pikaart; when $n=0$, it is the lower central
series. Consider the geometric level
\[
\Gamma^\lambda
:=\ker\left(
\G_{g,n}\longrightarrow
\Out\left(
\Pi^{\mathrm{pun}}_{g,n}/
\bigl(W^3\Pi^{\mathrm{pun}}_{g,n}\cdot
(\Pi^{\mathrm{pun}}_{g,n})^\ell\bigr)
\right)\right).
\]
Here $(\Pi^{\mathrm{pun}}_{g,n})^\ell$ denotes the subgroup normally
generated by the $\ell$th powers.
When $n=0$, this is the level of Pikaart--de Jong with $k=2$ and
level modulus $\ell$; see
\cite[Prop.~2.3.6 and Thms.~3.1.1, 3.1.3]{pikaart_dejong}.
When $n>0$, it is the level $\Gamma_{g,n}^{4,\ell}$ of
Boggi--Pikaart. In their refined filtration, one has
$V^4\Pi^{\mathrm{pun}}_{g,n}=W^3\Pi^{\mathrm{pun}}_{g,n}$;
the term $V^4$ is not $W^4$.
The arithmetic construction in
\cite[Prop.~2.6, case $k=4$]{boggi_pikaart} supplies this pointed
level, and its local monodromy is given by their Proposition~2.8.

Choose the corresponding finite \'etale Galois level cover
\[
q:M^\lambda\longrightarrow\M_{g,n/\Z[1/\ell]},
\]
and let
\[
\overline q:\overline M^\lambda
\longrightarrow\Mbar_{g,n/\Z[1/\ell]}
\]
be the normalization of the Deligne--Mumford compactification with
respect to $M^\lambda$.
For the choices just made, $\overline M^\lambda$ is a proper smooth
scheme over $\Z[1/\ell]$, and its boundary
$E^\lambda:=\overline M^\lambda-M^\lambda$ is a relative
normal-crossings divisor. These properties follow from the cited
constructions and their local-monodromy descriptions.

\subsection{The level covers of the selected opens}
\label{subsec:selected-level-opens}
\label{subsec:compact-type-level-fundamental-group}

Let $\Delta_{\mathrm{irr}}$ denote the reduced complement of compact
type in $\Mbar_{g,n}$. Define
\[
M_{\mathcal D}^\lambda
:=\overline M^\lambda\times_{\Mbar_{g,n}}\M_{\mathcal D},
\qquad
D_{\mathcal D}^\lambda
:=\overline M^\lambda-M_{\mathcal D}^\lambda,
\]
where the complement is taken as the union of the corresponding
components of $E^\lambda$, each taken once.
Since $\M_{\mathcal D}\hookrightarrow\Mbar_{g,n}$ is an open
immersion, its base change identifies $M_{\mathcal D}^\lambda$ with
an open subscheme of $\overline M^\lambda$.
The complement of $\M_{\mathcal D}$ is the union of
$\Delta_{\mathrm{irr}}$ and the separating boundary divisors whose
types are not in $\mathcal D$.
Thus $D_{\mathcal D}^\lambda$ is the union of the components of
$E^\lambda$ lying over the omitted boundary divisors.
In local normal-crossing coordinates it has an equation
\[
s_{e_1}\cdots s_{e_t}=0,
\]
where only a subset of the boundary parameters is used. Therefore
$(\overline M^\lambda,D_{\mathcal D}^\lambda)$ is a smooth proper
pair with relative normal-crossings boundary, and
$M_{\mathcal D}^\lambda=\overline M^\lambda-D_{\mathcal D}^\lambda$
is smooth over $\Z[1/\ell]$.

For $\mathcal D=\varnothing$, we have
$\M_{\mathcal D}=\M_{g,n}$ and $M_{\mathcal D}^\lambda=M^\lambda$.
When $\mathcal D$ consists of all separating boundary types, we have
$\M_{\mathcal D}=\M^c_{g,n}$ and write
$M^{c,\lambda}=M_{\mathcal D}^\lambda$.
Thus the compact-type level cover is included in this construction.

\begin{proposition}[\'Etaleness over the selected opens]
\label{prop:selected-level-etale}
\label{prop:compact-type-level-etale}
For every collection $\mathcal D$ of separating boundary types, the
induced morphism
\[
q_{\mathcal D}:M_{\mathcal D}^\lambda\longrightarrow\M_{\mathcal D}
\]
is finite \'etale. When $\mathcal D$ contains all separating types,
we write $q^c:M^{c,\lambda}\to\M^c_{g,n}$ for this cover.
\end{proposition}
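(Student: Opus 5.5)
The plan is to deduce étaleness from purity of the branch locus together with the local monodromy description of the level cover. Finiteness is formal. The morphism $\overline q:\overline M^\lambda\to\Mbar_{g,n/\Z[1/\ell]}$ is finite, being a normalization in a finite étale cover of a normal stack. The morphism $q_{\mathcal D}$ is the base change of $\overline q$ along the open immersion $\M_{\mathcal D}\hookrightarrow\Mbar_{g,n}$, so it is finite. Both source and target are regular: $M^\lambda_{\mathcal D}$ by the construction of \S\ref{subsec:selected-level-opens}, and $\M_{\mathcal D}$ as an open substack of $\Mbar_{g,n}$. It therefore suffices to show that $q_{\mathcal D}$ is étale. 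By Zariski--Nagata purity \cite[Exp.~X, Thm.~3.1]{SGA1}, applied on regular étale charts and using flatness of a finite morphism between regular schemes of the same dimension (miracle flatness), this reduces to showing that $q_{\mathcal D}$ is unramified at the generic points of the divisors $D^\circ$ for $D\in\mathcal D$. Over $\M_{g,n}$ it is already étale.

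Next I would reduce to a statement about inertia. Fix $D=\Delta_{h,S}\in\mathcal D$ and a point of $D^\circ$ in some fiber over $\Z[1/\ell]$. Étale locally, $\Mbar_{g,n}$ near this point is a smooth family with a single smoothing parameter $t$, and $D$ is given by $t=0$. The cover $M^\lambda\to\M_{g,n}$ is Galois with group $\G_{g,n}/\Gamma^\lambda$. Its restriction to the punctured neighborhood is therefore determined by the image of the local inertia. Over $\C$, this inertia is generated by the meridian, namely the Dehn twist $T_\alpha$ about a separating curve of type $(h,S)$. In mixed and positive characteristic, the tame inertia is the image of the corresponding generator of $\widehat\Z^{(p')}(1)$, and Boggi--Pikaart's local monodromy description \cite[Prop.~2.8]{boggi_pikaart} identifies it with the profinite image of the same twist. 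The cover is unramified along $D^\circ$ if and only if this inertia acts trivially. It therefore suffices to show that $T_\alpha\in\Gamma^\lambda$.

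The key group-theoretic step is to show that a separating twist acts trivially on $\Pi^{\mathrm{pun}}_{g,n}/(W^3\Pi^{\mathrm{pun}}_{g,n}\cdot(\Pi^{\mathrm{pun}}_{g,n})^\ell)$, up to inner automorphisms. For $n=0$, I would argue as in the proof of Proposition~\ref{prop:Kgn-weight-minus-two}: separating twists lie in the Johnson kernel $K_g$, so they act trivially on $\Pi/L^3\Pi$. For $n>0$, I would argue directly. Choose generators of $\Pi^{\mathrm{pun}}_{g,n}$ adapted to the decomposition along $\alpha$, so that $\alpha$ is represented by a product of commutators together with puncture loops on one side. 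Then $T_\alpha$ fixes the generators on one side and conjugates those on the other side by $\alpha$. Since $\alpha$ is a product of commutators of weight at least $2$ and puncture loops of Boggi--Pikaart weight at least $2$, each conjugation alters a generator by an element of weight at least $3$. Modulo inner automorphisms, $T_\alpha$ is thus trivial on the stated quotient. This is the mechanism behind the statement in \cite{boggi_pikaart} that their level is unramified over the separating boundary. The same argument applies to rational tails ($h=0$), where $\alpha$ is a product of at least two puncture loops. This covers every separating type, in particular the compact-type case $q^c$.

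I expect the main obstacle to be the comparison of inertia in positive and mixed characteristic. One must check that the tame inertia at a generic point of $D^\circ$ in a fiber over $p\neq\ell$ maps, under specialization, to the class of $T_\alpha$ in the finite quotient $\G_{g,n}/\Gamma^\lambda$. A possible wild part must also be dealt with. The approach I would take is to rely on the cited local monodromy proposition and on the fact that the cover is defined over $\Z[1/\ell]$. Purity over the regular base then means that unramifiedness at the generic point of $D^\circ_{\Q}$ propagates to the whole relative divisor $D^\circ_{/\Z[1/\ell]}$, since this divisor is irreducible and flat with connected generic fiber. Only the characteristic-zero twist computation is then needed.
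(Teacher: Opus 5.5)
Your argument is correct, but it takes a somewhat different route from the paper.

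\textbf{The paper's route.} It works pointwise over $\C$. At every boundary point of $\M_{\mathcal D/\C}$, including points with several nodes, it uses the local monodromy lattice $L_\Gamma$ and the cited Pikaart--de~Jong/Boggi--Pikaart description of $L_\Gamma^\lambda$ to show that all ramification indices equal $1$. It then descends \'etaleness from $\C$ to $\Q$ by faithfully flat descent. Only at the end does it invoke purity of the branch locus, to exclude ramification in the special fibers: a vertical codimension-one component would have its generic point in $\M_{g,n/\F_p}$, where the cover agrees with $q$.

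\textbf{Your route.} You apply purity once, over all of $\M_{\mathcal D/\Z[1/\ell]}$, together with miracle flatness. This reduces everything to the codimension-one points of $\M_{\mathcal D}$ outside $\M_{g,n}$, namely the generic points of the divisors in $\mathcal D$. These lie over $\Q$, so only one-node meridians over $\C$ need to be examined. You also verify the one group-theoretic input, $T_\alpha\in\Gamma^\lambda$, directly from $\alpha\in W^2$ and $[W^2,W^1]\subseteq W^3$, instead of citing the lattice computation.

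\textbf{Comparison.} Your argument is more economical and self-contained. It avoids the deeper strata and the separate descent step. It also does not need the full strength of the cited local monodromy results; their $\ell\Z$ factors on nonseparating edges are irrelevant here. The paper's argument, in exchange, gives explicit \'etaleness at every complex point.

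\textbf{Simplifications.} Your discussion of tame inertia in mixed and positive characteristic, and the appeal to Boggi--Pikaart for it, is unnecessary. Every codimension-one point of $\M_{\mathcal D}$ lying on the boundary is the generic point of a flat boundary divisor, so it lies over $\Q$. As your last paragraph already recognizes, that step can simply be deleted. Likewise, you do not need regularity of $M^\lambda_{\mathcal D}$ or miracle flatness: Zariski--Nagata purity \cite[Exp.~X, Thm.~3.1]{SGA1} only requires the source to be normal, which holds because it is a normalization.
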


\begin{proof}
The morphism is finite because it is the base change of $\overline q$.
We first prove \'etaleness over $\C$. Let
$[C]\in \M_{\mathcal D/\C}\setminus\M_{g,n/\C}$, and let $\Gamma$ be its
dual graph. Write $E(\Gamma)$ for the set of edges of $\Gamma$, one
for each node of $C$, and let $B(\Gamma)\subseteq E(\Gamma)$ be the
set of separating edges, that is, the edges whose removal disconnects
$\Gamma$. The local monodromy lattice is
\[
L_\Gamma=\bigoplus_{e\in E(\Gamma)}\Z\varepsilon_e,
\]
where $\varepsilon_e$ is the homotopy class of a positively oriented
meridian loop around the boundary branch corresponding to $e$.
The local monodromy homomorphism
\[
\mu_\Gamma:L_\Gamma\longrightarrow\G_{g,n}
\]
sends $\varepsilon_e$ to the Dehn twist about the vanishing cycle
corresponding to the node $e$. Define
\[
L_\Gamma^\lambda
:=\mu_\Gamma^{-1}(\Gamma^\lambda)
=\ker\left(
L_\Gamma\longrightarrow\G_{g,n}/\Gamma^\lambda
\right).
\]
Thus $L_\Gamma^\lambda$ consists of the local loops that lift to
closed loops in the level cover.
For $n=0$, Theorem~3.1.3(2) of \cite{pikaart_dejong}, with $k=2$
and odd modulus $\ell$, gives $L_\Gamma^\lambda=\ell F^0+F^2$,
where $F^0=L_\Gamma$ and $F^2$ is generated by the separating edges.
For $n>0$, we use \cite[Prop.~2.8]{boggi_pikaart}.
In either case,
\[
L_\Gamma^\lambda
=
\bigoplus_{e\notin B(\Gamma)}\ell\Z\varepsilon_e
\oplus
\bigoplus_{e\in B(\Gamma)}\Z\varepsilon_e.
\]
Consequently, the ramification index along the boundary branch
corresponding to $e$ is
\[
[\Z\varepsilon_e:
L_\Gamma^\lambda\cap\Z\varepsilon_e]
=
\begin{cases}
\ell,&e\notin B(\Gamma),\\
1,&e\in B(\Gamma).
\end{cases}
\]
Since $\M_{\mathcal D}\subseteq\M^c_{g,n}$, the curve $C$ is of compact type.
Thus $\Gamma$ is a tree, so every edge of $\Gamma$ is separating.
Hence $B(\Gamma)=E(\Gamma)$ and
$L_\Gamma^\lambda=L_\Gamma$. In the local polydisc description of
the level cover, all ramification indices are therefore equal to $1$.
Thus the normalization is \'etale over $[C]$. Since the cover is
already \'etale over the smooth locus, $q_{\mathcal D,\C}$ is finite \'etale.
Since $\C/\Q$ is faithfully flat and \'etaleness descends under
faithfully flat base change, $q_{\mathcal D,\Q}$ is finite \'etale.

It remains to exclude ramification in the special fibers.
The source is normal and $\M_{\mathcal D}$ is regular. Purity of the branch locus,
applied after passing to an \'etale atlas of $\M_{\mathcal D}$,
\cite[Exp.~X, Thm.~3.1]{SGA1}, implies that any nonempty branch locus
has a codimension-one component. Such a component cannot dominate
$\Spec\Z[1/\ell]$, since the generic fiber is \'etale.
A component supported over $p\neq\ell$ would be an irreducible
component of $\M_{\mathcal D/\F_p}$. Its generic point lies outside the
relative boundary divisor, hence in $\M_{g,n/\F_p}$, where
$q_{\mathcal D}$ agrees with the
finite \'etale cover $q$. Thus the branch locus is empty.
\end{proof}

Thus $M_{\mathcal D}^\lambda$ is a smooth finite \'etale cover of $\M_{\mathcal D}$. The morphism gives,
by pullback, a family of stable compact-type curves over $M_{\mathcal D}^\lambda$.
We use only this pullback family and the fact that $M_{\mathcal D}^\lambda$ is a scheme;
no separate modular interpretation of the level structure at the nodes
is required.

Define
\[
M_{\mathcal D}^{\circ,\lambda}
:=M_{\mathcal D}^\lambda
\times_{\M_{\mathcal D}}\M_{\mathcal D}^{\circ}.
\]
The complement of $M_{\mathcal D}^{\circ,\lambda}$ in $M_{\mathcal D}^\lambda$ has codimension at least two,
as does the corresponding complement downstairs. The schemes are
regular. Purity of the branch locus gives an equivalence of finite
\'etale cover categories
\[
\mathrm{FEt}(M_{\mathcal D}^\lambda)
\xrightarrow{\sim}\mathrm{FEt}(M_{\mathcal D}^{\circ,\lambda}).
\]
The same conclusion holds on the stacks by descent on regular \'etale
charts. It holds over $W(k)$ and over each geometric fiber.
Thus these inclusions induce isomorphisms of full fundamental groups.
It suffices to construct comparison for the larger opens $\M_{\mathcal D}$ and
their level covers $M_{\mathcal D}^\lambda$.

Choose a connected component of $M^\lambda_\C$, together with a base
point, and the corresponding component of $M_{\mathcal D/\C}^\lambda$.
By an abuse of notation, denote these components again by
$M^\lambda_\C$ and $M_{\mathcal D/\C}^\lambda$.
The cover identifies $\pi_1^{\top}(M^\lambda_\C)$ with the
finite-index subgroup $\Gamma^\lambda\subseteq\G_{g,n}$.
Recall that $N_{\mathcal D}\unlhd\G_{g,n}$ is the normal subgroup
generated by one positive Dehn twist of each type in $\mathcal D$.

\begin{proposition}
\label{prop:compact-type-level-fundamental-group}
For compatible connected components and base points, there is a
natural isomorphism
$\pi_1^{\top}(M_{\mathcal D/\C}^\lambda)
\cong\Gamma^\lambda/N_{\mathcal D}$.
\end{proposition}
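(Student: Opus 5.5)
The plan is to compute $\pi_1^{\top}(M_{\mathcal D/\C}^\lambda)$ by the same van Kampen/meridian argument used in Proposition~\ref{prop:boundary-group}, now carried out on the level cover. We work with the chosen connected components. Since $M^\lambda_\C\subseteq M_{\mathcal D/\C}^\lambda$ is the complement of a normal-crossings divisor in a smooth complex manifold, the inclusion induces a surjection on $\pi_1^{\top}$. Its kernel is normally generated by small meridians around the components of $M_{\mathcal D/\C}^\lambda\setminus M^\lambda_\C$. Precisely, these are the components lying over the one-node loci $D^\circ$ for $D\in\mathcal D$, together with deeper strata, which have codimension $\geq2$ and contribute nothing. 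The identification $\pi_1^{\top}(M^\lambda_\C)\cong\Gamma^\lambda$ gives $\pi_1^{\top}(M_{\mathcal D/\C}^\lambda)\cong\Gamma^\lambda/N'$, where $N'$ is the normal closure in $\Gamma^\lambda$ of these upstairs meridians.

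The key step is to identify the upstairs meridians. By Proposition~\ref{prop:selected-level-etale}, $q_{\mathcal D}$ is étale over every point of compact type. Its ramification index along each separating boundary branch is $1$, because $L_\Gamma^\lambda\ni\varepsilon_e$ for separating $e$. Hence a meridian upstairs maps isomorphically onto a meridian downstairs, that is, onto a positive Dehn twist $T_c$ with $c$ of a type in $\mathcal D$. These twists lie in $\Gamma^\lambda$, which is consistent with $\mu_\Gamma(\varepsilon_e)\in\Gamma^\lambda$. Conversely, every such twist $T_c$ arises in this way. Choose $\gamma\in\G_{g,n}$ carrying the reference curve to $c$, and lift the conjugated meridian path to the Galois cover; this produces a meridian of some boundary component in the chosen connected component. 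Varying the base-point path realizes all such $T_c$. Thus $N'$ is the normal closure in $\Gamma^\lambda$ of the set $\mathcal T_{\mathcal D}$ of all positive twists about curves of types in $\mathcal D$.

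It remains to show that this normal closure equals $N_{\mathcal D}$. We have $N_{\mathcal D}\subseteq\Gamma^\lambda$, since $\mathcal T_{\mathcal D}\subseteq\Gamma^\lambda$ and $\Gamma^\lambda$ is normal. The set $\mathcal T_{\mathcal D}$ is stable under conjugation by all of $\G_{g,n}$, because $\G_{g,n}$ acts transitively on curves of a given type. Hence the subgroup generated by $\mathcal T_{\mathcal D}$ is already normal in $\G_{g,n}$, so it equals both $N_{\mathcal D}$ and its normal closure in $\Gamma^\lambda$. This gives $\pi_1^{\top}(M_{\mathcal D/\C}^\lambda)\cong\Gamma^\lambda/N_{\mathcal D}$. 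The isomorphism is natural, being induced by the inclusion $M^\lambda_\C\hookrightarrow M_{\mathcal D/\C}^\lambda$, and it is compatible with enlarging $\mathcal D$.

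The main obstacle is the second paragraph. We must check carefully that the connected component chosen upstairs sees meridians for every curve of each type, and not merely for one $\Gamma^\lambda$-orbit. The normality argument above handles this: even if only one $\Gamma^\lambda$-conjugacy class per orbit appears, the full $\G_{g,n}$-orbit is recovered, because the components over $D^\circ$ correspond to $\Gamma^\lambda$-orbits of curves of that type and all orbits are adjoined. We also need that unramifiedness makes each meridian map to the twist itself rather than a power. This is exactly where the computation $L_\Gamma^\lambda=L_\Gamma$ from the preceding proof is used.
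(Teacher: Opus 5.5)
Your argument is correct, but it takes a different route from the paper.

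The paper does not redo the meridian analysis on the level cover. It observes that $q_{\mathcal D,\C}$ is a connected finite \'etale cover of $\M_{\mathcal D/\C}$ (Proposition~\ref{prop:selected-level-etale}). By the subgroup correspondence, its fundamental group is therefore a finite-index subgroup $G'$ of $\pi_1^{\orb}((\M_{\mathcal D})^{\an})\cong\G_{g,n}/N_{\mathcal D}$ (Proposition~\ref{prop:boundary-group}). Restricting this cover to the dense open $\M_{g,n/\C}$ gives the connected cover $M^\lambda_\C$. Its group is the preimage of $G'$ in $\G_{g,n}$, which is $\Gamma^\lambda$. This yields $N_{\mathcal D}\subseteq\Gamma^\lambda$ and $G'\cong\Gamma^\lambda/N_{\mathcal D}$ at once, as a map of short exact sequences.

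You instead apply the divisor-complement presentation directly on the smooth scheme $M^\lambda_{\mathcal D/\C}$. You then use ramification index one to identify upstairs meridians with twists $T_c$, and path lifting to show that every twist of a type in $\mathcal D$ occurs. Finally, the $\G_{g,n}$-stability of $\mathcal T_{\mathcal D}$ identifies its normal closure in $\Gamma^\lambda$ with $N_{\mathcal D}$.

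What each route buys:
\begin{itemize}
\item The paper's route is shorter. It uses only the \emph{conclusion} of \'etaleness, so the meridian computation done once downstairs transfers formally, and naturality comes for free.
\item Your route essentially reproves Proposition~\ref{prop:boundary-group} on the cover. That costs the extra path-lifting and normality checks. In return, it makes explicit where $L_\Gamma^\lambda=L_\Gamma$ enters: meridians map to the twists themselves rather than to proper powers. It also never needs the orbifold fundamental group of $\M_{\mathcal D}$ itself, only the smooth-locus identification $\pi_1^{\top}(M^\lambda_\C)\cong\Gamma^\lambda$.
\end{itemize}
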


\begin{proof}
Since $M^\lambda_\C$ is the pullback of the connected finite
\'etale cover $q_{\mathcal D,\C}$ to $\M_{g,n/\C}$,
the subgroup correspondence and
Proposition~\ref{prop:boundary-group} give
$N_{\mathcal D}\subseteq\Gamma^\lambda$ and the commutative diagram
% By construction,
% \[
% M^\lambda_\C
% =\M_{g,n/\C}\times_{\M_{\mathcal D/\C}}
% M_{\mathcal D/\C}^\lambda.
% \]
% Since $q_{\mathcal D,\C}$ is a connected finite \'etale cover, the
% subgroup correspondence, together with
% Proposition~\ref{prop:boundary-group}, shows that
% $N_{\mathcal D}\subseteq\Gamma^\lambda$ and gives the commutative
% diagram
\[
\begin{tikzcd}[column sep=small]
1\arrow[r]&
N_{\mathcal D}\arrow[r]\arrow[d,equal]&
\Gamma^\lambda\arrow[r]\arrow[d,hook]&
\pi_1^{\top}(M_{\mathcal D/\C}^\lambda)
\arrow[r]\arrow[d,hook]&1\\
1\arrow[r]&
N_{\mathcal D}\arrow[r]&
\G_{g,n}\arrow[r]&
\pi_1^{\orb}((\M_{\mathcal D})^{\an})\arrow[r]&1,
\end{tikzcd}
\]
with exact rows.
The top row gives the assertion.
\end{proof}

\subsection{Specialization of the level covers}
\label{subsec:selected-level-specialization}

Let $W=W(k)$ be the ring of Witt vectors of $k$, let $L$ be its
fraction field, and fix an algebraic closure $\bar L$.
Since $\ell\neq p$, there is a morphism
$\Spec W\to\Spec\Z[1/\ell]$.
After base change to $W$, fix a connected component of
$\overline M_W^\lambda$ and use the same notation for it and for
the corresponding opens and geometric fibers.
Since $W$ is strictly henselian and $\overline M_W^\lambda$ is
proper and smooth, its geometric fibers are connected.
They are smooth, hence irreducible, so their nonempty open subsets
$M_A^\lambda$ and $M_{\mathcal D/A}^\lambda$ are connected for $A=\bar L,k$.
The same holds for the opens over $W$.
Let $\Delta^\lambda$ be the stabilizer of the chosen component in
the deck group. It is the common deck group of the covers
$q_{\mathcal D,W}$ and their geometric fibers, for all $\mathcal D$.
Characteristic-zero comparison identifies it with
$\G_{g,n}/\Gamma^\lambda$, after compatible choices of base points.

By construction, $\overline M_W^\lambda$ is proper and smooth over
$W$, $D_{\mathcal D/W}^\lambda$ is a relative normal-crossings divisor, and
\[
M_{\mathcal D/W}^\lambda=\overline M_W^\lambda-D_{\mathcal D/W}^\lambda.
\]
Choose geometric base points $\bar\eta\in M^\lambda_{\bar L}$ and
$\bar\xi\in M^\lambda_k$. Regard both as geometric points of
$M_W^\lambda$ and fix an \'etale path $\beta$ from $\bar\eta$ to
$\bar\xi$ in this common smooth level locus.
Use the induced paths in all the $M_{\mathcal D/W}^\lambda$.
Use the same symbols for the images of the base points downstairs.
Unless otherwise indicated, fundamental groups are taken at these
chosen base points.
The geometric-fiber maps and $\beta$ determine
\begin{equation}
\label{eq:specialization-level-cover}
\operatorname{sp}_{\mathcal D}^{(\ell)}:
\pi_1(M_{\mathcal D/\bar L}^\lambda)^{(\ell)}
\longrightarrow\pi_1(M_{\mathcal D/k}^\lambda)^{(\ell)}.
\end{equation}
The tame specialization theorem makes it an isomorphism;
see \cite[Exp.~XIII, Cor.~2.9 and \S2.10]{SGA1}.
More precisely, with the two total-space base points identified
using $\beta$, the geometric-fiber maps give
\begin{equation}
\label{eq:selected-level-zigzag}
\pi_1(M_{\mathcal D/\bar L}^\lambda)^{(\ell)}
\xrightarrow{\sim}\pi_1(M_{\mathcal D/W}^\lambda)^{(\ell)}
\xleftarrow{\sim}\pi_1(M_{\mathcal D/k}^\lambda)^{(\ell)}.
\end{equation}
All finite $\ell$-group torsors involved are tame along the boundary.
The maps are functorial for enlarging $\mathcal D$, and hence for
the inclusions $M_{\mathcal D}^\lambda\hookrightarrow M^{c,\lambda}$.

Let $C_{\bar\eta},C_{\bar\xi}$ be the smooth curves at the base points,
and put $H_{t,\Zl}=H^1_{\et}(C_t,\Zl(1))$ for
$t=\bar\eta,\bar\xi$.
Write
\[
\Sp(H_{t,\Zl})[\ell]
=\ker\bigl\{
\Sp(H_{t,\Zl})\to\Sp(H_{t,\Zl}/\ell H_{t,\Zl})
\bigr\}.
\]
Since the level cover dominates full level $\ell$, the images of
$\rho_{\bar\eta}:\pi_1(M_{\mathcal D/\bar L}^\lambda)\to\Sp(H_{\bar\eta,\Zl})$
and
$\rho_{\bar\xi}:\pi_1(M_{\mathcal D/k}^\lambda)\to\Sp(H_{\bar\xi,\Zl})$
lie in these congruence subgroups. They are pro-$\ell$, so both
representations factor through the maximal pro-$\ell$ quotients.

Let $J\to M_{\mathcal D/W}^\lambda$ be the relative Jacobian, and set
$T_\ell J:=\varprojlim_r J[\ell^r]$.
This is a lisse $\Zl$-sheaf with
\[
(T_\ell J)_t\cong H^1_{\et}(C_t,\Zl(1))=H_{t,\Zl}.
\]
It is the pullback of the lisse sheaf $\mathbb H^c_{\Zl}$ on
$\M^c_{g,n/W}$ introduced in
Subsection~\ref{subsec:jacobian-monodromy-completion}.
Transport along $\beta$, together with the polarization pairing,
gives a symplectic isomorphism
$\alpha_{\Zl}:H_{\bar\eta,\Zl}\xrightarrow{\sim}H_{\bar\xi,\Zl}$,
and hence an induced conjugation isomorphism
$\alpha_{\Zl,*}:\Sp(H_{\bar\eta,\Zl})
\xrightarrow{\sim}\Sp(H_{\bar\xi,\Zl})$.
These identifications are the same for every $\mathcal D$.

\begin{lemma}[Compatibility with symplectic monodromy]
\label{lem:specialization-monodromy}
For every $\mathcal D$, the diagram
\[
\begin{tikzcd}
\pi_1(M_{\mathcal D/\bar L}^\lambda)^{(\ell)}
\arrow[r,"\operatorname{sp}_{\mathcal D}^{(\ell)}","\sim"']
\arrow[d,"\rho_{\bar\eta}^{(\ell)}"']&
\pi_1(M_{\mathcal D/k}^\lambda)^{(\ell)}
\arrow[d,"\rho_{\bar\xi}^{(\ell)}"]\\
\Sp(H_{\bar\eta,\Zl})
\arrow[r,"\alpha_{\Zl,*}","\sim"']&
\Sp(H_{\bar\xi,\Zl})
\end{tikzcd}
\]
commutes.
\end{lemma}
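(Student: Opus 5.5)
The plan is to obtain commutativity by expressing both composites as monodromy of a single lisse sheaf on the total space over $W$, and then restricting to the two geometric fibers. Consider the lisse $\Zl$-sheaf $T_\ell J$ on $M_{\mathcal D/W}^\lambda$. It is the pullback of $\mathbb H^c_{\Zl}$, and it carries the perfect alternating pairing to $\Zl(1)$ given by the polarization. Its fiber at $\bar\eta$ defines a continuous representation
\[
\rho_W:\pi_1(M_{\mathcal D/W}^\lambda,\bar\eta)\longrightarrow\Sp(H_{\bar\eta,\Zl}).
\]
The relevant twist $\Zl(1)$ is the one for $\mu_{\ell^\infty}$ over $W$. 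The pairing is preserved by the whole fundamental group of the total space, and not only up to similitude, because that group acts trivially on $\Zl(1)$. The reason is that $W$ is strictly henselian, so $\pi_1(\Spec W)$ is trivial and $\mu_{\ell^r}$ is constant over $W$.

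Restricting $T_\ell J$ to the geometric fibers gives the sheaves whose monodromy is $\rho_{\bar\eta}$ and $\rho_{\bar\xi}$. Since formation of the relative Jacobian commutes with base change, the restriction of $\rho_W$ to $\pi_1(M_{\mathcal D/\bar L}^\lambda,\bar\eta)$ is $\rho_{\bar\eta}$. Now transport along $\beta$. This identifies $\pi_1(M_{\mathcal D/W}^\lambda,\bar\eta)$ with $\pi_1(M_{\mathcal D/W}^\lambda,\bar\xi)$, and it identifies the fibers $H_{\bar\eta,\Zl}\cong H_{\bar\xi,\Zl}$ by the same isomorphism $\alpha_{\Zl}$ that was used to define $\alpha_{\Zl,*}$. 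Under these identifications, the restriction of $\rho_W$ to the special fiber becomes $\alpha_{\Zl,*}^{-1}\circ\rho_{\bar\xi}$. This is the standard fact that a change of base point along a path conjugates monodromy by the fiber transport along that path. Consequently both $\rho_{\bar\eta}$ and $\alpha_{\Zl,*}^{-1}\circ\rho_{\bar\xi}$ factor through the maps of the zigzag \eqref{eq:selected-level-zigzag}, each followed by $\rho_W$.

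Next pass to pro-$\ell$ quotients. The image of $\rho_W$ lies in the congruence subgroup $\Sp(H_{\bar\eta,\Zl})[\ell]$, which is pro-$\ell$. It lies there because $M^\lambda_{\mathcal D/W}$ dominates full level $\ell$: the group $J[\ell]$ is constant on the cover. This is checked over the dense open $M^\lambda_W$ and then extended, since $J[\ell]$ is finite étale over the whole normal connected base. So $\rho_W$ factors through $\pi_1(M_{\mathcal D/W}^\lambda)^{(\ell)}$, giving a map $\rho_W^{(\ell)}$. By definition, $\operatorname{sp}^{(\ell)}_{\mathcal D}$ is the composite of the first isomorphism in \eqref{eq:selected-level-zigzag} with the inverse of the second. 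Therefore
\[
\alpha_{\Zl,*}^{-1}\circ\rho^{(\ell)}_{\bar\xi}\circ\operatorname{sp}^{(\ell)}_{\mathcal D}=\rho_W^{(\ell)}\circ(\text{first map})=\rho^{(\ell)}_{\bar\eta},
\]
which is the asserted commutativity.

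The main obstacle is the bookkeeping of base points. One has to check that "transport along $\beta$" on fundamental groups, which defines $\operatorname{sp}$, and "transport along $\beta$" on stalks of $T_\ell J$, which defines $\alpha_{\Zl}$, are the same operation, namely the fiber-functor isomorphism induced by $\beta$ applied to the covers $J[\ell^r]$. I would argue this directly: an étale path is exactly an isomorphism of fiber functors, and the stalk of the locally constant sheaf $J[\ell^r]$ is the fiber functor evaluated on the corresponding finite étale cover. The same remark gives independence of $\mathcal D$, because $\beta$ and $\alpha_{\Zl}$ are taken in the common smooth level locus $M_W^\lambda$.
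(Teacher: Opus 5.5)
Your proposal is correct and takes the same route as the paper. The paper's proof simply observes that $\rho_{\bar\eta}$ and $\rho_{\bar\xi}$ are the monodromies of the two geometric fibers of the single lisse sheaf $T_\ell J$ on $M_{\mathcal D/W}^\lambda$, and then invokes functoriality of specialization with the base-point change along $\beta$. Your extra details (the trivial action on $\Zl(1)$ over the strictly henselian $W$, the factorization through pro-$\ell$ quotients via the congruence subgroup, and treating both transports along $\beta$ as one fiber-functor isomorphism) are exactly the bookkeeping that this two-line argument leaves implicit.
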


\begin{proof}
The two representations are monodromies of the two fibers of the same
lisse sheaf $T_\ell J$. Functoriality of specialization, with
base-point comparison determined by $\beta$, gives
$\alpha_{\Zl,*}\circ\rho_{\bar\eta}^{(\ell)}
=\rho_{\bar\xi}^{(\ell)}\circ\operatorname{sp}_{\mathcal D}^{(\ell)}$.
\end{proof}

For $A\in\{\bar L,W,k\}$, consider $\pi_1(\M_{\mathcal D/A})$.
Over $\bar L$ and $k$, use the images of $\bar\eta$ and $\bar\xi$,
respectively, as base points. Over $W$, use either of these geometric
points, indicating the base point when necessary.
Write $\pi_1(\M_{\mathcal D/A})^\rel$ for its relative pro-$\ell$
completion with respect to the integral Jacobian monodromy
representation, regarded as a continuous surjection onto its
closed image.
Since the kernel of reduction modulo $\ell$ in this image is
pro-$\ell$, Lemma~\ref{lem:relative-pro-l-change-of-target} allows
us to compute the same completion relative to the mod-$\ell$
monodromy image.

\begin{lemma}\label{lem:level-mod-l-kernel}
The mod-$\ell$ Jacobian monodromy on $\M_{\mathcal D/A}$, for
$A\in\{\bar L,W,k\}$ and with the base points chosen above,
factors through $\Delta^\lambda$.
Under the chosen identifications, the kernel of the resulting
representation of $\Delta^\lambda$ is
\[
\G_{g,n}[\ell]/\Gamma^\lambda,
\]
and is a finite $\ell$-group. Here $\G_{g,n}[\ell]$ denotes the
full symplectic level-$\ell$ subgroup.
These identifications are compatible with the geometric-fiber maps,
change of base point along $\beta$, and enlarging $\mathcal D$.
\end{lemma}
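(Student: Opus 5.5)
The plan is to show that the mod-$\ell$ Jacobian monodromy on $\M_{\mathcal D/A}$ kills the image of $\pi_1(M^\lambda_{\mathcal D/A})$, and then to read off the kernel in characteristic zero and transport it to the other fibers.

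\emph{Factorization.} The mod-$\ell$ monodromy is the action on the finite étale cover $\J[\ell]$ of $\M_{\mathcal D/A}$. Since $\Gamma^\lambda$ acts trivially on $\Pi^{\mathrm{pun}}_{g,n}/(W^3\cdot(\Pi^{\mathrm{pun}}_{g,n})^\ell)$, and this quotient surjects onto $H_1(\Sigma_g,\Z/\ell)$, we get $\Gamma^\lambda\subseteq\G_{g,n}[\ell]$. Hence the pullback of $\J[\ell]$ to $M^\lambda_{\mathcal D/\C}$ has trivial monodromy, since by Proposition~\ref{prop:compact-type-level-fundamental-group} its fundamental group is a quotient of $\Gamma^\lambda$. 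A finite étale cover of the connected scheme $M^\lambda_{\mathcal D/W}$ that is trivial on the generic geometric fiber is trivial. The reason is that $\pi_1$ of the geometric generic fiber surjects onto $\pi_1(M^\lambda_{\mathcal D/W})$; this follows from properness and smoothness with normal-crossings boundary, and in any case holds for tame covers by \eqref{eq:selected-level-zigzag}. The cover $\J[\ell]$ is étale of $\ell$-power degree, hence tame. So $\J[\ell]$ is constant on $M^\lambda_{\mathcal D/W}$, and therefore on both geometric fibers.

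\emph{Identification of the kernel.} Because $q_{\mathcal D}$ is Galois with group $\Delta^\lambda$ on the chosen component, the representation $\pi_1(\M_{\mathcal D/A})\to\Sp(H/\ell)$ descends to $\Delta^\lambda=\pi_1(\M_{\mathcal D/A})/\pi_1(M^\lambda_{\mathcal D/A})$. Over $\C$, $\Delta^\lambda\cong\G_{g,n}/\Gamma^\lambda$ and the representation is induced by $\G_{g,n}\to\Sp(H_\Z/\ell)$. Its kernel is therefore $\G_{g,n}[\ell]/\Gamma^\lambda$, independently of $\mathcal D$, since $N_{\mathcal D}\subseteq\Gamma^\lambda$. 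For $A=W,k$, the deck group is the same $\Delta^\lambda$ and the representation is the monodromy of the same constant-on-the-cover sheaf. Transport along $\beta$ and $\alpha_{\Zl}$ mod $\ell$ then identifies the $\Delta^\lambda$-representations on the three fibers, so the kernels agree. Compatibility with enlarging $\mathcal D$ follows because every map is restriction to the common open $M^\lambda$.

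\emph{The kernel is an $\ell$-group.} This is the step I expect to be the main obstacle. The group $\G_{g,n}[\ell]$ acts trivially on $H\otimes\Z/\ell$, so it acts on $Q=\Pi^{\mathrm{pun}}_{g,n}/(W^3\cdot(\cdot)^\ell)$ through automorphisms that are trivial on the graded pieces of the Boggi--Pikaart filtration modulo $\ell$. First I would check that $\Gr_W^1 Q$ is $H\otimes\Z/\ell$, and that the puncture classes also live in weight $2$ so that no further weight-one data appears. Given this, the image of $\G_{g,n}[\ell]$ in $\Out(Q)$ lies in a group of automorphisms of a finite $\ell$-group $Q$ acting unipotently on a filtration with elementary abelian quotients, and such a group is an $\ell$-group. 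That image is exactly $\G_{g,n}[\ell]/\Gamma^\lambda$. If this direct check is delicate in the weight-two piece, I would instead cite the corresponding statements of Pikaart--de Jong and Boggi--Pikaart, which assert that $\G_{g,n}[\ell]/\Gamma^\lambda$ is an $\ell$-group for these levels.
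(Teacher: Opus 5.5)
Your proposal is correct and follows the paper's route. You factor the mod-$\ell$ monodromy through $\Delta^\lambda$ using $\Gamma^\lambda\subseteq\G_{g,n}[\ell]$, read off the kernel $\G_{g,n}[\ell]/\Gamma^\lambda$ in characteristic zero, and transport it along $\beta$ and the geometric-fiber maps. You then prove the $\ell$-group property from the central extension $1\to B_\ell\to\Pi^{\mathrm{pun}}_\lambda\to H_{\F_\ell}\to1$; the paper makes your stability-group step explicit by noting that commutators and ordered puncture classes are central, hence fixed, so the relevant automorphisms embed in $\Hom_{\F_\ell}(H_{\F_\ell},B_\ell)$.

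One caution: drop the fallback appeal to \eqref{eq:selected-level-zigzag} and tameness for the triviality of $\J[\ell]$ over $W$. That zigzag only controls pro-$\ell$ quotients, and $\ell$-power degree does not make the Galois closure an $\ell$-group. Your primary argument suffices: $\pi_1$ of the geometric generic fiber surjects onto $\pi_1(M^\lambda_{\mathcal D/W})$, because $M^\lambda_{\mathcal D/W}$ is smooth over the strictly henselian $W$ with connected geometric generic fiber, so ramified extensions of $L$ cannot give \'etale covers. This in fact supplies a justification that the paper only asserts.
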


\begin{proof}
Over $\bar L$, characteristic-zero comparison identifies the deck
group with $\G_{g,n}/\Gamma^\lambda$.
Since $\Gamma^\lambda\subseteq\G_{g,n}[\ell]$, the mod-$\ell$
monodromy factors through this quotient, and its kernel is
$\G_{g,n}[\ell]/\Gamma^\lambda$.
We first show that this quotient is a finite $\ell$-group.
Put
\[
\Pi^{\mathrm{pun}}_\lambda
:=\Pi^{\mathrm{pun}}_{g,n}/
\bigl(W^3\Pi^{\mathrm{pun}}_{g,n}\cdot
(\Pi^{\mathrm{pun}}_{g,n})^\ell\bigr),
\]
and let $B_\ell$ be the image of $W^2\Pi^{\mathrm{pun}}_{g,n}$.
Write $H_{\F_\ell}=H_1(\Sigma_g,\F_\ell)$.
Then
\[
1\longrightarrow B_\ell
\longrightarrow\Pi^{\mathrm{pun}}_\lambda
\longrightarrow H_{\F_\ell}\longrightarrow1
\]
is a central extension of finite groups of exponent $\ell$. Since
$[\Pi^{\mathrm{pun}}_{g,n},W^2\Pi^{\mathrm{pun}}_{g,n}]
\subseteq W^3\Pi^{\mathrm{pun}}_{g,n}$,
the subgroup $B_\ell$ is central in
$\Pi^{\mathrm{pun}}_\lambda$.
The group $B_\ell$ is generated by commutators and, when $n>0$,
puncture classes.
A geometric mapping class acting trivially on $H_{\F_\ell}$ fixes
commutators, because $B_\ell$ is central, and fixes puncture classes,
because the markings are ordered and these classes are central.
It therefore fixes $B_\ell$ pointwise.
For an automorphism $f$ inducing the identity on both $B_\ell$ and
$H_{\F_\ell}$, the assignment
\[
\bar x\longmapsto f(x)x^{-1}
\]
is a well-defined homomorphism $H_{\F_\ell}\to B_\ell$.
It embeds this automorphism group into
$\Hom_{\F_\ell}(H_{\F_\ell},B_\ell)$.
Passing to outer automorphisms takes a quotient of a finite
$\ell$-group. Since $\G_{g,n}[\ell]/\Gamma^\lambda$ is the image
of $\G_{g,n}[\ell]$ in this outer automorphism group, it is itself
a finite $\ell$-group.

The sheaf $\mathbb H^c_{\Zl}/\ell\mathbb H^c_{\Zl}$ becomes trivial
on $M_W^\lambda$, hence also on every $M_{\mathcal D/W}^\lambda$,
since the dense open immersion into this normal scheme induces a
surjection on fundamental groups. The action of
$\pi_1(\M_{\mathcal D/W},\bar\eta)$ on
$H_{\bar\eta,\Zl}/\ell H_{\bar\eta,\Zl}$ therefore factors through
$\Delta^\lambda$.
Base-point change along the image of $\beta$ downstairs induces
the identity on $\Delta^\lambda$. Transport along $\beta$ therefore
identifies the two $\Delta^\lambda$-representations. The
geometric-fiber maps likewise induce the identity on the deck group.
Under the fixed identification
$\Delta^\lambda\cong\G_{g,n}/\Gamma^\lambda$, the kernels of these
representations coincide with the subgroup
$\G_{g,n}[\ell]/\Gamma^\lambda$ of $\Delta^\lambda$.
This holds for $A=\bar L,W,k$, with either chosen base point
when $A=W$. Since all the sheaves and covers are restrictions
of the same ones over compact type, this identification is also
compatible with enlarging $\mathcal D$.
\end{proof}
The following form of finite-cover descent will be convenient.

\begin{lemma}\label{lem:finite-cover-descent}
Let $N$ be an open normal subgroup of a profinite group $\Pi$ and
suppose that a continuous surjection $\Pi\to S$ onto a finite group
factors through $\Delta=\Pi/N$, with
$\ker(\Delta\to S)$ a finite $\ell$-group.
Put $N_0=\ker(N\to N^{(\ell)})$.
Then the relative pro-$\ell$ completion $\Pi^\rel$ with respect
to $\Pi\to S$ is canonically isomorphic to $\Pi/N_0$, and fits into
an exact sequence
\[
1\longrightarrow N^{(\ell)}
\longrightarrow \Pi^\rel
\longrightarrow\Delta\longrightarrow1.
\]
\end{lemma}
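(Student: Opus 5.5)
The plan is to construct $\Pi/N_0$ directly, check that it is a legitimate factorization in Definition~\ref{def:relative-pro-l-completion}, and then verify the universal property.

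\emph{Normality and the exact sequence.} Since $N$ is open, it is finitely generated as a profinite group. Hence $N_0$, the intersection of the open normal subgroups of $N$ with $\ell$-power index, is closed and characteristic in $N$. As $N$ is normal in $\Pi$, it follows that $N_0\unlhd\Pi$. This gives an exact sequence
\[
1\to N/N_0=N^{(\ell)}\to\Pi/N_0\to\Delta\to1 .
\]

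\emph{$\Pi/N_0$ is a factorization.} The map $\Pi/N_0\to S$ factors through $\Delta$. Its kernel $K$ sits in
\[
1\to N^{(\ell)}\to K\to\ker(\Delta\to S)\to1 .
\]
Here $N^{(\ell)}$ is pro-$\ell$ and $\ker(\Delta\to S)$ is a finite $\ell$-group, so $K$ is pro-$\ell$. The projection $\Pi\to\Pi/N_0$ is surjective and continuous. Therefore $\Pi/N_0$ is one of the factorizations appearing in Definition~\ref{def:relative-pro-l-completion}.

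\emph{Universal property.} Let $q:G\to S$ have pro-$\ell$ kernel, and let $f:\Pi\to G$ be a continuous lift. Then $f(N)$ lies in $q^{-1}(1)$, because $N$ maps trivially to $S$; recall that $\Pi\to S$ factors through $\Pi/N$. So $f(N)$ sits inside the pro-$\ell$ group $\ker q$. Let $V$ be any open normal subgroup of $G$. The image of $N$ in $(\ker q)/(V\cap\ker q)$ is a finite $\ell$-group. Hence the preimage of $V$ in $N$ is an open normal subgroup of $N$ of $\ell$-power index, and so contains $N_0$. Taking the intersection over all $V$ gives $f(N_0)=1$. Thus $f$ factors uniquely through $\Pi/N_0$, and the factorization is compatible with the maps to $S$.

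\emph{Identification and the main point.} The universal property, together with the surjectivity of $\Pi\to\Pi/N_0$ (which gives uniqueness), identifies $\Pi/N_0$ canonically with $\Pi^\rel$. Equivalently, $\Pi/N_0$ is the terminal object in the inverse system defining $\Pi^\rel$. The only substantive step is the use of the $\ell$-group hypothesis on $\ker(\Delta\to S)$, which shows that $\Pi/N_0$ is itself admissible. Without that hypothesis, the canonical map from $\Pi^\rel$ would still exist but would fail to be injective onto $N^{(\ell)}$. The exact sequence in the statement is then the one displayed above.
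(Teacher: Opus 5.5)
Your proof is correct and is essentially the paper's argument. Admissibility of $\Pi/N_0$ comes from the $\ell$-group hypothesis on $\ker(\Delta\to S)$, and $N_0$ is killed because $N$ maps into a pro-$\ell$ kernel; the only difference is that you check this for an arbitrary target $G$ and read off the universal property, whereas the paper checks it only for the canonical map $\Pi\to\Pi^\rel$ and then compares the two induced maps by density.

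One correction. An open subgroup of a profinite group is finitely generated exactly when the ambient group is (take $\Pi=N=\prod_{\mathbb N}\Z/2$), so your finite-generation claim about $N$ is false in general. It is also unnecessary: $N_0$ is closed as an intersection of open subgroups, and it is normal in $\Pi$ because conjugation by $\Pi$ acts on $N$ through continuous automorphisms, which permute the open normal subgroups of $\ell$-power index. Likewise, in your closing aside, the map that survives without the hypothesis goes the other way: it is $\Pi/N_0\to\Pi^\rel$, not a map out of $\Pi^\rel$.
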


\begin{proof}
Since $N_0$ is a closed characteristic subgroup of $N$, it is closed
and normal in $\Pi$. The canonical map $N\to N^{(\ell)}$ is
surjective, so $N/N_0\cong N^{(\ell)}$, and there is an exact sequence
\[
1\longrightarrow N^{(\ell)}\longrightarrow\Pi/N_0
\longrightarrow\Delta\longrightarrow1.
\]
The kernel of $\Pi/N_0\to S$ is an extension of
$\ker(\Delta\to S)$ by $N^{(\ell)}$, and is therefore pro-$\ell$.
Thus $\Pi\to\Pi/N_0$ is a relative pro-$\ell$ quotient over $S$.
The universal property gives a continuous homomorphism
$\varphi:\Pi^\rel\to\Pi/N_0$ through which the quotient map
$\Pi\to\Pi/N_0$ factors.

Conversely, let $j:\Pi\to\Pi^\rel$ be the canonical map.
The image of $N$ lies in the pro-$\ell$ group
$\ker(\Pi^\rel\to S)$, so $j|_N$ factors through $N^{(\ell)}$.
In particular, $j$ kills $N_0$ and induces a continuous homomorphism
$\psi:\Pi/N_0\to\Pi^\rel$.
By construction, $\varphi\circ\psi$ is the identity after composition
with the surjection $\Pi\to\Pi/N_0$, and $\psi\circ\varphi$ is the
identity on the image of $j$. This image is dense in $\Pi^\rel$,
so continuity gives $\psi\circ\varphi=\id$ on $\Pi^\rel$.
Thus $\varphi$ and $\psi$ are inverse isomorphisms.
The preceding exact sequence gives the assertion.
\end{proof}

\begin{lemma}\label{lem:compact-type-relative-pro-l-exact}
For $A\in\{\bar L,W,k\}$, there is a natural exact sequence
\[
1\longrightarrow\pi_1(M_{\mathcal D/A}^\lambda)^{(\ell)}
\longrightarrow\pi_1(\M_{\mathcal D/A})^\rel
\longrightarrow\Delta^\lambda\longrightarrow1.
\]

\end{lemma}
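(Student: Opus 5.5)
The plan is to apply the finite-cover descent Lemma~\ref{lem:finite-cover-descent} with
\[
\Pi=\pi_1(\M_{\mathcal D/A}),\qquad N=\pi_1(M_{\mathcal D/A}^\lambda),\qquad \Delta=\Delta^\lambda,
\]
and with $S$ the mod-$\ell$ Jacobian monodromy image.

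\textbf{Step 1: the cover.} By Proposition~\ref{prop:selected-level-etale}, $q_{\mathcal D}$ is finite \'etale. Its base change to $A$ is therefore finite \'etale as well. The chosen component is connected, since the geometric fibers over $W$ are connected and the opens are dense. This component is Galois over $\M_{\mathcal D/A}$ with deck group $\Delta^\lambda$. Hence $N$ is an open normal subgroup of $\Pi$ with $\Pi/N\cong\Delta^\lambda$, and the identification is compatible with the base points chosen above.

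\textbf{Step 2: the hypotheses of the descent lemma.} By Lemma~\ref{lem:level-mod-l-kernel}, the mod-$\ell$ monodromy $\Pi\to S$ factors through $\Delta^\lambda$. The kernel of $\Delta^\lambda\to S$ is $\G_{g,n}[\ell]/\Gamma^\lambda$, which is a finite $\ell$-group. Lemma~\ref{lem:finite-cover-descent} then shows that the relative pro-$\ell$ completion of $\Pi$ over $S$ is $\Pi/N_0$. This quotient sits in the exact sequence
\[
1\to N^{(\ell)}\to\Pi/N_0\to\Delta^\lambda\to1.
\]

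\textbf{Step 3: change of target.} The completion $\pi_1(\M_{\mathcal D/A})^\rel$ is defined relative to the integral monodromy image $P$. The kernel of $P\to S$ lies in the congruence subgroup $\Sp(H_{\Zl})[\ell]$, so it is pro-$\ell$. Lemma~\ref{lem:relative-pro-l-change-of-target} therefore identifies the completion over $P$ with the completion over $S$. Combining this with Step~2 gives the asserted sequence. It is left exact, as Remark~\ref{rem:relative-pro-l-not-left-exact} anticipated: injectivity of $N^{(\ell)}\to\Pi^\rel$ holds because $\Pi^\rel$ is literally $\Pi/N_0$ and $N/N_0=N^{(\ell)}$.

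\textbf{Step 4: naturality.} The sequence must be natural for the geometric-fiber maps, for change of base point along $\beta$, and for enlarging $\mathcal D$. Each of these operations sends $N$ to the corresponding $N$, because the level covers are pullbacks of a single cover. Each also induces the identity on $\Delta^\lambda$ and is compatible with $S$, by Lemma~\ref{lem:level-mod-l-kernel}. The subgroup $N_0$ is characteristic in $N$ and is preserved by any homomorphism carrying $N$ into $N'$. Hence the isomorphism $\Pi^\rel\cong\Pi/N_0$ is functorial, by the universal property in Proposition~\ref{prop:relative-pro-l-naturality}.

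The main obstacle is the case $A=W$ in Step~1. There the base scheme is not a geometric point, so one must check that the Galois group of the chosen connected component over $\M_{\mathcal D/W}$ is still exactly $\Delta^\lambda$, with no extra arithmetic contribution. I would argue this using connectedness of the geometric fibers of $M^\lambda_W$, which holds because $W$ is strictly henselian. With that, the stabilizer of the component equals the stabilizer in each fiber.
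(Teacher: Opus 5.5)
Your proposal is correct and follows the paper's own proof. The paper applies Lemma~\ref{lem:finite-cover-descent} with exactly your $\Pi$, $N$, and $S$, uses Lemma~\ref{lem:level-mod-l-kernel} for the kernel condition, and notes that the same argument works over $W$ with either base point; your change-of-target step is the reduction the paper records just before Lemma~\ref{lem:level-mod-l-kernel}, and your naturality discussion anticipates what the paper checks in Lemma~\ref{lem:compact-type-level-specialization-compatibility}.
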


\begin{proof}
Apply Lemma~\ref{lem:finite-cover-descent} with
$\Pi=\pi_1(\M_{\mathcal D/A})$,
$N=\pi_1(M_{\mathcal D/A}^\lambda)$, and $S$ the mod-$\ell$
monodromy image.
Lemma~\ref{lem:level-mod-l-kernel} verifies the kernel condition.
The same argument applies over $W$ with either base point.
\end{proof}

To pass from specialization of the level cover to relative
pro-$\ell$ completion of $\M_{\mathcal D}$, we compare these exact sequences over
$\bar L$ and $k$ through the corresponding sequences over $W$.
The following lemma records their compatibility with the
geometric-fiber maps, change of base point along $\beta$, and
symplectic monodromy.

\begin{lemma}[Comparison over the Witt ring]
\label{lem:compact-type-level-specialization-compatibility}
Let $\iota_{\bar\eta}$ and $\iota_{\bar\xi}$ denote the maps induced
by the geometric-fiber morphisms. These maps and $\beta$ give
\[
\begin{tikzcd}[
column sep=1.2em,
row sep=1.8em,
cells={nodes={font=\small}}
]
1\arrow[r]&
\pi_1(M_{\mathcal D/\bar L}^\lambda)^{(\ell)}\arrow[r,hook]
\arrow[d,"\iota_{\bar\eta}^{(\ell)}"']&
\pi_1(\M_{\mathcal D/\bar L})^\rel\arrow[r]
\arrow[d,"\iota_{\bar\eta}^{\rel}"']&
\Delta^\lambda\arrow[r]\arrow[d,equal]&1\\
1\arrow[r]&
\pi_1(M_{\mathcal D/W}^\lambda,\bar\eta)^{(\ell)}\arrow[r,hook]
\arrow[d,"\beta_*^{(\ell)}"']&
\pi_1(\M_{\mathcal D/W},\bar\eta)^\rel\arrow[r]
\arrow[d,"\beta_*^\rel"']&
\Delta^\lambda\arrow[r]\arrow[d,equal]&1\\
1\arrow[r]&
\pi_1(M_{\mathcal D/W}^\lambda,\bar\xi)^{(\ell)}\arrow[r,hook]&
\pi_1(\M_{\mathcal D/W},\bar\xi)^\rel\arrow[r]&
\Delta^\lambda\arrow[r]&1\\
1\arrow[r]&
\pi_1(M_{\mathcal D/k}^\lambda)^{(\ell)}\arrow[r,hook]
\arrow[u,"\iota_{\bar\xi}^{(\ell)}"]&
\pi_1(\M_{\mathcal D/k})^\rel\arrow[r]
\arrow[u,"\iota_{\bar\xi}^{\rel}"]&
\Delta^\lambda\arrow[r]\arrow[u,equal]&1.
\end{tikzcd}
\]
All rows are exact and all arrows in the first nontrivial column are
isomorphisms. The diagram is compatible with symplectic monodromy.
For $t=\bar\eta,\bar\xi$, write
$\rho_{W,t}^{\rel}$ for the induced monodromy on
$\pi_1(\M_{\mathcal D/W},t)^\rel$ and
$\rho_{W,t}^{(\ell)}$ for the monodromy on the level-cover kernel.
More precisely, the change-of-base-point maps give
\[
\begin{tikzcd}[
row sep=1em,
column sep=1.5em,
cells={nodes={font=\small}}
]
\pi_1(M_{\mathcal D/W}^\lambda,\bar\eta)^{(\ell)}\arrow[rr,hook]
\arrow[dr,"\rho_{W,\bar\eta}^{(\ell)}"']
\arrow[dd,"\beta_*^{(\ell)}"']&&
\pi_1(\M_{\mathcal D/W},\bar\eta)^\rel
\arrow[dl,"\rho_{W,\bar\eta}^{\rel}"]
\arrow[dd,"\beta_*^\rel"]\\
&\Sp(H_{\bar\eta,\Zl})
\arrow[dd,"\alpha_{\Zl,*}"'{pos=0.25}]&\\
\pi_1(M_{\mathcal D/W}^\lambda,\bar\xi)^{(\ell)}\arrow[rr,hook]
\arrow[dr,"\rho_{W,\bar\xi}^{(\ell)}"']&&
\pi_1(\M_{\mathcal D/W},\bar\xi)^\rel
\arrow[dl,"\rho_{W,\bar\xi}^{\rel}"]\\
&\Sp(H_{\bar\xi,\Zl})&
\end{tikzcd}
\]
and the analogous diagrams for the two fiber-to-$W$ maps commute.
\end{lemma}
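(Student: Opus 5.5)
The plan is to assemble the diagram from three ingredients: functoriality of relative pro-$\ell$ completion, the finite-cover descent of Lemma~\ref{lem:finite-cover-descent}, and the tame specialization isomorphisms \eqref{eq:selected-level-zigzag}.

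\emph{Step 1: the vertical maps.} The geometric-fiber morphisms $M_{\mathcal D/A}^\lambda\to M_{\mathcal D/W}^\lambda$ and $\M_{\mathcal D/A}\to\M_{\mathcal D/W}$, for $A=\bar L,k$, induce homomorphisms on \'etale fundamental groups. These commute with the inclusions coming from the finite \'etale cover $q_{\mathcal D}$. The lisse sheaf $\mathbb H^c_{\Zl}$ on the target pulls back to the corresponding sheaf on each fiber, so these homomorphisms intertwine the integral Jacobian monodromy. Proposition~\ref{prop:relative-pro-l-naturality} then yields $\iota^\rel_{\bar\eta}$ and $\iota^\rel_{\bar\xi}$, and maximal pro-$\ell$ quotients yield $\iota^{(\ell)}$.

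For the base-point change, the path $\beta$ lies in $M^\lambda_{\mathcal D/W}$ and is pushed down to $\M_{\mathcal D/W}$. It induces isomorphisms $\beta_*$ on both fundamental groups, compatible with the inclusion. Transporting the fiber of $\mathbb H^c_{\Zl}$ along $\beta$ is exactly $\alpha_{\Zl}$, so $\beta_*$ intertwines $\rho_{W,\bar\eta}$ and $\rho_{W,\bar\xi}$ up to $\alpha_{\Zl,*}$. Proposition~\ref{prop:relative-pro-l-naturality}, applied with $a=\alpha_{\Zl,*}$ on the monodromy images, gives $\beta^\rel_*$, and pro-$\ell$ completion gives $\beta^{(\ell)}_*$. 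This verifies the second displayed diagram; the fiber-to-$W$ analogues are checked the same way.

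\emph{Step 2: rows and the right column.} Exactness of each row is Lemma~\ref{lem:compact-type-relative-pro-l-exact}, applied over $\bar L$, $k$, and over $W$ at each of the two base points. The construction of Lemma~\ref{lem:finite-cover-descent} identifies $\Pi^\rel$ with $\Pi/N_0$ naturally. Hence the left squares commute: they are induced by the commuting squares of groups $N\subset\Pi$, and $N_0$ is characteristic, so it maps into $N_0$. The right squares commute as the identity on $\Delta^\lambda$ because Lemma~\ref{lem:level-mod-l-kernel} states that the deck-group identifications are preserved by the geometric-fiber maps and by change of base point along $\beta$.

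\emph{Step 3: the left column consists of isomorphisms.} The maps $\beta^{(\ell)}_*$ are isomorphisms because $\beta_*$ is. The maps $\iota^{(\ell)}_{\bar\eta}$ and $\iota^{(\ell)}_{\bar\xi}$ are the two isomorphisms in \eqref{eq:selected-level-zigzag}. These come from the tame specialization theorem \cite[Exp.~XIII]{SGA1} for the smooth proper pair $(\overline M^\lambda_W,D^\lambda_{\mathcal D/W})$ with relative normal-crossings boundary. They apply because every finite $\ell$-group cover is tame along the boundary, as $\ell\neq p$.

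The main obstacle I expect is the bookkeeping in Step~1. Relative completion at $\bar\eta$ and at $\bar\xi$ uses different targets $\Sp(H_{t,\Zl})$, so one must check that the monodromy images, not merely the ambient groups, correspond under $\alpha_{\Zl,*}$. This holds because $\beta_*$ is an isomorphism of $\pi_1(\M_{\mathcal D/W})$ at the two base points and carries one image onto the other. It must also be checked that the descent identification $\Pi^\rel\cong\Pi/N_0$ is functorial in morphisms of pairs $(\Pi,N)$ over varying $S$. The argument runs through the universal property in the proof of Lemma~\ref{lem:finite-cover-descent}, and the uniqueness clause there forces commutativity.
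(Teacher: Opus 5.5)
Your proposal is correct and follows essentially the same route as the paper: rows from Lemma~\ref{lem:compact-type-relative-pro-l-exact}, comparison maps and their compatibility with the rows by functoriality, the identity on $\Delta^\lambda$ because the fiber maps preserve base points and $\beta$ lies in the level cover, the left column from \eqref{eq:selected-level-zigzag} together with base-point change, and monodromy compatibility from pulling back $\mathbb H^c_{\Zl}$ and transporting along $\beta$. One small correction: a morphism of pairs $(\Pi,N)\to(\Pi',N')$ carries $N_0$ into $N_0'$ not because $N_0$ is characteristic (that property concerns only automorphisms of $N$), but because the composite $N\to N'\to N'^{(\ell)}$ has pro-$\ell$ target and therefore factors through $N^{(\ell)}$.
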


\begin{proof}
The chosen component of $q_{\mathcal D,W}$ is a connected Galois cover with
group $\Delta^\lambda$.
As established above, the mod-$\ell$ monodromy is trivial on
$M_{\mathcal D/W}^\lambda$, and the induced representation of $\Delta^\lambda$ has
finite $\ell$-group kernel.
Lemma~\ref{lem:compact-type-relative-pro-l-exact} therefore
gives all four rows.
Functoriality of pro-$\ell$ and relative pro-$\ell$ completion
gives the comparison maps and their compatibility with the
horizontal maps. The geometric-fiber maps preserve the chosen
base points, and $\beta$ is a path in the level cover, so the
induced maps on the deck quotient $\Delta^\lambda$ are the identity.
Indeed, the image of $\beta$ downstairs lifts from the chosen point
upstairs to $\beta$ itself, so no deck transformation is introduced
at the endpoint. This holds simultaneously for every $\mathcal D$ and gives
the first commutative diagram.

By \eqref{eq:selected-level-zigzag},
$\iota_{\bar\eta}^{(\ell)}$ and $\iota_{\bar\xi}^{(\ell)}$
are isomorphisms. The map $\beta_*^{(\ell)}$ is an
isomorphism by change of base point along $\beta$.
% The homomorphisms $\iota_{\bar\eta}^{(\ell)}$ and
% $\iota_{\bar\xi}^{(\ell)}$, induced respectively by
% \[
% M_{\mathcal D/\bar L}^\lambda\longrightarrow M_{\mathcal D/W}^\lambda,
% \qquad
% M_{\mathcal D/k}^\lambda\longrightarrow M_{\mathcal D/W}^\lambda,
% \]
% are isomorphisms by tame specialization
% \cite[Exp.~XIII, Cor.~2.9 and \S2.10]{SGA1}.
% The homomorphism $\beta_*^{(\ell)}$ is the isomorphism obtained
% by changing the base point of $M_{\mathcal D/W}^\lambda$ from $\bar\eta$ to
% $\bar\xi$ along $\beta$.

Finally, all monodromy representations arise from the lisse
sheaf $\mathbb H^c_{\Zl}$ on $\M^c_{g,n/W}$, whose pullback
to $M_{\mathcal D/W}^\lambda$ identifies with $T_\ell J$.
Pullback gives compatibility with the geometric-fiber maps,
and transport along $\beta$ gives $\alpha_{\Zl}$.
The level-cover actions are restrictions of the actions on
$\pi_1(\M_{\mathcal D/W})$.
These compatibilities pass to the indicated completions by
functoriality, proving the remaining assertions.
\end{proof}

The comparison of exact sequences now yields the desired
specialization isomorphism for the relative pro-$\ell$ completions,
compatible with symplectic monodromy.

\begin{proposition}[Specialization of relative pro-$\ell$ completion]
\label{prop:compact-type-relative-pro-l-specialization}
\label{prop:selected-functorial-comparison}
For every collection $\mathcal D$, the specialization data determine an
isomorphism
\[
\operatorname{sp}_{\mathcal D}^\rel:
\pi_1(\M_{\mathcal D/\bar L})^\rel\xrightarrow{\sim}\pi_1(\M_{\mathcal D/k})^\rel
\]
compatible with symplectic monodromy through $\alpha_{\Zl,*}$.
These isomorphisms are functorial for enlarging $\mathcal D$.
The same statements hold for $\M_{\mathcal D}^{\circ}$.
\end{proposition}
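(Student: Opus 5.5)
The plan is to deduce the isomorphism from the four-row diagram of Lemma~\ref{lem:compact-type-level-specialization-compatibility} by a five-lemma argument on the middle column. Define
\[
\operatorname{sp}_{\mathcal D}^\rel:=(\iota_{\bar\xi}^{\rel})^{-1}\circ\beta_*^\rel\circ\iota_{\bar\eta}^{\rel}.
\]
For this composite to make sense, each of the three middle vertical arrows must be an isomorphism.

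For each of the three middle maps, the rows are exact. The left vertical arrows are isomorphisms: $\iota^{(\ell)}$ by tame specialization \eqref{eq:selected-level-zigzag}, and $\beta_*^{(\ell)}$ by change of base point. The right vertical arrows are the identity of $\Delta^\lambda$. The five lemma for profinite groups then shows that each middle arrow is a continuous bijection, hence a homeomorphism since the groups are compact Hausdorff. For $\beta_*^\rel$ this is automatic anyway, because change of base point along the image of $\beta$ has an inverse.

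Compatibility with symplectic monodromy follows by stacking the monodromy triangles of Lemma~\ref{lem:compact-type-level-specialization-compatibility}. Over $\bar L$ and $k$ the monodromy factors through the relative pro-$\ell$ completions by their universal property. The fiber-to-$W$ maps commute with monodromy because all the sheaves are pullbacks of $\mathbb H^c_{\Zl}$. The change of base point intertwines the two monodromies through $\alpha_{\Zl,*}$. Composing these gives $\rho_{\bar\xi}^\rel\circ\operatorname{sp}_{\mathcal D}^\rel=\alpha_{\Zl,*}\circ\rho_{\bar\eta}^\rel$.

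For functoriality in $\mathcal D\subseteq\mathcal D'$, every ingredient is natural under the open immersion $\M_{\mathcal D}\hookrightarrow\M_{\mathcal D'}$: the geometric-fiber maps, the single path $\beta$ in the common locus $M^\lambda_W$, and relative pro-$\ell$ completion via Proposition~\ref{prop:relative-pro-l-naturality}. The squares relating the composites for $\mathcal D$ and $\mathcal D'$ therefore commute.

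For the one-node opens, I use purity, as established above. The inclusions $\M^{\circ}_{\mathcal D/A}\hookrightarrow\M_{\mathcal D/A}$ induce isomorphisms of full fundamental groups for $A=\bar L,W,k$, compatibly with base points and monodromy. Hence they induce isomorphisms of relative pro-$\ell$ completions, and I transport $\operatorname{sp}_{\mathcal D}^\rel$ along them.

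The main obstacle is checking that the rows over $W$ are genuinely compatible with the top and bottom rows. This requires that the descent of Lemma~\ref{lem:finite-cover-descent} is natural under the fiber maps and base-point change, and that the induced map on $\Delta^\lambda$ is exactly the identity rather than an inner automorphism. This is the content of the lemma, which uses that $\beta$ lifts to the chosen component. I would double-check this naturality before invoking the five lemma.
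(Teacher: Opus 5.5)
Your proposal is correct and takes essentially the same route as the paper. The short five lemma applied to the four-row diagram of Lemma~\ref{lem:compact-type-level-specialization-compatibility} makes the three middle arrows isomorphisms, and $\operatorname{sp}_{\mathcal D}^\rel$ is defined as the same composite. Monodromy compatibility comes from that lemma, functoriality in $\mathcal D$ from using one base-point pair and one path $\beta$ for all $\mathcal D$, and purity handles $\M_{\mathcal D}^{\circ}$. Your extra remarks, that continuity of the inverse follows from compactness and that the identity on $\Delta^\lambda$ is the key input from that lemma, agree with the paper's argument and simply make it more explicit.
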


\begin{proof}
The short five lemma applied to the four-row diagram shows that all
three comparison maps in the middle column are isomorphisms.
Define
\[
\operatorname{sp}_{\mathcal D}^\rel
=(\iota_{\bar\xi}^\rel)^{-1}
\circ\beta_*^\rel\circ\iota_{\bar\eta}^\rel.
\]
It restricts on the level-cover kernels to
$(\iota_{\bar\xi}^{(\ell)})^{-1}\circ\beta_*^{(\ell)}
\circ\iota_{\bar\eta}^{(\ell)}=\operatorname{sp}_{\mathcal D}^{(\ell)}$
and induces the identity on $\Delta^\lambda$.
The monodromy compatibilities of the preceding lemma give
\[
\begin{tikzcd}
\pi_1(\M_{\mathcal D/\bar L})^\rel
\arrow[r,"\operatorname{sp}_{\mathcal D}^\rel","\sim"']
\arrow[d,"\rho_{\bar\eta}^{\mathcal D,\rel}"']&
\pi_1(\M_{\mathcal D/k})^\rel
\arrow[d,"\rho_{\bar\xi}^{\mathcal D,\rel}"]\\
\Sp(H_{\bar\eta,\Zl})
\arrow[r,"\alpha_{\Zl,*}","\sim"']&
\Sp(H_{\bar\xi,\Zl}).
\end{tikzcd}
\]
The level-cover maps are induced by pullback of finite \'etale covers
on the common models, and the exact sequences are natural for these
maps. Since the same base points and path $\beta$ are used for all
$\mathcal D$, the comparisons are compatible with enlarging $\mathcal D$.
Purity gives the one-node version.
\end{proof}

\subsection{Continuous relative completion}
\label{subsec:compact-type-continuous-specialization}

The Betti symplectic monodromy of the smooth level cover has image
of finite index in the image of $\G_{g,n}$ in the integral symplectic
group, and is therefore Zariski dense.
By characteristic-zero comparison, the monodromy of
$\pi_1(M^\lambda_{\bar L})$ is Zariski dense over $\Ql$ as well.
The dense open immersion
$M^\lambda_{\bar L}\hookrightarrow M_{\mathcal D/\bar L}^\lambda$ induces a
surjection on fundamental groups, and the monodromy on the smooth
level cover is the restriction of that on $M_{\mathcal D/\bar L}^\lambda$.
Consequently, the monodromy of $\pi_1(M_{\mathcal D/\bar L}^\lambda)$ is Zariski dense
in $\Sp(H_{\bar\eta,\Ql})$.
Lemma~\ref{lem:specialization-monodromy} gives the same conclusion
for $\pi_1(M_{\mathcal D/k}^\lambda)$.
Since the level-cover groups map to $\pi_1(\M_{\mathcal D/A})$, the monodromy of
$\pi_1(\M_{\mathcal D/A})$ is also Zariski dense for $A=\bar L,k$.
The same holds over $W$, by compatibility with the generic-fiber
map. We may therefore pass to continuous relative completion.

For $A=\bar L,W,k$, denote by $\cG_{\M_{\mathcal D}/A}$ the
continuous relative completion over $\Ql$ with respect to symplectic
monodromy, and by $\u_{\M_{\mathcal D}/A}$ the Lie algebra of its prounipotent
radical. When $\mathcal D$ contains all separating types, these are the groups
$\cG^c_{g,n/A}$ and Lie algebras $\u^c_{g,n/A}$ considered earlier.

\begin{proposition}[Specialization of relative completion]
\label{prop:compact-type-relative-completion-specialization}
The specialization data induce isomorphisms
\[
\cG_{\M_{\mathcal D}/\bar L}\xrightarrow{\sim}\cG_{\M_{\mathcal D}/k},
\qquad
\u_{\M_{\mathcal D}/\bar L}\xrightarrow{\sim}\u_{\M_{\mathcal D}/k},
\]
compatible with symplectic monodromy under
$\alpha_{\Ql}=\alpha_{\Zl}\otimes_{\Zl}\Ql$.
They are functorial for enlarging $\mathcal D$.
The same statements hold for $\M_{\mathcal D}^{\circ}$.
\end{proposition}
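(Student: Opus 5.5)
The plan is to deduce this from Proposition~\ref{prop:selected-functorial-comparison} together with Proposition~\ref{prop:continuous-completion-relative-pro-l}. Write $\Pi_A=\pi_1(\M_{\mathcal D/A})$ for $A=\bar L,k$, and let $\rho_A:\Pi_A\to\Sp(H_{A,\Zl})$ be the integral Jacobian monodromy. Its image $P_A$ is compact and, by the discussion preceding the statement, Zariski dense in $\Sp(H_{A,\Ql})$. Proposition~\ref{prop:continuous-completion-relative-pro-l}, applied with $R=\Sp$ (which has the $\Zl$-model $\Sp(H_{A,\Zl})$), shows that $\Pi_A\to\Pi_A^\rel$ induces an isomorphism $\cG_{\M_{\mathcal D}/A}\cong\cG(\Pi_A^\rel)$. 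Here $\cG(\Pi_A^\rel)$ is the continuous relative completion of $\Pi_A^\rel$ with respect to the induced representation. That representation still takes values in $\Sp(H_{A,\Zl})$ and has the same Zariski-dense image.

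Next, the isomorphism $\operatorname{sp}_{\mathcal D}^\rel:\Pi_{\bar L}^\rel\to\Pi_k^\rel$ of profinite groups intertwines the two monodromies through $\alpha_{\Zl,*}$. The morphism $\alpha_{\Ql,*}:\Sp(H_{\bar\eta,\Ql})\to\Sp(H_{\bar\xi,\Ql})$ is an isomorphism of reductive groups. Functoriality of continuous relative completion (Subsection~\ref{subsec:relative-completion-properties}) therefore gives morphisms $\cG(\Pi_{\bar L}^\rel)\to\cG(\Pi_k^\rel)$ and back, one induced by $\operatorname{sp}_{\mathcal D}^\rel$ and $\alpha_{\Ql,*}$, the other by their inverses. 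Both composites preserve the canonical lifts and the reductive quotients, so uniqueness in the universal property shows they are mutually inverse. Composing with the identifications from the first step gives $\cG_{\M_{\mathcal D}/\bar L}\cong\cG_{\M_{\mathcal D}/k}$ over $\alpha_{\Ql,*}$. Restricting to kernels of the maps to $\Sp$ gives the isomorphism of prounipotent radicals, and taking Lie algebras gives the isomorphism of the $\u$'s.

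Functoriality in $\mathcal D$ follows by the same uniqueness argument. For $\mathcal D\subseteq\mathcal D'$ the inclusions induce maps $\Pi_A\to\Pi'_A$, hence maps $\Pi_A^\rel\to\Pi_A'^\rel$ by Proposition~\ref{prop:relative-pro-l-naturality}. These commute with $\operatorname{sp}^\rel$ by Proposition~\ref{prop:selected-functorial-comparison}. The resulting square of continuous relative completions commutes because both paths are morphisms over $\alpha_{\Ql,*}$ that agree on the dense image of $\Pi_{\bar L}$. The naturality of the isomorphism in Proposition~\ref{prop:continuous-completion-relative-pro-l} comes from the same uniqueness. For the one-node opens, purity identifies $\pi_1(\M^\circ_{\mathcal D/A})$ with $\Pi_A$, compatibly with monodromy and with enlarging $\mathcal D$, so the statement transfers directly.

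The main point needing care is the continuity and topology bookkeeping. First, the canonical lift $\Pi_A\to\cG(\Ql)$ must factor continuously through $\Pi_A^\rel$; this is the content of Proposition~\ref{prop:continuous-completion-relative-pro-l}, via Lemma~\ref{lem:compact-subgroups-prounipotent}. Second, $\Pi_A^\rel$ is profinite, and its monodromy is continuous and Zariski dense, so its continuous relative completion is defined. Once these points are checked, everything else is formal from the universal properties.
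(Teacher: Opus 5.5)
Your proposal is correct and follows essentially the same route as the paper. You reduce to relative pro-$\ell$ completions via Proposition~\ref{prop:continuous-completion-relative-pro-l}, transport along $\operatorname{sp}^\rel_{\mathcal D}$ by functoriality of continuous relative completion over $\alpha_{\Ql,*}$, identify the prounipotent radicals as kernels of the projections to $\Sp$, and get functoriality in $\mathcal D$ and the one-node case from Proposition~\ref{prop:selected-functorial-comparison} and purity; your explicit Zariski-density uniqueness argument for the functoriality square simply spells out what the paper leaves implicit.
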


\begin{proof}
The preceding paragraph verifies Zariski density for
Proposition~\ref{prop:continuous-completion-relative-pro-l}.
Thus the completion of $\pi_1(\M_{\mathcal D/A})$ is canonically
that of $\pi_1(\M_{\mathcal D/A})^\rel$.
The preceding relative pro-$\ell$ isomorphism is compatible with
the $\Ql$-valued Jacobian monodromy representations under
$\alpha_{\Ql}$.
By functoriality of continuous relative completion
(Subsection~\ref{subsec:relative-completion-properties}), this
isomorphism and its inverse induce mutually inverse maps on
continuous relative completions, proving the first assertion.

Writing $\mathcal U$ for the prounipotent radicals, compatibility
with monodromy gives the commutative diagram with exact rows
\[
\begin{tikzcd}[
column sep=1.2em,
row sep=1.5em,
cells={nodes={font=\small}}
]
1\arrow[r]&
\mathcal U_{\M_{\mathcal D}/\bar L}
\arrow[r]\arrow[d]&
\cG_{\M_{\mathcal D}/\bar L}
\arrow[r]\arrow[d,"\sim"]&
\Sp(H_{\bar\eta,\Ql})
\arrow[r]\arrow[d,"\alpha_{\Ql,*}","\sim"']&1\\
1\arrow[r]&
\mathcal U_{\M_{\mathcal D}/k}
\arrow[r]&
\cG_{\M_{\mathcal D}/k}
\arrow[r]&
\Sp(H_{\bar\xi,\Ql})
\arrow[r]&1.
\end{tikzcd}
\]
The middle and right vertical maps are isomorphisms, so the
left vertical map is also an isomorphism. Taking Lie algebras
gives the second assertion.
Functoriality and the assertion for $\M_{\mathcal D}^{\circ}$
follow from Proposition~\ref{prop:selected-functorial-comparison}
and purity.
\end{proof}

In particular, the inclusions into compact type give the commutative
diagram
\[
\begin{tikzcd}[
column sep=1.3em,
cells={nodes={font=\small}}
]
\cG_{\M_{\mathcal D}^{\circ}/\bar L}
\arrow[r,"\sim"]\arrow[d]&
\cG_{\M_{\mathcal D}^{\circ}/W}\arrow[d]&
\cG_{\M_{\mathcal D}^{\circ}/k}
\arrow[l,"\sim"']\arrow[d]\\
\cG^c_{g,n/\bar L}\arrow[r,"\sim"]&
\cG^c_{g,n/W}&
\cG^c_{g,n/k}\arrow[l,"\sim"'].
\end{tikzcd}
\]
Here the two total-space base points are identified using $\beta$.
The same diagram holds with $\M_{\mathcal D}$ in place of
$\M_{\mathcal D}^{\circ}$.
For an arbitrary collection $\mathcal D$, this compares characteristics;
it does not assert that the vertical maps are isomorphisms.

\begin{proof}[Proof of Theorem~\ref{thm:compact-type-completion}]
If $\operatorname{char}(k)=0$, the theorem follows from
Proposition~\ref{prop:generic-fiber-compact-type-comparison}
with $F=k$.
Now suppose that $\operatorname{char}(k)=p>0$.
Since $\bar L$ has characteristic zero,
Proposition~\ref{prop:generic-fiber-compact-type-comparison}
with $F=\bar L$ shows that $\u^c_{g,n/\bar L}$ is abelian and
\[
\u^c_{g,n/\bar L}
\cong\Lambda^3_0H_{\bar\eta,\Ql}
\oplus H_{\bar\eta,\Ql}^{\oplus n}.
\]
Apply Proposition~\ref{prop:compact-type-relative-completion-specialization}
with $\mathcal D$ equal to the collection of all separating boundary
types, so that $\M_{\mathcal D}=\M^c_{g,n}$.
It gives an isomorphism
$\u^c_{g,n/\bar L}\xrightarrow{\sim}\u^c_{g,n/k}$
compatible with symplectic monodromy under
$\alpha_{\Ql}:H_{\bar\eta,\Ql}\xrightarrow{\sim}H_{\bar\xi,\Ql}$.
Consequently, $\u^c_{g,n/k}$ is abelian, and this identification gives
\[
\u^c_{g,n/k}
\cong\Lambda^3_0H_{\bar\xi,\Ql}
\oplus H_{\bar\xi,\Ql}^{\oplus n}
\]
as $\Sp(H_{\bar\xi,\Ql})$-modules.
\end{proof}

\begin{proof}[Proof of Theorem~\ref{thm:fixed-genus-main}]
The characteristic-zero statement follows from
Theorem~\ref{thm:fixed-genus-betti} and characteristic-zero
comparison. Suppose now that $\operatorname{char}(k)=p>0$, and let
$\mathcal D$ be either $\mathcal D_h$ or $\mathcal D_{h,\leq2}$.
Since $\bar L$ has characteristic zero, the inclusion induces an
isomorphism
\[
\cG_{\M_{\mathcal D}^{\circ}/\bar L}
\xrightarrow{\sim}\cG^c_{g,n/\bar L}.
\]
Apply the preceding specialization diagram to
$\M_{\mathcal D}^{\circ}\hookrightarrow\M^c_{g,n}$.
The horizontal maps are isomorphisms, and the vertical map on the
generic fiber is the isomorphism just obtained. Commutativity
therefore implies that the vertical map over $k$ is also an
isomorphism.
This isomorphism is compatible with the projections to
$\Sp(H_{\bar\xi,\Ql})$, so it identifies their kernels,
the prounipotent radicals. Taking Lie algebras and applying
Theorem~\ref{thm:compact-type-completion} gives
\[
\u_{\M_{\mathcal D}^{\circ}/k}
\cong\u^c_{g,n/k}
\cong\Lambda^3_0H_{\bar\xi,\Ql}
\oplus H_{\bar\xi,\Ql}^{\oplus n},
\]
with trivial Lie bracket and the asserted symplectic action.
The argument was carried out over $W(k)$, so it applies to every
algebraically closed field $k$ of characteristic $p$.
\end{proof}

\section{Extension of Local Systems}\label{sec:local-systems}
%%%%%%%%%%%%%%%%%%%%%%%%%%%%%%%%%%%%%%%%%%%%%%%%%%%%%
%%%%%%%%%%%%%%%%%%%%%%%%%%%%%%%%%%%%%%%%%%%%%%%%%%%%%
Suppose that $g\geq3$ and $n\geq0$. We work either with rational
Betti local systems over $\C$, with coefficient field $F=\Q$, or
with lisse $\Ql$-sheaves over an algebraically closed field $k$,
with $F=\Ql$. In the latter setting, $\ell\neq\operatorname{char}(k)$,
and $\ell$ is assumed odd when $\operatorname{char}(k)>0$.
All stacks are taken over the corresponding base field.
Write $\mathbb H^c_F$ for the Jacobian local system of
Subsection~\ref{subsec:jacobian-monodromy-completion}, and let $H$
be its fiber at a fixed base point in $\M_{g,n}$.
The notation $Z$ will denote any of the opens $\M_{g,n}$,
$\M_{\mathcal D}^{\circ}$, $\M_{\mathcal D}$, or $\M^c_{g,n}$.

\subsection{The category associated to relative completion}
\begin{definition}
Let $\Loc_H(Z;F)$ be the full subcategory of finite-dimensional
$F$-local systems on $Z$ consisting of those admitting a finite
filtration by local subsystems
\[
0=\V_0\subseteq\V_1\subseteq\cdots\subseteq\V_m=\V
\]
whose successive quotients are associated to algebraic
representations of $\Sp(H)$ through the monodromy of
$\mathbb H^c_F|_Z$.
The filtration is required to exist but is not part of the data;
morphisms are all morphisms of the underlying local systems.
\end{definition}

\begin{proposition}\label{prop:tannakian-local-systems}
With the fiber functors at the chosen base point, there are
equivalences of tensor categories
\begin{equation}\label{eq:tannakian}
\begin{aligned}
\Loc_H(Z;\Q)&\simeq\Rep_{\Q}(\cG_Z),\\
\Loc_H(Z;\Ql)&\simeq\Rep_{\Ql}(\cG_{Z/k})
\end{aligned}
\end{equation}
in the Betti and \'etale settings, respectively. Here $\Rep_F$
denotes finite-dimensional algebraic representations over $F$.
\end{proposition}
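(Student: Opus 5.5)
The plan is to build the equivalence from the universal property of relative completion, applied to the finite-dimensional quotients through which a representation of the fundamental group factors. Write $\pi=\pi_1^{\orb}(Z^{\an})$ in the Betti setting and $\pi=\pi_1(Z_{/k})$ in the \'etale setting, with $\rho:\pi\to\Sp(H)$ the restriction of the Jacobian monodromy; it is Zariski dense by Subsections~\ref{subsec:jacobian-monodromy-completion} and~\ref{subsec:compact-type-continuous-specialization}. The fiber functor at the base point identifies finite-dimensional local systems on $Z$ with finite-dimensional representations of $\pi$, continuous in the \'etale case. The functor $\Rep_F(\cG_Z)\to\Loc_H(Z;F)$ is pullback along the canonical lift $\widetilde\rho:\pi\to\cG_Z(F)$, which is continuous in the \'etale case. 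It is fully faithful because $\widetilde\rho(\pi)$ is Zariski dense in $\cG_Z$: a linear map commuting with $\pi$ commutes with the Zariski closure, and $\pi$-stable subspaces are $\cG_Z$-stable. It is a tensor functor by construction. For the essential image, take $V\in\Rep(\cG_Z)$. Its socle filtration is $\U_Z$-stable, because $\U_Z$ is prounipotent and therefore acts trivially on successive quotients of a suitable filtration. Hence the graded pieces are representations of $\Sp(H)$, so the pullback lies in $\Loc_H$.

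The main step is essential surjectivity. Let $\V\in\Loc_H(Z;F)$ with monodromy $\phi:\pi\to\GL(V)$ and a filtration $V_\bullet$ whose graded pieces are algebraic $\Sp(H)$-representations composed with $\rho$. Let $P\subseteq\GL(V)$ be the parabolic subgroup stabilizing $V_\bullet$, let $U_P$ be its unipotent radical, and let $\psi:\pi\to\prod_i\GL(\Gr_iV)$ be the induced action on the associated graded. By hypothesis $\psi$ factors as $\sigma\circ\rho$ for an algebraic homomorphism $\sigma:\Sp(H)\to\prod\GL(\Gr_iV)$. Form the fiber product $G'=P\times_{\prod\GL(\Gr_iV)}\Sp(H)$. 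It is an extension of $\Sp(H)$ by the unipotent group $U_P$. The map $(\phi,\rho):\pi\to G'(F)$ is a lift of $\rho$, and it is continuous in the \'etale case because $\phi$ and $\rho$ are continuous. Replacing $G'$ by the Zariski closure of the image changes nothing essential: the result is still an extension of $\Sp(H)$ by a unipotent group, since its image in $\Sp(H)$ is closed and contains the dense $\rho(\pi)$. The universal property of (continuous) relative completion then gives $\Phi:\cG_Z\to G'$ with $\Phi\circ\widetilde\rho=(\phi,\rho)$. Composing with $G'\to P\subseteq\GL(V)$ produces an algebraic representation of $\cG_Z$ whose pullback is $\phi$.

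It remains to check that the equivalence respects the tensor structures and the fiber functors, and that morphisms in $\Loc_H$ are arbitrary morphisms of local systems. The latter is consistent with full faithfulness, and $\Loc_H$ is closed under subobjects, quotients, tensor products and duals, since the image category is. The step I expect to need the most care is the continuity bookkeeping in the \'etale case. There the universal property requires a continuous lift into $G'(\Ql)$ with its $\ell$-adic topology, so one must confirm that the Zariski-closure replacement and the fiber product preserve continuity and that the definition in Subsection~\ref{subsec:relative-completion-definition} applies verbatim. Everything else is formal from the universal property and Zariski density.
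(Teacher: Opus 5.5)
Your proposal is correct and follows essentially the paper's argument: the fiber product of $\Sp(H)$ with the stabilizer of the filtration is a unipotent extension to which the universal property applies; conversely, a $\cG_Z$-stable filtration with $\U_Z$-trivial graded pieces lands in $\Loc_H$; and Zariski density gives full faithfulness. The differences are cosmetic. You use the socle filtration, whose semisimple graded pieces are killed by $\U_Z$ by the same normal-invariants argument the paper uses to build its filtration by iterated $\U$-invariants. Your Zariski-closure replacement is unnecessary, since the universal property as stated does not require the lift to have dense image.
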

\begin{proof}
This is the Tannakian description of relative completion
\cite[\S4]{HainMatsumoto2009}. We give the argument using the
universal properties recalled in
Subsection~\ref{subsec:relative-completion-definition}, which also
apply in the continuous setting \cite[\S6.2]{hain_rational}.
Write $\Gamma$ for $\pi_1^{\orb}(Z^{\an})$ in the Betti setting
and for $\pi_1(Z)$ in the \'etale setting, and denote its relative
completion by $\cG$.

Let $V$ be the fiber of an object of $\Loc_H(Z;F)$, and choose a
filtration as in the definition. Let $P_V\subseteq\GL(V)$ be its
stabilizer. A vector-space splitting of the filtration shows that
\[
P_V\longrightarrow\prod_{i=1}^m\GL(V_i/V_{i-1})
\]
is surjective, with kernel consisting of block upper triangular
matrices with identity diagonal blocks, hence unipotent.
% Let $V$ be the fiber of an object of $\Loc_H(Z;F)$, and choose a
% filtration as in the definition. Let $P_V\subseteq\GL(V)$ be its
% stabilizer. Let $P_V\subseteq\GL(V)$ be the stabilizer of the filtration.
% The homomorphism
% $P_V\to\prod_{i=1}^m\GL(V_i/V_{i-1})$ is surjective.
% Indeed, choose vector-space complements $E_i$ such that
% $V_i=V_{i-1}\oplus E_i$. Then $V=\bigoplus_{i=1}^m E_i$,
% and the quotient maps identify $E_i$ with $V_i/V_{i-1}$.
% Any tuple of automorphisms of the graded quotients therefore
% lifts to the direct sum of the corresponding automorphisms of
% the $E_i$. This lift preserves every $V_i$, so it belongs to $P_V$.
% With respect to this decomposition, the kernel consists of block
% upper triangular matrices with identity diagonal blocks, and is
% therefore unipotent. 
The algebraic symplectic actions on the graded
quotients therefore give an algebraic group
\[
G_V=\Sp(H)\times_{\prod_a\GL(V_a/V_{a-1})}P_V
\]
that is an extension of $\Sp(H)$ by a unipotent group.
The pair consisting of symplectic monodromy and the monodromy on
$V$ gives a lift $\Gamma\to G_V(F)$, continuous in the \'etale
setting. The universal property of $\cG$ extends this lift and
hence gives an algebraic representation of $\cG$ on $V$.

Conversely, let $V$ be a finite-dimensional algebraic
representation of $\cG$, and let $\U$ be its prounipotent radical.
The action of $\U$ on $V$ factors through a unipotent algebraic
group. Set $V_0=0$ and define $V_{i+1}$ to be the inverse image
of $(V/V_i)^{\U}$ under the quotient map $V\to V/V_i$. Thus
\[
V_{i+1}/V_i=(V/V_i)^{\U}.
\]
Normality of $\U$ makes this filtration $\cG$-stable.
Since every nonzero representation of a unipotent group has
nonzero invariants, $V_i\subsetneq V_{i+1}$ whenever $V_i\neq V$.
Finite dimensionality therefore gives $V_m=V$ for some $m$.
% Since $\U$ is normal in $\cG$, the $\U$-invariant subspace of
% every $\cG$-representation is $\cG$-invariant. Inductively, each
% $V_i$ is therefore $\cG$-invariant, so the construction is
% well-defined. Whenever $V_i\neq V$, the nonzero quotient $V/V_i$
% has a nonzero $\U$-invariant vector, since $\U$ acts through a
% unipotent group. Hence $V_i\subsetneq V_{i+1}$, and finite
% dimensionality implies that $V_m=V$ for some $m$.
By construction, $\U$ acts trivially on every $V_{i+1}/V_i$.
The graded actions consequently factor through
$\cG/\U\cong\Sp(H)$, so restriction to $\Gamma$ gives an object
of $\Loc_H(Z;F)$.

Finally, the canonical image of $\Gamma$ is Zariski dense in
$\cG$. Thus an extension of the representation is unique, and a
linear map intertwines the $\Gamma$-actions if and only if it
intertwines the $\cG$-actions. Restriction is therefore a fully
faithful, essentially surjective tensor functor, as required.
\end{proof}

\subsection{Extension from a partial compactification}
\begin{theorem}\label{thm:local-extension}
Suppose that $1\leq h\leq g-1$, and put
$X=\M_{\mathcal D_{h,\leq2}}^{\circ}$ and $Y=\M^c_{g,n}$.
Restriction along $j:X\hookrightarrow Y$ induces an equivalence
of tensor categories
\[
j^*:\Loc_H(Y;F)\xrightarrow{\sim}\Loc_H(X;F).
\]
The same assertion holds for $X=\M_{\mathcal D_h}^{\circ}$ and
for the corresponding larger opens
$\M_{\mathcal D_{h,\leq2}}$ and $\M_{\mathcal D_h}$.
\end{theorem}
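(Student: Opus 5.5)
The plan is to reduce the statement to Tannakian formalism. By Proposition~\ref{prop:tannakian-local-systems}, for each open $Z$ the fiber functor at the chosen base point identifies $\Loc_H(Z;F)$ with $\Rep_F(\cG_Z)$ (Betti) or $\Rep_{\Ql}(\cG_{Z/k})$ (\'etale). The first step is to check that these identifications are natural for $j$. Concretely, restriction $j^*$ on local systems must correspond to pullback of representations along the morphism $\cG_X\to\cG_Y$ induced by $j_*:\pi_1(X)\to\pi_1(Y)$. This holds because the canonical lifts are compatible: the lift of $\pi_1(X)$ into $\cG_Y$ is $\widetilde\rho_Y\circ j_*$, and it factors through the induced morphism by the universal property. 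We should also note that the symplectic monodromy on $X$ is the restriction of $\mathbb H^c_F$. Hence the defining condition of $\Loc_H$ (graded pieces associated to algebraic $\Sp(H)$-representations) is preserved by $j^*$, so $j^*$ lands in $\Loc_H(X;F)$.

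Next, Theorem~\ref{thm:fixed-genus-main} (in the Betti case Theorem~\ref{thm:fixed-genus-betti}; in the \'etale case its characteristic-zero or characteristic-$p$ version) says that the induced morphism $\cG_X\to\cG_Y$ is an isomorphism over $\Sp(H)$. Pullback along an isomorphism of proalgebraic groups is an equivalence of tensor categories. Composing with the two Tannakian equivalences gives $j^*$ as an equivalence of tensor categories, and this is compatible with fiber functors. The cases $X=\M^{\circ}_{\mathcal D_h}$ and the larger opens $\M_{\mathcal D_{h,\leq2}}$, $\M_{\mathcal D_h}$ follow in the same way, since the theorems cited cover them. For the larger opens one can also use purity: fundamental groups are unchanged under removal of codimension-two loci.

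The main point needing care is the naturality in the first step. Full faithfulness and essential surjectivity then transfer formally, but this only works if the square relating $j^*$ and pullback along $\cG_X\to\cG_Y$ commutes, including base points. I would handle it as follows. Choose the base point in $\M_{g,n}\subseteq X$, so the fiber functors on $X$ and $Y$ agree on underlying vector spaces. Then use Zariski density of the image of $\pi_1(X)$ in $\cG_X$: this shows that the $\cG_Y$-action pulled back to $\cG_X$ is the unique extension of the $\pi_1(X)$-action. In the \'etale setting one should also remark that continuity is automatic, because the lifts involved are composites of continuous maps.
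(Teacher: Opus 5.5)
Your proposal is correct and follows the paper's proof: identify restriction with pullback along $\cG_X\to\cG_Y$ via Proposition~\ref{prop:tannakian-local-systems}, invoke Theorem~\ref{thm:fixed-genus-main} for the isomorphism, and use the purity comparison of fundamental groups for the larger opens. Your extra discussion of base points and Zariski density only spells out the naturality of the Tannakian identification, which the paper takes for granted.
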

\begin{proof}
Under Proposition~\ref{prop:tannakian-local-systems}, restriction
corresponds to pullback of representations along
$\cG_X\to\cG_Y$ in the Betti setting and along
$\cG_{X/k}\to\cG_{Y/k}$ in the \'etale setting.
These morphisms are isomorphisms by
Theorem~\ref{thm:fixed-genus-main} for the one-node opens.
The fundamental-group comparisons of
Section~\ref{sec:fixed-genus} give the same conclusion for the
larger opens. Pullback therefore gives the asserted equivalences.
\end{proof}

Full faithfulness shows that an extension, together with a
specified identification of its restriction, is unique up to a
unique isomorphism respecting that identification. Since restriction
is exact, a chosen filtration by local subsystems and its inclusion
maps extend as well.

\subsection{A boundary-monodromy criterion}
In the Betti setting, boundary monodromy is the action of a
meridian around a boundary divisor. In the \'etale setting, it
is the action of the full inertia group at a geometric generic
point of the divisor on a regular \'etale chart.

\begin{lemma}\label{lem:local-inertia}
Let $U\subseteq Z$ be a dense open immersion between smooth
connected stacks considered here, whose complement consists of
boundary divisors and possibly strata of codimension at least two.
A finite-dimensional local system on $U$ extends to $Z$ if and
only if its boundary monodromy is trivial along every
codimension-one component of $Z\setminus U$.
The extension is unique up to a unique isomorphism compatible
with its specified restriction.
\end{lemma}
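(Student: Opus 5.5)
The plan is to reduce to a local statement at the generic points of the boundary divisors, glue by purity, and then read off uniqueness from full faithfulness of restriction.

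\emph{Necessity.} If $\V$ extends to a local system $\widetilde\V$ on $Z$, take a divisor component $E$ of $Z\setminus U$. In the Betti setting a small meridian around $E$ is null-homotopic in $Z$, since it bounds a small transverse disc. In the \'etale setting the inertia group at a geometric generic point of $E$ maps trivially to $\pi_1(Z)$ on a regular \'etale chart. Because $\widetilde\V$ is lisse across $E$, the boundary monodromy of $\V=\widetilde\V|_U$ along $E$ is trivial.

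\emph{Sufficiency.} Assume the monodromy is trivial along every divisorial component, and split $Z\setminus U=E\cup Z'$, where $E$ is the union of the codimension-one components and $Z'$ has codimension at least two. Put $U'=Z\setminus Z'$, so that $U\subseteq U'\subseteq Z$ and $U'\setminus U=E\cap U'$ is a divisor in the smooth stack $U'$.
\begin{enumerate}[(i)]
\item Extend over $U'$. In the Betti case, near a point of $E\cap U'$ take a polydisc chart in which $U$ is the complement of a coordinate hyperplane, possibly after removing codimension-two strata. The local fundamental group is generated by the meridian, which acts trivially, so the local system extends uniquely across the chart. In the \'etale case, work with an integral lattice, so with finite étale covers at each level $\ell^r$. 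Trivial inertia means each associated torsor is unramified along $E$. Zariski--Nagata purity \cite[Exp.~X, Thm.~3.1]{SGA1} on regular \'etale charts then extends it as a finite étale cover over $U'$, and passing to the limit and inverting $\ell$ gives the extension. Equivalently, and more cleanly: the kernel of $\pi_1(U)\to\pi_1(U')$ is normally generated by these inertia groups (respectively meridians), which act trivially on $\V$, so the representation factors through $\pi_1(U')$.
\item Extend over $Z$. Since $Z\setminus U'$ has codimension at least two in the regular $Z$, purity gives $\pi_1(U')\cong\pi_1(Z)$, exactly as in the codimension-two comparison of Section~\ref{sec:fixed-genus}. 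The local system therefore extends uniquely to $Z$.
\end{enumerate}

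\emph{Uniqueness.} The open immersion $U\hookrightarrow Z$ is dense and $Z$ is normal, so $\pi_1(U)\to\pi_1(Z)$ is surjective. Restriction of local systems is therefore fully faithful. Two extensions together with identifications of their restrictions are thus related by a unique isomorphism compatible with those identifications.

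The main obstacle is the \'etale step that identifies the kernel of $\pi_1(U)\to\pi_1(U')$ with the normal closure of the inertia groups on a Deligne--Mumford stack rather than a scheme. I would handle it by passing to a finite \'etale scheme cover, such as the level covers $M^\lambda_{\mathcal D}$ of Section~\ref{sec:specialization-characteristic-p}. There one applies purity of the branch locus for the regular scheme and descends along the Galois group, noting that inertia groups of the cover map into the inertia groups of the stack.
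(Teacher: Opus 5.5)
Your proposal is correct and follows essentially the same route as the paper. The Betti case factors the monodromy through the quotient by the normal closure of the meridians. The \'etale case reduces a $\Zl$-lattice modulo $\ell^r$, extends each finite cover by purity of the branch locus on regular charts, and passes to the limit. Uniqueness comes from full faithfulness of restriction, since $\pi_1(U)\to\pi_1(Z)$ is surjective. The only differences are cosmetic. You split off the codimension-two strata as a separate purity step, which the paper absorbs into its single application of purity. You also handle the stack issue via the finite \'etale level covers of Section~\ref{sec:specialization-characteristic-p} rather than regular \'etale charts plus descent; if you use those covers when the coefficient prime is $2$, build them at an auxiliary odd prime different from the characteristic.
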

\begin{proof}
In the Betti setting, the inclusion induces a surjection on
orbifold fundamental groups whose kernel is normally generated
by boundary meridians, by transversality. The assertion follows
by factoring the monodromy representation through this quotient.

In the \'etale setting, choose a monodromy-invariant
$\Zl$-lattice. Trivial boundary inertia implies that each finite
representation obtained by reduction modulo $\ell^m$ extends
across the boundary, by purity of the branch locus
\cite[Exp.~X, Thm.~3.1]{SGA1}. Passing to the inverse limit
and tensoring with $\Ql$ gives the required extension.
For the stacks considered here, apply this argument on regular
\'etale charts and descend.

In both settings, surjectivity on fundamental groups makes
restriction fully faithful, which gives the stated uniqueness.
\end{proof}
% \begin{proof}
% In the Betti setting, the homomorphism
% $\pi_1^{\orb}(U^{\an})\to\pi_1^{\orb}(Z^{\an})$ is surjective,
% with kernel normally generated by the boundary meridians.
% This follows by transversality on smooth charts; strata of complex
% codimension at least two do not affect the fundamental group.
% A monodromy representation therefore factors through
% $\pi_1^{\orb}(Z^{\an})$ exactly when these meridians act trivially.

% In the \'etale setting, choose a monodromy-invariant $\Zl$-lattice;
% one exists because the image of $\pi_1(U)$ is compact.
% Each reduction modulo $\ell^m$ has finite image.
% If full boundary inertia acts trivially, the normalization of
% each corresponding finite \'etale cover over a regular chart
% of $Z$ is unramified at every codimension-one point.
% These normalizations are finite because the charts are excellent.
% Purity of the branch locus
% \cite[Exp.~X, Thm.~3.1]{SGA1} makes them \'etale everywhere.
% Uniqueness of extension over normal charts extends the actions
% and descent data. Thus each finite representation factors through
% $\pi_1(Z)$. These factorizations are compatible, so their inverse
% limit, tensored with $\Ql$, gives the required lisse sheaf.
% Conversely, a sheaf on $Z$ has trivial boundary inertia on $U$.

% In both settings, the homomorphism on the corresponding
% fundamental groups is surjective. Restriction is consequently
% fully faithful, which proves uniqueness.
% \end{proof}

\begin{corollary}\label{cor:boundary-test}
Suppose that $1\leq h\leq g-1$, and let
$\V\in\Loc_H(\M_{g,n};F)$. The following are equivalent:
\begin{enumerate}[(i)]
\item $\V$ extends to a local system on $\M^c_{g,n}$;
\item $\V$ extends to a local system on
$\M_{\mathcal D_{h,\leq2}}^{\circ}$;
\item the boundary monodromy of $\V$ is trivial along every
$\Delta_{h,S}$ with $|S|\leq2$.
\end{enumerate}
The extensions belong to the corresponding categories $\Loc_H$
and are unique up to a unique isomorphism compatible with their
specified restrictions. The same assertion holds with all subsets
$S$ in \textup{(ii)} and \textup{(iii)}.
\end{corollary}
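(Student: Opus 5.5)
The plan is to prove the cycle (i)$\Rightarrow$(ii)$\Rightarrow$(iii)$\Rightarrow$(i), using Theorem~\ref{thm:local-extension} for the extension step and Lemma~\ref{lem:local-inertia} for the boundary-monodromy step.

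\emph{(i)$\Rightarrow$(ii).} If $\V$ extends to $\M^c_{g,n}$, restrict the extension along the open immersion $\M_{\mathcal D_{h,\leq2}}^{\circ}\hookrightarrow\M^c_{g,n}$. Its restriction to $\M_{g,n}$ is still $\V$.

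\emph{(ii)$\Rightarrow$(iii).} Suppose $\V$ extends to a local system on $X=\M_{\mathcal D_{h,\leq2}}^{\circ}$. Every $\Delta_{h,S}$ with $|S|\leq2$ meets $X$ in its dense one-node stratum $\Delta_{h,S}^\circ$. A meridian around it, or the inertia group at a geometric generic point of it, maps trivially into $\pi_1(X)$, because the loop bounds a disc in $X$, respectively the inertia acts trivially on a lisse sheaf on $X$. Hence the boundary monodromy of $\V$ is trivial along each such divisor.

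\emph{(iii)$\Rightarrow$(i).} First apply Lemma~\ref{lem:local-inertia} to $U=\M_{g,n}\subseteq Z=X$. The complement $X\setminus\M_{g,n}$ is the disjoint union of the smooth divisors $\Delta_{h,S}^\circ$, $|S|\leq2$, and these are exactly the codimension-one components. So (iii) gives an extension $\V_X$ on $X$, unique up to unique isomorphism. Next we need $\V_X\in\Loc_H(X;F)$. Take a filtration of $\V$ with symplectic graded pieces. Each piece is associated to an algebraic representation of $\Sp(H)$ through the monodromy of $\mathbb H^c_F$, which is already defined on $X$, so each piece extends to $X$. Because restriction $\pi_1(\M_{g,n})\to\pi_1(X)$ is surjective, a subspace of the fiber stable under $\pi_1(\M_{g,n})$ is stable under $\pi_1(X)$. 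Thus the filtration is by sub-local systems of $\V_X$, with the same graded pieces, and $\V_X\in\Loc_H(X;F)$. Now Theorem~\ref{thm:local-extension} gives an object of $\Loc_H(\M^c_{g,n};F)$ whose restriction is $\V_X$, and hence whose restriction to $\M_{g,n}$ is $\V$.

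\emph{Uniqueness and membership in $\Loc_H$.} Both follow from full faithfulness of restriction. The maps $\pi_1(\M_{g,n})\to\pi_1(X)\to\pi_1(\M^c_{g,n})$ are surjective, so restriction is fully faithful, as in Lemma~\ref{lem:local-inertia}. The extensions lie in $\Loc_H$ by the filtration-transfer argument above.

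\emph{The variant with all subsets $S$.} Run the same argument with $\M_{\mathcal D_h}^{\circ}$ and use the corresponding case of Theorem~\ref{thm:local-extension}. The implication (iii, all $S$)$\Rightarrow$(iii, $|S|\leq2$) is trivial. Conversely, (i) implies trivial monodromy along every $\Delta_{h,S}$, since these are compact-type divisors.

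The main obstacle is the \'etale version of (ii)$\Rightarrow$(iii). There one must check that full transverse inertia at the generic point of $\Delta_{h,S}$ acts trivially on a sheaf that extends over the one-node locus. This holds because the generic point of the divisor lies in $X$, and inertia at a point of $X$ acts trivially on any lisse sheaf on $X$. The other steps are formal.
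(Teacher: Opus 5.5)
Your proof is correct and follows essentially the same route as the paper. You use restriction for (i)$\Rightarrow$(ii) and Lemma~\ref{lem:local-inertia} to obtain the extension over $\M_{\mathcal D_{h,\leq2}}^{\circ}$. Surjectivity of $\pi_1(\M_{g,n})\to\pi_1(\M_{\mathcal D_{h,\leq2}}^{\circ})$ then transfers the symplectic filtration, so the extension lies in $\Loc_H$, and Theorem~\ref{thm:local-extension} finishes the argument. The only differences are cosmetic: you arrange the implications as a cycle and check (ii)$\Rightarrow$(iii) directly instead of quoting the lemma for both directions.
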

\begin{proof}
Put $X=\M_{\mathcal D_{h,\leq2}}^{\circ}$.
Restriction gives \textup{(i)}$\Rightarrow$\textup{(ii)}, and
Lemma~\ref{lem:local-inertia} gives
\textup{(ii)}$\Leftrightarrow$\textup{(iii)}.

For \textup{(ii)}$\Rightarrow$\textup{(i)}, put $U=\M_{g,n}$
and choose an extension of $\V$ to $X$. Write
$0=V_0\subseteq\cdots\subseteq V_m=V$ for the fiber filtration
of $\V$ supplied by the definition of $\Loc_H$.
Let $q:\Gamma_U\twoheadrightarrow\Gamma_X$ be the surjection
induced by inclusion, where $\Gamma_U$ and $\Gamma_X$ denote
the corresponding orbifold or \'etale fundamental groups.
Surjectivity of $q$ implies that each $V_i$ is invariant under
the extended monodromy.

Denote the Jacobian monodromy representations by $\rho_U$
and $\rho_X$, so that $\rho_U=\rho_X\circ q$.
For each $i$, let
$r_i:\Sp(H)\to\GL(V_i/V_{i-1})$
be the algebraic representation defining the original graded
monodromy, and let
$\sigma_i:\Gamma_X\to\GL(V_i/V_{i-1})(F)$
be the graded monodromy of the extension.
Since the extension restricts to $\V$,
\[
\sigma_i\circ q
=r_i\circ\rho_U
=r_i\circ\rho_X\circ q.
\]
Surjectivity of $q$ gives $\sigma_i=r_i\circ\rho_X$.
Thus the extended filtration has algebraic symplectic graded
quotients, and the extension belongs to $\Loc_H(X;F)$.
Theorem~\ref{thm:local-extension} then extends it to an object of
$\Loc_H(\M^c_{g,n};F)$.
Uniqueness follows from Lemma~\ref{lem:local-inertia}.
Replacing $\mathcal D_{h,\leq2}$ by $\mathcal D_h$ and applying
the same lemma and theorem proves the assertion for all subsets $S$.
\end{proof}

Theorem~\ref{thm:local-extension} and
Corollary~\ref{cor:boundary-test} prove
Corollary~\ref{thm:local-extension-main}.
For $n=0$, extension to compact type is therefore equivalent to
trivial boundary monodromy along the single divisor
$\Delta_{h,\varnothing}$.
In the Betti setting, it is enough to test one genus-$h$
separating twist, since all twists of that type are conjugate in
$\G_g$.

% NEW SECTION: the divisor-count consequence in the project notes.
\section{The Number of Boundary Divisors}\label{sec:divisor-count}
For $h\neq g-h$, the collection $\mathcal D_h$ consists of $2^n$
distinct separating boundary divisors. If $g=2h$ and $n\geq1$, the identification of complementary
subsets leaves $2^{n-1}$ types. For $n=0$, there is one type in
either case. The collection $|S|\leq2$ has at most
\[
1+n+\binom n2
\]
distinct divisors. Complementary subsets can reduce this number
when $g=2h$.

\begin{corollary}\label{cor:divisor-lower-bound}
Suppose that $\mathcal D$ is a collection of $r$ distinct separating
boundary divisors and that
\[
\M_{\mathcal D}^{\circ}\hookrightarrow\M^c_{g,n}
\]
induces an isomorphism on the relative completions of
Theorem~\ref{thm:fixed-genus-main}. Then $r\geq\binom n2$.
\end{corollary}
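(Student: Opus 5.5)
The plan is to reduce the statement to a dimension count in the trivial isotypic component of $\Gr^W_{-2}\u_{g,n}$. If the relative completions agree, then, working in the Betti setting (or first passing to characteristic zero via the specialization comparison of Proposition~\ref{prop:compact-type-relative-completion-specialization} and Corollary~\ref{cor:rational-to-continuous-completion}), we get $\mathcal N_{\mathcal D}=\mathcal N_{g,n}$. By Theorem~\ref{thm:compact-type-completion-char-zero} this means $\mathfrak n_{\mathcal D}=W_{-2}\u_{g,n}$. Lemma~\ref{lem:boundary-ideal-weight-two} then gives $I_{\mathcal D}=\Gr^W_{-2}\u_{g,n}$. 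Projecting onto the trivial isotypic component $\bigoplus_{i<j}\Q(1)_{ij}$ from \eqref{eq:wt2}, which has dimension $\binom n2$, we conclude that the trivial components of the initial forms $\operatorname{in}_{-2}(D)$, $D\in N_{\mathcal D}$, must span a $\binom n2$-dimensional space.

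The key step is to show that each divisor type contributes at most a line to this trivial component. By additivity of initial forms, and because conjugation by $\gamma$ acts through $\rho_{g,n}(\gamma)$ (Lemma~\ref{lem:image-Kgn-Sp-submodule}), $I_{\mathcal D}$ is spanned by the $\Sp(H)$-translates of the classes $d_D:=\operatorname{in}_{-2}(T_{c_D})$, one for each chosen representative curve $c_D$ of a type $D\in\mathcal D$. This holds because every element of $N_{\mathcal D}$ is a product of conjugates of these twists and their inverses. Let $\operatorname{pr}_0$ denote the $\Sp(H)$-equivariant projection onto the trivial isotypic component. Since $\Sp(H)$ acts trivially there, $\operatorname{pr}_0(s\cdot d_D)=\operatorname{pr}_0(d_D)$ for all $s$. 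Hence $\operatorname{pr}_0(I_{\mathcal D})$ is spanned by the $r$ vectors $\operatorname{pr}_0(d_D)$.

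Conjugation changes the initial form only by $\rho_{g,n}(\gamma)$, so the vector $\operatorname{pr}_0(d_D)$ does not depend on the choice of representative curve of the given type. This is the same conjugation argument used at the end of Proposition~\ref{prop:fixed-h-diagonal}. The one subtlety is that a type $(h,S)$ with $g=2h$ is identified with $(g-h,S^c)$, but it is still a single divisor with a single conjugacy class of twists, so the count is unaffected.

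Surjectivity of $\operatorname{pr}_0$ on $I_{\mathcal D}=\Gr^W_{-2}\u_{g,n}$ then yields $r\geq\dim\bigoplus_{i<j}\Q(1)_{ij}=\binom n2$. The main obstacle is the first reduction: making sure that an isomorphism of relative completions, however it is induced (in particular in the \'etale or positive-characteristic setting), forces $\mathfrak n_{\mathcal D}=W_{-2}\u_{g,n}$ in the rational Betti completion. The plan is to use functoriality of the specialization diagram following Proposition~\ref{prop:compact-type-relative-completion-specialization} to transfer the isomorphism to $\bar L$, then invoke Corollary~\ref{cor:rational-to-continuous-completion}. Faithful flatness of $\Q\to\Ql$ converts an isomorphism after $\widehat\otimes\Ql$ into equality of the rational ideals.
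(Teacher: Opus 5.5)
Your proposal is correct and follows essentially the same route as the paper. You reduce to characteristic zero through the specialization diagram and get $\mathfrak n_{\mathcal D}=W_{-2}\u_{g,n}$, so that $I_{\mathcal D}=\Gr^W_{-2}\u_{g,n}$. You then note that the projection onto the $\binom n2$-dimensional trivial isotypic component is spanned by the $r$ conjugation-invariant vectors $\operatorname{pr}_0(d_D)$, one per divisor. The only minor difference is in how you obtain $\mathfrak n_{\mathcal D}=W_{-2}\u_{g,n}$: you use $\mathcal N_{\mathcal D}=\mathcal N_{g,n}$ together with Theorem~\ref{thm:compact-type-completion-char-zero}, whereas the paper uses only that $\u_{g,n}/\mathfrak n_{\mathcal D}$ is abelian, which forces $\overline{[\u_{g,n},\u_{g,n}]}=W_{-2}\u_{g,n}\subseteq\mathfrak n_{\mathcal D}$.
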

\begin{proof}
First consider rational Betti relative completion.
Write $\mathcal D=\{D_1,\ldots,D_r\}$ and choose a full
meridian $T_s$ around each $D_s$, regarded as a separating
Dehn twist in $\G_{g,n}$. Any two positive full meridians around $D_s$ are conjugate
in $\G_{g,n}$, since their associated separating curves have
the same topological type.

Let $N_{\mathcal D}$ be the normal subgroup generated by the
$T_s$, and let $\mathfrak n_{\mathcal D}\subseteq\u_{g,n}$ be the Lie
algebra of its closed normal image in $\cG_{g,n}$.
By the boundary-meridian description of the fundamental group
and Proposition~\ref{prop:relative-completion-quotient},
the prounipotent Lie algebra of the relative completion of
$\M_{\mathcal D}^{\circ}$ is $\u_{g,n}/\mathfrak n_{\mathcal D}$.
Proposition~\ref{prop:Kgn-weight-minus-two} gives
$\mathfrak n_{\mathcal D}\subseteq W_{-2}\u_{g,n}$.

Set $d_s=\operatorname{in}_{-2}(T_s)\in\Gr^W_{-2}\u_{g,n}$.
By \eqref{eq:wt2}, the canonical equivariant projection onto
the trivial $\Sp(H)$-isotypical component is
\[
\operatorname{pr}_{\mathrm{triv}}:
\Gr^W_{-2}\u_{g,n}\longrightarrow
\bigl(\Gr^W_{-2}\u_{g,n}\bigr)_{\mathrm{triv}}
=\bigoplus_{i<j}\Q(1)_{ij}.
\]
Its target has dimension $\binom n2$.

Conjugation equivariance, established in the proof of
Lemma~\ref{lem:image-Kgn-Sp-submodule}, gives
\[
\begin{aligned}
\operatorname{pr}_{\mathrm{triv}}
\bigl(\operatorname{in}_{-2}(\gamma T_s\gamma^{-1})\bigr)
&=\operatorname{pr}_{\mathrm{triv}}
\bigl(\rho_{g,n}(\gamma)d_s\bigr)\\
&=\operatorname{pr}_{\mathrm{triv}}(d_s)
\end{aligned}
\qquad(\gamma\in\G_{g,n}),
\]
since $\Sp(H)$ acts trivially on the target of
$\operatorname{pr}_{\mathrm{triv}}$.
% Since
% $[\u_{g,n},W_{-2}\u_{g,n}]\subseteq W_{-3}\u_{g,n}$,
% the adjoint action on $\Gr^W_{-2}\u_{g,n}$ factors through
% $\Sp(H)$. Consequently, for every $\gamma\in\G_{g,n}$,
% \[
% \operatorname{in}_{-2}(\gamma T_s\gamma^{-1})
% =\rho_{g,n}(\gamma)d_s.
% \]
% The action on the trivial isotypical component is trivial, so
% \[
% \operatorname{pr}_{\mathrm{triv}}
% \bigl(\operatorname{in}_{-2}(\gamma T_s\gamma^{-1})\bigr)
% =\operatorname{pr}_{\mathrm{triv}}(d_s).
% \]

The weight-$-2$ initial form is additive on $K_{g,n}$.
Since $N_{\mathcal D}$ is generated by these conjugates and
their inverses, its projected initial forms lie in
\[
\operatorname{span}_{\Q}
\{\operatorname{pr}_{\mathrm{triv}}(d_1),\ldots,
  \operatorname{pr}_{\mathrm{triv}}(d_r)\}.
\]
By Lemma~\ref{lem:boundary-ideal-weight-two}, the image of
$\mathfrak n_{\mathcal D}$ in $\Gr^W_{-2}\u_{g,n}$ is
$I_{\mathcal D}$, the span of the initial forms of elements of
$N_{\mathcal D}$. Its projection onto the trivial isotypical
component therefore has dimension at most $r$.
% Passing to the closed Lie ideal $\mathfrak n_{\mathcal D}$ adds no
% further projected directions: brackets with $\u_{g,n}$ vanish
% in weight $-2$, and the displayed subspace is closed.
% Thus the projection of the image of $\mathfrak n_{\mathcal D}$ in
% $\Gr^W_{-2}\u_{g,n}$ has dimension at most $r$.

By hypothesis, $\u_{g,n}/\mathfrak n_{\mathcal D}$ is isomorphic to the
compact-type prounipotent Lie algebra, which is abelian.
Hence
\[
W_{-2}\u_{g,n}
=\overline{[\u_{g,n},\u_{g,n}]}
\subseteq \mathfrak n_{\mathcal D}.
\]
Together with the reverse inclusion established above, this
gives $\mathfrak n_{\mathcal D}=W_{-2}\u_{g,n}$.
Its image in $\Gr^W_{-2}\u_{g,n}$ is therefore the entire
space, so its projection onto the trivial isotypical component
is surjective. Consequently, $\binom n2\leq r$.

For continuous relative completion in characteristic zero,
the same argument applies after extension of scalars to $\Ql$
and profinite comparison.
In positive characteristic, apply
Proposition~\ref{prop:compact-type-relative-completion-specialization}
to $\M_{\mathcal D}^{\circ}$ and $\M^c_{g,n}$.
Functoriality identifies the inclusion-induced maps under the
specialization isomorphisms. Thus an isomorphism in positive
characteristic gives an isomorphism in characteristic zero,
where the bound has already been proved.
\end{proof}

This proves Corollary~\ref{cor:divisor-lower-bound-main}.
The bound also applies if restriction is assumed to give an
equivalence of the categories $\Loc_H$ with their fiber functors,
since those categories recover the relative completions.
Thus the lower bound and the fixed-genus upper bound have the
same quadratic leading term. No assertion that the lower bound
is attained is needed here.

\begin{remark}
For $g=3$ and $n=0$, the types $h=1$ and $h=2$ label the
same divisor, which is the only separating boundary divisor.
Thus the complement of $\M_{\mathcal D_1}^{\circ}$ in
$\M^c_3$ has codimension at least two.
The inclusion therefore induces an isomorphism on \'etale
fundamental groups by purity
\cite[Exp.~X, Thm.~3.1]{SGA1}, and hence on continuous
relative completions.
The rational Betti assertion follows from
Corollary~\ref{cor:rational-to-continuous-completion}
and faithful extension of scalars from $\Q$ to $\Ql$.
\end{remark}

%%%%%%%%%%%%%%%%%%%%%%%%%%%%%%%%%%%%%%%%%%%%%%%%%%%%%%%%%%%%%%%%%%%%%%%%%%%%%%%%%%%%%%%%%%%%%%%%%%%%%%
\section*{Notation and glossary}\label{sec:notation-glossary}

We use $\pi_1$, $\pi_1^{\top}$, and $\pi_1^{\orb}$ for \'etale,
topological, and orbifold fundamental groups, respectively.
The subscript $/k$ indicates base change; in geometric completion
statements, $k$ is algebraically closed.

\subsection*{Groups and completions}
\begin{description}
\item[$\G_{g,n},T_{g,n},K_{g,n}$]
The mapping class group fixing the markings pointwise, its Torelli
subgroup, and the subgroup generated by separating Dehn twists.

\item[$\M_{g,n},\M^c_{g,n},\Mbar_{g,n}$]
Moduli of smooth, compact-type, and stable genus-$g$ curves with
$n$ ordered markings, respectively.

\item[$\Pi_{g,n},\Pi^{\mathrm{pun}}_{g,n}$]
The pure surface braid group $\ker(\G_{g,n}\to\G_g)$ and the
fundamental group of $\Sigma_g\setminus\{x_1,\ldots,x_n\}$,
respectively.

\item[$\Gamma^{(\ell)},\widehat\Gamma$]
The ordinary pro-$\ell$ and profinite completions of $\Gamma$.

\item[$\Gamma^{\rel,\rho}$]
Relative pro-$\ell$ completion with respect to a dense profinite
representation $\rho$; we write $\Gamma^\rel$ when $\rho$ is understood.
This is a profinite group. For moduli groups, the representation is
integral Jacobian monodromy onto its closed image.

\item[$\cG_Z,\cG_{Z/k}$]
Relative completions of the Betti and geometric \'etale fundamental
groups of $Z$, over $\Q$ and $\Ql$, respectively, with respect to
Jacobian monodromy. These are proalgebraic groups.
For $Z=\M_{g,n}$ or $\M^c_{g,n}$, we write
$\cG_{g,n}$ or $\cG^c_{g,n}$, with $/k$ in the \'etale setting.

\item[$\U,\g,\u$]
The prounipotent radical of a relative completion $\cG$ and the
Lie algebras of $\cG$ and $\U$, respectively. Subscripts and
superscripts follow those of $\cG$.

\item[$\cP,\p$]
The rational unipotent completion of the fundamental group of a
smooth complex fiber and its Lie algebra.

\item[$F_n(C),\cP_{g,n},\p_{g,n}$]
The ordered configuration space of $n$ distinct points on $C$,
the rational unipotent completion of its fundamental group,
and its Lie algebra.

\item[$W_\bullet,L^\bullet$]
The weight filtration and lower central series; lower central
terms are closed in the pronilpotent topology.

\item[$\widehat\otimes$]
Scalar extension of pro-vector spaces, computed on
finite-dimensional quotients.
\end{description}

\subsection*{Symplectic notation}
\begin{description}
\item[$\mathbb H^c_\Z,\mathbb H^c_\Q,
\mathbb H^c_{\Zl},\mathbb H^c_{\Ql}$]
The Jacobian local systems on compact type, with the indicated
Betti or \'etale coefficients.

\item[$\mathbb H_\Z,\mathbb H_\Q,\mathbb H_{\Zl}$]
Their corresponding restrictions to the smooth locus.

\item[$H_\Q,H_{\Zl},H_{\Ql}$]
At the chosen smooth fibers,
$H_\Q=H^1(C^{\an},\Q(1))\cong H_1(C^{\an},\Q)$,
$H_{\Zl}=H^1_{\et}(C,\Zl(1))$, and
$H_{\Ql}=H_{\Zl}\otimes_{\Zl}\Ql$.
We write $H=H_\Q$ in the Betti setting and $H=H_{\Ql}$
in the $\ell$-adic setting.

\item[$\theta,V(\lambda)$]
The symplectic class and the irreducible $\Sp(H)$-module of
highest weight $\lambda$, with Tate twists as specified in the text.

\item[$\Lambda^2_0H,\Lambda^3_0H$]
The primitive exterior powers, normalized as
$\Lambda^2_0H=\Lambda^2H/\langle\theta\rangle$ and
$\Lambda^3_0H=(\Lambda^3H/(\theta\wedge H))(-1)$.

\item[$\rho_{g,n},\rho^c_{g,n}$]
Betti Jacobian monodromy on the smooth and compact-type loci,
respectively; $/k$ denotes the $\ell$-adic versions.

\item[$\widetilde\rho_{g,n},\widetilde\rho^c_{g,n}$]
Canonical lifts to the corresponding Betti relative completions.
\end{description}

% NEW NOTATION: appended in black, rather than recoloring the glossary.
\subsection*{Boundary notation and local systems}
\begin{description}
\item[$\Delta_{h,S},\Delta_{\mathrm{irr}}$]
The stable separating divisor whose genus-$h$ side carries $S$,
and the nonseparating divisor, respectively.
The labels $(h,S)$ and $(g-h,S^c)$ denote the same divisor;
$D^\circ$ denotes the one-node locus of $D$.

\item[$\M_{\mathcal D}^{\circ},\M_{\mathcal D}$]
The smooth locus together with the one-node strata of types in
$\mathcal D$, and the compact-type locus where every node has a
type in $\mathcal D$, respectively.

\item[$\mathcal D_h,\mathcal D_{h,\leq2}$]
The genus-$h$ collections using all subsets $S$ and only those
with $|S|\leq2$, respectively; complementary labels are counted once.

\item[$N_{\mathcal D},\mathcal N_{\mathcal D},
\mathfrak n_{\mathcal D}$]
The normal subgroup of $\G_{g,n}$ generated by twists of types
in $\mathcal D$, the Zariski closure of its image in $\cG_{g,n}$,
and the Lie algebra of that closure, respectively.
For all separating types, these are $K_{g,n}$,
$\cN_{g,n}$, and $\n_{g,n}$.

\item[$\operatorname{in}_{-2}(D),I_N$]
When the logarithm of $D$ lies in $W_{-2}$ of its indicated
completion, $\operatorname{in}_{-2}(D)$ is its class modulo $W_{-3}$.
For $N\unlhd\G_{g,n}$ contained in $K_{g,n}$, $I_N$ is the span
of these classes in $\Gr^W_{-2}\u_{g,n}$.

\item[$I_{\mathcal D}$]
The space $I_{N_{\mathcal D}}$.

\item[$N_h,\mathfrak n_h,I_h$]
The corresponding objects for $\mathcal D=\mathcal D_{h,\leq2}$.

\item[$\Loc_H(Z;F)$]
Finite-dimensional local systems admitting a finite filtration
with algebraic symplectic graded quotients.
Here $F=\Q$ in the Betti setting and $F=\Ql$ in the \'etale setting;
the filtration is not part of the data.
\end{description}

\subsection*{Level covers and specialization}
\begin{description}
\item[$\Gamma^\lambda,\Delta^\lambda$]
The geometric level subgroup of $\G_{g,n}$ and the deck group
of the chosen connected level cover.

\item[$M^\lambda,\overline M^\lambda$]
The smooth level cover and its normalization over $\Mbar_{g,n}$.

\item[$M_{\mathcal D}^\lambda,M^{c,\lambda}$]
The level covers over $\M_{\mathcal D}$ and $\M^c_{g,n}$;
$M_{\mathcal D}^{\circ,\lambda}$ is the restriction to
$\M_{\mathcal D}^{\circ}$.

\item[$W,L,\bar L$]
The Witt ring $W(k)$, its fraction field, and an algebraic closure
of that field, for $k$ of characteristic $p>0$.

\item[$\bar\eta,\bar\xi,\beta,\alpha_{\Zl},\alpha_{\Ql}$]
Generic- and special-fiber base points, an \'etale path between
them in the common smooth level locus, and the induced symplectic
transports on Jacobian fibers.

\item[$\operatorname{sp}_{\mathcal D}^{(\ell)},
\operatorname{sp}_{\mathcal D}^{\rel}$]
Specialization maps on the maximal pro-$\ell$ quotients of
level-cover groups and on relative pro-$\ell$ completions of
moduli groups, respectively.
\end{description}

\subsection*{AI disclosure}
The authors used OpenAI's ChatGPT to assist with literature
searches, check calculations and notation, and improve the
exposition of their original mathematical ideas and arguments.
All AI-assisted suggestions were independently reviewed and
verified by the authors, who take full responsibility for the
content of the paper.
\bibliographystyle{alpha}
\bibliography{references}
\end{document}